\documentclass[reqno]{amsart}
\usepackage{graphicx} 
\numberwithin{equation}{section}
\usepackage{amsmath}
\usepackage{amssymb}
\usepackage{amsthm}
\usepackage{color}
\usepackage{enumerate}
\theoremstyle{definition}
\newtheorem{thm}{Theorem}[section]
\newtheorem{prop}[thm]{Proposition}

\newtheorem{rem}[thm]{Remark}
\newtheorem{lemma}[thm]{Lemma}

\usepackage{mathrsfs}

\newcommand{\rational}{R}
\newcommand{\expansive}{\upsilon}
\newcommand{\dimension}{\delta}
\newcommand{\potential}{\phi}
\newcommand{\pinf}{\underline{\alpha}_{\Omega}}
\newcommand{\psup}{\overline{\alpha}_{\Omega}}
\newcommand{\prange}{F}
\newcommand{\indupote}{\tilde \potential}
\newcommand{\smallnumber}{\tilde \beta(\rational)}
\newcommand{\edge}{\Sigma}
\newcommand{\MS}{\edge_A}
\newcommand{\coding}{\pi}
\newcommand{\shift}{\sigma}
\newcommand{\comp}{\text{Comp}}
\newcommand{\Koebe}{\mathfrak{d}}
\newcommand{\induedge}{\tilde \edge}
\newcommand{\indushift}{\tilde \shift}
\newcommand{\CMS}{\tilde \edge_B}
\newcommand{\return}{\rho}
\newcommand{\indudomain}{\mathcal{D}}

\newcommand{\Proj}{\text{Proj}}
\newcommand{\indumeasure}{\tilde \mu}
\newcommand{\indumap}{\tilde R}
\newcommand{\exponent}{\beta}
\newcommand{\indupressure}{\tilde p}
\newcommand{\pressure}{p}
\newcommand{\codingpressure}{p^*}
\newcommand{\conv}{\text{Conv}(\{\delta_{\tau_\omega}\}_{\omega\in\Omega, \tau_\omega\in \coding^{-1}(\omega)})}
\newcommand{\LB}{\text{LB}(q)}
\newcommand{\inddiferent}{\mathcal{I}}
\newcommand{\parabolic}{\gamma}
\newcommand{\rate}{p}
\newcommand{\codmeas}{\mu^*}
\newcommand{\minrate}{\xi}
\newcommand{\growth}{\minrate(\omega)}
\newcommand{\palpha}{p_\alpha}
\newcommand{\minomega}{\underline{\omega}}
\newcommand{\maxomega}{\overline{\omega}}
\newcommand{\vspectrum}{b_{\text{v}}}
\newcommand{\CMSn}{\tilde \edge_{B,n}}

\title[Thermodynamic formalism and multifractal analysis]{Thermodynamic formalism and multifractal analysis of Birkhoff averages for parabolic rational maps}
\author{Yuya Arima}
\date{\today}

\address{Graduate School of Mathematics, Nagoya University,
Furocho, Chikusaku, Nagoya, 464-8602, JAPAN} 
\email{yuya.arima.c0@math.nagoya-u.ac.jp}

\subjclass[2020]{28A78, 37D25, 37D35, 37F10}
\thanks{{\it Keywords}: Rational maps, Rationally indifferent periodic points, Thermodynamic formalism, Multifractal analysis, Birkhoff averages}

\begin{document}

\begin{abstract}
In this paper, we study the multifractal analysis of Birkhoff averages for parabolic rational maps. We establish a conditional variational principle and prove the real analyticity and strict monotonicity of the Birkhoff spectrum, as well as the existence and uniqueness of the measure attaining the supremum in the conditional variational principle, on a certain region.
To this end, we prove the existence and uniqueness of an expanding equilibrium measure and the real analyticity of the pressure function on a suitable domain. For parabolic systems, our approach using thermodynamic formalism provides a unified framework for establishing the conditional variational principle and investigating finer properties of the Birkhoff spectrum, including its real analyticity, strict monotonicity, and the existence and uniqueness of a measure attaining the supremum on a certain region.  
\end{abstract}

\maketitle

\section{Introduction}
Let $f$ be a continuous dynamical system on a compact metric space $X$ and let $\phi$ be a continuous potential on $X$. The Birkhoff average of $\phi$ at $x\in X$ is defined by
\[
\lim_{n\to\infty}\frac{1}{n}\sum_{k=0}^{n-1}\phi(f^k(x))
\]
whenever the limit exists. 
Birkhoff averages provide a way to characterize the dynamical system $f$.   
For a given $f$-invariant ergodic Borel probability measure $\mu$ on $X$,
Birkhoff's ergodic theorem yields that  for $\mu$-almost every $x\in X$ the Birkhoff average of $\phi$ at $x$ converges to the space average $\int \phi d\mu$.
Thus, for $\alpha\neq \int \phi d\mu$ the set $B(\alpha)$ of points that the Birkhoff average of $\phi$ converges to $\alpha$ is negligible with respect to $\mu$.
However, there is still a possibility that $B(\alpha)$ might be a large set from another point of view. This raises the following natural questions: What are the exceptional Birkhoff averages? How large is the set $B(\alpha)$? Multifractal analysis enables us to answer these questions by quantifying the size of $B(\alpha)$. We refer the reader to the books by Pesin \cite{Pesinbook} and Barreira \cite{Barreirabook} for an introduction to multifractal analysis. 

In this paper, we study the multifractal analysis of Birkhoff averages for parabolic rational maps. Our approach using thermodynamic formalism enables us not only to establish the conditional variational principle but also to investigate finer properties of the Birkhoff spectrum on a certain region, including its real analyticity and strict monotonicity, as well as the existence and uniqueness of a measure attaining the supremum in the conditional variational principle. It also allows us to overcome the obstruction to constructing analytic inverse branches around rationally indifferent periodic points caused by the accumulation of critical orbits at rationally indifferent periodic points.

Let $\overline{\mathbb{C}}$ be the Riemann sphere equipped with the spherical metric $d_{\overline{\mathbb{C}}}$ and let $\rational:\overline{\mathbb{C}}\rightarrow  \overline{\mathbb{C}}$ be a rational map. 
We denote by $J$ the Julia set of $\rational$ and by $|\rational'(x)|_s$ the norm of the spherical derivative of $\rational$ at $x\in \overline{\mathbb{C}}$. 
In this paper, we always assume that $\text{deg}(\rational)\geq 2$, where $\text{deg}(\rational)$ denotes the degree of $\rational$.

A rational map $\rational$ is said to be expansive if there exists an expansive constant $\expansive>0$ such that for each $x,y\in J$ with $x\neq y$ we have 
$
\sup_{n\geq 0}
d_{\overline{\mathbb{C}}}(\rational^n(x),\rational^n(y))>\expansive.
$
A rational map $\rational$ is said to be parabolic if $\rational$ is expansive but not expanding (see \cite[Definition 25.3.3]{UrbanskiRoyMunday} for the definition of expanding rational maps).
    Let $\rational$ be a parabolic rational map
and let $\Omega(\rational)$ be the set of rationally indifferent periodic points. Since $\rational$ is parabolic, \cite[Corollary 24.2.23 and Theorem 26.2.7]{UrbanskiRoyMunday} implies that $0<\#\Omega(\rational)<\infty$.

Let $\rational$ be a rational map and let $\potential:J\rightarrow \mathbb{R}$ be a continuous function. 
We denote by $\dim_H(\cdot)$ the Hausdorff dimension with respect to the spherical metric $d_{\overline{\mathbb{C}}}$.
We define the Birkhoff spectrum $b:\mathbb{R}\rightarrow [0,\dim_H(J)]$ by
\[
b(\alpha):=\dim_H(B(\alpha)), \text{ where }
B(\alpha):=\left\{x\in J:\lim_{n\to\infty}\frac{1}{n}\sum_{k=0}^{n-1}\potential(R^k(x))=\alpha\right\}.
\]
Let $M(\rational)$ be the set of $\rational$-invariant Borel probability measures on $J$.
We set 
\[
\alpha_{\min}:= \min\left\{\int \phi d\mu:\mu\in M(\rational)\right\}
\text{ and }
\alpha_{\max}:= \max\left\{\int \phi d\mu:\mu\in M(\rational)\right\}.
\]
Then we have 
$B(\alpha)\neq \emptyset$ if and only if $\alpha\in [\alpha_{\min},\alpha_{\max}]$ (see Proposition \ref{prop non empty B and variational B}).

Let $\dimension$ denote the Hausdorff dimension of $J$. 
By \cite[Theorem 8.7]{AaronsonDenkerUrbanski}, there exists a unique nonatomic $\dimension$-conformal measure $m_{\dimension}$ for $\rational$ (see \cite[Definition 29.2.10]{UrbanskiRoyMunday} for the definition of $\dimension$-conformal measure). Furthermore, by \cite[Section 9]{AaronsonDenkerUrbanski}, $\rational$ admits a unique (up to constant) ergodic invariant Borel measure $\mu_{\dimension}$ on $J$ which is equivalent to $m_{\dimension}$. If $\mu_{\dimension}$ is finite, we normalize it so that $\mu_{\dimension}(J)=1$. See \cite[Theorem 9.8]{AaronsonDenkerUrbanski} for necessary and sufficient conditions for $\mu_{\dimension}$ to be finite.

We define 
\begin{align*}
&\pinf:=\min\left\{\frac{1}{\ell}\sum_{k=0}^{\ell-1}\phi(\rational^k(\omega)):\omega\in \Omega(\rational), \ \rational^\ell(\omega)=\omega\right\} \text{ and }
\\&
\psup:=\max\left\{\frac{1}{\ell}\sum_{k=0}^{\ell-1}\phi(\rational^k(\omega)):\omega\in \Omega(\rational), \ \rational^\ell(\omega)=\omega\right\}.
\end{align*}
We set
\begin{align}\label{eq def flat}
\prange:=
\left\{
\begin{array}{cc}
[\pinf,\psup] 
&\text{ if $\mu_\dimension$ is infinite}\\
\left[\min\{\int \phi d\mu_\dimension,\pinf\},
\max\{\int \phi d\mu_\dimension,\psup\}\right] 
& \text{ if $\mu_\dimension$ is finite}
\end{array}
\right ..
\end{align}

We denote by $\mathcal{H}$ the set of continuous functions on $J$ for which the induced potential defined by \eqref{eq def indupote} is locally H\"older (see Section \ref{sec induced systems} for the definition of locally H\"older). The locally H\"older continuity of a potential allows us to use an inducing method. We would like to stress that the geometric potential $\log |R'|_J|_s$ belongs to $\mathcal{H}$ (see Lemma \ref{lemma geometric Holder}). Moreover, if $\phi:J\rightarrow \mathbb{R}$ is H\"older continuous and Lipschitz continuous on a small neighborhood of $\Omega$ then $\phi\in\mathcal{H}$. See Lemma \ref{lemma sufficient condition Holder} for more details on sufficient conditions under which a potential belongs to $\mathcal{H}$.

For $\mu\in M(\rational)$ we denote by $h(\mu)$ the measure-theoretic entropy (see \cite{walters2000introduction}) and define $\lambda(\mu):=\int \log |R'|_sd\mu$. For $\mu\in M(\rational)$ we define the Hausdorff dimension of $\mu$ by 
$
\dim_H(\mu):=\inf\left\{\dim_H(Z):\mu(Z)=1,\ Z\text{ is Borel measurable}\right\}.
$
By \cite[Theorem 28.4.1]{UrbanskiRoyMunday}, if $\mu\in M(\rational)$ is ergodic and satisfies $\lambda(\mu)>0$ then we have 
\begin{align}\label{eq volume lemma}
    \dim_H(\mu)=\frac{h(\mu)}{\lambda(\mu)}.
\end{align}
We are now in a position to state our main theorem. 
\begin{thm}\label{thm main}
    Let $\rational$ be a parabolic rational map with $\text{deg}(\rational)\geq 2$ and let $\phi\in \mathcal{H}$. Then we have the following:
    \begin{itemize}
        \item[(M1)] For any $\alpha\in (\alpha_{\min},\alpha_{\max})\setminus \prange$ there exists a unique ergodic Borel probability measure $\mu_\alpha\in M(\rational)$ such that $\lambda(\mu_\alpha)>0$, $\int \phi d\mu_\alpha=\alpha$ and
        \begin{align}\label{eq def vspectrum}
        \dim_H(\mu_\alpha)=\sup\left\{\frac{h(\mu)}{\lambda(\mu)}:\mu\in M(\rational),\ \lambda(\mu)>0,\ \int\phi d\mu=\alpha \right\}.
        \end{align}
        Moreover, for any $\alpha\in (\alpha_{\min},\alpha_{\max})\setminus \prange$ we have $b(\alpha)=\dim_H(\mu_\alpha)$. 
\item[(M2)] The function $b$ is real-analytic on $(\alpha_{\min},\alpha_{\max})\setminus \prange$.
\item[(M3)] For all $\alpha\in (\alpha_{\min} , \alpha_{\max} ) \setminus \prange$ we have $b(\alpha)<\dimension$. Moreover, $b$ is strictly increasing on $(\alpha_{\min},\min \prange)$ and strictly decreasing on $(\max \prange,\alpha_{\max})$.
    \end{itemize}
\end{thm}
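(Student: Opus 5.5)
The approach is the standard thermodynamic one, run through the two-parameter pressure function $P(q,t):=P(q\phi-t\log|\rational'|_s)$ (topological pressure on $J$). I would take as given the results announced in the abstract: on a suitable open domain $\mathcal{D}\subset\mathbb{R}^2$, roughly the $(q,t)$ with $P(q,t)>q\alpha_\omega-0\cdot t$ strictly above the values forced by the parabolic points, i.e.\ $P(q,t)>\max_{\omega}\, q\cdot(\text{orbit average of }\phi\text{ at }\omega)$, the potential $q\phi-t\log|\rational'|_s$ has a unique equilibrium measure $\mu_{q,t}$. This measure is expanding ($\lambda(\mu_{q,t})>0$) and ergodic, and $P$ is real-analytic on $\mathcal{D}$. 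These facts are obtained through the inducing scheme (the induced potential $\indupote$ is locally H\"older since $\phi\in\mathcal{H}$ and $\log|\rational'|_s\in\mathcal H$ by Lemma \ref{lemma geometric Holder}), a countable Markov shift $\CMS$, and the Mauldin--Urba\'nski/Sarig theory there.

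\textbf{Construction of $\mu_\alpha$.} For $\alpha$ in the range, define $T(q)$ implicitly by $P(q,T(q))=q\alpha$, i.e.\ $P(q(\phi-\alpha)-T\log|\rational'|_s)=0$. By the implicit function theorem ($\partial_tP=-\lambda(\mu_{q,t})<0$), $T$ is analytic wherever the graph stays in $\mathcal{D}$. It is convex, and I would show that $T$ attains its minimum at some $q_\alpha$ exactly when $\alpha\notin\prange$, by computing $T'(q)=\bigl(\int\phi\,d\mu_{q,T(q)}-\alpha\bigr)/\lambda(\mu_{q,T(q)})$. Set $\mu_\alpha:=\mu_{q_\alpha,T(q_\alpha)}$, so that $\int\phi\,d\mu_\alpha=\alpha$.

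\textbf{Main obstacle.} I expect the hard step to be showing that the critical point $q_\alpha$ exists and lies in $\mathcal{D}$ precisely off $\prange$. As $q\to\pm\infty$ the averages $\int\phi\,d\mu_{q,T(q)}$ tend to $\alpha_{\max}$ and $\alpha_{\min}$ respectively. The obstruction is that the curve may exit $\mathcal{D}$ through the phase-transition boundary. There the equilibrium collapses to Dirac measures on parabolic orbits, whose averages lie in $[\pinf,\psup]$, or to $\mu_\dimension$ at $(0,\dimension)$ when that measure is finite. This is exactly why $\prange$ is excluded. I would handle it by checking that along the boundary the limit of the measures has $\phi$-average in $\prange$, so any $\alpha$ outside $\prange$ is reached inside $\mathcal{D}$, using continuity and monotonicity of $q\mapsto\int\phi\,d\mu_{q,T(q)}$ (via $T''\ge0$).

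\textbf{Variational formula and uniqueness.} For any $\mu$ with $\int\phi\,d\mu=\alpha$ and $\lambda(\mu)>0$, the variational principle gives $h(\mu)+q_\alpha\alpha-T\lambda(\mu)\le q_\alpha\alpha$, hence $h(\mu)/\lambda(\mu)\le T(q_\alpha)$. Equality holds for $\mu_\alpha$, and uniqueness of equilibrium measures yields uniqueness of the maximizer. By \eqref{eq volume lemma}, $\dim_H(\mu_\alpha)=T(q_\alpha)$.

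\textbf{Identification with $b(\alpha)$.} The lower bound $b(\alpha)\ge\dim_H(\mu_\alpha)$ follows from Birkhoff's theorem, since $\mu_\alpha(B(\alpha))=1$. For the upper bound I would cover $B(\alpha)$ by points whose Birkhoff sums approach $\alpha$ and use conformal-type estimates: at such points the lower local dimension of the conformal measure $m_{q_\alpha,T}$ is at least $T(q_\alpha)$. Points spending positive frequency near $\Omega$ would have average in $\prange$, so for $\alpha\notin\prange$ they can be controlled. Alternatively, I would invoke Proposition \ref{prop non empty B and variational B} if it already provides the upper bound $b(\alpha)\le\sup h/\lambda$.

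\textbf{(M2).} The function $b(\alpha)=T_\alpha(q_\alpha)$ is analytic by the implicit function theorem applied to the analytic system $P(q,t)=q\alpha$, $\partial_qP(q,t)=\alpha$. The Jacobian is nondegenerate because $\partial_q^2P>0$, which holds since $\phi$ is not cohomologous to a constant plus a multiple of $\log|\rational'|$ on $\mathcal{D}$ (otherwise $\alpha_{\min}=\alpha_{\max}$ or the range would be trivial).

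\textbf{(M3).} Differentiating gives $b'(\alpha)=-q_\alpha/\lambda(\mu_\alpha)$. For $\alpha<\min\prange$ we have $q_\alpha<0$, since $q=0$ corresponds to the $\phi$-average of the $\dimension$-equilibrium, which lies in $\prange$. Hence $b'>0$, and symmetrically $b'<0$ above $\max\prange$. Finally $b(\alpha)<\dimension$ because $T(q_\alpha)<T(0)=\dimension$ by strict convexity with $q_\alpha\ne0$.
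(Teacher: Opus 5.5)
\textbf{The construction of $\mu_\alpha$.} Your treatment of the variational spectrum $\vspectrum$ is a legitimate dual of the paper's argument. Instead of fixing $b=\vspectrum(\alpha)$ and finding the unique critical point of $q\mapsto\palpha(\vspectrum(\alpha),q)$ (Proposition~\ref{prop relationship}), you minimize the root function $T_\alpha$. Your variational inequality, the uniqueness via uniqueness of equilibrium states, the implicit-function argument, and $\vspectrum'(\alpha)=-q_\alpha/\lambda(\mu_\alpha)$ (in place of the paper's convex-combination proof of monotonicity) are all fine for $\vspectrum$.

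The existence step needs repair, however. In your sign convention, for $\alpha<\min\prange$ and $q>0$ you have $P(q,t)-q\alpha\geq q(\alpha_\omega-\alpha)>0$ for every $t$ (test with $\delta_\omega$). So $T_\alpha\equiv+\infty$ there, and the ``$q\to\pm\infty$'' picture does not apply. What works is the following:
\begin{itemize}
\item for $q<0$ the graph lies in $\mathcal{N}$ automatically, because $P(q,T(q))=q\alpha>\max_\omega q\alpha_\omega$;
\item $T(q)\to\infty$ as $q\to-\infty$;
\item as $q\to0^-$, $T(q)\to\dimension$, and weak$^*$ limits of $\mu_{q,T(q)}$ lie in $M_\dimension$, whose $\phi$-averages lie in $\prange$ (Proposition~\ref{prop equilibrium state Lambda}); hence $T'>0$ near $0^-$.
\end{itemize}
Also, the nondegeneracy you need is $\partial_q^2P\neq0$. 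This is equivalent to $\alpha_{\min}<\alpha_{\max}$ (Theorem~\ref{thm regularity of non induced pressure}), which takes real work in the parabolic setting. It is not the ``constant plus a multiple of $\log|\rational'|$'' condition you state.

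\textbf{The genuine gap: the upper bound.} The missing step is $b(\alpha)\leq\vspectrum(\alpha)$. The second half of (M1) and all of (M2) and (M3) depend on it, since they concern $b$, not $\vspectrum$.

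Proposition~\ref{prop non empty B and variational B} contains no dimension estimate, so your fallback is unavailable.

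Your local-dimension route has the inequality backwards. A bound $\underline{d}_m\geq T$ on $B(\alpha)$ only yields $\dim_H(B(\alpha))\geq T$. An upper bound needs $m(B(x,r))\geq r^{T+\epsilon}$ along some $r\to0$ at every $x\in B(\alpha)$. That in turn needs univalent, bounded-distortion inverse branches on balls of definite size around $\rational^n(x)$. This is exactly what fails when $\rational^n(x)$ is near $\Omega$, because critical orbits accumulate there \eqref{eq accumulation}.

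The claim you use to set this aside is false. Positive frequency near $\Omega$ does not force the average into $\prange$. Indeed, $\mu_\alpha$ is positive on open sets, so $\mu_\alpha$-typical points visit every neighbourhood of $\Omega$ with positive frequency, yet have average $\alpha\notin\prange$. Only asymptotically full frequency near $\Omega$ forces the average into $[\pinf,\psup]$.

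\textbf{How the paper does it.} Proposition~\ref{prop conditional variational principle} avoids local dimensions entirely.
\begin{itemize}
\item It covers $B_n=\{x:|S_\ell(\phi)(x)/\ell-\alpha|<\epsilon \text{ for all } \ell\geq n\}$ by $\ell$-cylinders of the finite Markov partition.
\item It uses Lemma~\ref{lemma diam and derivative}: $\text{diam}(\coding([\tau]))\leq C\ell\,|(\rational^\ell)'(z)|^{-1}$ for $z$ in an extended cylinder $\coding([\tau a_{\tau_{\ell-1},h}h])$. Parabolic stretches cost only the factor $\ell$, via Proposition~\ref{prop local property near inddiferent}.
\item It compares with the weak Gibbs measure \eqref{eq weak gibbs} on the finite primitive shift, whose distortion $D_{b,q}(\ell)$ is $o(\ell)$, and uses $p=\codingpressure$.
\end{itemize}
In the paper's sign convention, where $q(\alpha)>0$, this gives
$\sum_{\tau\in E_\ell}\text{diam}(\coding([\tau]))^b\ll\ell\,e^{\ell(p(b,q(\alpha))+q(\alpha)\alpha+(q(\alpha)+2)\epsilon)}\to0$
for $b>\vspectrum(\alpha)$. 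The exponent is negative because $b\mapsto p(b,q(\alpha))$ is strictly decreasing on $\mathcal{N}$.

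Your proposal needs an upper covering estimate of this kind, with a subexponential diameter/derivative comparison that survives parabolic excursions.
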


\begin{thm}\label{thm parabolic ragion}
Let $\rational$ be a parabolic rational map with $\text{deg}(\rational)\geq 2$ and let $\phi$ be a continuous function.
Then for any $\alpha\in \prange$ we have $b(\alpha)=\dimension$.
\end{thm}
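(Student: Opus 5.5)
The upper bound $b(\alpha)\le\dimension$ is trivial, since $B(\alpha)\subset J$. So the plan is to prove $b(\alpha)\ge\dimension$ for every $\alpha\in\prange$. The idea is to build, inside $B(\alpha)$, Cantor-type sets of dimension close to $\dimension$. Each is obtained by concatenating long orbit segments that follow a ``typical'' high-dimensional hyperbolic measure with long stays near the rationally indifferent periodic points. Only continuity of $\phi$ should be needed, because the construction controls Birkhoff sums directly.

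\textbf{Step 1: hyperbolic approximation.} For parabolic maps one has $\dimension=\sup\{\dim_H(\mu):\mu \text{ ergodic},\ \lambda(\mu)>0\}$, and $\dimension$ equals the hyperbolic dimension. Hence for any $\epsilon>0$ we can pick an ergodic $\nu$ with $\lambda(\nu)>0$ and $h(\nu)/\lambda(\nu)>\dimension-\epsilon$. Using Katok-type horseshoes for rational maps, I would then find an expanding conformal repeller $\Lambda_\epsilon\subset J\setminus\Omega(\rational)$ with $\dim_H\Lambda_\epsilon>\dimension-\epsilon$, whose invariant measures have $\int\phi\,d\mu$ within $\epsilon$ of $\int\phi\,d\nu$.

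\textbf{Step 2: choice of $\nu$ depending on the case.} If $\mu_\dimension$ is finite, then $\lambda(\mu_\dimension)>0$ and $\dim_H\mu_\dimension=\dimension$. In that case I take $\nu$ to be $\mu_\dimension$, or approximate it by hyperbolic measures, so that $\int\phi\,d\nu\approx\int\phi\,d\mu_\dimension$. If $\mu_\dimension$ is infinite, I use instead that the high-dimensional hyperbolic measures must spend most of their time near $\Omega$; concretely, equilibrium states of $-t\log|\rational'|_s$ with $t\uparrow\dimension$ accumulate on measures supported on parabolic orbits. In either case, a target $\alpha\in\prange$ is a convex combination of $\int\phi\,d\nu$ (case $\mu_\dimension$ finite) and the periodic averages $\frac1\ell\sum\phi(\rational^k\omega)$ over $\omega\in\Omega(\rational)$.

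\textbf{Step 3: the Moran construction.} I build a Moran set by alternating two kinds of orbit pieces:
\begin{itemize}
\item blocks of length $n_k$ that shadow the repeller $\Lambda_{\epsilon_k}$, with $\epsilon_k\to0$;
\item blocks of length $m_k$ spent in a parabolic petal near the orbit of the relevant $\omega\in\Omega(\rational)$, where Birkhoff averages are close to the periodic average.
\end{itemize}
The ratios $m_k/n_k$ are chosen so that the running Birkhoff averages converge to $\alpha$. The needed transitions exist because $J$ is topologically exact, and since orbits approach $\omega$ only polynomially, the derivative along a petal excursion of length $m$ grows only polynomially in $m$. This is the key point: parabolic excursions cost zero exponential expansion, so they do not lower the dimension in the limit. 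Using bounded distortion of inverse branches away from critical orbits, together with the known petal asymptotics, a mass distribution argument should give $\dim_H\ge\dimension-\epsilon_k$ along the construction, and hence $b(\alpha)=\dimension$.

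\textbf{Main obstacle.} The hard part is controlling distortion and contraction when constructing inverse branches through the parabolic excursions, since critical orbits may accumulate at $\Omega$. I would handle this by using only inverse branches along the repelling direction of the petals, where Koebe-type estimates are available. I would also need to verify carefully that the polynomial derivative growth from the petal excursions is negligible against $\exp(n_k\lambda)$, even in the infinite case, where $m_k/n_k$ may have to be large. In that case the dimension of the repeller blocks themselves must be recomputed per unit of total time, which could force the use of a $\Lambda_\epsilon$ adapted to lie near $\Omega$.
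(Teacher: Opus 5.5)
Your proposal is correct in outline, and it rests on the same mechanism as the paper: time spent near $\Omega$ costs neither entropy nor exponential expansion, since $h(\delta_\omega)=\lambda(\delta_\omega)=0$. You implement that mechanism differently.

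\emph{The paper's route.} The paper never builds excursions by hand. It takes high-dimensional measures from finite subsystems of the induced shift and forms the non-ergodic mixtures $p\,\mu_n^*+(1-p)\delta_\omega$, which keep the ratio $h/\lambda$ and have the prescribed $\phi$-integral. It then uses Takahasi's ergodic approximation theorem to replace these mixtures by ergodic measures charging every $[h]$, $h\in H$ (Lemma~\ref{lemma hyperbolic ergodic approximation}). The parabolic excursions are hidden inside the typical words of Lemma~\ref{lemma cylinder approximation}. Because every concatenated word ends in a symbol of $H$, distortion is controlled by Lemma~\ref{lemma distortion}.

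\emph{Your route.} You use Katok horseshoes plus explicit excursions governed by Proposition~\ref{prop local property near inddiferent}. This is more geometric and avoids both Takahasi's theorem and the induced system. The cost is that you must build the entry and exit transitions yourself: enter through a preimage $w\neq\omega$ of $\omega$, and exit by topological exactness. The Markov partition gives these to the paper for free.

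\emph{Your two flagged worries dissolve.} For distortion, \eqref{eq accumulation} shows that a small disc $D$ centred on $J$ away from $\Omega$ misses $\overline{\mathrm{PC}(R)}$. So each composite branch (blocks, transitions, excursion) is a single univalent inverse branch of an iterate on a neighbourhood of $D$, and Koebe gives bounds uniform in $m_k$. This is exactly the paper's device of pulling back only $B(z_h,2\kappa)$. For large $m_k/n_k$, an excursion adds no entropy and only $O(\log m_k)$ to $\log|(R^{n})'|$. Nothing needs to be renormalised per unit time, and no horseshoe adapted to lie near $\Omega$ is required.

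\emph{The one real point of care.} When $\mu_\delta$ is infinite, any sequence with $h(\nu_k)/\lambda(\nu_k)\to\delta$ has $h(\nu_k),\lambda(\nu_k)\to0$, because its weak$^*$ limits lie in $M_\delta=\mathrm{Conv}\{\delta_\omega\}$ by Proposition~\ref{prop equilibrium state Lambda}. Hence excursion and transition costs must be $o(n_k\lambda(\nu_k))$, not merely $o(n_k)$. The paper handles this through the relative errors in \eqref{eq ergodic appro 4} and the bound $h(\eta^*_{i_n})>2i_n^{-1}$. In your scheme you meet it by fixing $\Lambda_{\epsilon_k}$ first, then taking $n_k$ large with $m_k$ subexponential in $n_k$.
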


To prove Theorem \ref{thm main}, we establish the following theorem, which is of independent interest (Theorems \ref{thm equilibrium state rational} and \ref{thm regularity of non induced pressure}).
Theorem \ref{thm thermodynamics intro} generalizes the results of thermodynamic formalism obtained by Haydn and Isola \cite{HaydnIsola}.
For a continuous function $\psi$ on $J$ we define the topological pressure of $\psi$ by 
\[
P(\psi):=\sup\left\{h(\mu)+\int\psi d\mu: \mu\in M(\rational)\right\}.
\]
For $\phi\in \mathcal{H}$ and $(b,q)\in \mathbb{R}^2$ we define $\pressure(b,q):=P(-q\phi-b\log |\rational'|_J|_s)$. We set 
\[
\mathcal{N}:=\left\{(b,q)\in \mathbb{R}^2: \pressure(b,q)>\max\left\{\frac{-q}{\ell}\sum_{k=0}^{\ell-1}\phi(\rational^k(\omega)):\omega\in \Omega(\rational),\ \rational^\ell(\omega)=\omega\right\}\right\}.
\]
\begin{thm}\label{thm thermodynamics intro}
    Let $\rational$ be a parabolic rational map with $\text{deg}(\rational)\geq 2$ and let $\phi\in \mathcal{H}$. For all 
    $(b,q)\in\mathcal{N}$
    there exists a unique ergodic equilibrium measure $\mu_{b,q}\in M(\rational)$ for $-q\phi-b\log |\rational'|_J|_s$ with $\lambda(\mu_{b,q})>0$. Moreover, the function $(b,q)\mapsto \pressure(b,q)$ is real analytic on $\mathcal{N}$ and we have 
    \[
        \frac{\partial}{\partial b}\pressure(b,q)=-\lambda(\mu_{b,q}) \text{ and } \frac{\partial}{\partial q}\pressure(b,q)=-\int \phi d\mu_{b,q} .
    \]
\end{thm}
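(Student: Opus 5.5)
The plan is to use an inducing scheme: pass to an induced map on a countable Markov shift, where the induced potential is locally Hölder, and then transfer the thermodynamic information back to $\rational$.

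\emph{Step 1: the induced system.} Fix a neighbourhood $U$ of $\Omega(\rational)$ small enough that it contains no critical values accumulating badly. Let $\indudomain$ be a finite union of topological discs in $J\setminus U$, and let $\indumap=\rational^{\return}$ be the first return map to $\indudomain$, with return time $\return$. Since critical orbits can accumulate at parabolic points, analytic inverse branches may fail to exist near $\Omega$. To handle this, build the partition from the inverse branches of $\rational$ along orbits that spend their excursions in the parabolic petals. Near each $\omega$, use the local normal form $z\mapsto z+az^{p+1}+\cdots$: the repelling petals give univalent inverse branches whose derivatives decay polynomially, like $n^{-(p+1)/p}$. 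This yields a coding $\coding:\CMS\to J$ by a countable Markov shift with the BIP (big images and preimages) property. The induced potential is
\[
\indupote_{b,q}=\sum_{k=0}^{\return-1}\bigl(-q\phi-b\log|\rational'|_s\bigr)\circ\rational^k - \pressure\,\return .
\]
It is locally Hölder: for the $\phi$ part this holds because $\phi\in\mathcal H$, and for the geometric part it follows from Lemma \ref{lemma geometric Holder} and bounded distortion.

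\emph{Step 2: the pressure equation.} For $(b,q)$ and a real parameter $s$, set
\[
\indupressure(b,q,s)=P_{\indushift}\Bigl(\sum_{k<\return}\bigl(-q\phi-b\log|\rational'|_s\bigr)\circ\rational^k-s\return\Bigr).
\]
I will show two things. First, $\indupressure(b,q,s)<\infty$ exactly when $s>s_\Omega(q):=\max_\omega(-q/\ell)\sum\phi(\rational^k\omega)$. The key input is that the symbols with return time $n$ have total weight $\asymp n^{-b(p+1)/p}e^{n(s_\Omega(q)-s)}$ times a subexponential factor. Second, for $(b,q)\in\mathcal N$ one has $\indupressure(b,q,\pressure(b,q))=0$. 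For this I use the Abramov–Kac correspondence between measures with $\lambda>0$ (which do not charge $\Omega$) and induced measures with integrable return time. Together with $\pressure(b,q)>s_\Omega(q)$, this gives $\indupressure<\infty$ in a neighbourhood of $s=\pressure(b,q)$. The map $s\mapsto\indupressure$ is strictly decreasing, and I show it is continuous and tends to $+\infty$ or crosses $0$ at $\pressure$. This gives a zero strictly inside the finiteness region, i.e.\ the system is "strongly positive recurrent". Consequently Sarig/Mauldin–Urbański theory gives a unique Gibbs equilibrium state $\indumeasure$ for the induced potential at $s=\pressure$, with exponentially small tails, so $\int\return\,d\indumeasure<\infty$.

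\emph{Step 3: existence, uniqueness and analyticity.} Project: $\mu_{b,q}=\bigl(\sum_k \indumeasure\circ\rational^{-k}|_{\{\return>k\}}\bigr)/\int\return\,d\indumeasure$. Abramov's formula shows that $\mu_{b,q}$ is an equilibrium measure, and $\lambda(\mu_{b,q})>0$ since it is not supported on $\Omega$. For uniqueness, take any ergodic equilibrium measure $\mu$ with $\lambda(\mu)>0$. It gives positive mass to $\indudomain$ (after choosing $\indudomain$ suitably, possibly several inducing domains), so it lifts to an induced equilibrium state. By the uniqueness of Gibbs states it equals $\indumeasure$. For analyticity, the map $(b,q,s)\mapsto\indupressure$ is real analytic on the open set where it is finite, by analyticity of the pressure for BIP shifts along analytic families of locally Hölder potentials. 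Moreover
\[
\partial_s\indupressure=-\int\return\,d\indumeasure<0 .
\]
The implicit function theorem then gives that $\pressure$ is analytic on $\mathcal N$. Implicit differentiation gives $\partial_b\pressure=\int\sum_{k<\return}(-\log|\rational'|_s)\circ\rational^k\,d\indumeasure\big/\int\return\,d\indumeasure=-\lambda(\mu_{b,q})$, and similarly $\partial_q\pressure=-\int\phi\,d\mu_{b,q}$.

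\emph{Main obstacle.} The main obstacle is Step 1 together with the finiteness estimate in Step 2. Constructing a Markov inducing scheme with bounded distortion despite critical orbits accumulating on $\Omega$ is delicate, and so is controlling the tail sums uniformly in $(b,q)$ so that $\indupressure$ is finite on an open neighbourhood. My first attempt will be to use Koebe distortion on the repelling petals, after removing small neighbourhoods of critical-orbit accumulation via a choice of $\indudomain$ away from the postcritical set's problematic part.
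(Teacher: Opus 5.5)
Your proposal is correct in outline and follows essentially the same route as the paper. The paper likewise induces on the $H$-cylinders of the Markov partition to get a finitely primitive countable Markov shift, shows $\tilde p(b,q,s)<\infty$ for $s>\LB$ from the polynomial decay $n^{-(\rate_\omega+1)/\rate_\omega}$ of the parabolic branches, obtains $\tilde p(b,q,\pressure(b,q))=0$ via Abramov--Kac, uses the exponential tails of the Gibbs measure to project and lift for existence and uniqueness, and applies the implicit function theorem with $\partial_s\tilde p=-\indumeasure_{b,q}(\return)<0$.

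The one step you leave implicit, which the paper handles explicitly, is the non-injectivity of the coding $\coding$. The paper first proves everything for the symbolic pressure $\codingpressure$ on $\MS$, then transfers to $J$ via $P(\psi)\le P^*(\psi\circ\coding)$, Bowen's lifting of invariant measures, and entropy preservation for the open-set-positive lift $\codmeas_{b,q}$; this also yields $\pressure=\codingpressure$. Also, only the implication $s>\LB\Rightarrow\tilde p(b,q,s)<\infty$ is needed, and the converse you state ("exactly when") can fail at $s=\LB$.
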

 Under more general assumptions on the potential, Huneycutt and Thompson \cite{huneycutt2026specification} established a uniqueness of the expanding equilibrium measure. 
They used a specification and orbit-decomposition approach, which differs from the inducing method we use.

For uniformly expanding systems, the multifractal analysis was well understood. See the books by Pesin \cite{Pesinbook} and Barreira \cite{Barreirabook}. However, for parabolic systems, many of the arguments used for uniformly expanding systems break down, leading to different phenomena, such as phase transitions.

We want to compare our results with those of Johansson et al. \cite{JJTOPnonuniformly} and Jaerisch and Takahasi \cite{MixedJT}, who established a conditional variational principle for the Birkhoff spectrum of one-dimensional parabolic systems.
In our setting, the orbits of critical points accumulate on rationally indifferent periodic points (see \eqref{eq accumulation}). This implies that, for a fixed arbitrarily small neighborhood $U$ of a rationally indifferent periodic point, we cannot, in general, define an analytic inverse branch of $\rational^n$ on $U$ for sufficiently large $n\in\mathbb{N}$.
On the other hand, for the one-dimensional parabolic systems studied in \cite{JJTOPnonuniformly, MixedJT}, one can define an inverse branch of the $n$-th iterate around every parabolic periodic point for all $n\in\mathbb{N}$, and this fact plays an important role in their arguments.
Thus, it is not straightforward to adapt their arguments to our setting. Moreover, their approach is based on constructing a sequence of measures. It is difficult to obtain finer properties of the Birkhoff spectrum, such as those established in Theorem \ref{thm main} using their approach. In contrast, for $\alpha\in (\alpha_{\min} , \alpha_{\max} ) \setminus \prange$ we first identify the unique ergodic measure attaining the supremum in \eqref{eq def vspectrum} by adapting the arguments from \cite{arima}. 
This allows us to establish the conditional variational principle and, at the same time, obtain finer properties of the Birkhoff spectrum. 
Moreover, the proof of the conditional variational principle itself is shorter than the existing proofs given in \cite{JJTOPnonuniformly, MixedJT}.

Unfortunately, it is difficult to analyze $b(\alpha)$ for $\alpha\in\prange$ using an inducing method. Indeed, for $\alpha\in\prange$, there is in general no $\rational$-invariant measure $\mu$ such that $\lambda(\mu)>0$ and $\int\phi d\mu=\alpha$. 
However, when $\alpha\in\prange$, we can argue as follows: 
Using measures concentrated on rationally indifferent periodic orbits or $\mu_\dimension$ when it is finite and a certain hyperbolic approximation, for each $\alpha\in\prange$ we can construct  a sequence of ergodic measures $\{\mu_n\}_{n\in\mathbb{N}}\subset M(\rational)$ such that $\lim_{n\to\infty}\dim_H(\mu_n)=\dimension$ and
$
\int\phi d\mu_n\to\alpha
$
sufficiently fast as $n\to\infty$. 
Moreover, for each $n\in\mathbb{N}$ we can approximate $\mu_n$ with a collection of hyperbolic cylinders associated with the Markov partition constructed in Section \ref{sec parabolic rational maps} (see Lemmas \ref{lemma cylinder approximation} and \ref{lemma hyperbolic ergodic approximation}).
This result enables us to overcome the obstruction to constructing inverse branches described above and to prove Theorem \ref{thm parabolic ragion}.

For parabolic rational maps, William \cite{William} studied the local dimension spectrum, while Gelfert, Przytycki, and Rams \cite{GelfertPrzytyckiRams} studied the Lyapunov spectrum. However,  while the Lyapunov spectrum and the local dimension spectrum can be obtained as Legendre transforms of suitable thermodynamic quantities, such a description is not expected for the Birkhoff spectrum in general. Consequently, the arguments used in \cite{GelfertPrzytyckiRams, William} cannot, in general, be directly applied to the Birkhoff spectrum. In particular, our proofs differ from theirs.

The structure of the paper is as follows. 
In Section \ref{sec parabolic rational maps}, we construct a Markov partition and an induced system. 
In Section \ref{sec Thermodynamic formalism for the countable Markov shift}, we introduce the properties of the countable Markov shift induced by a parabolic rational map. 
In Section \ref{sec Thermodynamic formalism for parabolic rational maps}, we develop thermodynamic formalism for parabolic rational maps.
In Section \ref{sec Variational Birkhoff spectrum and Thermodynamic formalism}, we study the variational Birkhoff spectrum.
In Section \ref{sec Conditional variational principle}, we establish the conditional variational principle and complete the proof of Theorem \ref{thm main}.
In Section \ref{sec proof Theorem parabolic region}, we show Theorem \ref{thm parabolic ragion}.
In Section \ref{sec Verification of the Defining Condition}, we give a sufficient condition under which a continuous potential $\phi:J\rightarrow\mathbb{R}$ belongs to $\mathcal{H}$.

\textbf{Notations.} Throughout we shall use the following notation:
For a index set $\mathcal{Q}$ and $\{a_{q}\}_{q\in\mathcal{Q}},\{b_{q}\}_{q\in\mathcal{Q}}\subset[0,\infty]$ we write $a_q\ll b_q$ if there exists a constant $C\geq 1$ such that for all $q\in \mathcal{Q}$ we have $a_q\leq Cb_q$. If we have $a_q\ll b_q$ and $b_q\ll a_q$ then we write $a_q\asymp b_q$.
For a probability space $(X,\mathcal{B})$, a probability measure $\mu$ on $(X,\mathcal{B})$ and a measurable function $\psi:X\rightarrow \mathbb{R}$ we set
$\mu(\psi):=\int \psi d\mu$.
Moreover, for a dynamical system $(X,T)$, a function $\psi:X\rightarrow \mathbb
{R}$ and $n\in\mathbb{N}$ we set 
$S_n(\psi):=\sum_{k=0}^{n-1} \psi\circ T^k$.

\section{Parabolic rational maps}\label{sec parabolic rational maps}
In the following we always assume that $\rational$ is a parabolic rational map with $\deg(\rational)\geq 2$. 
Then, we have $J\neq \overline{\mathbb{C}}$. 
Thus, by conjugating with a Möbius transformation, we may assume without loss of generality that $\infty\notin J$.  
This assumption allows us to work with the Euclidean metric on $\mathbb{C}$ rather than $d_{\overline{\mathbb{C}}}$. For any $z\in J$ we have 
\[
|R'(z)|_s=\frac{1+|z|^2}{1+|\rational (z)|^2}|\rational'(z)|,
\]
where $|\rational'(z)|$ denotes the norm of the Euclidean derivative of $R$ at $z$ and $|z|$ denotes the the Euclidean norm of $z$. Therefore, $\log |R'|_s$ is cohomologous to $\log|R'|$, that is,  
$\log |R'(z)|_s-\log|R'(z)|=\log (1+|z|^2)-\log (1+|R(z)|^2).$ 
Consequently, for any $\mu\in M(R)$ we have $\lambda(\mu)=\mu( \log |\rational'|_s)=\mu(\log |\rational'|)$. 
In what follows, we use the Euclidean derivative and Euclidean metric rather than the spherical derivative and the spherical metric.

Let $\Omega(\rational)$ be the set of rationally indifferent periodic points.  The rationally indifferent periodic point $\omega$ is said to be simple if $\omega$ is a fixed point of $\rational$ and satisfies $\rational'(\omega)=1$. We denote by $\Omega_0(\rational)$ the set of simple rationally indifferent periodic points of $\rational$. Let $j\in \mathbb{N}$. 
Recall that the Julia set of $\rational$ coincides with the Julia set of $\rational^j$ (see, for example, \cite[Theorem 24.1.5]{UrbanskiRoyMunday}). Let $\potential$ be a continuous function $\potential$ on $J$. For any $z\in J$ we have 
\begin{align}\label{eq higher iteration 1}
\lim_{n\to\infty}\frac{1}{n}\sum_{k=0}^{n-1}\phi_j\circ(\rational^j)^k(z)=j\lim_{n\to\infty}\frac{1}{n}\sum_{k=0}^{n-1}\phi\circ\rational^k(z)
\end{align}
if at least one of the two limits exists. 
Here, $\phi_j$ is defined by 
$
\phi_j:=\sum_{r=0}^{j-1}\phi\circ\rational^r.
$
For $\mu\in M(R)$ we have $\mu\in M(\rational^j)$ and 
$
h(\mu,\rational^j)=j
h(\mu,\rational)
\text{ and } 
\mu( \phi_j )=j\mu( \phi )
$
(see \cite[Theorem 4.13]{walters2000introduction}). Here, $h(\mu,\rational^j)$ denotes the measure-theoretic entropy of $\mu$ with respect to $\rational^j$.
Conversely, for $\overline{\mu}\in M(\rational^j)$ we have 
$
h(\overline{\mu},R^j)=jh(\mu,\rational) 
\text{ and }
\overline{\mu}( \phi_j ) =j\mu( \phi ), 
\text{ where }\mu=\frac{1}{j}\sum_{r=0}^{j-1}\overline{\mu}\circ\rational^{-r}.
$
Thus, after replacing $\rational$ by $\rational^q$ for some $q$ such that $\Omega(\rational^q)=\Omega_0(\rational^q)$, we may assume without loss of generality that
\begin{align}\label{eq assumption simple}
    \Omega(\rational)=\Omega_0(\rational).
\end{align}
Throughout what follows, we assume that \eqref{eq assumption simple} holds.

\subsection{Dynamics of parabolic rational maps near rationally indifferent fixed points}\label{sec Dynamics of parabolic rational maps near rationally indifferent fixed points}
In this section, we briefly describe results on the dynamics of $\rational$ near $\omega\in \Omega(\rational)$ and Fatou's flower theorem. For details we refer the reader to \cite[Section 24.2]{UrbanskiRoyMunday}.
We denote by $\text{Crit}(\rational)$ the set of critical points of $\rational$. For $z\in \mathbb{C}$ and $r>0$ we denote by $B(z,r)$ the Euclidean open ball centered at $z$ with radius $r$.
Let $\smallnumber>0$ be a small number satisfying the following condition: 
\begin{itemize}
    \item[(I1)] For all $\omega\in \Omega(\rational)$ there exists a unique connected component $U(\omega,\smallnumber)$ of $\rational^{-1}(B(\omega,\smallnumber))$ containing $\omega$ such that $\text{Crit}(\rational)\cap U(\omega,\smallnumber) =\emptyset$. In particular, the map $\rational:U(\omega,\smallnumber)\to B(\omega,\smallnumber)$ is biholomorphic. We denote its inverse by $\rational_\omega^{-1}$.
    \item[(I2)]  For all $\omega\in \Omega(\rational)$ and $z\in B(\omega,\smallnumber)$ we have 
\begin{align}
    \rational_{\omega}^{-1}(z)=z-a_\omega (z-\omega)^{\rate_\omega+1}+O((z-\omega)^{\rate_\omega+2}),
\end{align} 
for some $\rate_\omega\in \mathbb{N}$ and $a_\omega\in \mathbb{C}\setminus \{0\}$.
\end{itemize}

Let $\omega\in \Omega(\rational)$. 
For $k=0,\cdots,\rate_\omega-1$ we define 
\begin{align*}
    L_{\omega,k}:=\left\{\omega+te^{i\frac{2k\pi-\arg(a_\omega)}{\rate_\omega}}:t>0\right\}.
\end{align*}
 These sets form the  attracting directions of $\rational_{\omega}^{-1}$, that is,  
$
\bigcup_{k=0}^{\rate_\omega-1}L_{\omega,k}=
\{z\in \mathbb{C}: -a_{\omega}(z-\omega)^{\rate_\omega}<0 \}.
$
We fix an angle $\mathfrak{a}\in [0,2\pi)$ satisfying $\mathfrak{a}\notin \bigcup_{\omega\in \Omega(\rational)}\arg(a_\omega^{-1})$.  
In this paper, for $p\in \mathbb{R}$ we denote by $\sqrt[p]{\cdot}$ the holomorphic branch of the $p$-th radical function defined on $\mathbb{C}\setminus (\{z\in \mathbb{C}: \arg(z)=\mathfrak{a}\}\cup\{0\})$ satisfying $\sqrt[p]{1}=1$. Also, we denote by $(\cdot)^{1/p}$ the holomorphic branch of the $p$th-root function defined on $\mathbb{C}\setminus (-\infty,0]$ satisfying $1^{1/p}=1$.
For $\omega\in \Omega(\rational)$ and $k=0,\cdots,\rate_\omega-1$ we define the maps $\rho_{\omega,k}:\mathbb{C}\rightarrow \mathbb{C}$ and $H:\mathbb{C}\setminus (-\infty,0]\rightarrow \mathbb{C}$ by 
\begin{align*}
    \rho_{\omega,k}(z)=\omega+e^{i2k\pi/\rate_\omega}\sqrt[p_{\omega}]{(a_\omega \rate_\omega)^{-1}} z
    \text{ and } H(z):=z^{-1/\rate_\omega}.
\end{align*}
Note that $\rho_{\omega,k}^{-1}(L_{\omega,k})=(0,\infty)$. 
%
We fix $\theta\in (0,\pi).$
For every $\omega\in \Omega(\rational)$, $k\in \{0,\cdots, p_{\omega}-1\}$ and $x> 0$ let 
\begin{align*}
    S^k_\omega(x,\theta):=\rho_{\omega,k}\circ H(S(x,\theta)),
\end{align*}
where $S(x,\theta):=\{z\in \mathbb{C}\setminus \{x\}: -\theta<\arg(z-x)<\theta\}$.
Then, by \cite[Lemma 24.2.3]{UrbanskiRoyMunday}, there exists $x(\theta)\in (0,\infty)$ such that for all $\omega\in \Omega(\rational)$, $0\leq k\leq \rate_\omega-1$ and $x\geq x(\theta)$ 
\begin{align}\label{eq action rational inverse branch}
    R_{\omega}^{-1}(S^k_\omega(x,\theta))\subset S^k_\omega(x+1/2,\theta).
\end{align}
The following proposition follows from \cite[Propositions 24.2.12 and 24.2.13]{UrbanskiRoyMunday}:
\begin{prop}\label{prop local property near inddiferent}
    For any small number $r>0$ there exists a constant $C\geq 1$ such that for all $n\in\mathbb{N}$, $\omega\in \Omega(\rational)$, $0\leq k\leq \rate_\omega-1$ and $z\in S_\omega^k(x(\theta),\theta)\setminus B(\omega,r)$ we have 
    \begin{align*}
   \frac{1}{C} \leq 
\frac{|R_\omega^{-n}(z)-\omega|}{n^{-1/\rate_\omega}}\leq C        
\text{ and }
   \frac{1}{C} \leq 
\frac{|(R_\omega^{-n})'(z)|}{n^{-(\rate_\omega+1)/\rate_\omega}}\leq C .    
    \end{align*}
\end{prop}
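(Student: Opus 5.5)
The plan is to reduce everything to the model map near infinity via the conjugacy $\rho_{\omega,k}\circ H$, and then use the standard parabolic asymptotics for the inverse branch.

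\textbf{Step 1: the model map.} Set $G:=(\rho_{\omega,k}\circ H)^{-1}\circ \rational_\omega^{-1}\circ(\rho_{\omega,k}\circ H)$ on $S(x(\theta),\theta)$. Starting from (I2) and the normalisation in $\rho_{\omega,k}$, a direct expansion gives
\[
G(w)=w+1+O(|w|^{-1/\rate_\omega}),
\]
uniformly on $S(x(\theta),\theta)$, after enlarging $x(\theta)$ if necessary.

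\textbf{Step 2: position estimate.} By \eqref{eq action rational inverse branch}, the iterates $G^n(w)$ stay in the sector. Each step increases the real part by at least $1/2$, so $|G^n(w)|\geq n/2$. Summing the errors then gives
\[
G^n(w)=w+n+O\Big(\sum_{j\le n} j^{-1/\rate_\omega}\Big)=w+n+o(n),
\]
uniformly in $w$. The hypothesis $z\notin B(\omega,r)$ matters here: it bounds $|w|$ above by a constant depending only on $r$, since $|z-\omega|\asymp |w|^{-1/\rate_\omega}$. Hence $|G^n(w)|\asymp n+1$ uniformly in such $w$ and in $n$. Because $|\zeta-\omega|\asymp |H^{-1}\text{-coordinate}|^{-1/\rate_\omega}$, this yields
\[
|R_\omega^{-n}(z)-\omega|\asymp n^{-1/\rate_\omega}.
\]
Finitely many $\omega$ and $k$ make the constant uniform.

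\textbf{Step 3: derivative estimate.} Differentiate the conjugacy. We have $(\rho_{\omega,k}\circ H)'(w)\asymp |w|^{-1-1/\rate_\omega}$, and its inverse has derivative of the reciprocal size. This gives
\[
|(R_\omega^{-n})'(z)|\asymp |(G^n)'(w)|\,\frac{|G^n(w)|^{-1-1/\rate_\omega}}{|w|^{-1-1/\rate_\omega}}.
\]
The denominator $|w|$ is bounded above and below, the lower bound being $x(\theta)$. So it remains to show that $|(G^n)'(w)|$ is bounded above and below uniformly.

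\textbf{Step 4: bounding $(G^n)'$.} This is the main obstacle, because the derivative errors must be summable. By Cauchy estimates on slightly smaller sectors, $G'(w)=1+O(|w|^{-1-1/\rate_\omega})$. Hence
\[
\log|(G^n)'(w)|=\sum_{j<n}O(|G^j(w)|^{-1-1/\rate_\omega})=\sum_j O(j^{-1-1/\rate_\omega}),
\]
which converges uniformly. The Cauchy estimate needs a disk of radius comparable to $|w|$ inside the sector. To secure it, I would first run the argument on a wider sector $S(x',\theta')$ with $\theta'>\theta$, which \eqref{eq action rational inverse branch} permits, and then restrict to $S(x(\theta),\theta)$. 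If that proves awkward, the alternative is Koebe distortion on the inverse-branch images, which applies because the branches are univalent. Combining Steps 3 and 4 gives $|(R_\omega^{-n})'(z)|\asymp n^{-(\rate_\omega+1)/\rate_\omega}$.
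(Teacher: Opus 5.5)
Your proof is correct, but it takes a different route from the paper. The paper gives no argument of its own here; it imports the estimates from Propositions 24.2.12 and 24.2.13 of Urba\'nski--Roy--Munday. You instead reconstruct the classical Fatou-coordinate proof.

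Your Step 1 checks out exactly. With $u:=\rho_{\omega,k}(H(w))-\omega$ one has $G(w)/w=(1-a_\omega u^{\rate_\omega}+O(u^{\rate_\omega+1}))^{-\rate_\omega}$ and $w\,u^{\rate_\omega}=(a_\omega\rate_\omega)^{-1}$. This gives $G(w)=w+1+O(|w|^{-1/\rate_\omega})$, with no branch issues since only integer powers occur. The position estimate is then $|G^n(w)|\asymp n$. The derivative estimate follows from the exact identity $|(\rational_\omega^{-n})'(z)|=|(G^n)'(w)|\,|G^n(w)|^{-1-1/\rate_\omega}\,|w|^{1+1/\rate_\omega}$ together with $\sum_j j^{-1-1/\rate_\omega}<\infty$.

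What your route buys over the citation is transparency. It shows that the hypothesis $z\notin B(\omega,r)$ is used only to bound $|w|$ from above. That bound is needed for $|G^n(w)|\ll n$ and for the factor $|w|^{1+1/\rate_\omega}$. It also shows that bounded distortion of $G^n$ comes precisely from $(\rate_\omega+1)/\rate_\omega>1$. The citation buys brevity and leaves the bookkeeping about domains of definition to the book.

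A few details to tidy up:

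\begin{itemize}
\item You should not enlarge $x(\theta)$, since the statement concerns the given $x(\theta)$. It is also unnecessary: $|G(w)-w-1|\le C|w|^{-1/\rate_\omega}$ holds automatically on the bounded part of $S(x(\theta),\theta)$, because both sides are bounded there.
\item Read the lower bound $|G^n(w)|\ge \sin\theta\,(x(\theta)+n/2)$ directly off iterating \eqref{eq action rational inverse branch}. The inference ``the real part grows by $1/2$, hence $|G^n(w)|\ge n/2$'' fails as written when $\theta>\pi/2$, because the sector then contains points with negative real part. For the same reason, your lower bound on $|w|$ should be $x(\theta)\sin\theta$ rather than $x(\theta)$.
\item Apply the Cauchy estimate on the wider sector $\{|\arg v|<\theta'\}$ only at iterates with $|G^j(w)|$ large. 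By the lower bound above, only uniformly boundedly many other indices $j$ remain. For those, $|G'(G^j(w))|$ is bounded above and away from $0$ by univalence and compactness.
\end{itemize}
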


By Fatou's flower theorem, we have the following (see, for example, \cite[Theorem 24.2.24]{UrbanskiRoyMunday}): For two sets $A,B\subset \mathbb{C}$ we define $\text{dist}(A,B):=\inf\{|z-z'|:z\in A,z'\in B\}$. If $A=\{z\}$, we write $\text{dist}(\{z\},B)=\text{dist}(z,B)$. For a set $A\subset \mathbb{C}$ we denote by $\partial A$ the Euclidean boundary of $A$.
\begin{thm}\label{thm fatou}
There is 
$    0<\beta(\rational)<\min\left\{\smallnumber, 
    \min_{\omega\in \Omega(\rational)}
    \text{dist}\left(\omega,\partial U(\omega,\smallnumber)\right)\right\}
$
    such that for all $\omega\in \Omega(\rational)$ we have 
    \begin{align*}
        J \cap B(\omega, \beta(\rational)) \setminus \{\omega\} \subset  \bigcup_{k=0}^{\rate_\omega-1} S_\omega^k(x(\theta),\theta).
    \end{align*}
\end{thm}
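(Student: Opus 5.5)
The plan is to pass to Fatou coordinates near each $\omega\in\Omega(\rational)$, show that a punctured neighbourhood of $\omega$ is covered by the repelling petals $S^k_\omega(x(\theta),\theta)$ together with attracting petals of $\rational$, and then show that the attracting petals lie in the Fatou set. Since $\Omega(\rational)$ is finite, it suffices to find a suitable $\beta_\omega$ for each $\omega$ separately and take the minimum together with $\smallnumber$ and $\text{dist}(\omega,\partial U(\omega,\smallnumber))$.

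\textbf{Step 1: coordinates.} Fix $\omega$ and work on the $\rate_\omega$ sheets $\rho_{\omega,k}\circ H$, $0\le k\le \rate_\omega-1$. The images of a large neighbourhood of $\infty$ in $\mathbb{C}\setminus(-\infty,0]$ under these maps cover a punctured disc $B(\omega,r)\setminus\{\omega\}$. Concretely, in the coordinate $w=(\rho_{\omega,k}\circ H)^{-1}(z)$, which is essentially $w\approx (a_\omega\rate_\omega (z-\omega)^{\rate_\omega})^{-1}$, I would use (I2) to expand the maps as
\[
\rational_\omega^{-1}:\ w\mapsto w+1+O(|w|^{-1/\rate_\omega}),\qquad \rational:\ w\mapsto w-1+O(|w|^{-1/\rate_\omega}),
\]
for $|w|$ large. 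This is the same computation that underlies \eqref{eq action rational inverse branch}.

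\textbf{Step 2: an attracting sector.} Fix $\theta'\in(\pi-\theta,\pi)$. For $x'$ large, set
\[
A(x',\theta')=\{w:\ |\arg(-(w+x'))|<\theta'\}.
\]
From the expansion of $\rational$ I would show that $\rational(A(x',\theta'))\subset A(x'+1/2,\theta')$. The $O(|w|^{-1/\rate_\omega})$ error is small compared with the unit drift $-1$, which points into the sector, and this holds for every opening $\theta'<\pi$ once $x'$ is large; the argument is the mirror of \cite[Lemma 24.2.3]{UrbanskiRoyMunday}.

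Iterating, $\text{Re}(\rational^n w)\to-\infty$ uniformly on $A(x',\theta')$. Back in the $z$-plane, this means $\rational^n\to\omega$ uniformly on the petal $P=\rho_{\omega,k}\circ H(A(x',\theta'))$. Hence $\{\rational^n\}$ is normal on the open set $P$, so $P\subset\overline{\mathbb{C}}\setminus J$.

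\textbf{Step 3: covering and conclusion.} Because $\theta+\theta'>\pi$, the two sectors $S(x(\theta),\theta)$ and $A(x',\theta')$ jointly contain the complement of a large disc $\{|w|\le M\}$. To see this, note that a point $w$ with $|w|$ large either has $|\arg(w-x(\theta))|<\theta$ or has argument within $\theta'$ of $\pi$ relative to $-x'$. The remaining wedges are bounded, so they lie in $\{|w|\le M\}$.

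Choosing $\beta_\omega$ so small that $B(\omega,\beta_\omega)\setminus\{\omega\}$ corresponds to $|w|>M$ on every sheet, each $z\in J\cap B(\omega,\beta_\omega)\setminus\{\omega\}$ lies in some $S^k_\omega(x(\theta),\theta)$, since it cannot lie in an attracting petal. A little care is needed to match the sheet of the attracting sector with the branch cut fixed by $\mathfrak{a}$. Because $\mathfrak{a}\notin\arg(a_\omega^{-1})$, the cut avoids the repelling directions, so the petals are well defined, and the attracting petals can be indexed by the adjacent sheets.

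\textbf{Main obstacle.} I expect the hardest step to be Step 2: verifying uniformly that the attracting sector of opening $\theta'$ close to $\pi$ is forward invariant, with quantitative drift, in these specific coordinates. This is where the error term from (I2) must be controlled against the geometry of a wide sector. If that fails for $\theta'$ near $\pi$, I would instead use smaller overlapping sectors rotated on each sheet, as in the standard proof of Fatou's flower theorem \cite[Theorem 24.2.24]{UrbanskiRoyMunday}.
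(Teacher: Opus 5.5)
The paper gives no proof of this statement. It is quoted as Fatou's flower theorem from \cite[Theorem 24.2.24]{UrbanskiRoyMunday}. Your proposal instead reconstructs the standard Leau--Fatou argument, and it works. Use the single-valued coordinate $\Phi(z):=(a_\omega p_\omega(z-\omega)^{p_\omega})^{-1}$ on a punctured disc. Then $\Phi\circ\rho_{\omega,k}\circ H=\mathrm{id}$ on $\mathbb{C}\setminus(-\infty,0]$, and $\Phi\circ R=\Phi-1+O(|\Phi|^{-1/p_\omega})$, exactly as you computed. Your ``main obstacle'' is not one. Take $\theta'\in[\max\{\pi-\theta,\pi/2\},\pi)$, which you may. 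Then the complement $K$ of $A(x',\theta')$ is a closed convex cone. If $w-\tfrac12+\epsilon\in K$ with $|\epsilon|<\tfrac12\sin\theta'$, then $w=(w-\tfrac12+\epsilon)+(\tfrac12-\epsilon)\in K$, because $\tfrac12-\epsilon$ lies in the translate of $K$ to vertex $0$. Since $\text{dist}(0,A(x',\theta'))\geq x'\sin\theta'$, this gives $R(A(x',\theta'))\subset A(x'+\tfrac12,\theta')$ once $x'$ is large. The citation buys brevity; your route buys a self-contained proof in the normalisation ($\rho_{\omega,k}$, $H$, $x(\theta)$) that the paper fixes.

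Three small repairs are needed.

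First, $A(x',\theta')$ contains a piece of $(-\infty,0]$, the branch cut of $H$, so $\rho_{\omega,k}\circ H(A(x',\theta'))$ is undefined. Also, the sheets $\rho_{\omega,k}\circ H(\mathbb{C}\setminus(-\infty,0])$ cover the punctured disc only up to the $p_\omega$ attracting rays. Define the attracting petals as $\Phi^{-1}(A(x',\theta'))$ near $\omega$ instead. This set contains those rays, and no sheet matching is needed.

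Second, the hypothesis $\mathfrak{a}\notin\arg(a_\omega^{-1})$ only makes the constant $\sqrt[p_\omega]{(a_\omega p_\omega)^{-1}}$ well defined. The cut that separates the petals is the cut $(-\infty,0]$ of $H$, which corresponds to the attracting directions of $R$.

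Third, for $\theta'>\pi/2$ the real part is unbounded above on $A(x',\theta')$, so $\mathrm{Re}(R^n w)\to-\infty$ is not uniform. Use $R^n(A(x',\theta'))\subset A(x'+n/2,\theta')$ together with $\text{dist}(0,A(y,\theta'))\geq y\sin\theta'$. This gives $R^n\to\omega$ uniformly on the petal, hence normality there.
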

In the following,  $\beta(\rational)$ denotes the number obtained by the above theorem.

\subsection{Construction of a Markov partition}
For $A\subset \mathbb{C}$ we denote by $\text{Int}(A)$ the Euclidean interior of $A$ and by $\overline{A}$ the Euclidean closure of $A$.
For a countable set $\edge$ a partition $\{A_{s}\}_{s\in \edge}$ of $J$ is called a Markov partition of $\rational$ if the following conditions hold:
\begin{itemize}
    \item We have $\bigcup_{k\in \edge}A_k= J$. Moreover, for each $k\in \edge$ we have $A_k=\overline{\text{Int}(A_k)}$.
    \item For each $k,\tilde k\in \edge$ if $\text{Int}(A_k)\cap \text{Int}\rational(A_{\tilde k})\neq \emptyset$ then we have $A_{k} \subset \rational(A_{\tilde k})$. Moreover, for each $k\in \edge$ the map $\rational|_{A_k}$ is injective.
    \item $\{\text{Int}(A_k):k\in \edge\}$ are pairwise disjoint.
\end{itemize}
Let $\expansive<\beta(\rational)/2$ be a expansive constant of the parabolic rational map $\rational$ satisfying the following condition: for every $\omega, \omega'\in \Omega(\rational)$ with $\omega\neq \omega'$ we have 
\begin{align}\label{eq separation}
    \text{dist}(\rational( B(\omega,2\expansive)),\Omega(\rational)\setminus\{\omega\})>2\expansive \text{ and }
    \text{dist}(B(\omega,2\expansive),B(\omega',2\expansive))>0.
\end{align}
Note that since $\rational$ is parabolic, $J \cap \text{Crit}(\rational)=\emptyset$ (see \cite[Theorem 26.2.3]{UrbanskiRoyMunday}). Moreover, by \cite[Theorem 26.2.4]{UrbanskiRoyMunday}, we have 
\begin{align}\label{eq accumulation}
\overline{\text{PC}(\rational)}\cap J=\Omega(\rational), 
\text{ where } \text{PC}(\rational)={\bigcup}_{n\in\mathbb{N}} \rational^{n}(\text{Crit}(\rational))
\end{align}
Hence, 
\begin{align*}
d_\expansive:=\text{dist}\left(J,\left(\overline{\text{PC}(\rational)}\cup \text{Crit}(\rational)\right)\setminus B(\Omega(\rational),\expansive)\right)>0,    
\end{align*}
where $B(\Omega(\rational),\expansive):=\{z\in\mathbb{C}:\text{dist}(z,\Omega(\rational))<\expansive\}$.

Choose $\hat q>0$ sufficiently small so that, for every $\omega\in\Omega(\rational)$ and every $z\in\rational^{-1}(\omega)$,
\begin{align}\label{eq biholomorphic one iteration}
\text{the restriction $\rational:B(z,\hat q)\rightarrow \rational(B(z,\hat q))$ is biholomorphic.
}
\end{align}
Let $z_1$ and $z_2$ be periodic points of $\rational$ with prime periods at least $3$ and disjoint forward orbits. We denote by $O_+(z_i)$ ($i=1,2$) the forward orbit of $z_i$. 
We fix a sufficiently small $r>0$ satisfying
\begin{align}\label{eq How small}
r<\min \{d_\expansive,v,\text{dist}(O_+(z_1),O_+(z_2)),q\}/4.
\end{align}
Since $2r<\min\{d_\expansive, v\}$, for each $z\in J\setminus B(\Omega(\rational),2\expansive)$ we have 
\begin{align}\label{eq disjoint PC Crit}
\left(\text{PC}(\rational)\cup\text{Crit}(\rational)\right)\cap B(z, 2r)=\emptyset.
\end{align}
In particular, by \cite[Corollary 22.5.4]{UrbanskiRoyMunday} for each $z\in J\setminus B(\Omega(\rational),2\expansive)$, $n\in\mathbb{N}$, $w\in \rational^{-n}(B(z, 2r))$ and connected component $U_w$ of $\rational^{-n}(B(z, 2r))$ with $w\in U_w$ the map $\rational^n:U_w\rightarrow B(z, 2r)$ is a conformal homeomorphism. Therefore, the analytic inverse branch $\rational^{-n}_w:B(z, 2r)\rightarrow U_w$ of $\rational^n:U_w\rightarrow B(z, 2r)$ is well defined. Moreover, since $r/2<\text{dist}(O_+(z_1),O_+(z_2))/8$, \cite[Lemma 24.1.13]{UrbanskiRoyMunday} implies that 
\begin{align}\label{eq inverse branch expanding}
    \lim_{n\to\infty}\inf \{|(\rational^n)'(w)|:z\in J\setminus B(\Omega(\rational),2\expansive), w\in \rational^{-n}(B(z, 2r))\}=\infty
\end{align}
In particular,  there exists $\eta>0$ such that for all $z\in J\setminus B(\Omega(\rational),2\expansive)$, $n\in\mathbb{N}$ and $w\in \rational^{-n}(B(z, 2r))$ we have
\begin{align}\label{eq def eta}
    \text{diam}(\rational^{-n}_w(B(z,\eta)))<\expansive/2,
\end{align}
where $\text{diam}(A)$ denotes the Euclidean diameter of $A\subset \mathbb{C}$.

By \cite[Theorem 19.2.2]{Urbanskinoninvertible}, we have the following version of the Koebe distortion theorem: 
there exist a small number $0<\Koebe<1/2$ and a constant $C>0$ such that if $x\in \mathbb{C}$, $\varepsilon>0$ and $f:B(x,\varepsilon)\rightarrow \mathbb{C}$ is a conformal homeomorphism function then for every $y\in B(z,\Koebe \varepsilon)$ we have 
\begin{align}\label{eq Koebe}
    |f'(y)-f'(x)|\leq  C|f'(x)||y-x|.
\end{align}
The following proposition was shown by Denker and Urba\'nski \cite[Lemma 4.4]{DenkerUrbanskiforummath} (see also \cite[Lemma 9.2]{AaronsonDenkerUrbanski}):
\begin{prop}
There exists a Markov partition $\{A_1,\cdots, A_s\}$ such that for each $1\le k\leq s$ we have $\text{diam}(A_k)\leq \Koebe\min\{r,\eta\}$. 
\end{prop}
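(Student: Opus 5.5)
The plan is the standard construction of Markov partitions for expansive maps with no critical points on the Julia set, in the style of Bowen and Ruelle as adapted by Denker and Urba\'nski. Fix $\varepsilon:=\Koebe\min\{r,\eta\}/4$; we may assume $\varepsilon<\expansive/4$. We cover $J$ by finitely many balls $B(x_i,\varepsilon)$, $i=1,\dots,N$. We then build closed sets $P_1,\dots,P_N$ with the following properties: each $P_i$ has small diameter, $P_i=\overline{\text{Int}(P_i)}$ relative to $J$, their union is $J$, and they form a pseudo-Markov cover. The final partition $\{A_k\}$ is obtained by the usual refinement: the sets $A_k$ are the closures of the nonempty sets of the form $\bigcap_i Q_i$, where each $Q_i\in\{\text{Int}P_i, J\setminus P_i\}$. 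This refinement preserves the smallness of diameters, so $\text{diam}(A_k)\le\Koebe\min\{r,\eta\}$. Throughout, interiors are taken relative to $J$, which is the natural reading of the definition.

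To obtain the Markov property, we follow the Sinai--Bowen shadowing approach in the form valid for open, expansive, distance-expanding-type maps. The map $\rational|_J$ is open, and it is locally injective because $J\cap\text{Crit}(\rational)=\emptyset$. It is also expansive with constant $\expansive$. The key replacement for uniform expansion is the local inverse branch structure. Away from $B(\Omega(\rational),2\expansive)$, inverse branches $\rational^{-n}_w$ exist on $B(z,2r)$ and contract eventually, by \eqref{eq inverse branch expanding} and \eqref{eq def eta}. Near $\omega$, we instead use $\rational_\omega^{-1}$, which, by \eqref{eq action rational inverse branch} and Proposition \ref{prop local property near inddiferent}, maps the petals $S^k_\omega$ into themselves with diameters tending to $0$, though only polynomially. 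Combining these facts with \eqref{eq separation}, we show that for every $\delta>0$ there is $n$ such that every $n$-fold backward branch along an orbit in $J$ has diameter at most $\delta$ on $\varepsilon$-balls. This uniform (non-exponential) contraction of inverse branches is all that the construction of the rectangle-type sets needs.

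With this in hand, we define the candidate partition elements as limits of backward iterations. Set $P_i^{(0)}:=\overline{B(x_i,\varepsilon)}\cap J$ and
\[P_i^{(m+1)}:=\bigcup\{\rational^{-1}_{\cdot}(P_j^{(m)}) : \rational(x_i)\in B(x_j,\varepsilon)\},\]
where each preimage is taken through the inverse branch near $x_i$. Uniform contraction of inverse branches together with expansivity makes $P_i:=\lim_m P_i^{(m)}$ a well-defined compact set, the limit taken in the Hausdorff metric. These sets satisfy the relation $\rational(P_i)=\bigcup_{j\sim i}P_j$, which yields the Markov property after refinement. Injectivity of $\rational$ on each $A_k$ follows from $\text{diam}(A_k)<\hat q$ together with injectivity of $\rational$ on small balls, since there are no critical points on $J$. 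The main obstacle is proving that $P_i=\overline{\text{Int}P_i}$ and that the boundaries are nowhere dense, so that the refined pieces have pairwise disjoint interiors. Near the parabolic points the contraction is only polynomial, and expansivity has to be used directly. The plan there is to invoke the Denker--Urba\'nski lemma cited in the statement, \cite[Lemma 4.4]{DenkerUrbanskiforummath}, which settles exactly this point for expansive rational maps. Equivalently, we can run their proof after checking that our $\varepsilon$ is below their threshold, which holds because $\Koebe\min\{r,\eta\}$ can be taken smaller than the expansive constant.
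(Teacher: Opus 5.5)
Your proposal goes through only because of its last step. The appeal to \cite[Lemma 4.4]{DenkerUrbanskiforummath} is already the full statement (Markov partitions of arbitrarily small diameter for expansive rational maps). This is exactly what the paper does: it gives no argument beyond that citation. Your remark that interiors and closures must be taken relative to $J$ is correct and worth keeping, since $J$ has empty Euclidean interior and the definition would otherwise be vacuous.

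The Bowen-style construction you sketch before the citation would not stand on its own, for three reasons.

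\emph{First, the contraction claim fails near $\Omega$.} It is false as stated for holomorphic branches on Euclidean $\varepsilon$-balls centred near $\Omega$.
\begin{itemize}
\item An immediate parabolic basin at $\omega$ contains a critical point $c$.
\item A path from $c$ to $\omega$ through the basin, ending in an attracting petal inside $B(\omega,\varepsilon)$, is mapped by $R^n$ into $B(\omega,\varepsilon)$ for all large $n$.
\item So the component of $R^{-n}(B(\omega,\varepsilon))$ containing $\omega$ contains $c$, and no univalent branch fixing $\omega$ exists. This is precisely the obstruction the paper describes in the introduction.
\end{itemize}
What is true is the $J$-restricted version, i.e.\ the uniform expansivity lemma: for every $\delta>0$ there is $n$ such that $|R^jx-R^jy|\le\varepsilon$ for $0\le j\le n$ forces $|x-y|<\delta$. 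This follows from expansivity and compactness alone, not from the petal estimates.

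\emph{Second, eventual smallness is not what the construction of the $P_i^{(m)}$ uses.} In the Bowen/Przytycki--Urba\'nski argument, each one-step preimage must land in the domain of the next branch. The Hausdorff limit exists because the increments are dominated by a geometric series in $\lambda^{-m}$. Both facts need one-step metric contraction of inverse branches. In the Euclidean metric, however, $R|_J$ is only eventually expanding away from $\Omega$ (cf.\ \eqref{eq inverse branch expanding}), and its backward contraction near $\Omega$ is only polynomial.

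\emph{Third, the obstacle you single out is not the real one.} Your refined pieces are closures of finite intersections of open sets, so they are automatically regular closed with pairwise disjoint interiors. Boundaries of closed sets are always nowhere dense. The actual work is transferring the relation $R(P_i)=\bigcup_{j\sim i}P_j$ to the refined pieces.

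A smaller point: $\hat q$ only controls injectivity near $R^{-1}(\Omega)$. For injectivity of $R|_{A_k}$ you should use a uniform injectivity radius of $R$ near the compact set $J$, which exists because $J\cap\text{Crit}(R)=\emptyset$.

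If you want an argument that does not simply defer to the citation, the clean route is as follows.
\begin{itemize}
\item $R|_J$ is open, since $R^{-1}(J)=J$, and it is positively expansive.
\item By a theorem of Reddy (see \cite{PrzytyckiUrbanski}), $R|_J$ is then distance expanding in some metric compatible with the topology of $J$.
\item Open distance-expanding maps of compact spaces have Markov partitions of arbitrarily small diameter \cite{PrzytyckiUrbanski}.
\item The identity between the two metrics is uniformly continuous on the compact set $J$, and every condition in the definition of a Markov partition is topological.
\item Hence taking the adapted diameter small enough gives Euclidean diameters at most $\Koebe\min\{r,\eta\}$ and below the injectivity radius.
\end{itemize}
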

We define
$    \edge:=\{1,\cdots, s\}.$
We define the incidence matrix $A:\edge\times\edge\rightarrow\{0,1\}$ by setting $A_{\tau,\tau'}=1$ if and only if $\rational(\text{Int}(A_\tau))\cap \text{Int}(A_{\tau'})$.
We set
\[
\MS:=\{\tau\in \edge^{\mathbb{N}\cup\{0\}}: A_{\tau_{n-1},\tau_n}=1,\  n\in\mathbb{N}\}.
\]
We denote by $\MS^n$ the set of all admissible
words of length $n\in\mathbb{N}$ with respect to $A$ and for $\tau\in \MS^n$ we set $|\tau|=n$. 
We denote
by $\MS^{*}$ the set of all admissible words which have a finite length
(i.e. $\MS^*=\cup_{n=1}^\infty\MS^{n}$).
For $n,l\in\mathbb{N}$, $\tau\in \MS^{n}$ and $\tau'\in \MS^{l}$ we define $\tau\tau':=\tau_0\cdots\tau_{n-1}\tau'_{0}\cdots\tau'_{l-1}\in \edge^{n+l}$.
For $\tau\in \MS^{n}$ ($n\in\mathbb{N}$) we define the cylinder set of $\tau$ by
$[\tau]:=\{\tau'\in\MS:\tau_{i}'=\tau_{i},0\leq i\leq n-1\}.$
We endow $\MS$
with the shift metric $d$ which is for $\tau,\tau'\in \MS$ defined by 
$
d(\tau,\tau')=\exp(-\inf\{k\ge 0:\,\, \tau_k \neq \tau_k' \}).
$
For $\tau\in \MS^*$ we define 
\[
A_\tau=A_{\tau_0}\cap\rational ^{-1}(A_{\tau_1})\cap\cdots \cap\rational ^{-(|\tau|-1)}(A_{\tau_{|\tau|-1}})
\]
By Proposition \ref{prop local property near inddiferent} and  \eqref{eq inverse branch expanding}, we have 
\begin{align}\label{eq uniform decay of cylinders}
\lim_{n\to\infty}\max\left\{\text{diam} (A_\tau):\tau\in \MS^{n} \right\}=0.
\end{align}
By \eqref{eq uniform decay of cylinders}, the coding map $\coding:\MS\rightarrow J$ defined by 
$
\{\coding(\tau)\}=\bigcap_{n=1}^{\infty}A_{\tau_0\cdots\tau_{n-1}}
$
is well defined. Moreover, $\coding$ is continuous and surjective and satisfies $\rational\circ \coding=\coding \circ \shift$. Here, $\shift$ denotes the left shift map on $\MS$.
We define 
\[
H:=\{h\in \edge: A_h\cap\mathbb{C}\setminus B(\Omega(\rational),2\expansive)\} \text{ and }\inddiferent:=\edge\setminus H
\]
For $h\in H$ we fix a point $z_h\in A_h$ with $z_h\in J\setminus B(\Omega(\rational),2\expansive)$.
The following lemma follows from the argument in the proof of \cite[Lemma 4.4]{DenkerUrbanskiforummath}. However, for the convenience of the reader, we include a proof. Let 
\begin{align}\label{eq def kappa}
\kappa:=\max\{ \text{diam}(A_e):e\in \edge\}
\end{align}
\begin{lemma}\label{lemma component}
For each $h\in H$, $n\in\mathbb{N}$ and $\tau\in \MS^n$ with $\tau h\in \MS^*$ there exists a unique connected component $\comp(\tau h)$ of $\rational^{-n}(B(z_h, 2\kappa))$ such that 
$
A_{\tau h}\subset \comp(\tau h).
$
\end{lemma}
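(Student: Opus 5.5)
The plan is to show that $A_{\tau h}$ is a connected-enough subset of $\rational^{-n}(B(z_h,2\kappa))$ so that it lies in a single component, and then to take that component. The first task is to confirm that $B(z_h,2\kappa)$ is one of the good balls. By the choice of the Markov partition, $\kappa\le\Koebe\min\{r,\eta\}<r$. Since $z_h\in J\setminus B(\Omega(\rational),2\expansive)$, we have $B(z_h,2\kappa)\subset B(z_h,2r)$. By \eqref{eq disjoint PC Crit} this ball contains no points of $\text{PC}(\rational)\cup\text{Crit}(\rational)$. Hence, by \cite[Corollary 22.5.4]{UrbanskiRoyMunday} as quoted above, every connected component $U$ of $\rational^{-n}(B(z_h,2\kappa))$ is mapped by $\rational^n$ conformally and homeomorphically onto $B(z_h,2\kappa)$.

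Next I check that $A_{\tau h}\subset\rational^{-n}(B(z_h,2\kappa))$. Since $\tau h\in\MS^{n+1}$, we have $\rational^n(A_{\tau h})\subset A_h$. Also $\text{diam}(A_h)\le\kappa$ and $z_h\in A_h$, so $A_h\subset \overline{B(z_h,\kappa)}\subset B(z_h,2\kappa)$. This gives the inclusion.

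The main step is to show that $A_{\tau h}$ cannot meet two different components. I will use the Markov structure and induct on $n$, with the convention that $\rational^0$ is the identity and $\comp(h)=B(z_h,2\kappa)$. Fix $\tau=\tau_0\tau'$ and suppose, by induction, that $A_{\tau' h}\subset\comp(\tau'h)=:V$, where $V$ is a component of $\rational^{-(n-1)}(B(z_h,2\kappa))$. We have $A_{\tau h}=A_{\tau_0}\cap\rational^{-1}(A_{\tau'h})$ and $\rational|_{A_{\tau_0}}$ is injective. The connected components of $\rational^{-1}(V)$ are exactly the components of $\rational^{-n}(B(z_h,2\kappa))$ lying over $V$, and they are pairwise disjoint open sets. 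The preimage $\rational^{-1}(V)$ splits into at most $\deg\rational$ components, and each is mapped onto $V$ conformally because $V$ avoids critical values (the critical values lie in $\text{PC}$). The quantitative fact I will use is \eqref{eq def eta}: every such component has diameter less than $\expansive/2$, because $\kappa<\eta$ gives $\rational^{-n}_w(B(z_h,2\kappa))\subset\rational^{-n}_w(B(z_h,\eta))$, up to adjusting the factor $2$ since $2\kappa<\eta$ when $\Koebe<1/2$. Meanwhile $\text{diam}(A_{\tau h})\le\text{diam}(A_{\tau_0})\le\kappa<\expansive/2$.

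This is the step I expect to be the obstacle, because $A_{\tau h}$ need not be connected, so the argument must use injectivity together with a separation argument. Suppose $x,y\in A_{\tau h}$ lie in distinct components $U_1\neq U_2$ over $V$. Take the inverse branches $g_i:V\to U_i$. Then $g_2(\rational(x))\in U_2$ and $x\in U_1$ are distinct preimages of the same point $\rational(x)$. Both are within about $\kappa+\expansive/2$ of $y$, hence within about $\expansive$ of each other. Since $J\cap\text{Crit}(\rational)=\emptyset$ and $\rational$ is expansive, distinct points of the same fibre sit at a uniform distance apart near $J$. Using the expansivity constant (equivalently, local injectivity of $\rational$ on $\expansive$-balls around $J$, which I will justify from \eqref{eq separation} and the small size of $r$), this gives a contradiction.

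Uniqueness of $\comp(\tau h)$ is then immediate, because distinct components are disjoint and $A_{\tau h}$ is nonempty whenever $\tau h$ is admissible. If local injectivity on $\expansive$-balls turns out not to be directly available, I would instead shrink everything to $B(z_h,2\kappa)$ with $\kappa\ll$ the injectivity radius of $\rational$ on a neighbourhood of $J$, which exists since $J$ is compact and free of critical points.
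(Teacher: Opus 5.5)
Your proof is correct and rests on the same two estimates as the paper's: partition cells have diameter at most $\kappa$, and every component of $\rational^{-m}(B(z_h,2\kappa))$ has diameter $<\expansive/2$ by \eqref{eq def eta}. Both are played against expansivity. The difference is in how the argument is organized.

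The paper does not induct. It fixes $w\in A_{\tau h}$, uses the full branch $\rational^{-n}_w$ on $B(z_h,2\kappa)$, and sets $x':=\rational^{-n}_w(\rational^n(x))$ for $x\in A_{\tau h}$. If $x\neq x'$, expansivity gives a time $l$ before the two orbits merge with $|\rational^l(x)-\rational^l(x')|\ge\expansive$. This contradicts $|\rational^l(x)-\rational^l(w)|\le\kappa$ together with $|\rational^l(w)-\rational^l(x')|<\expansive/2$; the latter comes from \eqref{eq def eta} applied to the inverse branch $\rational^l\circ\rational^{-n}_w$ of $\rational^{n-l}$.

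Your induction needs only the one-step consequence of expansivity: distinct preimages in $J$ of one point are more than $\expansive$ apart. The cost is that you must use the inductive hypothesis to place $\rational(x)$ in the domain $V$ of the one-step branch $g_2$.

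When you write it up, make the final step exact rather than ``about''.
\begin{itemize}
\item The points $x$ and $g_2(\rational(x))$ are distinct points of $J$ (by complete invariance) with the same image. So expansivity gives $|x-g_2(\rational(x))|>\expansive$.
\item On the other hand, $|x-g_2(\rational(x))|\le|x-y|+|y-g_2(\rational(x))|<\kappa+\expansive/2<\expansive$, because $\kappa<r<\expansive/4$ by \eqref{eq How small}.
\end{itemize}
No local-injectivity statement is needed. Moreover, \eqref{eq separation} only separates neighbourhoods of distinct parabolic points, so it would not provide one anyway. You can therefore drop that detour and the fallback of shrinking $\kappa$.
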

\begin{proof}
    We fix $w\in A_{\tau h}$. Let $U_w$ be the connected component of $\rational^{-n}(B(z_h, 2\kappa))$ with $w\in U_w$. Recall that $2\kappa<r$. Thus, by \eqref{eq How small}, the analytic inverse branch $\rational^{-n}_w:B(z_h,2\kappa)\rightarrow U_w$ of the map $\rational^n:U_w\rightarrow B(z_h,2\kappa)$ is well defined. Let $x\in A_{\tau h}$. Then $\rational^n(x)\in A_h\subset B(z_h,2\kappa)$. Thus, there exists $y\in U_w$ such that $\rational^n(x)=\rational^n(y)$. The result follows once we show that $x=y$. For a contradiction we assume that $x\neq y$. Then there exists $0\leq l'<n$ such that $\rational^{l'}(x)\neq \rational^{l'}(y)$ and $\rational^{l'+1}(x)=\rational^{l'+1}(y)$. Since $\expansive$ is a expansive constant of $\rational$, there exists $0\leq l\leq l'$ such that 
    \begin{align}\label{eq uniquness of connected component}
    |\rational^l(x)-\rational^l(y)|\geq \expansive.    
    \end{align}
    Note that we have $\rational^l(y)=\rational^{l}(\rational^{-n}_w(\rational^n(y)))\in \rational^l\circ\rational_w^{-n}(B(z_h,2\kappa))$ and $\rational^l(w)=\rational^{l}(\rational^{-n}_w(\rational^n(w)))\in \rational^l\circ\rational_w^{-n}(B(z_h,2\kappa))$. Since $2\kappa<\eta$ and $\rational^l\circ\rational_w^{-n}$ is an analytic inverse branch, \eqref{eq def eta} implies that $|\rational^l(y)-\rational^l(w)|<\expansive/2$. On the other hand, we have  $\rational^{l}(w),\rational^l(x)\in A_{\tau_l}$ and thus, $|\rational^l(w)-\rational^l(x)|\leq \kappa<\expansive/2$. Consequently, we obtain $|\rational^l(x)-\rational^l(y)|<\expansive$. This is a contradiction.
\end{proof}

For each $h\in H$, $n\in\mathbb{N}$ and $\tau\in \MS^n$  with $\tau h\in \MS^*$  we denote by $\comp(\tau h)$ the unique connected component of $\rational^{-n}(B(z_h, 2\kappa))$ obtained in Lemma \ref{lemma component}.
We denote by $\rational^{-n}_{\tau h}$ the inverse of the conformal homeomorphism $\rational^n:\comp(\tau h)\to B(z_h,2\kappa)$.

The following distortion lemma follows immediately from \eqref{eq disjoint PC Crit} and \eqref{eq Koebe}:
\begin{lemma}\label{lemma distortion}
    There exists a constant $C>0$ such that for all $n\in\mathbb{N}$, $h\in H$, $\tau\in \MS^n$ with $\tau h\in \MS^*$ and $x,y\in B(z_{h},2\kappa)$ we have
    \begin{align*}
        |(\rational_{\tau h}^{-n})'(y)-(\rational_{\tau h}^{-n})'(x)|\leq C|\left(\rational_{\tau h}^{-n}\right)'(x)||x-y|.
    \end{align*}
Moreover, by increasing $C$ if necessary, we have  $|(\rational_{\tau h}^{-n})'(y)|\leq C|(\rational_{\tau h}^{-n})'(x)|$.
\end{lemma}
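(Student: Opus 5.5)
The plan is to extend the inverse branch $\rational^{-n}_{\tau h}$ to a larger ball and then apply the Koebe estimate \eqref{eq Koebe}. By \eqref{eq disjoint PC Crit}, $B(z_h,2r)$ contains no points of $\text{PC}(\rational)\cup\text{Crit}(\rational)$, because $z_h\in J\setminus B(\Omega(\rational),2\expansive)$. Hence, as noted after \eqref{eq disjoint PC Crit} using \cite[Corollary 22.5.4]{UrbanskiRoyMunday}, every connected component of $\rational^{-n}(B(z_h,2r))$ is mapped conformally onto $B(z_h,2r)$ by $\rational^n$.

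Take the component $V$ containing $\comp(\tau h)$. It exists because $2\kappa<2r$, so $\comp(\tau h)$ is a connected subset of $\rational^{-n}(B(z_h,2r))$. Let $F:B(z_h,2r)\to V$ be the inverse of $\rational^n|_V$. Then $F$ is a conformal homeomorphism, and it restricts to $\rational^{-n}_{\tau h}$ on $B(z_h,2\kappa)$: both are inverse branches of $\rational^n$ whose images lie in $V$, and $\rational^n|_V$ is injective.

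Next, check the radii. By the choice of the Markov partition, $\kappa\le\Koebe\min\{r,\eta\}$, so $2\kappa\le 2\Koebe r$ and $B(z_h,2\kappa)\subset B(z_h,\Koebe\cdot 2r)$. The Koebe estimate \eqref{eq Koebe} is centered at the ball's center, whereas the lemma needs arbitrary pairs $x,y\in B(z_h,2\kappa)$. To handle this, apply \eqref{eq Koebe} with the base point $x$ itself. Since $|x-z_h|<2\kappa\le 2\Koebe r\le r$ (as $\Koebe<1/2$), the ball $B(x,r)$ lies inside $B(z_h,2r)$, so $F|_{B(x,r)}$ is conformal. For $y\in B(z_h,2\kappa)$ we have $|y-x|<4\kappa\le 4\Koebe\min\{r,\eta\}$. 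This gives $y\in B(x,\Koebe r)$ provided $4\kappa<\Koebe r$. This is the one point needing care: if the stated bound on $\kappa$ is not quite enough, I would either shrink the partition diameters (the Denker--Urba\'nski construction allows arbitrarily small diameters) or shrink the Koebe constant. Granting it, \eqref{eq Koebe} gives
\[
|F'(y)-F'(x)|\le C|F'(x)||y-x|,
\]
which is the first inequality with a constant independent of $n,h,\tau$.

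For the second inequality, the first one gives
\[
|F'(y)|\le |F'(x)|\bigl(1+C|x-y|\bigr)\le (1+4C\kappa)|F'(x)|.
\]
Enlarging $C$ to $\max\{C,1+4C\kappa\}$ finishes the proof. The only real obstacle is the radius bookkeeping above, ensuring that the whole ball $B(z_h,2\kappa)$ sits inside the Koebe region of a ball on which the inverse branch is univalent. The fact that critical orbits accumulate only at $\Omega(\rational)$, far from $z_h$, is exactly what \eqref{eq disjoint PC Crit} supplies.
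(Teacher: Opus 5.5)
Your argument is correct and takes the same route as the paper, which simply asserts that the lemma follows immediately from \eqref{eq disjoint PC Crit} and \eqref{eq Koebe}: extend $\rational^{-n}_{\tau h}$ to the univalent branch on $B(z_h,2r)$ and apply Koebe. The radius point you left conditional needs no change of setup (shrinking $\Koebe$ would not help, though shrinking the partition would): apply \eqref{eq Koebe} with $\varepsilon=r$ successively at $w_i:=x+\tfrac{i}{4}(y-x)$, $0\le i\le 3$, each of which lies within $2\kappa<r$ of $z_h$ and satisfies $|w_{i+1}-w_i|<\kappa\le\Koebe r$, to get $|F'(y)-F'(x)|\le C(1+C\kappa)^{3}|F'(x)||y-x|$.
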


\subsection{Induced systems}\label{sec induced systems}
In this section, we construct an induced system. We define
\begin{align*}
    \induedge_1:=H^2\cap\MS^2 \text{ and }\induedge_n:=\{h\parabolic_1\cdots \parabolic_{n-1}h'\in \MS^{n+1}: h,h'\in H,\parabolic_i\in \inddiferent\}
\end{align*}
for $n\geq 2$. We set $\induedge:=\bigcup_{n\in\mathbb{N}}\induedge_n$. We define the incidence matrix $B:\induedge\times\induedge\rightarrow \{0,1\}$ by setting $B_{\tau,\tilde \tau}=1$ if and only if $\tau_{|\tau|-1}=\tilde \tau_0$. We define the countable Markov shift $(\CMS,\indushift)$ by 
\[
\CMS:=\{\tau\in \induedge^{\mathbb{N}\cup\{0\}}: B_{\tau_{n-1},\tau_n}=1, n\in\mathbb{N}\}.
\]
Here, $\indushift$ denotes the left shift map on $\CMS$. We use the same notation for the countable Markov shift $(\CMS,\indushift)$ as for the Markov shift $(\MS,\shift)$. We also define the shift metric $\tilde d$ on $\CMS$ in the same way as the shift metric $d$ on $\MS$.

Let $\indudomain:=\{[h]:h\in H\}$. We define the first return time function $\return^*:\indudomain\rightarrow\mathbb{N}\cup\{\infty\}$ by 
\[
\return^*(\tau):=\inf\{n\in\mathbb{N}: \shift^n(\tau)\in \indudomain\}.
\]
The first return map $\indushift^*:\{\return^*<\infty\}\rightarrow \indudomain$ of $\shift$ to $\indudomain$ is defined by $\indushift^*(\tau):=\shift^{\return^*(\tau)}(\tau)$. We set $\indudomain:=\bigcap_{n\in \mathbb{N}\cup\{0\}}(\indushift^*)^{-n}(\{\return^*<\infty\})$. 

The following map allows us to identify $(\indudomain,\indushift^*)$ with the countable Markov shift $(\CMS,\indushift)$.
Recall that for $\tau,\tilde \tau\in \induedge$, $B_{\tau,\tilde \tau}=1$ if and only if $\tau_{|\tau|-1}=\tilde \tau_0$.
We define the map $\Proj:\CMS\rightarrow \indudomain$ by 
\[
\Proj(\tau)=h_0p_{0,1}\cdots p_{0,|\tau_0|-1}h_1p_{1,1}\cdots p_{1,|\tau_1|-1}h_2p_{2,1}\cdots p_{2,|\tau_2|-1}\cdots, 
\]
where $\tau=\tau_0\tau_1\cdots$ and $\tau_k=h_kp_{k,1}\cdots p_{k,|\tau_k|-1}h_k'$. 
Here, $p_{k,0}$ is understood to be the empty word.
Note that $\Proj$ is bijection and $\Proj\circ \indushift=\indushift^*\circ \Proj$. Moreover, $\Proj$ and $\Proj^{-1}$ are continuous. Hence, $\Proj$ is a topological conjugacy between $(\indudomain,\indushift^*)$ and $(\CMS,\indushift)$. In particular, for any $\indumeasure\in M(\indushift)$ and Borel measurable function $\indupote$ on $\CMS$ we have 
\begin{align}\label{eq Projection equivalence 1}
    h(\indumeasure)=h(\indumeasure^*) \text{ and }\indumeasure( \indupote )=\indumeasure^*( \indupote^* ),
\end{align}
where $\indupote^*=\indupote\circ \Proj^{-1}$ and $\indumeasure^*=\indumeasure\circ \Proj^{-1}$. Conversely, 
 for any $\indumeasure^*\in M(\indushift^*)$ and Borel measurable function $\indupote^*$ on $\indudomain$ we have 
\begin{align}\label{eq Projection equivalence 2}
    h(\indumeasure)=h(\indumeasure^*) \text{ and }\indumeasure( \indupote )=\indumeasure^*( \indupote^* ),
\end{align}
where $\indupote=\indupote^*\circ \Proj$ and $\indumeasure=\indumeasure^*\circ \Proj$.
We also define the map $\Proj^*:\CMS^*\rightarrow \MS^*$ by 
\[
\Proj^*(\tau)=h_0p_{0,1}\cdots p_{0,|\tau_0|-1}\cdots 
h_{|\tau|-1}p_{|\tau|-1,1}\cdots p_{|\tau|-1,|\tau_{|\tau|-1}|-1}h'_{|\tau|-1}, 
\]
where $\tau=\tau_0\tau_1\cdots\tau_{|\tau|-1}$ and $\tau_k=h_kp_{k,1}\cdots p_{k,|\tau_k|-1}h_k'$. 
Here, $p_{k,0}$ is understood to be the empty word.

The following lemma is an immediately consequence of \eqref{eq inverse branch expanding}: Let 
\[
\return:=\return^*\circ\Proj.
\]
For a function $\phi:J\rightarrow \mathbb{R}$ we define the function $\indupote:\CMS\rightarrow \mathbb{R}$ by  
\begin{align}\label{eq def indupote}
\indupote(\tau):=\sum_{k=0}^{\return(\tau)-1}\phi\circ\rational^{k}(\coding\circ\Proj(\tau))
\end{align}
If $\phi=\log |\rational'|$ then we write $\log |\indumap'|:=\indupote$.
\begin{lemma}\label{lemma uniformly expanding}
For any $s>1$
    there exists $N\in\mathbb{N}$ such that for all $n\geq N$, $\tau\in \CMS^n$ and $x\in \comp(\Proj^*(\tau))$ we have $|(\rational^{|\Proj^*(\tau)|-1})'(x)|>s$. In particular, For all $\tau\in \CMS$ we have 
    $S_N(\log |\indumap'|)(\tau)>s.$
\end{lemma}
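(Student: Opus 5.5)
The plan is to show that an induced word of length $n$ always contains long hyperbolic stretches once $n$ is large, and that these give the expansion. For $\tau\in\CMS^n$, the word $\Proj^*(\tau)=\tau'h$ ends with a symbol $h\in H$, and $\comp(\tau'h)$ is the component on which the branch $\rational^{-m}_{\tau'h}$ is defined, where $m=|\Proj^*(\tau)|-1\geq n$. So for $x\in\comp(\Proj^*(\tau))$ we have $\rational^m(x)\in B(z_h,2\kappa)$ and
\[
(\rational^m)'(x)=1/(\rational^{-m}_{\tau'h})'(\rational^m(x)).
\]
It therefore suffices to show that $\sup\{|(\rational^{-m}_{\tau'h})'(y)|:y\in B(z_h,2\kappa)\}$ is less than $1/s$ uniformly over all $\tau\in\CMS^n$ once $n\geq N$.

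The first step uses \eqref{eq inverse branch expanding}. Because $z_h\in J\setminus B(\Omega(\rational),2\expansive)$ and $2\kappa<2r$, every $w=\rational^{-m}_{\tau'h}(y)$ with $y\in B(z_h,2\kappa)$ lies in $\rational^{-m}(B(z_h,2r))$. Hence $\inf|(\rational^m)'(w)|\to\infty$ as $m\to\infty$, uniformly in $h$, $\tau$ and $y$. Choose $M$ such that this infimum exceeds $s$ for all $m\geq M$. Since $m\geq n$, taking $N=M$ gives $|(\rational^{m})'(x)|>s$ for every $x\in\comp(\Proj^*(\tau))$ and every $n\geq N$. The branch used here is exactly the one from Lemma \ref{lemma component}, so the point $x$ ranges over the component on which the estimate of \eqref{eq inverse branch expanding} applies. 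Note that $x$ itself may lie near a parabolic point: parabolic symbols $\parabolic_i$ may appear in the middle of $\Proj^*(\tau)$. This causes no difficulty, because the estimate concerns only the endpoint $\rational^m(x)$, which is near $z_h$ and away from $\Omega(\rational)$.

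For the second assertion, take $\tau\in\CMS$. Then $S_N(\log|\indumap'|)(\tau)=\log|(\rational^{m})'(\coding\circ\Proj(\tau))|$, where $m=\sum_{k<N}\return(\indushift^k\tau)=|\Proj^*(\tau_0\cdots\tau_{N-1})|-1$. The point $\coding\circ\Proj(\tau)$ lies in $A_{\Proj^*(\tau_0\cdots\tau_{N-1})}\subset\comp(\Proj^*(\tau_0\cdots\tau_{N-1}))$. Applying the first part to $e^{s}$ in place of $s$ (enlarging $N$ accordingly) gives $S_N(\log|\indumap'|)(\tau)>s$.

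The main obstacle is bookkeeping rather than analysis. One must check that the relevant point is exactly $\rational^{-m}_{\tau'h}(y)$ with $y$ in the ball $B(z_h,2r)$ about a non-parabolic point, which is what \eqref{eq inverse branch expanding} requires. One must also check the index count $m=|\Proj^*(\tau)|-1\geq n$: each induced symbol contributes at least one step, since $\return\geq1$.
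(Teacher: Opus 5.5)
Your proof is correct and follows the paper's approach: the paper also obtains the lemma directly from \eqref{eq inverse branch expanding}, via $\comp(\Proj^*(\tau))\subset\rational^{-m}(B(z_h,2r))$ with $z_h\in J\setminus B(\Omega(\rational),2\expansive)$ and $m=|\Proj^*(\tau)|-1\geq n$. Your choice to run the first part with $e^{s}$ in place of $s$ is a correct fix, since the first part by itself only gives $S_N(\log|\indumap'|)>\log s$ in the ``in particular'' clause.
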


By Lemma \ref{lemma uniformly expanding} we obtain the following:
\begin{lemma}\label{lemma exponential decay}
There exist $C>0$ and $\beta>0$ such that for all $n\in\mathbb{N}$ and $\tau\in \CMS^n$ we have $\text{diam}(\comp(\Proj^*(\tau)))\leq Ce^{-\beta n}$.    
\end{lemma}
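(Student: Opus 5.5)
The plan is to compare the diameter of $\comp(\Proj^*(\tau))$ with the product of the derivatives of the inverse branch, and to obtain the required factor $e^{-\beta n}$ from Lemma \ref{lemma uniformly expanding} applied block by block.

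Fix $s=2$ and let $N$ be the integer given by Lemma \ref{lemma uniformly expanding}. Let $\tau\in\CMS^n$ with $\tau_{n-1}$ ending in $h\in H$, and set $m:=|\Proj^*(\tau)|-1$. Then $\comp(\Proj^*(\tau))=\rational^{-m}_{\Proj^*(\tau)}(B(z_h,2\kappa))$.

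\textbf{Step 1 (reduction to a derivative bound).} By Lemma \ref{lemma distortion}, $|(\rational^{-m}_{\Proj^*(\tau)})'|$ is comparable, with a uniform constant, at all points of $B(z_h,2\kappa)$. Since the ball is convex,
\[
\text{diam}(\comp(\Proj^*(\tau)))\ll \kappa\,\inf_{x\in \comp(\Proj^*(\tau))}|(\rational^m)'(x)|^{-1}.
\]
It therefore suffices to show $|(\rational^m)'(x)|\geq C^{-1}2^{\lfloor n/N\rfloor}$ on $\comp(\Proj^*(\tau))$.

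\textbf{Step 2 (splitting into blocks).} Write $n=kN+j$ with $0\le j<N$, and split $\tau$ into consecutive subwords $\tau^{(1)},\dots,\tau^{(k)}$ of length $N$ followed by a remainder of length $j$. Let $m_i:=|\Proj^*(\tau^{(i)})|-1$. Then $m=\sum_i m_i+m_{\mathrm{rem}}$, because consecutive blocks share their boundary letter $h$. The chain rule gives
\[
(\rational^m)'(x)=\prod_i (\rational^{m_i})'(\rational^{m_1+\dots+m_{i-1}}(x))\cdot(\text{remainder}).
\]

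\textbf{Step 3 (each block expands).} For $x\in\comp(\Proj^*(\tau))$, the point $y=\rational^{m_1+\dots+m_{i-1}}(x)$ lies in $\comp(\Proj^*(\tau^{(i)}\cdots))$. The key point is that it also lies in $\comp(\Proj^*(\tau^{(i)}))$. Indeed, the image of the ball $B(z_h,2\kappa)$ under the inverse branch along the tail sits inside $B(z_{h'},2\kappa)$ for the intermediate symbol $h'$, at least after shrinking via \eqref{eq def eta} and the choice $\text{diam}(A_k)\le\Koebe\min\{r,\eta\}$. If this inclusion is delicate, I would replace the balls $B(z_h,2\kappa)$ by smaller balls $B(z_h,\kappa)$ in the estimate. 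Uniqueness of components in Lemma \ref{lemma component} then identifies the branches. Given the inclusion, Lemma \ref{lemma uniformly expanding} yields $|(\rational^{m_i})'(y)|>2$ for each block.

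\textbf{Step 4 (the remainder).} For the remainder block, and in the case $k=0$, a uniform lower bound $c>0$ on $|(\rational^{m_{\mathrm{rem}}})'|$ is needed. It follows from Proposition \ref{prop local property near inddiferent} together with \eqref{eq inverse branch expanding}: derivatives along preimages of balls around points in $J\setminus B(\Omega(\rational),2\expansive)$ are bounded below uniformly, since they tend to infinity and there are finitely many small times. Combining Steps 2–4 gives the bound in Step 1 with $\beta=(\log 2)/N$.

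The main obstacle is the nesting of components in Step 3, which is what makes the chain rule compatible with Lemma \ref{lemma uniformly expanding}.
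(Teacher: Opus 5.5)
Your overall scheme (cut $\tau$ into blocks of $N$ induced symbols, apply Lemma \ref{lemma uniformly expanding} to each block through the chain rule, then convert the derivative bound into a diameter bound with Lemma \ref{lemma distortion}) is what the paper has in mind when it says the lemma follows from Lemma \ref{lemma uniformly expanding}. Steps 1, 2 and 4 are fine. The gap is Step 3, which is exactly the nesting you flag.

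Write $M_i:=m_1+\cdots+m_{i-1}$, and let $h''$ be the last letter of $\Proj^*(\tau^{(i)})$. Asking that $\rational^{M_i}(x)\in\comp(\Proj^*(\tau^{(i)}))$ for every $x\in\comp(\Proj^*(\tau))$ amounts to asking that the component of the tail word, which begins with $h''$, lie inside $B(z_{h''},2\kappa)$. Nothing you cite gives this. The bound \eqref{eq def eta} only controls the diameters of such pullbacks by $\expansive/2$, whereas $\kappa\le\Koebe\min\{r,\eta\}<\expansive/8$, so it says nothing at scale $\kappa$.

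The inclusion can in fact fail. Take a short tail, for example $i=k$ with remainder a single symbol $h''h\in\induedge_1$. There is then no contraction along the tail. Its component is the pullback of $B(z_h,2\kappa)$ by one inverse branch of $\rational$, of size of order $\kappa/|\rational'|$. It sticks out of $B(z_{h''},2\kappa)$ as soon as $|\rational'|$ is small on $A_{h''}$, and nothing in the setting prevents that. Shrinking the balls to radius $\kappa$ does not help, for the same reason.

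The repair is that you never need the inclusion at every point of $\comp(\Proj^*(\tau))$. Your own Step 1 bound is in terms of $\inf_{x\in\comp(\Proj^*(\tau))}|(\rational^m)'(x)|^{-1}$, so a lower bound at a single point suffices. Lemma \ref{lemma distortion} would transfer it to all points anyway.

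\begin{itemize}
\item Take $x_0\in A_{\Proj^*(\tau)}$; it lies in $\comp(\Proj^*(\tau))$ by Lemma \ref{lemma component}.
\item Let $w_i$ be the word obtained from $\Proj^*(\tau)$ by deleting its first $M_i$ letters. By the definition of the sets $A_w$, $\rational^{M_i}(x_0)\in A_{w_i}$.
\item Since $\Proj^*(\tau^{(i)})$ is a prefix of $w_i$, you get $A_{w_i}\subset A_{\Proj^*(\tau^{(i)})}\subset\comp(\Proj^*(\tau^{(i)}))$, again by Lemma \ref{lemma component}.
\item Hence Lemma \ref{lemma uniformly expanding} makes each block factor of $(\rational^m)'(x_0)$ exceed $2$.
\item The remainder factor is evaluated at $\rational^{m_1+\cdots+m_k}(x_0)\in J$. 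For long remainders it is bounded below by \eqref{eq inverse branch expanding}. For the finitely many short ones it is at least $\min\{1,\min_J|\rational'|\}^{n_0}>0$, since $\text{Crit}(\rational)\cap J=\emptyset$.
\end{itemize}

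This gives $\text{diam}(\comp(\Proj^*(\tau)))\ll 2^{-\lfloor n/N\rfloor}$, that is, $\beta=(\log 2)/N$.

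Alternatively, you can rescue your inclusion itself. Take $s$ large compared with the distortion constant of Lemma \ref{lemma distortion}, and merge the remainder into the last block. Then every tail has at least $N$ induced symbols, so its component has diameter $<\kappa$ and fits inside $B(z_{h''},2\kappa)$. With $s=2$ this route is not available.
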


\section{Thermodynamic formalism for the countable Markov shift $(\CMS,\indushift)$}\label{sec Thermodynamic formalism for the countable Markov shift}
In this section, we describe some properties of the countable Markov shift $(\CMS,\indushift)$. These results will be used in later sections.

 For a continuous function $\tilde\psi$ on $\CMS$ the topological pressure of $\tilde\psi$ introduced by Mauldin and Urba\'nski \cite{mauldin2003graph} is given as
\begin{align}\label{eq def topological induced pressure}
\tilde{P}(\tilde\psi):=\lim_{n\to\infty}\frac{1}{n}\log\sum_{a\in \CMS^n}\exp\left(
\sup_{\tau\in[a]} 
S_n(\tilde\psi)(\tau)
\right).
\end{align}
Let $\phi:J\rightarrow\mathbb{R}$ be a continuous function and let $\indupote$ be the induced potential defined by \eqref{eq def indupote}. 
For some $\beta>0$ the induced potential $\indupote$ is said to be locally H\"older with exponent $\beta$ if  
$    \sup_{n\in\mathbb{N}}\sup_{a\in \CMS^n}\sup\{|\indupote(\tau)-\indupote(\tau')|(\tilde d(\tau,\tau'))^{-\beta}:\tau,\tau'\in [a],\ \tau\neq\tau'\}<\infty.
$
 The induced potential $\indupote$ is said to be locally H\"older if there exists $\beta>0$ such that $\indupote$ is locally H\"older with exponent $\beta$. 
 Lemmas \ref{lemma distortion}, \ref{lemma uniformly expanding}, and \ref{lemma exponential decay} allow us to apply the arguments used in the proofs of \cite[Lemmas 19.3.4 and 19.4.1]{Urbanskinoninvertible}, yielding the following. 
\begin{lemma}\label{lemma geometric Holder}
The function $\log |\indumap'|$ is locally H\"older.
\end{lemma}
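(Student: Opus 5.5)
The plan is to express $\log|\indumap'|$ through the analytic inverse branches $\rational^{-n}_{\tau h}$ of Lemma \ref{lemma component}, and then to use the Koebe-type distortion estimate of Lemma \ref{lemma distortion} together with the exponential shrinking of Lemma \ref{lemma exponential decay}. We may work with the Euclidean derivative, since $\log|\rational'|_s$ and $\log|\rational'|$ are cohomologous.

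\textbf{Step 1 (representation).} Fix $\tau\in\CMS$ with $\tau_0=h\gamma_1\cdots\gamma_{m-1}h'$, so that $\return(\tau)=m=|\tau_0|-1$. Set $x=\coding\circ\Proj(\tau)$. Then $\rational^m(x)\in A_{h'}\subset B(z_{h'},2\kappa)$, and $x=\rational^{-m}_{\tau_0}(\rational^m(x))$, where $\rational^{-m}_{\tau_0}$ is the inverse branch associated with the word $\Proj^*(\tau_0)=\tau_0$. By the chain rule,
\[
\log|\indumap'|(\tau)=\log|(\rational^m)'(x)|=-\log\bigl|(\rational^{-m}_{\tau_0})'(\rational^m(x))\bigr|.
\]

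\textbf{Step 2 (distortion on a cylinder).} Let $\tau,\tau'\in[a]$ with $a\in\CMS^n$, $n\ge1$, and $\tau\ne\tau'$. Write $\tilde d(\tau,\tau')=e^{-k}$ with $k\ge n$. Both points share the first symbol $\tau_0$, so the same branch $\rational^{-m}_{\tau_0}$ applies to both. Put $y=\rational^m(\coding\Proj(\tau))$ and $y'=\rational^m(\coding\Proj(\tau'))$. These are $\coding\Proj(\indushift\tau)$ and $\coding\Proj(\indushift\tau')$, and $\indushift\tau,\indushift\tau'$ agree on their first $k-1$ symbols. Both therefore lie in $A_{\Proj^*(b)}\subset\comp(\Proj^*(b))$ for the common word $b\in\CMS^{k-1}$. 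By Lemma \ref{lemma exponential decay},
\[
|y-y'|\le Ce^{-\beta(k-1)}.
\]
The case $k=1$ is trivial, since $y,y'\in A_{h'}$ and $|y-y'|\le\kappa$.

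Lemma \ref{lemma distortion} gives both $|(\rational^{-m}_{\tau_0})'(y)-(\rational^{-m}_{\tau_0})'(y')|\le C|(\rational^{-m}_{\tau_0})'(y)|\,|y-y'|$ and comparability of the two derivatives. Using $|\log u-\log v|\le |u-v|/\min\{u,v\}$, we get
\[
|\log|\indumap'|(\tau)-\log|\indumap'|(\tau')|\le C'|y-y'|\le C''e^{-\beta k}=C''\tilde d(\tau,\tau')^{\beta}.
\]
The constants do not depend on $m$, $n$, $a$ or $\tau_0$, so $\log|\indumap'|$ is locally H\"older with exponent $\beta$.

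\textbf{Main obstacle.} The delicate point is uniformity in $m$, where the branches pass arbitrarily long through the parabolic neighbourhoods. This is exactly what Lemma \ref{lemma distortion} provides: the branches are defined on the fixed ball $B(z_{h'},2\kappa)$, which avoids $\text{PC}(\rational)$ by \eqref{eq disjoint PC Crit}, so Koebe's theorem applies with constants independent of $m$.

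A second point to check is that $y,y'$ lie in the Koebe subball where \eqref{eq Koebe} is valid. This holds because $\text{diam}(A_{h'})\le\Koebe r$ while the branch is defined on a ball of radius $2\kappa$. If necessary, I would shrink the ball or center the estimate at $y$ using $B(y,\kappa)\subset B(z_{h'},2\kappa)$.
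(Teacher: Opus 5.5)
Your proof is correct and follows the paper's route. The paper only cites the standard conformal-GDMS argument (Lemmas 19.3.4 and 19.4.1 of \cite{Urbanskinoninvertible}), fed by Lemmas \ref{lemma distortion}, \ref{lemma uniformly expanding} and \ref{lemma exponential decay}, and you have written that argument out: the representation $\log|\indumap'|(\tau)=-\log|(\rational^{-m}_{\tau_0})'(\coding\Proj(\indushift\tau))|$, Koebe distortion on $B(z_{h'},2\kappa)$ uniform in $m$, and exponential shrinking of $\comp(\Proj^*(b))$; your closing worry about the Koebe subball is unnecessary, since Lemma \ref{lemma distortion} is already stated for arbitrary pairs of points in $B(z_h,2\kappa)$.
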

In what follows, we fix 
$\phi\in \mathcal{H}$ and a exponent $\exponent>0$ such that $\indupote$ and $\log|\indumap'|$ are locally H\"older with exponent $\beta$. 
Note that, since $\return$ is constant on $[\tau]$ for each $\tau\in \induedge$, $\return$ is locally H\"older with $\exponent$. 

By \cite[Lemma 9.3]{AaronsonDenkerUrbanski}, without loss of generality there are two (repulsive) periodic points with relatively periods whose orbits are outside of $B(\Omega(\rational),2\expansive)$ and in the interior of sets $\{A_k:k\in \edge\}$. 
Thus, by exactly the same argument as in the proof of \cite[Theorem 7.1]{AaronsonDenkerUrbanski}, the countable Markov shift $(\CMS,\indushift)$ is topologically mixing, that is, for all $\tau,\tilde \tau\in \induedge$ there exists $N_{\tau,\tilde \tau}\in\mathbb{N}$ such that for all $n\geq N_{\tau,\tilde \tau}$ we have $[\tau]\cap \indushift^{-n}([\tilde \tau])\neq \emptyset$. Moreover, it is not difficult to show that it satisfies the big images and preimages (BIP) property, that is, there exists a finite set $F\subset \induedge$ such that for all $a\in \induedge$ there exists $b,\tilde b\in F$ such that $B_{b, a}B_{a,\tilde b}=1$. By \cite{Sarig}, these properties imply that 
$\CMS$ is finitely primitive, that is, there exist $n\in\mathbb{N}$ and a finite set $\tilde \Lambda\subset \CMS^n$ such that for all $e,e'\in \induedge $ there is $a=a(e,e')\in \tilde \Lambda$ for which $e ae'\in \CMS^*$. 

These conditions enable us to apply results from the thermodynamic formalism for the countable Markov shift ($\CMS,\indushift$) as  presented in \cite[Section 2]{mauldin2003graph} (see also \cite[Section 17, 18 and 20]{Urbanskinoninvertible}).
Define the pressure function $p:\mathbb{R}^3\rightarrow \mathbb{R}$ by 
\[
\indupressure(b,q,s):=\tilde{P}(-q\indupote-b\log|\indumap'|-s\return).
\]
\begin{thm}{\cite[Theorem 2.1.8]{mauldin2003graph}}\label{thm variational principle induce} For all $(b,q,s)\in \mathbb{R}^3$ we have
\[
\indupressure(b,q,s)=\sup_{\indumeasure}\{h(\indumeasure)+
\indumeasure(-q\indupote-b\log|\indumap'|-s\return) \},
\]
where the supremum is taken over the set of measures $\indumeasure\in M(\indushift)$ satisfying $\indumeasure(-q\indupote-b\log|\indumap'|-s\return)>-\infty$. 
\end{thm}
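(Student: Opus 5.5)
This statement is quoted from \cite[Theorem 2.1.8]{mauldin2003graph}, so the plan is to check that $(\CMS,\indushift)$ and the potential $\tilde\psi:=-q\indupote-b\log|\indumap'|-s\return$ meet the hypotheses of that theorem, and then to recall the structure of its proof. The hypotheses are: (i) $\CMS$ is finitely primitive, (ii) $\tilde\psi$ is acceptable, i.e.\ uniformly continuous with bounded oscillation on each $1$-cylinder, and (iii) the variational formula is read over invariant measures with $\indumeasure(\tilde\psi)>-\infty$. Item (i) was recorded just above, via topological mixing and BIP and \cite{Sarig}. For (ii), $\indupote$ and $\log|\indumap'|$ are locally H\"older with exponent $\exponent$ by the choice of $\phi\in\mathcal{H}$ and Lemma \ref{lemma geometric Holder}. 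Also $\return$ is constant on each $[\tau]$, $\tau\in\induedge$. Hence $\tilde\psi$ is locally H\"older, which yields acceptability and bounded distortion $|S_n\tilde\psi(\tau)-S_n\tilde\psi(\tau')|\le C$ for $\tau,\tau'$ in a common $n$-cylinder.

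For the inequality ``$\ge$'', I would take $\indumeasure$ with $\indumeasure(\tilde\psi)>-\infty$ and first assume it is ergodic with $h(\indumeasure)<\infty$. Consider the partition $\alpha=\{[e]:e\in\induedge\}$, and let $\alpha^n$ be its $n$-th join. The inequality $H_{\indumeasure}(\alpha^n)+\indumeasure(S_n\tilde\psi)\le\log\sum_{a\in\CMS^n}\exp\sup_{[a]}S_n\tilde\psi$ holds by Jensen's inequality, i.e.\ the inequality $\sum p_a(x_a-\log p_a)\le\log\sum e^{x_a}$. Dividing by $n$ and letting $n\to\infty$ gives $h(\indumeasure,\alpha)+\indumeasure(\tilde\psi)\le\tilde P(\tilde\psi)$. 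The generating property of $\alpha$ gives $h(\indumeasure,\alpha)=h(\indumeasure)$. This needs $H_{\indumeasure}(\alpha)<\infty$; here one uses $\indumeasure(\tilde\psi)>-\infty$ together with $\sum_e \exp\sup_{[e]}\tilde\psi<\infty$ whenever the pressure is finite. The case $\tilde P=\infty$ is trivial, and the non-ergodic case follows by ergodic decomposition.

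For ``$\le$'', I would approximate by finite subsystems. For a finite set $F\subset\induedge$ containing the finite set $\tilde\Lambda$ of connecting words, restrict to the subshift $\CMS(F)$ over $F$. This is a subshift of finite type, and the potential restricted to it is H\"older. So the classical variational principle gives an invariant measure $\indumeasure_F$ with $h(\indumeasure_F)+\indumeasure_F(\tilde\psi)=\tilde P_F(\tilde\psi)$. Finite primitivity and bounded distortion then show $\tilde P_F(\tilde\psi)\uparrow\tilde P(\tilde\psi)$ as $F\uparrow\induedge$. Each $\indumeasure_F$ is compactly supported, so $\indumeasure_F(\tilde\psi)>-\infty$, and the supremum is at least $\tilde P(\tilde\psi)$.

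The main obstacle is the approximation $\tilde P_F\to\tilde P$. It requires gluing words from different cylinders using the words $a(e,e')\in\tilde\Lambda$. This controls $\sum_{a\in\CMS^{m}}\exp\sup S_m\tilde\psi$ by a supermultiplicative sequence, up to constants that depend only on $\tilde\Lambda$ and the distortion constant. Since the result is quoted from \cite{mauldin2003graph}, I would simply invoke it once the hypotheses are verified.
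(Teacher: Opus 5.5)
Your proposal is correct and follows the paper's approach. The paper gives no independent proof; it invokes \cite[Theorem 2.1.8]{mauldin2003graph} after recording two facts: $\CMS$ is finitely primitive, and $\indupote$, $\log|\indumap'|$ and $\return$ are locally H\"older, so the combined potential is acceptable. These are exactly the hypotheses you verify, and your extra sketch of the Mauldin--Urba\'nski argument is sound but, as you say, unnecessary once the citation is justified.
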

Let 
\[
Fin:=\{(b,q,s)\in \mathbb{R}^3:\indupressure(b,q,s)<\infty\}.
\]
\begin{prop}{\cite[Proposition 2.1.9]{mauldin2003graph}}\label{prop finite pressure}
 $(b,q,s)\in Fin$ if and only if 
 \[
 \sum_{\tau\in \induedge}\exp\left(\sup_{\tilde\tau\in [\tau]}\left\{\left(-q\indupote-b\log|\indumap'|-s\return\right)(\tau)\right\}\right)<\infty.
 \]
\end{prop}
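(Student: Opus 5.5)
The plan is to reduce the statement to the first-level estimate from the definition \eqref{eq def topological induced pressure}, using finite primitivity of $\CMS$ (established above via topological mixing and the BIP property) together with the local H\"older continuity of the potential. Set $\tilde\psi:=-q\indupote-b\log|\indumap'|-s\return$. Since $\indupote$, $\log|\indumap'|$ and $\return$ are locally H\"older with exponent $\exponent$, so is $\tilde\psi$. Summing the H\"older bounds along the shifted orbit then gives bounded distortion: there is $K\ge 0$ such that for all $n$, $a\in\CMS^n$ and $\tau,\tau'\in[a]$ we have $|S_n\tilde\psi(\tau)-S_n\tilde\psi(\tau')|\le K$. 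We denote $Z_n:=\sum_{a\in\CMS^n}\exp(\sup_{[a]}S_n\tilde\psi)$ and $Z_1=\sum_{\tau\in\induedge}\exp(\sup_{[\tau]}\tilde\psi)$, which is exactly the series in the statement.

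\emph{Sufficiency.} Suppose $Z_1<\infty$. Since $\sup_{[a]}S_{n+m}\tilde\psi\le\sup_{[a_0\cdots a_{n-1}]}S_n\tilde\psi+\sup_{[a_n\cdots]}S_m\tilde\psi$, we get submultiplicativity $Z_{n+m}\le Z_nZ_m$. Hence $Z_n\le Z_1^n$, and so $\tilde P(\tilde\psi)\le\log Z_1<\infty$. Subadditivity of $\log Z_n$ also guarantees that the limit in \eqref{eq def topological induced pressure} exists.

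\emph{Necessity.} Suppose $Z_1=\infty$. We show that $Z_n=\infty$ for every $n$, so that $\tilde P(\tilde\psi)=\infty$. Let $m$ and $\tilde\Lambda$ be as in finite primitivity. Fix a symbol $e_0\in\induedge$. For each $e\in\induedge$ pick $a(e)\in\tilde\Lambda$ with $e\,a(e)\,e_0\in\CMS^*$. The map $e\mapsto e\,a(e)\,e_0$ is injective, since the first letter determines $e$, and it lands in words of length $m+2$. Take $\tau$ in the cylinder of this word. Then $S_{m+2}\tilde\psi(\tau)\ge\tilde\psi(\tau)-(m+1)\max\{\sup_{[w]}|\tilde\psi|: w\in\tilde\Lambda\cup\{e_0\}\}$. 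The maximum here is finite, because a locally H\"older function is bounded on each one-cylinder, and $\tilde\Lambda\cup\{e_0\}$ is finite. By bounded distortion, $\tilde\psi(\tau)\ge\sup_{[e]}\tilde\psi-K$. Therefore $Z_{m+2}\ge c\,Z_1=\infty$ for some constant $c>0$. The same construction, concatenating such blocks, gives $Z_{k(m+2)}\ge c^kZ_1^k$. For general $n$ we can pad similarly, or simply note that once some $Z_n=\infty$, submultiplicativity forces infinitely many of the $Z_n$ to be infinite, and the limit (or $\liminf$) is then $\infty$.

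The main obstacle is the necessity direction. There one has to justify that the intermediate connecting words contribute only a uniformly bounded loss, and the care point is the treatment of the limit when some $Z_n=\infty$. The key is the uniform boundedness of $\tilde\psi$ on the finitely many cylinders of $\tilde\Lambda\cup\{e_0\}$.
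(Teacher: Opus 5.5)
Your proof is correct and is the standard argument behind \cite[Proposition 2.1.9]{mauldin2003graph}, which the paper cites without proof: submultiplicativity $Z_{n+m}\le Z_nZ_m$ gives $\tilde P\le\log Z_1$, and finite primitivity of $\CMS$ together with the bounded oscillation on one-cylinders of the locally H\"older potential $-q\indupote-b\log|\indumap'|-s\return$ gives the converse. Drop the aside that submultiplicativity forces infinitely many $Z_n$ to be infinite, since submultiplicativity only gives upper bounds; use the padding route instead, which is immediate: choosing $\tau\in[e\,a(e,e_0)\,e_0\,a(e_0,e_0)\,e_0\cdots]$ gives $\sup_{[\tau_0\cdots\tau_{n-1}]}S_n\tilde\psi\ge\sup_{[e]}\tilde\psi-K-(n-1)M$, hence $Z_n\ge e^{-K-(n-1)M}Z_1=\infty$ for every $n$.
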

For $(b,q,s)\in Fin$ a measure $\indumeasure\in M(\indushift)$ is called a Gibbs measure for $-q\indupote-b\log|\indumap'|-s\return$ if there exists a constant $Q\geq 1$ such that for every $n\in\mathbb{N}$, $\tau\in \CMS^n$ and $\tilde\tau\in[\tau]$ we have
\begin{align}\label{eq gibbs}
\frac{1}{Q}\leq \frac{\indumeasure([\tau])}{\exp(S_n(-q\indupote-b\log|\indumap'|-s\return)(\tilde \tau)-\indupressure(b,q,s)n)}\leq Q. 
\end{align}
\begin{thm}{
\cite[Theorem 2.2.4 and Corollary 2.7.5]{mauldin2003graph}}\label{thm Gibbs induce}
    For $(b,q,s)\in Fin$ there exists the unique Gibbs measure $\indumeasure_{b,q,s}\in M(\indushift)$ for $-q\indupote-b\log|\indumap'|-s\return$. Moreover, $\indumeasure_{b,q,s}$ is ergodic.
\end{thm}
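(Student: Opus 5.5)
This statement is quoted from Mauldin--Urba\'nski, so the plan is to reduce it to their framework. The work lies in checking that the hypotheses of \cite[Theorem 2.2.4 and Corollary 2.7.5]{mauldin2003graph} hold for the potential $\tilde\psi:=-q\indupote-b\log|\indumap'|-s\return$ on $(\CMS,\indushift)$.

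The hypotheses needed are three:
\begin{enumerate}[(i)]
\item the shift is finitely primitive;
\item $\tilde\psi$ is locally H\"older, i.e.\ has summable variations;
\item $\tilde\psi$ is summable, i.e.\ $\sum_{e\in\induedge}\exp(\sup_{[e]}\tilde\psi)<\infty$.
\end{enumerate}

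Condition (i) was established above. Topological mixing together with the BIP property gives finite primitivity by \cite{Sarig}.

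For (ii), $\indupote$ and $\log|\indumap'|$ are locally H\"older with exponent $\exponent$ because $\phi\in\mathcal{H}$ and by Lemma \ref{lemma geometric Holder}. Also $\return$ is constant on each one-cylinder. So $\tilde\psi$ is a linear combination of locally H\"older functions and is itself locally H\"older with exponent $\exponent$. Consequently $\sup_{\tau,\tau'\in[a]}|S_n\tilde\psi(\tau)-S_n\tilde\psi(\tau')|$ is bounded uniformly in $n$ and $a\in\CMS^n$, via the geometric series $\sum_k e^{-\exponent k}$.

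For (iii), summability is exactly the criterion in Proposition \ref{prop finite pressure}, so $(b,q,s)\in Fin$ gives it directly.

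Once these hold, the construction runs as follows.
\begin{enumerate}
\item Let $\mathcal{L}$ be the Perron--Frobenius operator of $\tilde\psi$. Using (i) and (iii), obtain an eigenmeasure $\tilde m$ with $\mathcal{L}^*\tilde m=e^{\indupressure(b,q,s)}\tilde m$.
\item Show that $\tilde m$ satisfies the Gibbs bounds \eqref{eq gibbs}. The bounded distortion from (ii) gives the upper bound. Finite primitivity gives the lower bound, by connecting any cylinder to a fixed finite set of words.
\item Via the bounded-distortion estimate, the iterates $e^{-n\indupressure}\mathcal{L}^n\mathbf{1}$ are bounded above and below. A Ces\`aro limit of them yields a density $\rho$ with $\log\rho$ bounded. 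Then $\indumeasure_{b,q,s}:=\rho\,\tilde m$ is shift-invariant and still Gibbs.
\end{enumerate}

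For uniqueness, suppose two invariant Gibbs measures exist. They are mutually boundedly absolutely continuous on all cylinders, hence equivalent. Ergodicity then forces them to be equal.

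The main obstacle is ergodicity. It follows from a quasi-Bernoulli estimate. For cylinders $[\tau]$ and $[\tau']$, finite primitivity lets one insert a connecting word $a\in\tilde\Lambda$, and the Gibbs property then gives
\[
\indumeasure([\tau]\cap\indushift^{-(|\tau|+k)}[\tau'])\ge c\,\indumeasure([\tau])\indumeasure([\tau'])
\]
for suitable $k\le n$. Summing over $k\le n$ and extending from cylinders to Borel sets by approximation, any invariant set $E$ satisfies $\indumeasure(E\cap E^c)\ge c'\indumeasure(E)\indumeasure(E^c)$. Since the left side is zero, $E$ is trivial.

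In the countable setting, care is needed with the infinitely many first letters. This is handled by using the summability (iii), which makes tails of cylinder measures small uniformly.
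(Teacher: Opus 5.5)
Your proposal is correct and matches the paper. The paper gives no independent proof: it invokes \cite[Theorem 2.2.4 and Corollary 2.7.5]{mauldin2003graph} after checking exactly your three hypotheses, namely finite primitivity of $(\CMS,\indushift)$ (from topological mixing and BIP via \cite{Sarig}), local H\"older continuity of $\indupote$, $\log|\indumap'|$ and the locally constant $\return$, and summability as the criterion of Proposition~\ref{prop finite pressure}; your outline of the Mauldin--Urba\'nski construction is the standard one and is extra detail beyond what the paper supplies.
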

In the following, for $(b,q,s)\in Fin$ we denote by $\indumeasure_{b,q,s}$ the unique $\indushift$-invariant Gibbs measure obtained in Theorem \ref{thm Gibbs induce}.
For $(b,q,s)\in \mathbb{R}^3$ we say that $\indumeasure\in M(\indushift)$ is an equilibrium measure for $-q\indupote-b\log|\indumap'|-s\return$ if we have $\indumeasure(-q\indupote-b\log|\indumap'|-s\return)>-\infty$ and $\indupressure(b,q,s)=h(\indumeasure)+\indumeasure(-q\indupote-b\log|\indumap'|-s\return)$.
\begin{thm}{\cite[Theorem 2.2.9]{mauldin2003graph} 
}
\label{thm equilibrium state induce}
  Let  $(b,q,s)\in Fin$. If $\indumeasure_{b,q,s}(-q\indupote-b\log|\indumap'|-s\return)>-\infty$ then $\indumeasure_{b,q,s}$ is the unique equilibrium measure for $-q\indupote-b\log|\indumap'|-s\return$.
\end{thm}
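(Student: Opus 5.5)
This is the cited result \cite[Theorem 2.2.9]{mauldin2003graph} of Mauldin and Urba\'nski. I would prove it with the standard Gibbs-measure argument for finitely primitive countable Markov shifts. Throughout, write $\tilde\psi:=-q\indupote-b\log|\indumap'|-s\return$. This potential is locally H\"older with exponent $\exponent$, since $\indupote$, $\log|\indumap'|$ and $\return$ are.

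\textbf{Step 1: $\indumeasure_{b,q,s}$ is an equilibrium measure.} First, since $\tilde\psi$ is locally H\"older, its oscillation on each one-cylinder $[e]$, $e\in\induedge$, is bounded by a uniform constant $K$. Second, the Gibbs property \eqref{eq gibbs} with $n=1$ gives $-\log\indumeasure_{b,q,s}([e])=-\tilde\psi(\tilde\tau)+\indupressure(b,q,s)+O(1)$ for every $\tilde\tau\in[e]$. Integrating against $\indumeasure_{b,q,s}$ and using the hypothesis $\indumeasure_{b,q,s}(\tilde\psi)>-\infty$, the partition $\alpha=\{[e]:e\in\induedge\}$ has finite entropy $H(\alpha)$. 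Third, $\alpha$ is a generator, so the Kolmogorov--Sinai theorem gives $h(\indumeasure_{b,q,s})=\lim_n\frac1n H(\alpha^n)$. Applying \eqref{eq gibbs} at level $n$ and using invariance of the measure,
\[
H(\alpha^n)=-n\indumeasure_{b,q,s}(\tilde\psi)+n\indupressure(b,q,s)+O(1).
\]
Dividing by $n$ yields $h(\indumeasure_{b,q,s})+\indumeasure_{b,q,s}(\tilde\psi)=\indupressure(b,q,s)$. Combined with Theorem \ref{thm variational principle induce}, this shows $\indumeasure_{b,q,s}$ attains the supremum.

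\textbf{Step 2: uniqueness.} Let $\nu\in M(\indushift)$ be another equilibrium measure, so $\nu(\tilde\psi)>-\infty$ and $h(\nu)+\nu(\tilde\psi)=\indupressure(b,q,s)$.

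First I reduce to ergodic $\nu$. Write the ergodic decomposition of $\nu$. Entropy is affine, the integral of $\tilde\psi$ is affine, and each ergodic component satisfies $h+\int\tilde\psi\le\indupressure(b,q,s)$ by Theorem \ref{thm variational principle induce}. Hence almost every ergodic component is itself an equilibrium measure.

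Next, suppose $\nu$ is ergodic and $\nu\neq\indumeasure_{b,q,s}$. Both measures are ergodic, so they are mutually singular. For each $n$, compare them on $\alpha^n$:
\[
H_\nu(\alpha^n)+\int S_n\tilde\psi\,d\nu-n\indupressure(b,q,s)=\sum_{[\tau]\in\alpha^n}\nu([\tau])\log\frac{\exp\bigl(S_n\tilde\psi|_{[\tau]}-n\indupressure(b,q,s)\bigr)}{\nu([\tau])}.
\]
By \eqref{eq gibbs}, the right-hand side equals $\sum\nu([\tau])\log\bigl(\indumeasure_{b,q,s}([\tau])/\nu([\tau])\bigr)+O(1)$. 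By Jensen's inequality, mutual singularity forces this sum to tend to $-\infty$. On the other hand, the equilibrium identity gives $\frac1n H_\nu(\alpha^n)\ge h(\nu)$, hence $H_\nu(\alpha^n)\ge nh(\nu)$, so the left-hand side is $\ge 0$ for every $n$. This is a contradiction.

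\textbf{Main obstacle.} The delicate point is that $\alpha$ is a countable partition. Kolmogorov--Sinai and the inequality $H_\nu(\alpha^n)\ge nh(\nu)$ require $H_\nu(\alpha)<\infty$, which is not automatic. I would derive it from $\nu(\tilde\psi)>-\infty$ together with the summability in Proposition \ref{prop finite pressure}, via the standard estimate $\sum_e\nu([e])(-\log\nu([e]))\le\sum_e\nu([e])(-\tilde\psi|_{[e]})+\log\sum_e e^{\sup\tilde\psi|_{[e]}}+C$. Its proof bounds the sum over cylinders where $\nu([e])<e^{\sup\tilde\psi|_{[e]}}$ using $x\mapsto -x\log x$. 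The same bound applies verbatim to $\indumeasure_{b,q,s}$ in Step 1.
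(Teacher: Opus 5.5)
Your argument is correct and is essentially the standard proof behind the cited result. Mauldin--Urba\'nski adapt Bowen's argument in just this way: the Gibbs property plus Kolmogorov--Sinai for the finite-entropy generator of one-cylinders give the equilibrium identity, and the log-sum inequality on $n$-cylinders gives uniqueness, applied with sets $C_n$ that are unions of $n$-cylinders approximating a set separating $\nu$ from $\indumeasure_{b,q,s}$ (the paper itself only imports the theorem). One small correction to your last paragraph: obtain $H_\nu<\infty$ from $\nu([e])\log\bigl(a_e/\nu([e])\bigr)\le a_e-\nu([e])$ with $a_e:=\exp(\sup\tilde\psi|_{[e]})$, because bounding $-\nu([e])\log\nu([e])$ by $-a_e\log a_e$ via monotonicity would need $\sum_e a_e\log(1/a_e)<\infty$, which summability of the $a_e$ alone does not give.
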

For $\indumeasure\in M(\indushift)$ we define $\lambda(\indumeasure)=\indumeasure( \log|\indumap'|)$
.
\begin{thm}{\cite[Theorem 2.6.12 and Proposition 2.6.13]{mauldin2003graph} (see also \cite[Theorem 20.1.12]{Urbanskinoninvertible})} \label{thm regularity of induced pressure}
The function $(b,q,s)\mapsto\indupressure(b,q,s)$ is real-analytic on $\text{Int}(Fin)$. Moreover, we have Ruelle's formula, that is, for $(b,q,s)\in \text{Int}(Fin)$,
$
\frac{\partial}{\partial b}\indupressure(b,q,s)=-\lambda(\indumeasure_{b,q,s}),
$
$
\frac{\partial}{\partial q}\indupressure(b,q,s)=-\indumeasure_{b,q,s}(\indupote),
$
$
\frac{\partial}{\partial s}\indupressure(b,q,s)=-\indumeasure_{b,q,s}(\return).
$
\end{thm}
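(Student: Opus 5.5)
This is a result quoted from \cite{mauldin2003graph}, so the plan is to recall how it is proved there, using the standard transfer-operator argument for finitely primitive countable Markov shifts. Write $\psi_{b,q,s}:=-q\indupote-b\log|\indumap'|-s\return$. For $(b,q,s)\in Fin$, consider the Ruelle operator
\[
\mathcal{L}_{b,q,s}g(\tau)=\sum_{e\in\induedge:\,B_{e,\tau_0}=1}\exp\bigl(\psi_{b,q,s}(e\tau)\bigr)g(e\tau).
\]
It acts on the Banach space $\mathcal{B}$ of bounded functions on $\CMS$ that are locally H\"older with exponent $\exponent$.

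\textbf{Step 1: spectral gap.} By Proposition \ref{prop finite pressure} the operator is bounded on $\mathcal{B}$. The bounded distortion needed is supplied by the local H\"older continuity of $\indupote$, $\log|\indumap'|$ and $\return$ fixed above. A Lasota--Yorke type inequality then gives quasi-compactness. Finite primitivity of $\CMS$ shows that $e^{\indupressure(b,q,s)}$ is a simple isolated eigenvalue and the rest of the spectrum lies in a strictly smaller disc. The associated eigenfunction $g_{b,q,s}$ is bounded away from $0$ and $\infty$. The eigenmeasure $m_{b,q,s}$ of the dual operator is conformal, and $g_{b,q,s}\,m_{b,q,s}$ is the Gibbs measure $\indumeasure_{b,q,s}$ of Theorem \ref{thm Gibbs induce}.

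\textbf{Step 2: complexify and perturb.} Fix $(b_0,q_0,s_0)\in\text{Int}(Fin)$ and extend to complex parameters $(b,q,s)\in\mathbb{C}^3$ near $(b_0,q_0,s_0)$. Then $|\exp(\psi_{b,q,s})|=\exp(\psi_{\Re b,\Re q,\Re s})$. Because $(b_0,q_0,s_0)$ is interior, there is $\epsilon>0$ such that the summability condition of Proposition \ref{prop finite pressure} holds at all real parameters within $\epsilon$. Since $\indupote$, $\log|\indumap'|$ and $\return$ have bounded oscillation on each cylinder $[e]$, this yields
\[
\sum_{e}\bigl(1+|\indupote|+\log|\indumap'|+\return\bigr)\exp\Bigl(\sup_{[e]}\psi_{\Re b,\Re q,\Re s}\Bigr)<\infty
\]
uniformly in a neighbourhood. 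The unbounded factors are absorbed by moving the parameters slightly, since $x e^{-\epsilon x}$ is bounded. Hence each summand is holomorphic in $(b,q,s)$ as a $\mathcal{B}$-valued map, and the series converges locally uniformly in operator norm. So $(b,q,s)\mapsto\mathcal{L}_{b,q,s}$ is holomorphic into bounded operators on $\mathcal{B}$.

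\textbf{Step 3: analytic eigenvalue and derivatives.} Kato's analytic perturbation theory then gives a holomorphic simple leading eigenvalue $\lambda(b,q,s)$. This eigenvalue equals $e^{\indupressure}$ on real parameters, so $\indupressure=\log\lambda$ is real analytic on $\text{Int}(Fin)$. For Ruelle's formula, differentiate the identity $m_{b,q,s}(\mathcal{L}_{b,q,s}g_{b,q,s})=\lambda\, m_{b,q,s}(g_{b,q,s})$ with the normalization $m_{b,q,s}(g_{b,q,s})=1$. The terms involving the derivatives of $g$ and $m$ cancel by the eigen-equations. What remains is, for instance,
\[
\partial_b\lambda=\lambda\, m_{b,q,s}\bigl(g_{b,q,s}\cdot(-\log|\indumap'|)\bigr)=-\lambda\,\lambda(\indumeasure_{b,q,s}),
\]
using that $\mathcal{L}^*m=\lambda m$. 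The formulas for $\partial_q$ and $\partial_s$ follow the same way with $-\indupote$ and $-\return$, and the integrability they need comes from the moment bound in Step 2.

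\textbf{Main obstacle.} I expect Step 2 to be the hardest: proving holomorphy of the operator family despite the unbounded potentials. It relies on using the interior condition to get uniform summability with the polynomial weights, together with controlling the H\"older seminorm of $e^{\psi_{b,q,s}}$ on each cylinder by $|\psi|$-dependent constants that are summable.
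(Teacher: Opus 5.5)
Your sketch is correct, and it is the standard Perron--Frobenius operator plus Kato analytic-perturbation argument by which the cited results of Mauldin--Urba\'nski are proved; the paper gives no proof of its own beyond that citation, so there is nothing further to compare. Two small points to tidy up: the weight in Step 2 should be $|\log|\indumap'||$, and in Step 1 the compactness needed for quasi-compactness should come from a weak norm such as $L^1(m_{b,q,s})$ rather than the sup norm, because bounded H\"older balls are not sup-norm precompact on the non-compact shift $\CMS$.
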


\section{Thermodynamic formalism for parabolic rational maps}\label{sec Thermodynamic formalism for parabolic rational maps}
In this section, we develop the thermodynamic formalism for the parabolic rational maps. Throughout this section, we fix a parabolic rational map $\rational$ with $\text{deg}(\rational)\geq 2$ such that $\infty\notin J$ and \eqref{eq assumption simple} holds. We also fix a potential $\phi\in \mathcal{H}$. For simplicity of notation, we write $\Omega:=\Omega(\rational)$

For a continuous function $\psi$ on $J$ we define the topological pressure of $\psi$ by 
\[
P(\psi):=\sup\{h(\mu)+\mu(\psi):\mu\in M(\rational)\}.
\]
For a continuous function $\psi^*$ on $\MS$ we define the topological pressure of $\psi^* $ by 
\[
P^*(\psi^*):=\sup\{h(\mu)+\mu(\psi^*):\mu\in M(\shift)\}.
\]
We define the pressure functions $\pressure, \codingpressure:\mathbb{R}^2\rightarrow \mathbb{R}$ by
\[
\pressure(b,q):=P(-q\phi-b\log|\rational'|) \text{ and }\codingpressure(b,q):=P^*(-q\phi^*-b\log|(\rational^*)'|),
\]
where $\phi^*:=\phi \circ \coding$ and $\log|(\rational^*)'|:=\log |R'\circ\coding|$.
\begin{rem}\label{rem coding pressure}
Recall that the coding map $\coding:\MS\rightarrow J$ is continuous and surjective and satisfies $\rational\circ \coding=\coding \circ \shift$. Moreover, $\coding|_{\coding^{-1}(J\setminus\bigcup_{n=0}^\infty \rational^{-n}(\bigcup_{k\in\edge}\partial A_k))}$ is injective (see, for example, \cite[Theorem 4.5.7]{PrzytyckiUrbanski}).
By \cite[Proposition 2.13]{Bowen}, for any continuous function $\psi$ on $J$ we have 
\begin{align}\label{eq pressure ine}
P(\psi)\leq P^*(\psi\circ \coding).    
\end{align}
By exactly the same argument as in the proof of  \cite[Lemma 4.3]{Bowen}, for any $\mu\in M(\rational)$ there exists $\mu^*\in M(\shift)$ such that $\mu=\mu^*\circ\coding^{-1}$.  
In particular, by \eqref{eq pressure ine}, for a continuous function $\psi$ on $J$ if there exists an equilibrium measure $\codmeas\in M(\shift)$ for $\psi\circ \coding$ such that $h(\codmeas)=h(\codmeas\circ\coding^{-1})$ then $\codmeas\circ\coding^{-1}$ is an equilibrium measure for $\psi$ and $P(\psi)=P^*(\psi\circ\coding)$. Moreover, if $\codmeas\in M(\shift)$ is the unique equilibrium measure for $\psi\circ \coding$ such that $h(\codmeas)=h(\codmeas\circ\coding^{-1})$ then $\codmeas\circ\coding^{-1}$ is the unique equilibrium measure for $\psi$.
 By \cite[Theorem 4.5.9]{PrzytyckiUrbanski}, if $\mu\in M(\shift)$ is ergodic and positive on open sets then we have 
\begin{align}\label{eq entropy presurving}
    h(\mu)=h(\mu\circ\coding^{-1}). 
\end{align}
\end{rem}

For a function $\tilde\psi$ on $\CMS$ and $e\in \edge$ we set $\tilde \psi([e]):=\sup_{\tau\in[e]}\tilde\psi(\tau)$.
We define, for $\omega\in \Omega$ and $q\in \mathbb{R}$, 
\begin{align*}
\alpha_\omega:=\phi(\omega),\ 
\LB:= \max_{\omega\in \Omega}\{-q\alpha_\omega\} \text{ and }
\mathcal{N}:=\left\{(b,q)\in \mathbb{R}^2:p^*(b,q)>\LB\right\}.
\end{align*}
For $\omega\in \Omega$ we define 
\[
\inddiferent_{\omega}:=\{\parabolic\in \inddiferent:A_\parabolic\subset B(\omega,2\expansive)\}.
\]
Then by \eqref{eq separation}, we have $\inddiferent=\bigcup_{\omega\in \Omega}\inddiferent_\omega$. Moreover, if $A_{\parabolic,\tilde \parabolic}=1$ for some $\parabolic,\tilde \parabolic\in \inddiferent$ then there exists $\omega\in \Omega$ such that $\parabolic,\tilde \parabolic\in \inddiferent_\omega$. Thus, for each $n\geq 2$ we have 
\begin{align*}
\induedge_n=\bigcup_{\omega\in \Omega}\induedge_{n,\omega}, \text{ where }
    \induedge_{n,\omega}:=\{h\parabolic_1\cdots \parabolic_{n-1}h'\in \induedge_{n}:\parabolic_i\in \inddiferent_\omega,1\leq i\leq n-1\}.
\end{align*}
By Proposition \ref{prop local property near inddiferent}, there exists a constant $C\geq 1$ such that for all $\omega\in \Omega$, $n\geq 2$ and $e\in \induedge_{n,\omega}$ we have 
\begin{align}\label{eq growth rate}
    C^{-1} n^{-\frac{\rate_\omega+1}{\rate_\omega}}\leq e^{-\log|\indumap'|([e])}\leq Cn^{-\frac{\rate_\omega+1}{\rate_\omega}}
\end{align}
 By proposition \ref{prop local property near inddiferent} and Fatou's flower theorem (Theorem \ref{thm fatou}), for all $\omega\in \Omega$ and $z\in J\cap B(\omega,2v)$ we have $\lim_{n\to\infty}\rational^{-n}_\omega(z)=\omega$. Therefore, since 
 \begin{align}\label{eq decreasing set}
\bigcup_{\parabolic\in \inddiferent_\omega^{n+1}\cap \MS^{n+1}}[\parabolic] \subset\bigcup_{\parabolic\in \inddiferent_\omega^n\cap \MS^n}[\parabolic]     
 \end{align}
 for all $n\in\mathbb{N}$ and $\omega\in\Omega$, for all $\epsilon>0$ 
 there exists $N\in \mathbb{N}$ such that for all $n\geq N$ and $\omega\in \Omega$ we have
 \begin{align}\label{eq n big near inddiferent}
     \bigcup_{\parabolic\in \inddiferent_\omega^n\cap \MS^n}\coding([\parabolic])\subset B(\omega,\epsilon).
 \end{align}

\begin{lemma}\label{lem finite pressure on a special parameter}
    For all $(b,q)\in\mathbb{R}^2$ and $s\in(\LB,\infty)$ we have $(b,q,s)\in Fin$. 
   In particular, for all $(b,q)\in\mathcal{N}$ we have $(b,q,\codingpressure(b,q))\in Fin$. 
\end{lemma}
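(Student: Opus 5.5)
By Proposition \ref{prop finite pressure}, it suffices to show that
\[
\sum_{e\in \induedge}\exp\left(\sup_{\tilde\tau\in [e]}\left(-q\indupote-b\log|\indumap'|-s\return\right)(\tilde\tau)\right)<\infty .
\]
The plan is to split $\induedge=\induedge_1\cup\bigcup_{n\ge 2}\bigcup_{\omega\in\Omega}\induedge_{n,\omega}$ and bound each piece separately. The set $\induedge_1$ is finite, so it contributes a finite amount. Likewise, for each $n$ the set $\induedge_{n,\omega}$ is finite, with cardinality at most $(\#H)^2$ times the number of admissible $\inddiferent_\omega$-words of length $n-1$. The key observation is that this number of words is uniformly bounded in $n$. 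Near $\omega$ the dynamics is the injective branch, and the words $\parabolic\in\inddiferent_\omega^{n}\cap\MS^n$ correspond to the cylinders $\rational_\omega^{-k}$-pulled back near $\omega$. Each such word is determined, up to boundedly many choices, by its last symbol, since $\rational$ restricted to $B(\omega,2\expansive)$ is injective. Hence $\#\induedge_{n,\omega}\le C_0$ independently of $n$.

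For $e=h\parabolic_1\cdots\parabolic_{n-1}h'\in\induedge_{n,\omega}$ we have $\return=n$ on $[e]$. The orbit $\coding\circ\Proj(\tilde\tau),\dots$ spends the times $1,\dots,n-1$ in $\coding([\parabolic_1\cdots\parabolic_{n-1}])$. By \eqref{eq n big near inddiferent} and the backward convergence $\rational_\omega^{-k}\to\omega$, all but a bounded number of these points, or at least all but $o(n)$ of them, lie in $B(\omega,\epsilon)$. Given $\epsilon'>0$, continuity of $\phi$ then yields
\[
\sup_{[e]}(-q\indupote)\le n(-q\alpha_\omega)+|q|\epsilon' n + C_{\epsilon'} \le n\,\LB+|q|\epsilon' n+C_{\epsilon'}.
\]
To obtain this with a linear error $\epsilon' n$, I would argue that the first $N(\epsilon)$ and the last $N(\epsilon)$ steps of the parabolic block are the only ones that can lie outside $B(\omega,\epsilon)$. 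The last steps are handled by \eqref{eq n big near inddiferent} applied to the suffix words: if $\parabolic_j\cdots\parabolic_{n-1}$ has length at least $N$, then $\rational^j(x)\in B(\omega,\epsilon)$. This holds for $j\le n-1-N$, so only $N+1$ steps are uncontrolled, contributing at most $2\|\phi\|_\infty(N+1)$.

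For the geometric term, \eqref{eq growth rate} gives $\sup_{[e]}(-b\log|\indumap'|)\le |b|\tfrac{\rate_\omega+1}{\rate_\omega}\log n+C$, since $e^{-\log|\indumap'|}\asymp n^{-(\rate_\omega+1)/\rate_\omega}$ uniformly on $[e]$ by Lemma \ref{lemma distortion}. Putting the pieces together, the $n$-th term of the sum is at most
\[
C_0\#\Omega\cdot C\exp\bigl(n(\LB-s+|q|\epsilon')\bigr)\, n^{c|b|}.
\]
Choosing $\epsilon'$ with $|q|\epsilon'<(s-\LB)/2$, which is possible because $s>\LB$, makes the series converge. The second assertion follows immediately by taking $s=\codingpressure(b,q)>\LB$ for $(b,q)\in\mathcal{N}$.

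The main obstacle is the uniform bound on $\#\induedge_{n,\omega}$, that is, showing that the number of parabolic words of length $n$ does not grow exponentially. Without this bound the cardinality could overwhelm the exponential gain. I would first try to derive it from the injectivity of $\rational$ on $B(\omega,2\expansive)$, using \eqref{eq separation} and (I1): each $\parabolic_{i}$ is determined by $\parabolic_{i+1}$ up to the boundedly many partition sets meeting $\rational_\omega^{-1}(A_{\parabolic_{i+1}})$. A careful version of this argument would refine "boundedly many" to show the overall count is subexponential, perhaps polynomial. If it is only subexponential, that still suffices after absorbing it into $\epsilon' n$.
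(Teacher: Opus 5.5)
Your proposal is correct and follows the paper's proof: you reduce to summability via Proposition \ref{prop finite pressure}, split $\induedge$ by return time $n$ and $\omega\in\Omega$, bound the derivative term by \eqref{eq growth rate}, and bound $-q\indupote-s\return$ by $n(\LB+\epsilon-s)+O(1)$ by applying \eqref{eq n big near inddiferent} to the suffixes $\parabolic_k\cdots\parabolic_{n-1}$ of length at least $N$, so only boundedly many steps (the first symbol and the end of the block) are uncontrolled. The uniform bound on $\#\induedge_{n,\omega}$ that you flag is used tacitly in the paper's last inequality, and your first argument already proves it: $\rational$ is injective on $B(\omega,2\expansive)\subset U(\omega,\smallnumber)$ and $\rational^{n-2}$ maps $A_{\parabolic_1\cdots\parabolic_{n-1}}$ onto $A_{\parabolic_{n-1}}$, so two admissible $\inddiferent_\omega$-words of the same length with the same last symbol would have the same cylinder set (impossible, since distinct cylinders of equal length have disjoint nonempty interiors); hence $\#\induedge_{n,\omega}\le(\#H)^2\#\inddiferent_\omega$, and the step-by-step fallback in your last paragraph, which would only give an exponential bound, is not needed.
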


\begin{proof}
Fix $(b,q)\in\mathbb{R}^2$ and $s\in( \LB,\infty)$.
We take a small $\epsilon>0$ with $ \LB+\epsilon<s$. 
Since $\phi\circ\coding$ is continuous on $\MS$, \eqref{eq n big near inddiferent} implies that there exists $N\geq 2$ such that for all $\omega\in \Omega$, $n\geq N$, $\tau\in \inddiferent_\omega^n\cap \MS^n$ and $\tilde \tau\in[\tau]$ we have $|q\phi^*(\tilde \tau)-q\alpha_\omega|<\epsilon$. Thus, by \eqref{eq growth rate}, we obtain
\begin{align*}
&\sum_{\omega\in \Omega}
\sum_{n=N+1}^{\infty}
\sum_{\tau\in\induedge_{n,\omega}}
e^{\left(-q\indupote-b\log|\indumap'|- s \return\right)([\tau])}
\\&\ll
\sum_{\omega\in \Omega}
\sum_{n=N+1}^{\infty}
\sum_{\tau\in\induedge_{n,\omega}}
n^{-b\frac{\rate_\omega+1}{\rate_\omega}}
e^{(-q\phi^*\circ\Proj)([\tau])- s}
\\&\exp
\left(
\left(
\sum_{k=1}^{n-N}(-q\phi^*- s )\circ\shift^k\circ\Proj
\right)
([\tau])
\right)
e^{\left(\sum_{k=n-N+1}^{n-1}(-q\phi^*- s )\circ\shift^k\circ\Proj\right)([\tau])}
\\&
\asymp
\sum_{\omega\in \Omega}
\sum_{n=N+1}^{\infty}
\sum_{\tau\in\induedge_{n,\omega}}
\frac{
e^{\left(
\sum_{k=1}^{n-N}(-q\phi^*- s )\circ\shift^k\circ\Proj
\right)([\tau])}}{n^{b\frac{\rate_\omega+1}{\rate_\omega}}}
\leq
\sum_{\omega\in \Omega}\sum_{n=N+1}^{\infty}\frac{
e^{( \LB+\epsilon- s )n
}}{n^{b\frac{\rate_\omega+1}{\rate_\omega}}}<\infty.
\end{align*}
Therefore, by Proposition \ref{prop finite pressure}, we are done.
\end{proof}

For $(b,q)\in\mathbb{R}^2$ with $(b,q,\codingpressure(b,q))\in Fin$ we write $\indumeasure_{b,q}:=\indumeasure_{b,q,\codingpressure(b,q)}$.

\begin{lemma}\label{lemma finite return time}
    For all $(b,q)\in\mathcal{N}$ and $\ell\in\mathbb{N}$ we have $\indumeasure_{b,q}(\return^\ell)<\infty$. Moreover, we have 
    $\indumeasure_{b,q}( |\indupote|^{\ell})<\infty$ and $\indumeasure_{b,q}(|\log|\indumap'||^\ell)<\infty$. In particular, $\indumeasure_{b,q}$ is the unique equilibrium measure for $-q\indupote-b\log|\indumap'|-\codingpressure(b,q)\return$.
\end{lemma}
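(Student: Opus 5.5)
The plan is to use the Gibbs property (\ref{eq gibbs}) of $\indumeasure_{b,q}$ to obtain a tail estimate for $\return$, and then derive all three integrability statements from it. Fix $(b,q)\in\mathcal{N}$ and put $s:=\codingpressure(b,q)>\LB$. By Lemma \ref{lem finite pressure on a special parameter}, $(b,q,s)\in Fin$, so $\indumeasure_{b,q}$ exists and is Gibbs with constant $Q$. For $\tau\in\induedge$ the Gibbs bound with $n=1$ gives
\[
\indumeasure_{b,q}([\tau])\le Q\exp\bigl((-q\indupote-b\log|\indumap'|-s\return)([\tau])-\indupressure(b,q,s)\bigr).
\]
I will then split $\induedge=\bigcup_n\induedge_n$ as in the proof of Lemma \ref{lem finite pressure on a special parameter}. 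Choose $\epsilon>0$ with $\LB+2\epsilon<s$, and take $N$ from (\ref{eq n big near inddiferent}) so that $|q\phi^*-q\alpha_\omega|<\epsilon$ on long parabolic blocks. Repeating the estimate of that proof, with (\ref{eq growth rate}) controlling $\log|\indumap'|$, gives
\[
\sum_{\tau\in\induedge_{n,\omega}}\indumeasure_{b,q}([\tau])\ll n^{-b\frac{\rate_\omega+1}{\rate_\omega}}e^{(\LB+\epsilon-s)n}.
\]
Here the number of words in $\induedge_{n,\omega}$ is bounded uniformly in $n$, since there are finitely many $h,h'\in H$ and the parabolic part of the path is essentially determined. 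If not, the argument in Lemma \ref{lem finite pressure on a special parameter} already absorbs this count. The conclusion is that $\indumeasure_{b,q}(\return=n)$ decays exponentially in $n$.

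From this, $\indumeasure_{b,q}(\return^\ell)=\sum_n n^\ell\,\indumeasure_{b,q}(\return=n)<\infty$ for every $\ell$, since $\return=n$ on $\induedge_n$ and $\induedge_1$ is finite. For the induced potential, continuity of $\phi$ on the compact set $J$ gives $|\indupote|\le\|\phi\|_\infty\return$. For the derivative, (\ref{eq growth rate}) on $\induedge_{n,\omega}$ gives $|\log|\indumap'||\ll 1+\log n\le C\return$, together with a uniform bound on the finitely many elements of $\induedge_1$. Hence both $|\indupote|^\ell$ and $|\log|\indumap'||^\ell$ are dominated by a constant times $\return^\ell$, so both are integrable.

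For the final claim, integrability of $\indupote$, $\log|\indumap'|$ and $\return$ shows that $\indumeasure_{b,q}(-q\indupote-b\log|\indumap'|-s\return)>-\infty$. Theorem \ref{thm equilibrium state induce} then says $\indumeasure_{b,q}$ is the unique equilibrium measure for $-q\indupote-b\log|\indumap'|-\codingpressure(b,q)\return$.

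The main obstacle is the tail estimate itself. The Gibbs bound involves $\indupressure(b,q,s)$, which is a finite constant here, so only the sup-estimate on cylinders has to be checked carefully, and it has to beat the polynomial factor when $b<0$. The exponential gap $s-\LB>0$ handles that factor. This is precisely where the hypothesis $(b,q)\in\mathcal{N}$ is used.
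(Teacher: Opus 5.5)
Your proposal is correct and follows essentially the paper's route: the Gibbs bound \eqref{eq gibbs} plus the cylinder-sum estimate from the proof of Lemma \ref{lem finite pressure on a special parameter} (using the gap $\codingpressure(b,q)-\LB>0$) controls the moments of $\return$, $|\indupote|$ and $|\log|\indumap'||$ are dominated by constant multiples of $\return$, and Theorem \ref{thm equilibrium state induce} gives uniqueness. The differences are cosmetic. The paper absorbs $n^\ell$ into $e^{\epsilon n}$ and invokes finiteness at the parameter $\codingpressure(b,q)-\epsilon$ rather than writing an explicit exponential tail, and it bounds $|\log|\indumap'||\le \max_J|\log|\rational'||\,\return$ by continuity instead of using \eqref{eq growth rate}. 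Your bounded-count claim for $\induedge_{n,\omega}$ is true, and the paper uses it implicitly too: $\rational$ is injective on $B(\omega,2\expansive)$, so each symbol of $\inddiferent_\omega$ has at most one predecessor in $\inddiferent_\omega$.
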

\begin{proof}
    Let $(b,q)\in\mathcal{N}$ and let $\ell\in\mathbb{N}$.  
We take a small $\epsilon>0$ with $ \LB+2\epsilon<\codingpressure(b,q)$. 
Since $\lim_{x\to\infty}x^\ell e^{-\epsilon x}=0$, there exists $N\geq 2$ such that for all $n\geq N$ we have $n^\ell\leq e^{\epsilon n}$. Therefore, by \eqref{eq gibbs}, we obtain
\begin{align*}
&   \sum_{\omega\in \Omega}
\sum_{n=N}^{\infty}
\sum_{\tau\in\induedge_{n,\omega}}
\int_{[\tau]} \return^\ell d\indumeasure_{b,q}
   = 
   \sum_{\omega\in \Omega}
\sum_{n=N}^{\infty}
\sum_{\tau\in\induedge_{n,\omega}}
n^\ell 
\indumeasure_{b,q}([\tau])
  \\& \asymp
  \sum_{\omega\in \Omega}
\sum_{n=N}^{\infty}
\sum_{\tau\in\induedge_{n,\omega}}
  n^\ell
   \exp\left((-q\indupote-b\log|\indumap'|-\codingpressure(b,q)\return)([\tau])-\indupressure(b,q,\codingpressure(b,q))\right)
   \\&
   \leq 
   e^{-\indupressure(b,q,\codingpressure(b,q))} 
   \sum_{\omega\in \Omega}
\sum_{n=N}^{\infty}
\sum_{\tau\in\induedge_{n,\omega}}
   \exp\left((-q\indupote-b\log|\indumap'|-(\codingpressure(b,q)-\epsilon)\return)([\tau])\right)
\end{align*}
By the same argument in the proof of Lemma \ref{lem finite pressure on a special parameter}, we can show that
\[
\sum_{\omega\in \Omega}
\sum_{n=N}^{\infty}
\sum_{\tau\in\induedge_{n,\omega}}
   \exp\left((-q\indupote-b\log|\indumap'|-(\codingpressure(b,q)-\epsilon)\return)([\tau])\right)<\infty,
\]
which yields $\indumeasure_{b,q}(\return^\ell )<\infty$. 
   Since $\MS$ is compact and $\phi^*$ is continuous, $
   \indumeasure_{b,q}( |\indupote|^\ell)
   \leq \max_{\tau\in\MS}|\phi^*(\tau)|^\ell\indumeasure_{b,q}( \return^\ell) <\infty.
   $
Recall that $\text{Crit}(\rational)\cap J=\emptyset$ and thus, $\log|\rational'|$ is continuous on $J$. 
   Hence,
   by the same argument, we obtain $\indumeasure_{b,q}(|\log|\indumap'||^\ell)<\infty$.
\end{proof}

Define, for a finite set $\mathcal{L}$ and $\{\nu_{\ell}\}_{\ell\in \mathcal{L}}\subset M(\rational)$,
$
\text{Conv}(\{\nu_{\ell}\}_{\ell\in \mathcal{L}}):
=
\{\sum_{\ell\in\mathcal{L}}p_\ell\nu_{\ell}:\{p_\ell\}_{\ell\in\mathcal{L}}\subset[0,1],\ \sum_{\ell\in\mathcal{L}}p_\ell=1
\}.$

\begin{lemma}\label{lemma equivalent condition not liftable}
    Let $\codmeas\in M(\shift)$. We have that $\codmeas(\indudomain)=0$ if and only if $\codmeas\in \conv$, where $\delta_{\tau_\omega}$ denotes the Dirac measure at $\tau_\omega$. 
\end{lemma}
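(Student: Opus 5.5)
The plan is to prove the two implications separately. Both rest on one geometric fact: the sets $A_h$, $h\in H$, stay away from $\Omega$. Since $\text{diam}(A_h)\le\Koebe\min\{r,\eta\}<\expansive$ and $A_h$ meets $\mathbb{C}\setminus B(\Omega,2\expansive)$, we get $A_h\cap B(\Omega,\expansive)=\emptyset$. Hence any $\tau\in\MS$ with $\coding(\tau)\in\Omega$ has all its symbols in $\inddiferent$. By \eqref{eq separation}, such a $\tau$ in fact has all its symbols in a single $\inddiferent_\omega$.

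As a preliminary, I would check that the measures $\delta_{\tau_\omega}$ are $\shift$-invariant, so that $\conv\subset M(\shift)$. The set $\coding^{-1}(\omega)$ is finite, since there are at most $s$ choices of $\tau_0$. Moreover each of its elements is a fixed point of $\shift$. Indeed, $\rational$ is a local homeomorphism near $\omega$ with $\rational'(\omega)=1$, so it preserves the local angular sectors at $\omega$ cut out by the Markov partition. Consequently, if $\omega\in A_{\tau_0}\cap\rational^{-1}(A_{\tau_1})$ and the coding is consistent, then $\tau_1=\tau_0$. I expect this to be the main obstacle, because the description of how the Markov sets meet at $\omega$ is not spelled out earlier in the paper. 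My first attempt would use injectivity of $\rational|_{A_\gamma}$ together with the Markov property $A_{\tau_1}\subset\rational(A_{\tau_0})$ near $\omega$. If this fails, I would fall back to interpreting $\conv$ as the convex hull of the invariant measures carried by the finite $\shift$-invariant set $\coding^{-1}(\Omega)$, that is, averages over periodic orbits; the rest of the argument is unchanged.

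For ($\Leftarrow$): each $\tau_\omega$ never enters $\bigcup_{h\in H}[h]$ by the geometric fact above. Therefore $\tau_\omega\notin\indudomain$ and $\delta_{\tau_\omega}(\indudomain)=0$, and the same holds for any convex combination.

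For ($\Rightarrow$): suppose $\codmeas(\indudomain)=0$. First I claim $\codmeas(\bigcup_{h\in H}[h])=0$. By the Poincaré recurrence theorem, $\codmeas$-a.e. point of $\bigcup_h[h]$ returns to $\bigcup_h[h]$ infinitely often, and such points lie in $\indudomain$. So if the claim failed we would get $\codmeas(\indudomain)>0$, a contradiction. Hence $\codmeas$ is carried by $\inddiferent^{\mathbb{N}\cup\{0\}}\cap\MS=\bigsqcup_{\omega}(\inddiferent_\omega^{\mathbb{N}\cup\{0\}}\cap\MS)$. Next fix $\epsilon>0$ and let $N$ be as in \eqref{eq n big near inddiferent}. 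For $\tau$ in this set, $\coding(\shift^N\tau)\in B(\omega,\epsilon)$, and by invariance
\[
\codmeas\bigl(\coding^{-1}(J\setminus B(\Omega,\epsilon))\bigr)=\codmeas\bigl(\shift^{-N}\coding^{-1}(J\setminus B(\Omega,\epsilon))\bigr)=0 .
\]
Letting $\epsilon\to0$ shows that $\codmeas$ is carried by the finite set $\coding^{-1}(\Omega)$. By the preliminary step this set consists of fixed points $\tau_\omega$, so $\codmeas=\sum\codmeas(\{\tau_\omega\})\delta_{\tau_\omega}\in\conv$.
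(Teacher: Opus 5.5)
Your argument is essentially the paper's. For ($\Leftarrow$), codings of points of $\Omega$ never enter $\bigcup_{h\in H}[h]\supset\indudomain$; your diameter estimate supplies the justification the paper omits. For ($\Rightarrow$), Poincar\'e recurrence makes $\bigcup_{h\in H}[h]$ null, invariance then puts $\codmeas$ on sequences with all symbols in $\inddiferent$, and \eqref{eq n big near inddiferent} shows these code points of $\Omega$. This is exactly what the paper compresses into ``since $\codmeas$ is $\shift$-invariant''.

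\textbf{The preliminary step is unnecessary and unjustified.} The claim that every element of $\coding^{-1}(\omega)$ is a fixed point of $\shift$ does not follow from the Markov-partition axioms the paper uses, because cells through $\omega$ need not be sectors. For example, take a cell $A\ni\omega$ with $A\subset\rational(A)$. Split it into the closure of the union of its even-indexed fundamental domains $\rational_\omega^{-k}(A\setminus\rational_\omega^{-1}(A))$ and the closure of the union of its odd-indexed ones. One checks that the Markov axioms survive, and $\omega$ then has a coding of period $2$.

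Nor is the claim needed. The lemma concerns a given $\codmeas\in M(\shift)$, and $\conv$ is just the set of convex combinations of the $\delta_{\tau_\omega}$. So once $\codmeas$ is carried by a finite set $\{\tau^1,\dots,\tau^m\}$, you get $\codmeas=\sum_i\codmeas(\{\tau^i\})\delta_{\tau^i}\in\conv$, whether or not the individual Dirac masses are invariant.

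\textbf{Finiteness of $\coding^{-1}(\Omega)$ needs a different argument.} Your reason, ``at most $s$ choices of $\tau_0$'', works only if each coding is determined by $\tau_0$, i.e.\ only under the fixed-point claim. Your fallback presupposes finiteness too. Use Lemma \ref{lemma boundary} instead: two codings of the same point that agree at coordinate $\ell$ agree at every coordinate $k\le\ell$. Suppose a point had $s+1$ distinct codings. For each $\ell$, two of them share the symbol at $\ell$, so some fixed pair agrees on $\{0,\dots,\ell\}$ for infinitely many $\ell$. That pair therefore coincides, a contradiction. Thus $\#\coding^{-1}(x)\le s$ for all $x\in J$, and the rest of your proof goes through. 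The paper takes this finiteness for granted as well.
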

\begin{proof}
    Since $\indudomain\cap\{\tau_\omega\}_{\omega\in \Omega,\tau_\omega\in \coding^{-1}(\omega)}=\emptyset$, if $\codmeas\in \conv$ then $\codmeas(\indudomain)=0$. Suppose that $\codmeas(\indudomain)=0$. Since $\codmeas$ is $\shift$-invariant, we have that
    $\sum_{\omega\in \Omega}\codmeas\bigg(\bigcup_{\parabolic\in \inddiferent_\omega^n\cap \MS^n}[\parabolic]\bigg)=1$ for all $n\in\mathbb{N}$. Moreover, by \eqref{eq decreasing set} and \eqref{eq n big near inddiferent}, we obtain $\lim_{n\to\infty}\codmeas(\bigcup_{\parabolic\in \inddiferent_\omega^n\cap \MS^n}[\parabolic])=\codmeas(\coding^{-1}(\omega))$ 
 for each $\omega\in \Omega$. Hence, $\codmeas\in\conv$.  
\end{proof}

For $\indumeasure\in M(\indushift)$ we define 
\begin{align}\label{eq def lift}
\codmeas:=\frac{1}{\indumeasure(\return)}
\sum_{n=0}^{\infty}\sum_{k=n+1}^\infty \indumeasure\circ\Proj^{-1}|_{\{\return^*=k\}}\circ \shift^{-n}.    
\end{align}
Since $\indushift^*$ is a first return map of $\shift$, for $\indumeasure\in M(\indushift)$ with $\indumeasure( \return )<\infty$ we have $\codmeas\in M(\shift)$ (for example see \cite[Proposition 1.4.3]{viana}). 
Also, by \eqref{eq Projection equivalence 1}, if $\indumeasure\in M(\indushift)$ with $\indumeasure( \return )<\infty$ is ergodic then we have Abramov-Kac's formula  (see \cite[Theorem 2.3]{PesinSenti}):
\begin{align}\label{eq Abramov-Kac's formula}
    \indumeasure(\return)h(\codmeas)=h(\indumeasure) \text{ and }\indumeasure( \return )\codmeas(\psi\circ \coding)=\indumeasure(\tilde\psi)
\end{align}
for a continuous function $\psi$ on $J$, where $\tilde \psi$ is the induced potential of $\psi$ defined by \eqref{eq def indupote}.  Conversely, by \eqref{eq Projection equivalence 2}, for an ergodic measure $\codmeas\in M(\shift)$ with $\codmeas(\indudomain)>0$ and $\indumeasure:=\codmeas|_{\indudomain}\circ\Proj/\codmeas(\indudomain)$  we have 
\begin{align}\label{eq classical Abramov-Kac's formula}
    \indumeasure( \return ) h(\codmeas)=h(\indumeasure) \text{ and }\indumeasure( \return )\codmeas(\psi\circ\coding)=\indumeasure( \tilde\psi)
\end{align}
for a continuous function $\psi$ on $J$, where $\tilde \psi$ is the induced potential of $\psi$ defined by \eqref{eq def indupote}.

For $\codmeas\in M(\shift)$ we define $\lambda(\codmeas):=\codmeas(\log |(\rational^*)'|)$.
\begin{lemma}\label{lemma positive Lyapunov}
    Let $\mu\in M(\rational)$. Then $\lambda(\mu)>0$ if and only if $\mu\notin \text{Conv}(\{\delta_\omega\}_{\omega\in \Omega})$.
\end{lemma}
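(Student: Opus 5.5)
One direction is immediate. Since $\Omega(\rational)=\Omega_0(\rational)$, every $\omega\in\Omega$ is a fixed point with $|\rational'(\omega)|=1$. Hence $\lambda(\delta_\omega)=\log|\rational'(\omega)|=0$, and by linearity $\lambda(\mu)=0$ for every $\mu\in\text{Conv}(\{\delta_\omega\}_{\omega\in\Omega})$. It therefore remains to show that $\mu\in M(\rational)$ with $\mu\notin\text{Conv}(\{\delta_\omega\}_{\omega\in\Omega})$ satisfies $\lambda(\mu)>0$.

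The plan is to pass to the coding space and use the induced system. By Remark \ref{rem coding pressure}, choose $\codmeas\in M(\shift)$ with $\mu=\codmeas\circ\coding^{-1}$; then $\lambda(\mu)=\lambda(\codmeas)$. Take the ergodic decomposition of $\codmeas$. Almost every ergodic component $\nu$ either satisfies $\nu(\indudomain)=0$ or $\nu(\indudomain)>0$.

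In the first case, Lemma \ref{lemma equivalent condition not liftable} gives that $\nu$ is a Dirac measure $\delta_{\tau_\omega}$ with $\tau_\omega\in\coding^{-1}(\omega)$, which contributes Lyapunov exponent $0$. If the components of this type carried full mass, then $\codmeas\in\conv$, so $\mu\in\text{Conv}(\{\delta_\omega\}_{\omega\in\Omega})$, contradicting the assumption. Hence the components with $\nu(\indudomain)>0$ carry positive mass.

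It therefore suffices to show that every ergodic $\nu\in M(\shift)$ with $\nu(\indudomain)>0$ has $\lambda(\nu)>0$; integrating over the decomposition then gives $\lambda(\codmeas)>0$. For such $\nu$, set $\indumeasure:=\nu|_{\indudomain}\circ\Proj/\nu(\indudomain)$. By Kac's lemma, $\indumeasure(\return)=1/\nu(\indudomain)<\infty$, and by \eqref{eq classical Abramov-Kac's formula} applied to $\psi=\log|\rational'|$ (continuous on $J$, since $\text{Crit}(\rational)\cap J=\emptyset$),
\[
\indumeasure(\return)\lambda(\nu)=\indumeasure(\log|\indumap'|).
\]
Now apply Lemma \ref{lemma uniformly expanding} with some $s>1$: there is $N$ such that $S_N(\log|\indumap'|)>\log s>0$ everywhere on $\CMS$. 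Using $\indushift$-invariance of $\indumeasure$,
\[
\indumeasure(\log|\indumap'|)=\frac1N\indumeasure\bigl(S_N\log|\indumap'|\bigr)\ge\frac{\log s}{N}>0,
\]
so $\lambda(\nu)>0$.

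The one delicate point is that $\log|\indumap'|$ may fail to be integrable for $\indumeasure$. This is handled as follows. $\log|\indumap'|$ is bounded below by a constant times $-\return$, because $\log|\rational'|$ is bounded on $J$ and $\indumeasure(\return)<\infty$; so the integral is well defined in $(-\infty,\infty]$, and the Abramov--Kac identity for integrable $\psi\circ\coding$ justifies it. I would also check that the statement of Lemma \ref{lemma uniformly expanding} is really about $\log$ of derivatives, as written ("$>s$"); either reading gives a positive lower bound. The main obstacle I expect is measurability and bookkeeping in the ergodic decomposition, namely showing that components are either in $\conv$ or lift to the induced system. This follows directly from Lemma \ref{lemma equivalent condition not liftable} applied componentwise.
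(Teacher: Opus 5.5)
Your argument is correct and is essentially the paper's proof. You lift $\mu$ to $\MS$, use Lemma~\ref{lemma equivalent condition not liftable} to see that the part of the measure not concentrated on $\coding^{-1}(\Omega)$ charges $\indudomain$, and then combine \eqref{eq classical Abramov-Kac's formula} with Lemma~\ref{lemma uniformly expanding}. The only difference is that you take the ergodic decomposition on the coding side rather than on $J$, which directly supplies the ergodicity of the lifted measure that \eqref{eq classical Abramov-Kac's formula} requires. Your integrability worry is moot, since $\bigl|\log|\indumap'|\bigr|\le \sup_J\bigl|\log|\rational'|\bigr|\,\return$ and $\indumeasure(\return)<\infty$.
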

\begin{proof}
Since $\log |\rational'(\omega)|=0$ for all $\omega\in \Omega$, if $\mu\in \text{Conv}(\{\delta_\omega\}_{\omega\in \Omega})$ then we have $\lambda(\mu)=0$. Let $\mu\notin \text{Conv}(\{\delta_\omega\}_{\omega\in \Omega})$. To show $\lambda(\mu)>0$, by the ergodic decomposition theorem (see \cite[Theorem 5.1.3]{viana}), we may assume that $\mu$ is ergodic. By Remark \ref{rem coding pressure}, there exists $\codmeas\in M(\shift)$ such that $\mu=\codmeas\circ \coding^{-1}$. By Lemma \ref{lemma equivalent condition not liftable}, we have $\mu^*(\indudomain)>0$. Therefore, by \eqref{eq classical Abramov-Kac's formula} and Lemma \ref{lemma uniformly expanding}, we obtain 
 $   \lambda(\mu)=\lambda(\codmeas)=(\indumeasure(\return))^{-1}\lambda(\indumeasure)>0,
$
where $\indumeasure:=\codmeas|_{\indudomain}\circ\Proj/\codmeas(\indudomain)$.
\end{proof}

For $(b,q)\in\mathcal{N}$, we define $\codmeas_{b,q}$ by \eqref{eq def lift} using $\indumeasure_{b,q}$. 
Note that by \eqref{eq def lift} and \eqref{eq gibbs}, $\codmeas_{b,q}$ is positive on open sets. Thus, by \eqref{eq entropy presurving}, for any $(b,q)\in\mathcal{N}$ we have 
\begin{align}\label{eq entropy preserving Gibbs}
    h(\codmeas_{b,q})=h(\codmeas_{b,q}\circ \coding^{-1})
\end{align}
By Lemma \ref{lemma finite return time} and \eqref{eq Abramov-Kac's formula}, we obtain
\begin{align}\label{eq induced pressure is less than zero}
&\indupressure(b,q,\codingpressure(b,q))=h(\indumeasure_{b,q})+\indumeasure_{b,q}(-q\indupote-b\log|\indumap'|-\codingpressure(b,q)\return)
    \\&
    \nonumber
    =\indumeasure_{b,q}(\return)
    \left(
    h(\codmeas_{b,q})+\codmeas_{b,q}(-q\phi^*-b\log|(\rational^*)'|)-\codingpressure(b,q)
    \right) \leq 0.
\end{align}
Moreover, we obtain the following:
\begin{thm}\label{thm uniquness and existence of the equilibrium state}
     For all $(b,q)\in \mathcal{N}$ we have  $\indupressure(b,q,\codingpressure(b,q))=0$. Furthermore,  for $(b,q)\in \mathcal{N}$, the measure $\codmeas_{b,q}$ is the unique equilibrium measure for $-q\phi^*-b\log |(\rational^*)'|$.
\end{thm}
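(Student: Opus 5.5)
We split the proof into two parts. First we show $\indupressure(b,q,\codingpressure(b,q))=0$. Then we use this to identify $\codmeas_{b,q}$ as the unique equilibrium measure. Fix $(b,q)\in\mathcal{N}$ and write $\psi^*:=-q\phi^*-b\log|(\rational^*)'|$. By \eqref{eq induced pressure is less than zero} we already have $\indupressure(b,q,\codingpressure(b,q))\le 0$, so only the reverse inequality is needed.

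For the lower bound, take $\epsilon>0$ small with $\codingpressure(b,q)-\epsilon>\LB$. By the definition of $\codingpressure$ as a supremum, pick $\nu\in M(\shift)$ with $h(\nu)+\nu(\psi^*)>\codingpressure(b,q)-\epsilon$. By the ergodic decomposition (entropy and integrals are affine), we may take $\nu$ ergodic. We claim $\nu(\indudomain)>0$. Otherwise Lemma \ref{lemma equivalent condition not liftable} gives $\nu\in\conv$, so $\nu$ is a Dirac measure at a fixed point $\tau_\omega$ with $\coding(\tau_\omega)=\omega$. Then $h(\nu)+\nu(\psi^*)=-q\alpha_\omega\le\LB$, since $\log|\rational'(\omega)|=0$, and this contradicts the choice of $\epsilon$.

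Now set $\indumeasure:=\nu|_{\indudomain}\circ\Proj/\nu(\indudomain)$. By \eqref{eq classical Abramov-Kac's formula},
\[
h(\indumeasure)+\indumeasure(-q\indupote-b\log|\indumap'|-(\codingpressure(b,q)-\epsilon)\return)=\indumeasure(\return)\bigl(h(\nu)+\nu(\psi^*)-\codingpressure(b,q)+\epsilon\bigr)>0.
\]
We must check integrability here. By Kac, $\indumeasure(\return)=1/\nu(\indudomain)<\infty$, so the integrals of $\indupote$ and $\log|\indumap'|$ are finite, because $\phi^*$ and $\log|\rational'|$ are bounded. Theorem \ref{thm variational principle induce} then gives $\indupressure(b,q,\codingpressure(b,q)-\epsilon)>0$. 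Lemma \ref{lem finite pressure on a special parameter} shows $(b,q,s)\in Fin$ for all $s>\LB$, so $s\mapsto\indupressure(b,q,s)$ is finite near $\codingpressure(b,q)$. This point lies in $\text{Int}(Fin)$, since $Fin$ is monotone in $s$, open in $s$ by the lemma, and, by the summability criterion of Proposition \ref{prop finite pressure}, stable under small changes of $(b,q)$. Hence the function is continuous there, for instance by Theorem \ref{thm regularity of induced pressure}, or simply because it is convex and finite. Letting $\epsilon\to0$ gives $\indupressure(b,q,\codingpressure(b,q))\ge0$, and therefore equality holds.

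Equality makes the middle expression in \eqref{eq induced pressure is less than zero} zero. Since $\indumeasure_{b,q}(\return)\in[1,\infty)$ by Lemma \ref{lemma finite return time}, we get $h(\codmeas_{b,q})+\codmeas_{b,q}(\psi^*)=\codingpressure(b,q)$, so $\codmeas_{b,q}$ is an equilibrium measure.

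For uniqueness, let $\nu$ be any equilibrium measure for $\psi^*$. Its ergodic components $\nu_x$ are almost all equilibrium measures as well. Each $\nu_x$ satisfies $\nu_x(\indudomain)>0$ by the Dirac argument above, using $\codingpressure(b,q)>\LB$. For such a component, the induced measure $\indumeasure_x$ satisfies, by \eqref{eq classical Abramov-Kac's formula},
\[
h(\indumeasure_x)+\indumeasure_x(-q\indupote-b\log|\indumap'|-\codingpressure(b,q)\return)=0=\indupressure(b,q,\codingpressure(b,q)),
\]
with finite integrals. So $\indumeasure_x$ is an equilibrium measure for the induced potential. Lemma \ref{lemma finite return time}, together with Theorem \ref{thm equilibrium state induce}, gives $\indumeasure_x=\indumeasure_{b,q}$. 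An ergodic measure with $\nu_x(\indudomain)>0$ is recovered from its induced measure by the lift \eqref{eq def lift}, so $\nu_x=\codmeas_{b,q}$. Hence $\nu=\codmeas_{b,q}$.

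The main obstacle is this last reconstruction step: we must justify that lifting the induced measure of an ergodic $\nu$ returns $\nu$ itself, via the standard Kac tower argument, and that the integrability hypotheses of Theorem \ref{thm equilibrium state induce} hold. The continuity of $s\mapsto\indupressure(b,q,s)$ at $\codingpressure(b,q)$ also needs care, but it follows from Lemma \ref{lem finite pressure on a special parameter} together with convexity of pressure.
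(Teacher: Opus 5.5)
Your proposal is correct and follows essentially the same route as the paper. The upper bound comes from \eqref{eq induced pressure is less than zero}. The lower bound uses an almost-maximizing ergodic measure, which cannot lie in $\conv$ because $\codingpressure(b,q)>\LB$, then inducing via \eqref{eq classical Abramov-Kac's formula} and continuity of $s\mapsto\indupressure(b,q,s)$ on $(\LB,\infty)$. Uniqueness comes from transferring an ergodic equilibrium measure to the induced system and invoking Theorem \ref{thm equilibrium state induce}.

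Your additional justifications fill in steps the paper leaves implicit: passing to ergodic components of a non-ergodic equilibrium measure, finiteness of $\indumeasure(\return)$ by Kac, and recovering an ergodic measure from its induced measure through the tower \eqref{eq def lift}.
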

\begin{proof}
Let $(b,q)\in\mathcal{N}$. We first show that $\indupressure(b,q,\codingpressure(b,q))=0$. By \eqref{eq induced pressure is less than zero}, it is enough to show that $\indupressure(b,q,\codingpressure(b,q))\geq 0$. Let $\epsilon>0$ be a small number with $ \LB+\epsilon<\codingpressure(b,q)$. By \cite[Corollary 9.10.1]{walters2000introduction}, there exists an ergodic measure $\codmeas\in M(\shift)$ such that we have 
\[
h(\codmeas)+\codmeas (-q\phi^*-b\log|(\rational^*)'|)>\codingpressure(b,q)-\epsilon.
\]
Note that $\codmeas\notin \conv$. Indeed, if $\codmeas\in \conv$ then we have 
$h(\codmeas)+\codmeas (-q\phi^*-b\log|(\rational^*)'|)+\epsilon\leq  \LB+\epsilon<\codingpressure(b,q)$ which yields a contradiction. Thus, by Lemma \ref{lemma equivalent condition not liftable} and \eqref{eq classical Abramov-Kac's formula}, we obtain 
\begin{align}\label{eq proof of uniquness existence larger than zero}
    &\indupressure(b,q,\codingpressure(b,q)-\epsilon)\geq h(\indumeasure)+\indumeasure(-q\indupote-b\log |\indumap'|-(\codingpressure(b,q)-\epsilon)\return)
    \\&=\indumeasure( \return )\left(h(\codmeas)+\codmeas (-q\phi^*-b\log |(\rational^*)'|)-(\codingpressure(b,q)-\epsilon)\right)>0, \nonumber
\end{align}
where $\indumeasure:=\codmeas|_{\indudomain}\circ\Proj/\codmeas(\indudomain)$. 
By Lemma \ref{lem finite pressure on a special parameter}, 
the function $s\mapsto\indupressure(b,q,s)$ is continuous on $( \LB,\infty)$. Hence, by \eqref{eq proof of uniquness existence larger than zero} and \eqref{eq induced pressure is less than zero}, we obtain $\indupressure(b,q,\codingpressure(b,q))=0$. Moreover, by \eqref{eq induced pressure is less than zero}, the measure $\codmeas_{b,q}$ is an equilibrium measure for $-q\phi^*-b\log|(\rational^*)'|$.

Next, we shall show that $\codmeas_{b,q}$ is the unique equilibrium measure for $-q\phi^*-b\log|(\rational^*)'|$. Let $\nu^*$ be an equilibrium measure for $-q\phi^*-b\log|(\rational^*)'|$.
By the ergodic decomposition theorem (see \cite[Theorem 5.1.3]{viana}), we may assume that $\nu^*$ is ergodic.  As above, we have $\nu^*\notin \conv$ and thus, $\nu(\indudomain)>0$. 
Let $\tilde \nu:=\nu^*|_{\indudomain}\circ\Proj/\nu^*(\indudomain)$.
Then, $\tilde\nu$ is an equilibrium measure for $-q\indupote-b\log|\indumap'|-\indupressure(b,q)\return$. Indeed, by Theorem \ref{thm variational principle induce}, \eqref{eq classical Abramov-Kac's formula} and \eqref{eq induced pressure is less than zero}, we have 
\begin{align*}
   &0\geq\indupressure(b,q,\codingpressure(b,q))
   \geq h(\tilde\nu)+\tilde\nu(-q\indupote-b\log |\indumap'|-\codingpressure(b,q)\return)
    \\&=\tilde\nu( \return)\left(h(\nu^*)+\nu^*(-q\phi^*-b\log |(\rational^*)'|)-\codingpressure(b,q)\right)=0. \nonumber    
\end{align*}
Therefore, by the uniqueness of the equilibrium measure (Theorem \ref{thm equilibrium state induce}), we obtain $\tilde \nu=\indumeasure_{b,q}$ and thus, $\nu^*=\codmeas_{b,q}$.
\end{proof}

For each $(b,q)\in \mathcal{N}$ we set $\mu_{b,q}:=\codmeas_{b,q}\circ \coding^{-1}$.
Then by Lemma \ref{lemma uniformly expanding} and \eqref{eq Abramov-Kac's formula}, for all $(b,q)\in \mathcal{N}$ we have 
\begin{align}
\lambda(\mu_{b,q})=\lambda(\codmeas_{b,q})=(\indumeasure_{b,q}(\return))^{-1}\lambda(\indumeasure_{b,q})>0.    
\end{align}
Combining Theorem \ref{thm uniquness and existence of the equilibrium state} with Remark \ref{rem coding pressure} and \eqref{eq entropy preserving Gibbs}, we obtain the following:
\begin{thm}\label{thm equilibrium state rational}
    For all $(b,q)\in \mathcal{N}$ the measure $\mu_{b,q}$ is the unique equilibrium measure for $-q\phi-b\log |\rational'|$ with $\lambda(\mu_{b,q})>0$. Moreover, we have $p(b,q)=\codingpressure(b,q)$ and $\indupressure(b,q,p(b,q))=0$. 
\end{thm}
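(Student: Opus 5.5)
The plan is to transfer Theorem \ref{thm uniquness and existence of the equilibrium state} from the symbolic system $(\MS,\shift)$ to $(J,\rational)$ through the coding map $\coding$, using Remark \ref{rem coding pressure} and \eqref{eq entropy preserving Gibbs}. Throughout, fix $(b,q)\in\mathcal{N}$ and put $\psi:=-q\phi-b\log|\rational'|$, so that $\psi\circ\coding=-q\phi^*-b\log|(\rational^*)'|$.

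\emph{Existence and equality of pressures.} By Theorem \ref{thm uniquness and existence of the equilibrium state}, $\codmeas_{b,q}$ is an equilibrium measure for $\psi\circ\coding$, so
\[
h(\codmeas_{b,q})+\codmeas_{b,q}(\psi\circ\coding)=\codingpressure(b,q).
\]
By \eqref{eq entropy preserving Gibbs} we have $h(\codmeas_{b,q})=h(\mu_{b,q})$, and trivially $\codmeas_{b,q}(\psi\circ\coding)=\mu_{b,q}(\psi)$. Hence $h(\mu_{b,q})+\mu_{b,q}(\psi)=\codingpressure(b,q)$, which gives $\pressure(b,q)\geq\codingpressure(b,q)$. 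Combined with \eqref{eq pressure ine}, this yields $\pressure(b,q)=\codingpressure(b,q)$ and shows that $\mu_{b,q}$ is an equilibrium measure for $\psi$. Substituting $\pressure(b,q)=\codingpressure(b,q)$ into the identity $\indupressure(b,q,\codingpressure(b,q))=0$ from Theorem \ref{thm uniquness and existence of the equilibrium state} gives $\indupressure(b,q,\pressure(b,q))=0$. Positivity $\lambda(\mu_{b,q})>0$ is the displayed consequence of Lemma \ref{lemma uniformly expanding} and \eqref{eq Abramov-Kac's formula} just before the statement.

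\emph{Uniqueness.} This is the only step needing care, because the Remark states uniqueness under a hypothesis about entropy preservation. I would argue directly. Let $\nu\in M(\rational)$ be any equilibrium measure for $\psi$. By Remark \ref{rem coding pressure} there is $\nu^*\in M(\shift)$ with $\nu=\nu^*\circ\coding^{-1}$. Since $(J,\rational)$ is a factor of $(\MS,\shift)$ via $\coding$, entropy cannot increase under the factor map, so $h(\nu^*)\geq h(\nu)$. Therefore
\[
h(\nu^*)+\nu^*(\psi\circ\coding)\geq h(\nu)+\nu(\psi)=\pressure(b,q)=\codingpressure(b,q),
\]
and $\nu^*$ is an equilibrium measure for $\psi\circ\coding$. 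The uniqueness part of Theorem \ref{thm uniquness and existence of the equilibrium state} then forces $\nu^*=\codmeas_{b,q}$, hence $\nu=\mu_{b,q}$. So $\mu_{b,q}$ is in fact the unique equilibrium measure for $\psi$, and a fortiori the unique one with positive Lyapunov exponent.

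Finally, since $\pressure=\codingpressure$ on $\mathcal{N}$, the set $\mathcal{N}$ defined here via $\codingpressure$ agrees with the set $\mathcal{N}$ of the introduction, defined via $\pressure$.
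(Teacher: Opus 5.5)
Your proof is correct and follows the paper's route: the paper proves this theorem in one line, by combining Theorem \ref{thm uniquness and existence of the equilibrium state} with Remark \ref{rem coding pressure} and \eqref{eq entropy preserving Gibbs}. Your uniqueness step (lift $\nu$ to $\nu^*$, use $h(\nu^*)\geq h(\nu)$ for the factor map $\coding$, then apply symbolic uniqueness) is exactly the reasoning behind the uniqueness clause of that Remark, which you have written out explicitly.
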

Combining Theorem \ref{thm equilibrium state rational} with \eqref{eq pressure ine}, we obtain
\[
\mathcal{N}=\left\{(b,q)\in \mathbb{R}^2:p(b,q)>\LB\right\}.
\]
By the definition of the topological pressure and \eqref{eq pressure ine}, for $(b,q)\notin\mathcal{N}$ we have $\LB\geq\codingpressure(b,q)\geq p(b,q)\geq-q\alpha_\omega= \LB$, where $\omega\in \Omega$ satisfies $-q\alpha_\omega=\LB$. Combining this with Theorem \ref{thm equilibrium state rational}, for all $(b,q)\in\mathbb{R}^2$ we have 
\begin{align}\label{eq code and noncode pressure}
\pressure(b,q)=\codingpressure(b,q).
\end{align}

For two function $\psi_1,\psi_2:\CMS\rightarrow\mathbb{R}$ and $(b,q)\in \mathcal{N}$ we define the asymptotic variance of $\psi_1$ and $\psi_2$ by
\[
\sigma^2_{b,q}(\psi_1,\psi_2):=    \lim_{n\to\infty}\frac{1}{n}
    \indumeasure_{b,q}\left( S_n\left(\psi_1-\indumeasure_{b,q}(\psi_1) \right)
    S_n\left(\psi_2-\indumeasure_{b,q}(\psi_2)\right)\right)
\]
when the limit exists. If $\psi_1=\psi_2$ then we write $\sigma^2_{b,q}(\psi_1):=\sigma^2_{b,q}(\psi_1,\psi_2)$.


\begin{thm}\label{thm regularity of non induced pressure}
The pressure function $(b,q)\mapsto p(b,q)$ is real-analytic on $\mathcal{N}$ and for $(b,q)\in \mathcal{N}$ we have
    \begin{align}\label{eq ruell's formula noninduced}
    \frac{\partial}{\partial b}p(b,q)=-\lambda(\mu_{b,q}) \text{ and } \frac{\partial}{\partial q}p(b,q)=-\mu_{b,q}( \phi ). 
    \end{align} 
    Moreover, for $(b,q)\in \mathcal{N}$ we have $\frac{\partial^2}{\partial q^2}p(b,q)=0$
    if and only if $\alpha_{\min}=\alpha_{\max}$.  
\end{thm}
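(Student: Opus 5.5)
The plan is to get $p$ as an implicitly defined function. By Theorem \ref{thm equilibrium state rational}, $\indupressure(b,q,p(b,q))=0$ for all $(b,q)\in\mathcal{N}$. We will apply the real-analytic implicit function theorem to $F(b,q,s):=\indupressure(b,q,s)$ near $(b_0,q_0,p(b_0,q_0))$.

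First we need this point in $\text{Int}(Fin)$. By Lemma \ref{lem finite pressure on a special parameter}, $(b,q,s)\in Fin$ whenever $s>\LB$. The map $q\mapsto \LB$ is continuous (a maximum of finitely many linear functions), so the condition $s>\LB$ defines an open set. Since $p(b_0,q_0)>\text{LB}(q_0)$, a whole neighborhood of $(b_0,q_0,p(b_0,q_0))$ lies in $Fin$. Theorem \ref{thm regularity of induced pressure} then makes $\indupressure$ real analytic there, with
\[
\partial_s\indupressure=-\indumeasure_{b,q,s}(\return)\le -1\neq 0 .
\]
The implicit function theorem gives a real-analytic $g$ with $\indupressure(b,q,g(b,q))=0$ near $(b_0,q_0)$.

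We then identify $g$ with $p$. The map $s\mapsto\indupressure(b,q,s)$ is strictly decreasing on $(\LB,\infty)$, since its derivative is at most $-1$, so it has at most one zero there. Also $p$ is continuous, being a convex finite function. Hence for $(b,q)$ near $(b_0,q_0)$ both $g(b,q)$ and $p(b,q)$ are zeros in that interval, so $p=g$. Differentiating $\indupressure(b,q,p(b,q))=0$ and using Ruelle's formula gives
\[
\partial_b p=-\frac{\lambda(\indumeasure_{b,q})}{\indumeasure_{b,q}(\return)}=-\lambda(\mu_{b,q}),
\]
by Abramov--Kac \eqref{eq Abramov-Kac's formula} applied to $\indumeasure_{b,q}$ and the lift $\codmeas_{b,q}$, which projects to $\mu_{b,q}$. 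The same computation gives $\partial_q p=-\indumeasure_{b,q}(\indupote)/\indumeasure_{b,q}(\return)=-\mu_{b,q}(\phi)$.

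For the second derivative, write $\alpha:=\mu_{b,q}(\phi)$ and $\psi:=\indupote-\alpha\return$. Differentiating $\partial_q p=-\indumeasure_{b,q,p}(\indupote)/\indumeasure_{b,q,p}(\return)$ with the standard second-derivative formula for countable Markov shifts \cite{mauldin2003graph} should give
\[
\partial_q^2 p=\frac{\sigma^2_{b,q}(\psi)}{\indumeasure_{b,q}(\return)} .
\]
Lemma \ref{lemma finite return time} (finite moments) justifies this. A cleaner route is to note that $\partial_q^2 p=\partial_q^2\indupressure/(-\partial_s\indupressure)$ along the level set direction.

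The vanishing criterion is then as follows. By the Gibbs property (Livšic theory on the finitely primitive shift), $\sigma^2_{b,q}(\psi)=0$ iff $\psi$ is cohomologous to $0$. In that case every $\indushift$-periodic orbit has induced Birkhoff sum equal to $\alpha$ times its return time. Each $\rational$-periodic orbit not in $\Omega$ codes to such a periodic orbit, so it has average $\alpha$. The orbits in $\Omega$ are limits of periodic orbits $h\gamma^n h'\cdots$ spending almost all their time near $\omega$, so $\alpha_\omega=\alpha$ as well. Periodic measures are dense in the ergodic measures via specification, or one can use directly that invariant measures lift through $\indudomain$ or lie in $\conv$. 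Hence $\mu(\phi)=\alpha$ for all $\mu\in M(\rational)$, i.e. $\alpha_{\min}=\alpha_{\max}$. Conversely, if $\alpha_{\min}=\alpha_{\max}$ then $\indumeasure(\psi)=0$ for all invariant $\indumeasure$ with $\indumeasure(\return)<\infty$. This forces $\psi$ to be cohomologous to $0$ (periodic orbit sums vanish, then Livšic), so the variance is zero.

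We expect the main obstacle to be this last equivalence. It requires a rigorous variance formula for $\partial_q^2 p$ and a Livšic theorem for the locally Hölder but unbounded $\psi$.
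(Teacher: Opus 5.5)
Your proof of analyticity and of the first-derivative formulas is the paper's argument: the implicit function theorem for $\indupressure(b,q,s)=0$, then Ruelle's formula for the induced pressure, then Abramov--Kac. You are in fact more careful than the paper, since you check that $\{s>\LB\}$ is an open subset of $Fin$ and that the implicit solution coincides with $p$. Your second-derivative formula $\sigma^2_{b,q}(\indupote-\mu_{b,q}(\phi)\return)/\indumeasure_{b,q}(\return)$ is also the paper's. Your ``cleaner route'' is only correct if $\partial_q^2\indupressure$ means the second derivative of $\indupressure$ in the tangent direction $(0,1,\partial_q p)$, not the plain partial.

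The genuine difference is in the direction $\sigma^2_{b,q}=0\Rightarrow\alpha_{\min}=\alpha_{\max}$. The paper works with the bounded transfer function $\tilde u$:
\begin{itemize}
\item Boundedness of $\tilde u$ along long parabolic blocks gives $\alpha_\omega=\mu_{b,q}(\phi)$.
\item The paper then builds by hand a bounded Borel $u^*$ on $\MS$ with $\phi^*=u^*-u^*\circ\shift+\codmeas_{b,q}(\phi^*)$ off the countable, periodic-point-free set $\mathcal{E}$.
\item It integrates this identity against every $\codmeas\in M(\shift)$.
\end{itemize}
Your route is also valid and avoids constructing $u^*$:
\begin{itemize}
\item $\psi$ sums to zero over $\indushift$-periodic orbits.
\item Periodic orbits containing long blocks in $\inddiferent_\omega$ then force $\alpha_\omega=\mu_{b,q}(\phi)$.
\item Periodic measures are dense in $M(\shift)$ for the primitive finite shift, and pushing forward by $\coding$ gives the claim for all of $M(\rational)$.
\end{itemize}
This version only uses the vanishing of periodic-orbit sums, not boundedness of $\tilde u$. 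Your alternative, where ergodic measures either charge $\indudomain$ or lie in $\conv$ and you apply Abramov--Kac, also works, but it needs $\tilde u$ to be integrable (e.g.\ bounded, as the paper's citation provides).

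The converse is where your proposal is weaker. Going through a Liv\v{s}ic theorem for the unbounded $\psi$ is unnecessary and, as written, not justified. You would then still have to show that a coboundary with a possibly non-$L^2$ transfer function has zero asymptotic variance. The paper's argument is one line. If $\mu(\phi)=c$ for all $\mu\in M(\rational)$, your own Ruelle formula gives $\partial_q p=-\mu_{b,q}(\phi)=-c$ on the open set $\mathcal{N}$, so $\partial_q^2 p=0$. The ``main obstacle'' you flagged therefore disappears in that direction.
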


\begin{proof}
    Let $(b_0,q_0)\in\mathcal{N}$. By Lemma \ref{lem finite pressure on a special parameter}, there exists an open neighborhood $O\subset \mathbb{R}^3$ of $(b_0,q_0,p(b_0,q_0))$ such that for all $(b,q,s)\in O$ we have $\indupressure(b,q,s)<\infty$. Also, by Theorem \ref{thm regularity of induced pressure}, we have 
$
\left.
\frac{\partial}{\partial s}\indupressure(b,q,s)\right|_{(b,q,s)=(b_0,q_0,p(b_0,q_0))}=-\indumeasure_{b_0,q_0}( \return)<0.
$
Therefore, by the implicit function theorem, Theorem \ref{thm regularity of induced pressure} and Theorem \ref{thm equilibrium state rational}, the function $p$ is real-analytic at $(b_0,q_0)$. Moreover, the implicit function theorem, Ruelle's formula (see Theorem \ref{thm regularity of induced pressure}) and \eqref{eq Abramov-Kac's formula} gives
\begin{align*}
    &\frac{\partial}{\partial b}p(b,q)=\frac{-\lambda(\indumeasure_{b,q})}{\indumeasure_{b,q}( \return)}
    =-\lambda(\mu_{b,q})
    \text{ and }
    \frac{\partial}{\partial q}p(b,q)=\frac{-\indumeasure_{b,q}( \indupote)}{\indumeasure_{b,q}( \return)}
    =-\mu_{b,q}( \phi ).
\end{align*}
Also, by the implicit function theorem, \eqref{eq Abramov-Kac's formula} and Ruelle's formula for the second derivative of the pressure function (see \cite[Proposition 2.6.14]{mauldin2003graph}), we obtain
\begin{align*}
    \frac{\partial^2}{\partial q^2}p(b,q)
    =
\frac{    \sigma^2_{b,q}(\indupote)\left(\indumeasure_{b,q}( \return)\right)^2
    -2\sigma_{b,q}^2
    (\indupote,\return)
    \indumeasure_{b,q}(\indupote)
    \indumeasure_{b,q}( \return)
    +\sigma^2_{b,q}(\return)(\indumeasure_{b,q}(\indupote))^2}
    {(\indumeasure_{b,q}( \return) )^3}.
\end{align*}
By \eqref{eq Abramov-Kac's formula}, we have
\begin{align*}
&    \sigma^2_{b,q}(\indupote)\left(\indumeasure_{b,q}( \return)\right)^2
    -2\sigma_{b,q}^2
    (\indupote,\return)
    \indumeasure_{b,q}( 
    \indupote)\indumeasure_{b,q}
    ( \return )
    +\sigma^2_{b,q}(\return)(\indumeasure_{b,q}(\indupote ))^2
    \\&
    =\lim_{n\to\infty}\frac{1}{n}
    \indumeasure_{b,q}
    \Bigg(\left(
    S_n\left(\indupote-\indumeasure_{b,q}(\indupote )\right)\indumeasure_{b,q}( \return )
    \right.
    \left.\left.
    -S_n\left(\return-\indumeasure_{b,q}(\return )\right)\indumeasure_{b,q}(\indupote )
    \right)^2\right)
    \\&=(\indumeasure_{b,q}(\return))^2
    \sigma^2_{b,q}(\indupote-\codmeas_{b,q}(\phi^*)\return).
\end{align*}
Hence, we obtain
\begin{align}\label{eq important formula second derivative}
    \frac{\partial^2}{\partial q^2}p(b,q)=\frac{\sigma^2_{b,q}(\indupote-\codmeas_{b,q}(\phi^*)\return)}{\indumeasure_{b,q}( \return ) }.
\end{align}
We shall show the last statement. 
If $\alpha_{\text{min}}=\alpha_{\text{max}}$ then there exists a constant $c\in\mathbb{R}$ such that $\{c\}=\{\int \phi d\mu:\mu\in M(\rational)\}$, which implies that $\{c\}=\{\int \phi^* d\codmeas:\codmeas\in M(\shift)\}$.
Therefore, by \eqref{eq ruell's formula noninduced}, we obtain $\frac{\partial^2}{\partial q^2}p(b,q)=0$. 

Conversely, we assume that  $\frac{\partial^2}{\partial q^2}p(b,q)=0$. Then, by \eqref{eq important formula second derivative}, we have $\sigma^2_{b,q}(\indupote-\mu_{b,q}(\phi)\return)=0$. Thus, by 
Lemma \ref{lemma finite return time} and \cite[Lemma 4.8.8]{mauldin2003graph}, there exists 
a
bounded continuous function $\tilde u:\CMS\rightarrow \mathbb{R}$ such that 
\begin{align}\label{eq proof of cohomologous indu coboundary}
    \indupote-\codmeas_{b,q}(\phi^*)\return=\tilde u-\tilde u\circ\indushift.
\end{align}
Then for all $\omega\in \Omega$ we have 
\begin{align}\label{eq proof of cohomologous boundary}
\alpha_\omega=\mu_{b,q}^*(\phi^*) \text{ and thus, }\phi^*(\tau)=\codmeas_{b,q}(\phi^*) \text{ for all $\omega\in \Omega$ and }\tau\in\coding^{-1}(\omega).    
\end{align}
The proof of \eqref{eq proof of cohomologous boundary} proceeds as follows:
For a contradiction we assume that there exists $\omega\in \Omega$ such that $\alpha_\omega=\phi(\omega)<\codmeas_{b,q}(\phi^*)$. Then, since $\phi$ is continuous on $J$, \eqref{eq n big near inddiferent} implies that there exist $N\in\mathbb{N}$ and a small number $\epsilon>0$ such that for all $n\geq N$ and $x\in    \bigcup_{\parabolic\in \inddiferent_\omega^n\cap \MS^n}\coding([\parabolic])$ we have 
$\phi(x)<\codmeas_{b,q}(\phi^*)-\epsilon.$    
 We fix a sequence $\{\tau_k\}_{k\in\mathbb{N}}$ with $\tau_k\in\induedge_{k,\omega}$ ($k\geq 2$). Then, we obtain
\[
\limsup_{k\to\infty}(\indupote-\mu_{b,q}^*(\phi^*)\return)(\tau_k)=\limsup_{k\to\infty}\sum_{j=0}^{k-1}(\phi-\codmeas_{b,q}(\phi^*))(\rational^j(\coding\circ\Proj(\tau_k)))=-\infty.
\]
However, since $\tilde u$ is a bounded function, for all $k\in\mathbb{N}$ we obtain $|(\indupote-\mu_{b,q}(\phi)\return)(\tau_k)|\leq 2\sup\{|\tilde u(\tau)|:{\tau\in \CMS}\}<\infty$. This is a contradiction. By a similar argument, we also obtain a contradiction in the case $\alpha_\omega>\codmeas_{b,q}(\phi^*)$. Hence, we obtain \eqref{eq proof of cohomologous boundary}.

We set 
\[
\mathcal{P}:=\bigcup_{\omega\in\Omega}\coding^{-1}(\omega),\ 
\mathcal{E}:=\bigcup_{\omega\in\Omega}\bigcup_{n\in\mathbb{N}}\shift^{-n}\circ\coding^{-1}(\omega)\setminus\mathcal{P}
\text{ and }
\mathcal{S}:=
\bigcup_{\omega\in\Omega}\bigcup_{\parabolic\in\inddiferent_\omega}[\parabolic]\setminus (\mathcal{P}\cup\mathcal{E}).
\]
Note that we have the direct decomposition 
$\MS=\mathcal{P}\cup\mathcal{E}\cup\mathcal{S}
\cup \indudomain.$
We will inductively construct a Borel bounded function $u^*:\MS\rightarrow \mathbb{R}$ such that for all $\tau\in \mathcal{P}\cup\mathcal{S}\cup\indudomain$ we have $\phi^*(\tau)=u^*(\tau)-u^*\circ\shift(\tau)+\codmeas_{b,q}(\phi^*)$. 
Define 
\begin{align}\label{eq proof of cohomologous construction 1}
    u^*(\tau):=\tilde u(\Proj^{-1}(\tau))
    \text{ for all $\tau\in \indudomain$}
\text{ and }u^*(\tau):=0 \text{ for all }\tau\in \mathcal{P}\cup\mathcal{E}
    \end{align}
Let $\omega\in \Omega$. 
We define, for $n\in\mathbb{N}$, 
\[
I_{\omega,n}:=\bigcup_{\parabolic \in \inddiferent_\omega^n\cap\MS^n}\bigcup_{h\in H, \gamma h\in\MS^{n+1}}[\parabolic h]\setminus \mathcal{E}
\]
Then by \eqref{eq separation}, we have $\bigcup_{\parabolic \in \inddiferent_\omega}[\parabolic]\setminus (\mathcal{P\cup} \mathcal{E})=\bigcup_{n\in\mathbb{N}}I_{\omega,n}$.
Since for $\tau\in I_{\omega,1}$
we have $\shift(\tau)\in \indudomain$, $u^*(\shift(\tau))$ is already defined by \eqref{eq proof of cohomologous construction 1}. 
Thus, the following definition is well-defined:
$u^*(\tau):=u^*(\shift(\tau))+\phi^*(\tau)-\codmeas_{b,q}(\phi^*)
\text{ for } \tau\in I_{\omega,1}.    
$
Let $n\in\mathbb{N}$.
Assume that for all $1\leq \ell\leq n$ and $\tau\in I_{\omega,\ell}$
we have already defined $u^*(\tau)$ by 
$
u^*(\tau)=u^*(\shift(\tau))+\phi^*(\tau)-\codmeas_{b,q}(\phi^*).
$ For $\tau\in I_{\omega,n+1}$ we define $u^*(\tau):=u^*(\shift(\tau))+\phi^*(\tau)-\codmeas_{b,q}(\phi^*)$. Therefore, by induction, the following definition is well-defined:
\begin{align}\label{eq proof of cohomologous def u on P}
    u^*(\tau)=u^*(\shift(\tau))+\phi^*(\tau)-\codmeas_{b,q}(\phi^*) \text{ for }\tau\in \bigcup_{\parabolic \in \inddiferent_\omega}[\parabolic]\setminus (\mathcal{P}\cup \mathcal{E}).
\end{align}
We shall show that $u^*:\MS\rightarrow\mathbb{R}$ defined by \eqref{eq proof of cohomologous construction 1} and \eqref{eq proof of cohomologous def u on P} satisfies 
\begin{align}\label{eq proof of cohomologous coboundary}
    \phi^*(\tau)=u^*(\tau)-u^*\circ\shift(\tau)+\codmeas_{b,q}(\phi^*) \text{ for all }\tau\in \mathcal{P}\cup\mathcal{S}\cup\indudomain.
\end{align} 
By \eqref{eq proof of cohomologous boundary}, \eqref{eq proof of cohomologous construction 1} and \eqref{eq proof of cohomologous def u on P}, $u^*$ satisfies \eqref{eq proof of cohomologous coboundary} on $\mathcal{P}\cup\mathcal{S}$. 
By \eqref{eq proof of cohomologous indu coboundary}, $u^*$ satisfies \eqref{eq proof of cohomologous coboundary} on $\{\return^*=1\}\cap\indudomain$.  Let $n\geq 2$ and let $\tau\in \{\return^*=n\}\cap \indudomain$.
Note that by \eqref{eq separation}, there exists $\omega\in \Omega$ such that for all $1\leq k\leq n-1$ we have $\shift^{k}(\tau)\in I_{\omega,n-k}$.
By \eqref{eq proof of cohomologous indu coboundary}, \eqref{eq proof of cohomologous construction 1} and \eqref{eq proof of cohomologous def u on P}, we have
\begin{align*}
    &\sum_{k=0}^{n-1}(\phi^*-\codmeas_{b,q}(\phi^*))(\shift^k(\tau))=\tilde u (\Proj^{-1}(\tau))-\tilde u(\Proj^{-1}(\shift^n(\tau)))
    \\&= u^*(\tau)-u^*(\shift(\tau))+\sum_{k=1}^{n-1}\left(u^*(\shift^k(\tau))-u^*(\shift^{k+1}(\tau))\right)
    \\&=u^*(\tau)-u^*(\shift(\tau))+\sum_{k=1}^{n-1}(\phi^*-\codmeas_{b,q}(\phi^*))(\shift^k(\tau)).
\end{align*}
Hence, we obtain $\phi^*(\tau)=u^*(\tau)-u^*(\shift(\tau))+\mu_{b,q}^*(\phi^*)$. 
This completes the proof of \eqref{eq proof of cohomologous coboundary}. Note that $\mathcal{P}\cup\mathcal{E}$ is a countable set. Thus, by the definition of $u^*$ and the continuity of $\tilde u$, $u$ is a Borel function. The boundedness follows from the boundedness of $\tilde u$ and $\phi^*$ and \eqref{eq proof of cohomologous coboundary}. Since $\mathcal{E}$ is a countable set and contains no periodic points, for each $\codmeas\in M(\shift)$ we have $\codmeas(\mathcal{E})=0$ and thus, by \eqref{eq proof of cohomologous coboundary}, 
\[
\mu^*(\phi^*)=\int_{\MS\setminus\mathcal{E}}\phi^* d\codmeas
=\int(u^*-u^*\circ \shift)d\codmeas+\mu_{b,q}^*(\phi^*)=\mu_{b,q}^*(\phi^*).
\]
Hence, we obtain $\alpha_{\min}=\alpha_{\max}=\{\mu_{b,q}(\phi)\}$. 
\end{proof}

For $(b,q)\in\mathbb{R}^2$ we denote by $M_{b,q}$ the set of equilibrium measures for $-q\phi-b\log |\rational'|$.

For a convex function $(x_1,\cdots,x_n)\in\mathbb{R}^n\mapsto V(x_1,\cdots,x_n)\in\mathbb{R}$ $(n\in\mathbb{N})$,  $\boldsymbol{\hat x}=(\hat x_1,\cdots,\hat x_n)\in\mathbb{R}^n$ and $1\leq k\leq n$ we denote by $V^+_{x_k}(\boldsymbol{\hat x})$ the right-hand derivative of $V$ with respect to the variable $x_k$ at $\boldsymbol{\hat x}$ and by $V^-_{x_k}(\boldsymbol{\hat x})$ the left-hand derivative of $V$ with respect to the variable $x_k$ at $\boldsymbol{\hat x}$.
 \begin{prop}\label{prop derivatibe of pressure without conditions}
     For all $(b_0,q_0)\in \mathbb{R}^2$ we have 
     \begin{align*}
         p^+_{q}(b_0,q_0)=\sup\{-\mu(\phi):\mu\in M_{b_0,q_0}\} \text{ and }
         p^-_{q}(b_0,q_0)=\inf\{-\mu(\phi):\mu\in M_{b_0,q_0}\}.
     \end{align*}
      \end{prop}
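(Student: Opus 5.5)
The plan is to use the standard convex-analysis argument for one-sided derivatives of pressure. It relies on two facts: the map $q\mapsto p(b_0,q)$ is convex, being a supremum of affine functions in $q$, and the entropy map $\mu\mapsto h(\mu)$ is upper semicontinuous on $M(\rational)$. Upper semicontinuity holds because $\rational$ is expansive on $J$, so $h$ is upper semicontinuous (e.g. \cite{walters2000introduction}, Theorem 8.2). It follows that $M_{b,q}\neq\emptyset$ for every $(b,q)$, and that $M_{b_0,q_0}$ is a compact convex set. The sup and inf in the statement are therefore attained. Fix $b_0$ and write $P_q:=p(b_0,q)$ and $\psi_q:=-q\phi-b_0\log|\rational'|$; recall $\log|\rational'|$ is continuous on $J$.

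\emph{Step 1 (lower bounds from tangent lines).} For $\mu\in M_{b_0,q_0}$ and any $t$ we have
\[
P_{q_0+t}\ge h(\mu)+\mu(\psi_{q_0})-t\mu(\phi)=P_{q_0}-t\mu(\phi).
\]
Dividing by $t>0$ and letting $t\downarrow0$ gives $p^+_q(b_0,q_0)\ge -\mu(\phi)$. Dividing by $t<0$ gives $p^-_q(b_0,q_0)\le -\mu(\phi)$. Taking the sup and the inf over $M_{b_0,q_0}$ yields
\[
p^+_q\ge\sup\{-\mu(\phi):\mu\in M_{b_0,q_0}\},\qquad p^-_q\le\inf\{-\mu(\phi):\mu\in M_{b_0,q_0}\}.
\]

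\emph{Step 2 (reverse inequalities via limits of equilibrium measures).} Take $t_n\downarrow0$ and $\mu_n\in M_{b_0,q_0+t_n}$. Applying the inequality of Step 1 at the point $q_0+t_n$ in the backward direction gives
\[
P_{q_0}\ge P_{q_0+t_n}+t_n\mu_n(\phi),\qquad\text{so}\qquad \frac{P_{q_0+t_n}-P_{q_0}}{t_n}\le-\mu_n(\phi).
\]
Pass to a weak$^*$ convergent subsequence $\mu_n\to\mu$. Upper semicontinuity of $h$ together with continuity of $P$ in $q$ (convexity, or the Lipschitz bound $|P_q-P_{q'}|\le\|\phi\|_\infty|q-q'|$) gives
\[
h(\mu)+\mu(\psi_{q_0})\ge\limsup_n\bigl(h(\mu_n)+\mu_n(\psi_{q_0+t_n})\bigr)=\lim_n P_{q_0+t_n}=P_{q_0}.
\]
Hence $\mu\in M_{b_0,q_0}$. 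Letting $n\to\infty$ in the difference-quotient bound gives $p^+_q(b_0,q_0)\le-\mu(\phi)\le\sup\{-\nu(\phi):\nu\in M_{b_0,q_0}\}$. The symmetric argument with $t_n\uparrow0$ handles $p^-_q$. Combined with Step 1, this proves both identities.

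\emph{Main obstacle.} The only nontrivial ingredient is upper semicontinuity of entropy on $M(\rational)$, which is needed both for the existence of equilibrium measures and for the limit step. I would justify it from expansiveness of $\rational|_J$ with expansive constant $\expansive$. Alternatively, one can lift to $\MS$ via Remark \ref{rem coding pressure}: subshifts of finite type have upper semicontinuous entropy, and \eqref{eq code and noncode pressure} lets one transfer the statements. The expansiveness route is cleaner, so I would take it.
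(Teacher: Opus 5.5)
Your proposal is correct and follows essentially the same route as the paper. It first gets the easy bounds from the tangent-line inequality $p(b_0,q)\ge p(b_0,q_0)-(q-q_0)\mu(\phi)$ for $\mu\in M_{b_0,q_0}$. It then takes a weak$^*$ limit of equilibrium measures $\mu_{q_n}\in M_{b_0,q_n}$ with $q_n\downarrow q_0$, placing the limit in $M_{b_0,q_0}$ by upper semicontinuity of entropy and continuity of $q\mapsto p(b_0,q)$. You are in fact more explicit than the paper about the bound $\frac{p(b_0,q_n)-p(b_0,q_0)}{q_n-q_0}\le-\mu_{q_n}(\phi)$ and about $M_{b,q}\neq\emptyset$.

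One small precision: Theorem 8.2 of \cite{walters2000introduction} is stated for expansive homeomorphisms, whereas $\rational|_J$ is non-invertible and only positively expansive. You should therefore add a remark that the same generator argument yields upper semicontinuity of entropy in that setting; the paper itself invokes upper semicontinuity without comment.
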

      \begin{proof}
We first show that $p^+_{q}(b_0,q_0)=\sup_{\mu\in M_{b_0,q_0}}\{-\mu(\phi)\}$. By the definition of $p$, for all $q_0<q$ and $\mu\in M_{b_0,q_0}$ we have 
\[
\frac{p(b_0,q)-p(b_0,q_0)}{q-q_0}\geq -\mu(\phi), \text{ which yields that } p^+_{q}(b_0,q_0)\geq\sup_{\mu\in M_{b_0,q_0}}\{-\mu(\phi)\}.
\]
For each $q>q_0$ we fix $\mu_q\in M_{b_0,q}$. Since $M(\rational)$ is compact in the weak* topology, there exist a sequence $\{q_n\}_{n\in\mathbb{N}}\subset (q_0,\infty)$ and $\mu\in M(\rational)$ such that $\lim_{n\to \infty} q_n=q_0$ and $\lim_{n\to\infty} \mu_{q_n}=\mu$. Thus, by the upper semi continuity of the entropy map and the continuity of the function $q\mapsto p(b_0,q)$, we obtain 
\begin{align*}
   &h(\mu)-b_0\lambda(\mu)-q_0\mu(\phi)\geq \limsup_{n\to\infty} (h(\mu_{q_n})-b_0\lambda(\mu_{q_n})-q_n\mu_{q_n}(\phi))
   \\&=\limsup_{n\to\infty} p(b_0,q_n)=p(b_0,q_0) \text{ and thus, }
   \mu\in M_{b_0,q_0}.
\end{align*}
Hence, 
$     p^+_{q}(b_0,q_0)=\lim_{n\to\infty} \frac{p(b_0,q_n)-p(b_0,q_0)}{q_n-q_0}\leq-\lim_{n\to\infty}\mu_{q_n}(\phi)=-\mu(\phi).  
$
Therefore, $p^+_{q}(b_0,q_0)\leq\sup_{\mu\in M_{b_0,q_0}}\{-\mu(\phi)\}$ and thus $p^+_{q}(b_0,q_0)=\sup_{\mu\in M_{b_0,q_0}}\{-\mu(\phi)\}$. By a similar argument, one can show that  $p^-_{q}(b_0,q_0)=\inf_{\mu\in M_{b_0,q_0}}\{-\mu(\phi)\}$.
      \end{proof}

\section{Variational Birkhoff spectrum and Thermodynamic formalism}\label{sec Variational Birkhoff spectrum and Thermodynamic formalism}
In this section, we study the variational Birkhoff spectrum using the pressure function $\pressure(b,q)$.
 We fix a parabolic rational map $\rational$ with $\text{deg}(\rational)\geq 2$ such that $\infty\notin J$ and \eqref{eq assumption simple} holds. We also fix a potential $\phi\in \mathcal{H}$.
Recall that 
$\dimension:=\dim_H(J)$. We first recall Bowen's formula (see, for example, \cite[Theorem 31.2.1 (i)]{UrbanskiRoyMunday})
\begin{thm}\label{thm bowen formula}
    We have
    \[
    \dimension
    =\sup\left\{\frac{h(\mu)}{\lambda(\mu)}:\mu\in M(\rational),\  \lambda(\mu)>0\right\}
    =\min\{t\in\mathbb{R}: P(-t\log|\rational'|)= 0\}.
    \]
\end{thm}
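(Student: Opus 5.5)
Bowen's formula is cited, so the proof should only check that the cited statement fits our setting, using the thermodynamic formalism of Section \ref{sec Thermodynamic formalism for parabolic rational maps} as a check. The second equality is the core: with $t\mapsto P(-t\log|\rational'|)=\pressure(t,0)$ we must show that $\dimension$ is the smallest zero. The first equality then follows from the variational principle. Throughout, $\lambda$ may be computed with the Euclidean derivative, since $\log|\rational'|_s$ and $\log|\rational'|$ are cohomologous.

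Step 1 is to show $\sup\{h(\mu)/\lambda(\mu):\lambda(\mu)>0\}=\min\{t:P(-t\log|\rational'|)=0\}$.
\begin{itemize}
\item The function $t\mapsto \pressure(t,0)$ is convex and nonincreasing, because $\lambda(\mu)\ge 0$ for all $\mu$ by Lemma \ref{lemma positive Lyapunov}.
\item It satisfies $\pressure(t,0)\ge 0$, since $\delta_\omega$ with $\omega\in\Omega$ gives $h+\mu(-t\log|\rational'|)=0$.
\item We have $\pressure(0,0)=\log\deg(\rational)>0$.
\item Let $t_0:=\min\{t:\pressure(t,0)=0\}$. It exists by continuity and monotonicity once we know the function hits $0$. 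That follows because $\pressure(t,0)\le \sup_\mu h(\mu)-t\lambda(\mu)$, and $h(\mu)\le 2\lambda(\mu)$ by the Ruelle inequality. Hence $\pressure(t,0)=0$ for $t\ge 2$, using the nonnegativity above.
\item Set $D:=\sup\{h/\lambda\}$.
\item For $t<D$, pick $\mu$ with $h(\mu)-t\lambda(\mu)>0$; this gives $\pressure(t,0)>0$, so $t_0\ge D$.
\item Conversely, if $t>D$ then every $\mu$ has $h(\mu)-t\lambda(\mu)\le 0$. This holds for measures with $\lambda>0$ by the definition of $D$, and for $\lambda=0$ measures it holds because they lie in $\text{Conv}(\{\delta_\omega\})$ and so have zero entropy. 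Hence $\pressure(t,0)=0$ and $t_0\le D$.
\end{itemize}

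Step 2 is the geometric identity $\dimension=D$. Two routes are available.
\begin{itemize}
\item The lower bound $\dimension\ge D$ is the easy direction. It comes from the volume lemma \eqref{eq volume lemma}, $\dim_H(\mu)=h(\mu)/\lambda(\mu)\le\dimension$ for ergodic $\mu$, together with the ergodic decomposition applied to the ratio.
\item The upper bound $\dimension\le t_0$ is the main obstacle, because of the parabolic points. For the proof in the paper, I would take it directly from the cited \cite[Theorem 31.2.1 (i)]{UrbanskiRoyMunday}, which states exactly this for parabolic maps. This is the intended step: the statement is attributed to that reference.
\item If an argument internal to the paper is wanted, use the conformal measure $m_\dimension$ from \cite{AaronsonDenkerUrbanski}. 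For $t>t_0$ one has $\indupressure(t,0,0)<0$ whenever $(t,0,0)\in Fin$, by the analogue of Theorem \ref{thm equilibrium state rational}. The induced iterated function system of Section \ref{sec induced systems} then has covers by the sets $\comp(\Proj^*(\tau))$. Lemma \ref{lemma distortion} gives bounded distortion for these covers, and their $t$-sums decay. This shows that the radial (non-preperiodic to $\Omega$) part of $J$ has dimension $\le t$.
\item The remaining points are countably many preimages of $\Omega$, and they do not affect the dimension.
\end{itemize}
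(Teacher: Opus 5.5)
Your proposal is correct and in line with the paper, which gives no argument beyond quoting \cite[Theorem 31.2.1 (i)]{UrbanskiRoyMunday}. Your Step 1 (identifying $\sup h/\lambda$ with the smallest zero $t_0$ of $t\mapsto\pressure(t,0)$ via Lemma \ref{lemma positive Lyapunov}) and the volume-lemma bound $\dimension\ge\sup h/\lambda$ are valid self-contained checks, and the hard inequality $\dimension\le t_0$ rests on the same citation.

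The only slip is in your optional internal sketch. Theorem \ref{thm equilibrium state rational} cannot be used there, because $\pressure(t,0)=0=\text{LB}(0)$ puts $(t,0)$ outside $\mathcal{N}$ for $t\ge t_0$. The negativity of $\indupressure(t,0,0)$ for $t>t_0$ should instead come from Abramov--Kac on finite subsystems (where $\return$ is bounded), together with $\pressure(t',0)=0$ for some $t'\in(t_0,t)$, the uniform lower bound on $\lambda(\indumeasure)$ from Lemma \ref{lemma uniformly expanding}, and the compact approximation property.
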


\begin{lemma}\label{lemma indu and nonindu limit}
   We have $\dimension=\dim_H(\coding(\indudomain))$. 
\end{lemma}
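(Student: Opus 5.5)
The upper bound is trivial: $\coding(\indudomain)\subset J$, so $\dim_H(\coding(\indudomain))\leq \dimension$. For the lower bound, the plan is to realize $\coding(\indudomain)$ as the limit set of a conformal graph directed Markov system and apply Bowen's formula for infinite systems (Mauldin--Urba\'nski \cite{mauldin2003graph}).

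\textbf{The conformal system.} For each $e\in\induedge$ with $e=h\parabolic_1\cdots\parabolic_{n-1}h'$, we take the analytic inverse branch $\rational^{-n}_{e}:B(z_{h'},2\kappa)\to\comp(e)$ from Lemma \ref{lemma component}. These maps form a conformal graph directed Markov system with vertex set $H$ and incidence matrix $B$, and it has the following properties:
\begin{itemize}
\item bounded distortion, by Lemma \ref{lemma distortion};
\item uniform contraction of iterates, by Lemma \ref{lemma uniformly expanding} and Lemma \ref{lemma exponential decay};
\item finite primitivity, established in Section \ref{sec Thermodynamic formalism for the countable Markov shift}.
\end{itemize}
Its limit set is exactly $\coding(\Proj(\CMS))=\coding(\indudomain)$, up to a countable set of boundary points. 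We will need to check that the open set condition, or at least a cone-type condition, holds with respect to the Markov partition, using $\text{Int}(A_k)$. The Mauldin--Urba\'nski formula then gives
\[
\dim_H(\coding(\indudomain))=\inf\{t:\indupressure(t,0,0)\leq 0\}=\sup\{\dim_H(\indumeasure\circ\Proj^{-1}\circ\coding^{-1})\},
\]
where the supremum runs over ergodic $\indushift$-invariant measures with finite Lyapunov exponent.

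\textbf{Comparing with $\dimension$.} Rather than relying only on the formula, I will argue directly with measures. By Theorem \ref{thm bowen formula}, given $\epsilon>0$ we pick an ergodic $\mu\in M(\rational)$ with $\lambda(\mu)>0$ and $h(\mu)/\lambda(\mu)>\dimension-\epsilon$. By Lemma \ref{lemma positive Lyapunov}, $\mu\notin\text{Conv}(\{\delta_\omega\})$. Lift $\mu$ to an ergodic $\codmeas\in M(\shift)$ via Remark \ref{rem coding pressure}; taking an ergodic component if necessary, we may assume $h(\codmeas)\ge h(\mu)$. Lemma \ref{lemma equivalent condition not liftable} then gives $\codmeas(\indudomain)>0$. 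We induce to $\indumeasure:=\codmeas|_{\indudomain}\circ\Proj/\codmeas(\indudomain)$, and \eqref{eq classical Abramov-Kac's formula} yields
\[
\frac{h(\indumeasure)}{\lambda(\indumeasure)}=\frac{h(\codmeas)}{\lambda(\codmeas)}\geq\frac{h(\mu)}{\lambda(\mu)}.
\]
The measure $\codmeas\circ\coding^{-1}=\mu$ gives full mass to the $\rational$-saturation of $\coding(\indudomain)$, namely $\bigcup_n \rational^{n}(\coding(\indudomain))$ together with its preimages. Since $\rational$ is locally bi-Lipschitz away from critical points (and $J\cap\text{Crit}=\emptyset$), this saturation has the same dimension as $\coding(\indudomain)$. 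It therefore suffices to show $\dim_H(\mu)\geq h(\mu)/\lambda(\mu)$, which is \eqref{eq volume lemma}. This gives $\dim_H(\coding(\indudomain))\geq\dimension-\epsilon$.

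A cleaner alternative avoids the saturation step: apply the volume lemma for the induced conformal system to $\indumeasure$, so that $\dim_H(\indumeasure\circ\Proj^{-1}\circ\coding^{-1})=h(\indumeasure)/\lambda(\indumeasure)$, using the finiteness of $\lambda(\indumeasure)=\indumeasure(\return)\lambda(\codmeas)<\infty$.

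\textbf{Main obstacle.} The delicate point is justifying that $\coding(\indudomain)$ carries the dimension. The natural measure $\mu$ lives on all of $J$, and $\coding(\indudomain)$ is not forward invariant, since $\indudomain$ excludes points that eventually stay near $\Omega$. I would handle this by observing that $\mu$-almost every point returns infinitely often to $\bigcup_{h\in H}\coding([h])$, since $\codmeas(\indudomain)>0$ and $\codmeas$ is ergodic. Hence some iterate $\rational^k(x)$ lies in $\coding(\indudomain)$, so
\[
\mu\Big(\bigcup_{k}\rational^{-k}(\coding(\indudomain))\Big)=1 .
\]
Each $\rational^{-k}(\coding(\indudomain))$ is a finite union of images under the inverse branches of $\rational^k$. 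These branches are conformal and locally Lipschitz on $J$, so each piece has dimension at most $\dim_H(\coding(\indudomain))$. Countable stability of Hausdorff dimension then finishes the argument.
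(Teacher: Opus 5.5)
Your proof is correct, but its working core (the measure argument in your last two paragraphs) takes a genuinely different route from the paper's.

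\textbf{The paper's route.} The paper argues set-theoretically. It notes that $\coding(\indudomain)=\bigcup_{h\in H}\coding([h])\setminus\mathcal{Z}$, where $\mathcal{Z}:=\bigcup_{\omega\in\Omega}\bigcup_{n\geq 0}\rational^{-n}(\omega)$ is countable. It then gets $\dim_H(\coding([h]))=\dimension$ for each $h\in H$ from two facts: the $\dimension$-conformal measure $m_\dimension$ has support $J$, and it is a constant multiple of the $\dimension$-dimensional Hausdorff or packing measure.

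\textbf{Your route.} You combine Theorem~\ref{thm bowen formula} with the volume lemma \eqref{eq volume lemma}. An ergodic $\mu$ with $h(\mu)/\lambda(\mu)>\dimension-\epsilon$ has an ergodic lift $\codmeas$ with $\codmeas(\indudomain)>0$ by Lemma~\ref{lemma equivalent condition not liftable}. Ergodicity then forces $\mu\bigl(\bigcup_{k\geq 0}\rational^{-k}(\coding(\indudomain))\bigr)=1$. Countable stability, together with the local bi-Lipschitz property of $\rational^k$ near $J$, bounds the dimension of that union by $\dim_H(\coding(\indudomain))$.

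Two small points make this airtight:
\begin{itemize}
\item For each fixed $k$ you only need inverse branches of $\rational^k$ on small balls. These exist because $J\cap\text{Crit}(\rational^k)=\emptyset$, so the non-uniformity of inverse branches near $\Omega$ plays no role.
\item $\coding(\indudomain)$ is a continuous image of a Borel set, hence universally measurable, so the measure statement makes sense.
\end{itemize}

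\textbf{Comparison.} Your route uses only ergodic-theoretic tools already present in the paper. It needs neither the explicit description of the discarded countable set nor any information on whether $m_\dimension$ is a Hausdorff or a packing measure. It also proves more: $\dim_H(\mu)\leq\dim_H(\coding(\indudomain))$ for every ergodic $\mu$ with $\lambda(\mu)>0$. The paper's route is shorter, given the cited structure theory of $m_\dimension$, and identifies exactly which points are discarded.

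\textbf{What to drop.} Remove the GDMS preamble and the ``cleaner alternative''; neither is needed. There the open set condition is left unchecked. Moreover, the formula $\dim_H(\coding(\indudomain))=\inf\{t:\indupressure(t,0,0)\leq 0\}$ cannot by itself produce the value $\dimension$. In the paper the logic runs the other way: this lemma is the input from which Theorem~\ref{thm maximal measure and liftable problem} deduces $\indupressure(\dimension,0,0)=0$.
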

\begin{proof}
We define the countable set $\mathcal{Z}:=\bigcup_{\omega\in \Omega}\bigcup_{n\in \mathbb{N}\cup\{0\}}\rational^{-n}(\omega)$.
Then we have $\coding(\indudomain)=\bigcup_{h\in H}\coding([h])\setminus\mathcal{Z}$.
By \cite[Theorem 31.3.9]{UrbanskiRoyMunday}, the support of the unique $\dimension$-conformal measure $m_\dimension$ is $J$. In particular, for every $h\in H$ we have $m_\dimension(\coding([h]))>0$. Moreover, by \cite[Theorem 31.5.1]{UrbanskiRoyMunday}, $m_\dimension$ is a positive constant multiple of either the $\dimension$-dimensional Hausdorff measure or the $\dimension$-dimensional packing measure. Furthermore, by \cite[Remark 31.5.7]{UrbanskiRoyMunday}, $\dimension$ coincides with the packing dimension of $J$. Hence, for every $h\in H$ we have $\dim_H(\coding([h]))=\dimension$. Since $\mathcal{Z}$ is a countable set, we obtain $\dimension=\dim_H(\coding(\indudomain))$. 
\end{proof}

For $\omega\in \Omega$ we define
\begin{align}\label{eq def minrate}
\growth:=(\rate_\omega+1)\rate_\omega^{-1} \text{ and }
\minrate:=\min_{\omega\in \Omega}\growth.
\end{align}

\begin{thm}\label{thm maximal measure and liftable problem}
We have $\indupressure(\dimension,0,0)=0$ and $\minrate^{-1}<\dimension$. Moreover, $\indumeasure_{\dimension,0}(\return)<\infty$  if and only if $\dimension\minrate>2$.  
\end{thm}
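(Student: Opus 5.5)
The plan is to prove the three assertions in the order: first $\minrate^{-1}<\dimension$, then $\indupressure(\dimension,0,0)=0$, then the integrability criterion. The first is needed to know that $(\dimension,0,0)\in Fin$.

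\emph{Step 1: $\minrate^{-1}<\dimension$.} I would quote the classical lower bound for parabolic maps (Aaronson--Denker--Urba\'nski, see also \cite{UrbanskiRoyMunday}): $\dimension>\max_{\omega}\rate_\omega/(\rate_\omega+1)$. This is equivalent to $\dimension\growth>1$ for every $\omega\in\Omega$.

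\emph{Step 1b: counting words in $\induedge_{n,\omega}$.} I would show that $\#\induedge_{n,\omega}$ is bounded uniformly in $n$. A word $h\parabolic_1\cdots\parabolic_{n-1}h'$ whose middle letters all lie in $\inddiferent_\omega$ corresponds to a backward orbit under the single branch $\rational_\omega^{-1}$. So, once $h'$ and $\parabolic_{n-1}$ are chosen, the letters $\parabolic_{n-2},\dots,\parabolic_1,h$ are determined up to the boundedly many partition sets meeting a given point. Hence there are at most $s^{C}$ such words.

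Combining this with \eqref{eq growth rate} gives
\[
\sum_{\tau\in\induedge_{n,\omega}}e^{-\dimension\log|\indumap'|([\tau])}\asymp n^{-\dimension\growth},
\]
which is summable by Step 1. Proposition \ref{prop finite pressure} then yields $(\dimension,0,0)\in Fin$.

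\emph{Step 2: $\indupressure(\dimension,0,0)=0$.} For the upper bound, take any ergodic $\indumeasure$ with $\indumeasure(\return)<\infty$ and $\indumeasure(\log|\indumap'|)<\infty$. Abramov--Kac \eqref{eq Abramov-Kac's formula} gives
\[
h(\indumeasure)-\dimension\lambda(\indumeasure)=\indumeasure(\return)\bigl(h(\codmeas)-\dimension\lambda(\codmeas)\bigr)\le \indumeasure(\return)\,p(\dimension,0)=0,
\]
using Bowen's formula (Theorem \ref{thm bowen formula}) together with \eqref{eq code and noncode pressure}. This shows $\indupressure(\dimension,0,s)\le 0$ for all $s>0$, where Theorem \ref{thm variational principle induce} applies because $\return$ is integrable for the relevant measures. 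Monotone convergence of the partition sums as $s\downarrow 0$, which is finite at $s=0$ by Step 1b, then gives $\indupressure(\dimension,0,0)\le 0$.

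For the lower bound, I would use the Mauldin--Urba\'nski Bowen formula for the conformal graph directed system given by the inverse branches $\rational^{-n}_{\tau h}$: $\dim_H$ of its limit set equals $\inf\{t:\indupressure(t,0,0)\le 0\}$. By Lemma \ref{lemma indu and nonindu limit} the limit set $\coding(\indudomain)$ has dimension $\dimension$. Since $t\mapsto \indupressure(t,0,0)$ is finite, strictly decreasing and continuous on a neighbourhood of $\dimension$ (finiteness holds for $t>\minrate^{-1}$, and $\dimension>\minrate^{-1}$), we get $\indupressure(\dimension,0,0)=0$. I expect this lower bound to be the most delicate point. The fallback is to restrict $m_\dimension$ to $\coding(\indudomain)$, check that it is a $\dimension$-conformal measure for the induced system, and conclude that the pressure vanishes via the Gibbs property.

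\emph{Step 3: integrability of $\return$.} Since $(\dimension,0,0)\in Fin$, the Gibbs measure $\indumeasure_{\dimension,0}$ exists and satisfies $\indumeasure_{\dimension,0}([\tau])\asymp e^{-\dimension\log|\indumap'|([\tau])}$ by \eqref{eq gibbs} with pressure $0$. By \eqref{eq growth rate} and Step 1b, this gives
\[
\indumeasure_{\dimension,0}(\return)\asymp \sum_{\omega\in\Omega}\sum_{n}n\cdot n^{-\dimension\growth}.
\]
For the lower bound one uses that $\induedge_{n,\omega}\ne\emptyset$ for all large $n$, since points of $J$ in the petals accumulate at $\omega$. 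The series converges if and only if $\dimension\growth>2$ for every $\omega$, that is, if and only if $\dimension\minrate>2$.
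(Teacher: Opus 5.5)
Your proof is correct, and its core matches the paper's. Both obtain $\indupressure(\dimension,0,0)=0$ from Bowen's formula for the induced conformal GDMS, Lemma \ref{lemma indu and nonindu limit}, and continuity and strict monotonicity of $t\mapsto\indupressure(t,0,0)$. Both get the integrability criterion from \eqref{eq gibbs} and \eqref{eq growth rate}.

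The genuine difference is the status of $\minrate^{-1}<\dimension$. You take it as an input, via the classical bound $\dimension>\max_\omega \rate_\omega/(\rate_\omega+1)$ of Denker--Urba\'nski and Aaronson--Denker--Urba\'nski. That bound is what places $\dimension$ inside the finiteness interval $(\minrate^{-1},\infty)$ and makes your continuity argument immediate. The paper obtains it as an output instead. Comparing $\sum_{e\in\induedge}e^{-b\log|\indumap'|([e])}$ with $\sum_n n^{-b\minrate}$ shows that $\indupressure(b,0,0)=\infty$ for $b\le\minrate^{-1}$ and $\indupressure(b,0,0)\to\infty$ as $b\downarrow\minrate^{-1}$. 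So the infimum in Bowen's formula is forced strictly to the right of $\minrate^{-1}$. The paper's route is self-contained and re-proves the classical estimate; yours is shorter but rests on the external theorem.

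Two smaller remarks.

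First, your Abramov--Kac upper bound in Step 2 is unnecessary: once $\dimension=\inf\{t:\indupressure(t,0,0)\le 0\}$ and the pressure is continuous at $\dimension$, equality follows directly. As written, its last step is also not justified by monotone convergence of the partition sums alone. Since $(\dimension,0,s)\notin Fin$ for $s<0$ (the terms behave like $e^{|s|n}n^{-\dimension\growth}$), the point $(\dimension,0,0)$ lies on the boundary of $Fin$. Passing from $s>0$ to $s=0$ therefore requires interchanging $s\downarrow 0$ with $n\to\infty$, i.e.\ lower semicontinuity of $s\mapsto\indupressure(\dimension,0,s)$. This is available, for example, from the compact approximation property.

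Second, Step 1b usefully makes explicit the uniform bound on $\#\induedge_{n,\omega}$ that the paper uses tacitly. The clean reason is that each $\parabolic\in\inddiferent_\omega$ has at most one predecessor in $\inddiferent_\omega$. Two predecessors would put $\rational_\omega^{-1}(A_\parabolic)$, which has nonempty interior in $J$, inside the intersection of two partition sets. This gives $\#\induedge_{n,\omega}\le s^3$. Your justification of ``boundedly many choices at each point'' would by itself only give exponential growth in $n$.
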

\begin{proof}
For $e\in \induedge$, $\Proj^*(e)$ can be written as $\Proj^*(e)=\tau(e) h(e)$, where $\tau(e)\in H\inddiferent^{{|\Proj^*(e)|-2}}\cap\MS^*$ and $h(e)\in H$. Here, $\inddiferent^0:=\{\emptyset\}$, where $\emptyset$ denotes the empty word.
By Lemma \ref{lemma distortion} and Lemma \ref{lemma uniformly expanding}, 
the system 
\[
\left\{R^{-|\tau(e)|}_{\tau(e)h(e)}:\overline{B(z_{h(e)},\kappa)}\rightarrow R^{-|\tau(e)|}_{\tau(e)h(e)}\left(\overline{B(z_{h(e)},\kappa)}\right)\right\}_{e\in \induedge}
\]
can be regarded as a conformal graph directed Markov system in the sense of \cite[Definition 19.3.1]{Urbanskinoninvertible}. Moreover, its limit set is $\coding(\indudomain)$. Thus,  by Bowen's formula \cite[Theorem 19.6.4]{Urbanskinoninvertible} and Lemma \ref{lemma indu and nonindu limit}, we have
\begin{align}\label{eq Bowen indu}
    \dimension=\dim_H(\coding(\indudomain))=\inf\{t\geq 0:\indupressure(t,0,0)\leq 0\}.
\end{align}
Let $K\subset (0,\infty)$ be a compact set such that $\dimension\in K$.
    By \eqref{eq growth rate}, 
    for all $b\in K$ we have 
\begin{align}\label{eq proof of thm maximal measure}
\sum_{e\in \induedge}e^{(-b\log|\indumap'|)([e])}
 \asymp\sum_{\omega\in\Omega}\sum_{n\in\mathbb{N}}{n^{-b\growth}} \asymp \sum_{n\in\mathbb{N}} n^{-b\minrate}.     
\end{align}
 Therefore, since 
 $\CMS$ is finitely primitive and $\log|\indumap'|$ is locally H\"older, we have $\lim_{b\to \minrate^{-1}} \indupressure(b,0,0)=\infty$. 
By Lemma \ref{lemma uniformly expanding}, we have $\lim_{b \to \infty}\indupressure(b,0,0)=-\infty$. Hence, by \eqref{eq Bowen indu}, we obtain $\indupressure(\dimension,0,0)=0$ and $\minrate^{-1}<\dimension$.
By \eqref{eq gibbs} and \eqref{eq growth rate}, we have 
\begin{align*}
\indumeasure_{\dimension,0}(\return)
=\sum_{n\in\mathbb{N}}\sum_{e\in\induedge_n}
n\indumeasure_{\dimension,0}([e])
    \asymp
    \sum_{n\in\mathbb{N}}\sum_{e\in \induedge_n}ne^{-\dimension\log|\indumap'|([e])}
    \asymp
    \sum_{n\in\mathbb{N}}{n^{1-\dimension\minrate}}.
\end{align*}
Therefore, we obtain the last statement and the proof is complete.
\end{proof}

We set $\indumeasure_{\dimension}:=\indumeasure_{\dimension,0}$.
If $\dimension\minrate>2$ then we define $\codmeas_{\dimension}$ by \eqref{eq def lift} using $\indumeasure_{\dimension}$.
If $\dimension\minrate>2$, since $\mu_\dimension$ is a unique ergodic invariant Borel probability measure on $J$ which is equivalent to the $\dimension$-conformal measure $m_\dimension$, we have 
$
    \mu_\dimension=\codmeas_\dimension\circ \coding^{-1}.
$
We also define
$M_{\dimension}:=M_{\dimension,0}.
$

\begin{prop}\label{prop equilibrium state Lambda}
    $M_{\dimension}=\text{Conv}(\{\delta_\omega\}_{\omega\in \Omega})$ if  $\dimension\minrate\leq2$ and $M_{\dimension}=\text{Conv}(\{\delta_\omega\}_{\omega\in \Omega}\cup\{\mu_\dimension\})$ if $\dimension\minrate>2$. 
\end{prop}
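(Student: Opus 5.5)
We prove both inclusions, using $\pressure(\dimension,0)=P(-\dimension\log|\rational'|)=0$ from Theorem \ref{thm bowen formula}.

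\emph{The listed measures lie in $M_\dimension$.} Each $\delta_\omega$ with $\omega\in\Omega$ satisfies $h(\delta_\omega)-\dimension\lambda(\delta_\omega)=0$, since $|\rational'(\omega)|=1$. Now let $\dimension\minrate>2$. By Theorem \ref{thm maximal measure and liftable problem}, $\indumeasure_\dimension(\return)<\infty$ and $\indupressure(\dimension,0,0)=0$. By \eqref{eq growth rate}, $\indumeasure_\dimension(\log|\indumap'|)\ll\sum_n n^{-\dimension\minrate}\log n<\infty$, so Theorem \ref{thm equilibrium state induce} makes $\indumeasure_\dimension$ the equilibrium measure for $-\dimension\log|\indumap'|$. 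Abramov--Kac \eqref{eq Abramov-Kac's formula} then gives
\[
0=\indumeasure_\dimension(\return)\bigl(h(\codmeas_\dimension)-\dimension\lambda(\codmeas_\dimension)\bigr).
\]
Since $\codmeas_\dimension$ is positive on open sets, \eqref{eq entropy presurving} lets us pass to $\mu_\dimension=\codmeas_\dimension\circ\coding^{-1}$, so $\mu_\dimension\in M_\dimension$. Finally, $M_\dimension$ is convex because the entropy map is affine, so it contains the stated convex hulls.

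\emph{Every equilibrium measure is in the hull.} Let $\mu\in M_\dimension$. Equilibrium measures are preserved under ergodic decomposition because the entropy map is affine, so almost every ergodic component is again in $M_\dimension$. It therefore suffices to treat ergodic $\mu\notin\text{Conv}(\{\delta_\omega\}_{\omega\in\Omega})$. By Remark \ref{rem coding pressure}, choose $\codmeas\in M(\shift)$ with $\codmeas\circ\coding^{-1}=\mu$, taking it ergodic via an ergodic component that projects to $\mu$. Then $h(\codmeas)\ge h(\mu)$, and by \eqref{eq code and noncode pressure}, $\codmeas$ is an equilibrium measure for $-\dimension\log|(\rational^*)'|$ with $P^*=0$. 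Lemma \ref{lemma equivalent condition not liftable} gives $\codmeas(\indudomain)>0$. Set $\indumeasure:=\codmeas|_{\indudomain}\circ\Proj/\codmeas(\indudomain)$. By Kac's lemma $\indumeasure(\return)<\infty$, and \eqref{eq classical Abramov-Kac's formula} yields
\[
h(\indumeasure)-\dimension\indumeasure(\log|\indumap'|)=0=\indupressure(\dimension,0,0).
\]
Here $\indumeasure(\log|\indumap'|)=\indumeasure(\return)\lambda(\codmeas)<\infty$. Thus $\indumeasure$ is an equilibrium measure for $-\dimension\log|\indumap'|$ in the sense of Theorem \ref{thm variational principle induce}. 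By the uniqueness in Theorem \ref{thm equilibrium state induce}, we get $\indumeasure=\indumeasure_\dimension$, and hence $\indumeasure_\dimension(\return)=\indumeasure(\return)<\infty$. Theorem \ref{thm maximal measure and liftable problem} then forces $\dimension\minrate>2$, and $\codmeas=\codmeas_\dimension$, so $\mu=\mu_\dimension$. Consequently, when $\dimension\minrate\le2$ no such $\mu$ exists, and $M_\dimension=\text{Conv}(\{\delta_\omega\}_{\omega\in\Omega})$. When $\dimension\minrate>2$, the only ergodic elements of $M_\dimension$ are the $\delta_\omega$ and $\mu_\dimension$. Since the ergodic decomposition of $\mu\in M_\dimension$ is supported on these finitely many points, $\mu$ is a finite convex combination of them.

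\emph{Main obstacle.} The delicate step is applying Theorem \ref{thm equilibrium state induce}, which requires $\indumeasure_\dimension(-\dimension\log|\indumap'|)>-\infty$. We will verify this from \eqref{eq gibbs} and \eqref{eq growth rate}. The key estimate is
\[
\sum_n n^{-\dimension\minrate}\log n<\infty,
\]
which holds because $\dimension\minrate>1$ by Theorem \ref{thm maximal measure and liftable problem}. This finiteness is needed in both inclusions. We must also check that $\indumeasure(\return)<\infty$ for the induced measure; this follows from Kac's lemma for the first return map.
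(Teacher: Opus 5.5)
Your proposal is correct and follows essentially the same route as the paper. It lifts an ergodic $\mu\in M_\delta\setminus\text{Conv}(\{\delta_\omega\}_{\omega\in\Omega})$ to $\Sigma_A$ and induces on $\mathcal{D}$. Abramov--Kac together with $\tilde p(\delta,0,0)=0$ makes the induced measure an equilibrium state for $-\delta\log|\tilde R'|$, uniqueness identifies it with $\tilde\mu_\delta$, and the conclusion follows from the return-time dichotomy of Theorem~\ref{thm maximal measure and liftable problem} and ergodic decomposition.

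Your additional checks fill in points the paper leaves implicit. These are the finiteness of $\tilde\mu_\delta(\log|\tilde R'|)$, which is needed to apply Theorem~\ref{thm equilibrium state induce} at $(\delta,0,0)$, a point not covered by Lemma~\ref{lemma finite return time}; the choice of an ergodic lift; and the explicit verification that $\mu_\delta\in M_\delta$.
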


\begin{proof}
By Theorem \ref{thm bowen formula}, we have $\bigcup_{\omega\in\Omega}\{\delta_{\omega}\}\subset M_{\dimension}$.
Let $\mu\in M_{\dimension}$ be an ergodic measure such that $\mu\notin \text{Conv}(\{\delta_\omega\}_{\omega\in \Omega})$.
By Remark \ref{rem coding pressure}, there exists $\codmeas\in M(\shift)$ such that $\mu=\codmeas\circ \coding^{-1}$.
Since $\mu\notin \text{Conv}(\{\delta_\omega\}_{\omega\in \Omega})$,  Lemma \ref{lemma equivalent condition not liftable} implies that  $\codmeas(\indudomain)>0$. 
Let $\indumeasure:=\codmeas|_{\indudomain}\circ\Proj/\codmeas(\indudomain)$.
Then, by the variational principle (Theorem \ref{thm variational principle induce}), Theorem \ref{thm maximal measure and liftable problem} and \eqref{eq classical Abramov-Kac's formula}, we obtain
$
0=\indupressure(\dimension,0,0)\geq h(\indumeasure)-\dimension\lambda(\indumeasure)\geq{\indumeasure(\return)}(h(\mu)-\dimension\lambda(\mu))=0,
$
Therefore, $\indupressure(\dimension,0,0)=h(\indumeasure)-\dimension\lambda(\indumeasure)$ and $\indumeasure$ is an equilibrium measure for 
$-\dimension\log |\indumap'|$. By the uniqueness of the equilibrium measure for $\log |\indumap'|$ (see Theorem \ref{thm equilibrium state induce}), we obtain 
\begin{align}\label{eq proof equilibrium state geometric}
    \indumeasure=\indumeasure_{\dimension}.
\end{align}
We first assume that $\dimension \minrate\leq 2$. Then, by Theorem \ref{thm maximal measure and liftable problem} and \eqref{eq classical Abramov-Kac's formula}, we have $\infty=\indumeasure_{\dimension}(\return)=1/\codmeas(\indudomain)<\infty$. This is a contradiction. Therefore, the set of ergodic measures in $M_\dimension$ is $\bigcup_{\omega\in\Omega}\{\delta_{\omega}\}$. By the ergodic decomposition theorem (see \cite[Theorem 5.1.3]{viana}), $M_{\dimension}=\text{Conv}(\{\delta_\omega\}_{\omega\in \Omega})$.
Next, we consider the case $\dimension\minrate>2$. In this case, by \eqref{eq proof equilibrium state geometric}, the set of ergodic measures in $M_\dimension$ is $\bigcup_{\omega\in\Omega}\{\delta_{\omega}\}\cup \{\mu_\dimension\}$. Thus, by ergodic decomposition theorem, we obtain $M_{\dimension}=\text{Conv}(\{\delta_\omega\}_{\omega\in \Omega}\cup\{\mu_\dimension\})$. 
\end{proof}

By Theorem \ref{thm bowen formula}, for all $b\in (-\infty,\dimension)$ we have $p(b,0)>0$ and thus $(b,0)\in \mathcal{N}$. 
For $b\in(-\infty,\dimension)$ we set $\indumeasure_{b}:=\indumeasure_{b,0}$, $\codmeas_b=\codmeas_{b,0}$ and $\mu_{b}:=\mu_{b,0}$. We recall that $\prange$ is defined by \eqref{eq def flat}.

\begin{lemma}\label{lemma equilibrium measure near Delta}
There exists $\{b_n\}_{n\in\mathbb{N}}\subset (-\infty,\dimension)$ such that $\lim_{n\to\infty}b_n=\dimension$ and   $\lim_{n\to\infty}\mu_{b_n}(\phi)\in \prange$.
    \end{lemma}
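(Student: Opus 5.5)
Take any sequence $b_n\uparrow\dimension$ with $b_n<\dimension$. By the weak* compactness of $M(\rational)$, after passing to a subsequence we may assume that $\mu_{b_n}\to\mu$ for some $\mu\in M(\rational)$. Then $\mu_{b_n}(\phi)\to\mu(\phi)$, since $\phi$ is continuous. So it suffices to show that $\mu\in M_{\dimension}$, and then to identify $\mu(\phi)$ using Proposition \ref{prop equilibrium state Lambda}.

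\textbf{Step 1: $\mu$ is an equilibrium measure for $-\dimension\log|\rational'|$.} Each $\mu_{b_n}$ is an equilibrium measure for $-b_n\log|\rational'|$ (Theorem \ref{thm equilibrium state rational}), so $h(\mu_{b_n})-b_n\lambda(\mu_{b_n})=p(b_n,0)$. The function $\log|\rational'|$ is continuous on $J$, because $\text{Crit}(\rational)\cap J=\emptyset$, and $b\mapsto p(b,0)$ is convex and finite, hence continuous. By upper semicontinuity of the entropy map, exactly as in the proof of Proposition \ref{prop derivatibe of pressure without conditions},
\[
h(\mu)-\dimension\lambda(\mu)\geq\limsup_{n\to\infty}\bigl(h(\mu_{b_n})-b_n\lambda(\mu_{b_n})\bigr)=p(\dimension,0)=0,
\]
where the last equality is Theorem \ref{thm bowen formula}. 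Hence $\mu\in M_\dimension$.

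\textbf{Step 2: identification of $\mu(\phi)$.} By Proposition \ref{prop equilibrium state Lambda}, $\mu$ is a convex combination of the measures $\delta_\omega$ for $\omega\in\Omega$, together with $\mu_\dimension$ when $\dimension\minrate>2$.
\begin{itemize}
\item If $\dimension\minrate\le 2$, then $\mu(\phi)=\sum_\omega p_\omega\phi(\omega)$, which lies in $[\pinf,\psup]$.
\item If $\dimension\minrate>2$, then $\mu(\phi)$ is a convex combination of the values $\phi(\omega)$ and $\mu_\dimension(\phi)$, so it lies in $\left[\min\{\mu_\dimension(\phi),\pinf\},\max\{\mu_\dimension(\phi),\psup\}\right]$.
\end{itemize}
Under the standing assumption \eqref{eq assumption simple}, the averages along the fixed points $\omega\in\Omega$ are exactly $\phi(\omega)$, so these intervals are the ones defining $\prange$.

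\textbf{Step 3: matching the two cases of \eqref{eq def flat}.} This is the main point requiring care. Theorem \ref{thm maximal measure and liftable problem} shows that $\mu_\dimension$ is finite exactly when $\indumeasure_\dimension(\return)<\infty$, i.e. when $\dimension\minrate>2$: indeed, $\mu_\dimension$ is obtained by the lift \eqref{eq def lift}, which is finite iff $\indumeasure_\dimension(\return)<\infty$. I would cite \cite[Theorem 9.8]{AaronsonDenkerUrbanski} for this equivalence. Consequently the case $\dimension\minrate\le 2$ corresponds to $\mu_\dimension$ being infinite, where $\prange=[\pinf,\psup]$, and the case $\dimension\minrate>2$ corresponds to $\mu_\dimension$ being finite. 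In either case $\lim_n\mu_{b_n}(\phi)=\mu(\phi)\in\prange$, which proves the lemma.
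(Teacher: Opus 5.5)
Your proof is correct and follows the paper's argument essentially verbatim. Both use weak* compactness, then upper semicontinuity of entropy together with continuity of $b\mapsto p(b,0)$ to get $\mu\in M_\dimension$, and then Proposition~\ref{prop equilibrium state Lambda}. Your Step 3, which matches the case split $\dimension\minrate>2$ with finiteness of $\mu_\dimension$ (via the lift and \cite[Theorem 9.8]{AaronsonDenkerUrbanski}), only makes explicit an identification the paper uses without comment.
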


\begin{proof}
    We first note that, since $J$ is compact, $M(\rational)$ is also compact with respect to the weak* topology (see \cite[Theorem 6.10]{walters2000introduction}). Let $\{b_n\}_{n\in\mathbb{N}}\subset(-\infty,\dimension)$ be a sequence such that $\lim_{n\to\infty}b_n=\dimension$. Since $M(\rational)$ is compact, there exist a subsequence $\{b_{n_k}\}_{k\in\mathbb{N}}\subset \{b_n\}_{n\in\mathbb{N}}$ and $\mu\in M(\rational)$ such that $\mu_{b_{n_{k}}}$ converges to $\mu$ as $k\to\infty$ in the weak* topology. We shall show that $\mu\in M_{\dimension}$. Since the entropy map is upper semi-continuous, $\mu_{b_{n_{k}}}$ ($k\in\mathbb{N}$) is the equilibrium measure for $-b_{n_{k}}\log |\rational'|$ and the map $b\mapsto p(b,0)$ is continuous on $\mathbb{R}$, we have 
    \[
h(\mu)-\dimension\lambda(\mu)\geq \limsup_{k\to\infty}( h(\mu_{b_{n_k}})-b_{n_k}\lambda(\mu_{b_{n_k}}))=\limsup_{k\to\infty}p(b_{n_k},0)=p(\dimension,0).
    \]
Thus, $\mu\in M_\dimension$. By Proposition \ref{prop equilibrium state Lambda}, we obtain $\lim_{k\to\infty}\mu_{b_{n_k}}(\phi)=\mu(\phi)\in \prange$. 
\end{proof}

\begin{prop}\label{prop non empty B and variational B}
Let $\alpha\in \mathbb{R}$.
The set $B(\alpha)$ is not empty if and only if $\alpha\in[\alpha_{\min},\alpha_{\max}]$. Moreover, if $\alpha\in[\alpha_{\min},\alpha_{\max}]\setminus [\pinf,\psup]$ then there exists $\mu\in M(\rational)$ such that $\lambda(\mu)>0$ and $\mu(\phi)=\alpha$.    
\end{prop}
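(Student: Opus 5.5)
We have to show three things: that $B(\alpha)\neq\emptyset$ forces $\alpha\in[\alpha_{\min},\alpha_{\max}]$; that every such $\alpha$ is realized by some point; and the existence statement for $\alpha$ outside $[\pinf,\psup]$. We will do the last one first and then use it for nonemptiness.

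\emph{Necessity.} Let $x\in B(\alpha)$ and consider the empirical measures $\frac1n\sum_{k<n}\delta_{\rational^k(x)}$. By compactness of $J$, some subsequence converges weak* to a measure $\mu\in M(\rational)$. Since $\phi$ is continuous, $\mu(\phi)=\alpha$, and hence $\alpha\in[\alpha_{\min},\alpha_{\max}]$.

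\emph{The second statement.} Fix $\alpha\in[\alpha_{\min},\alpha_{\max}]\setminus[\pinf,\psup]$, and assume without loss of generality that $\alpha<\pinf$. Then $\alpha_{\min}\leq\alpha<\pinf$. Let $\nu\in M(\rational)$ attain $\nu(\phi)=\alpha_{\min}$. Write its ergodic decomposition as $\nu=t\nu_1+(1-t)\nu_2$, with $\nu_2\in\text{Conv}(\{\delta_\omega\}_{\omega\in\Omega})$ and $\nu_1$ having no mass on $\Omega$ (this is possible since $\Omega$ is a finite set of fixed points by \eqref{eq assumption simple}).
\begin{itemize}
\item If $t=0$, then $\alpha_{\min}=\nu(\phi)\geq\pinf>\alpha$, which is impossible. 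So $t>0$.
\item If $t=1$, then $\nu_1(\phi)=\alpha_{\min}$.
\item If $0<t<1$, then $\nu_2(\phi)\geq\pinf>\alpha_{\min}$, which forces $\nu_1(\phi)<\alpha_{\min}$. This contradicts minimality.
\end{itemize}
So $\nu_1(\phi)=\alpha_{\min}$, and $\lambda(\nu_1)>0$ by Lemma \ref{lemma positive Lyapunov}. Now take $\omega$ with $\phi(\omega)=\pinf$. On the segment $\mu_s=s\nu_1+(1-s)\delta_\omega$, $s\in(0,1]$, the map $s\mapsto\mu_s(\phi)$ is affine and runs over $(\pinf,\alpha_{\min}]\ni\alpha$, so some $\mu_s$ has $\mu_s(\phi)=\alpha$. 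For $s>0$ we have $\lambda(\mu_s)=s\lambda(\nu_1)>0$. This proves the second statement; the case $\alpha>\psup$ is symmetric.

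\emph{Sufficiency, $\alpha\in[\pinf,\psup]$.} Here we need a point $x$ with Birkhoff average $\alpha$. If $\alpha=\phi(\omega)$ for some $\omega$, take $x=\omega$. Otherwise $\alpha$ lies strictly between $\phi(\omega_1)$ and $\phi(\omega_2)$ for two parabolic fixed points. We build $x$ by concatenating admissible words in $\MS$ that spend long blocks near $\omega_1$ and near $\omega_2$. The block lengths are chosen so that their proportions tend to the right ratio, with each block of length $o$ of the total elapsed time. Connecting words can be kept of bounded length using the finitely primitive structure, i.e.\ the mixing of $\MS$. Since blocks of $\inddiferent_\omega$ symbols code points in $B(\omega,\epsilon)$ by \eqref{eq n big near inddiferent}, the Birkhoff averages of $\coding(\tau)$ converge to $\alpha$.

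\emph{Sufficiency, remaining $\alpha$ and the main obstacle.} The same concatenation argument covers every $\alpha\in[\alpha_{\min},\alpha_{\max}]$ when combined with the measures from the second statement. Given $\alpha\in[\alpha_{\min},\alpha_{\max}]\setminus[\pinf,\psup]$, obtain $\mu$ with $\lambda(\mu)>0$ and $\mu(\phi)=\alpha$, and reduce to an ergodic combination. Lift to the induced system via \eqref{eq classical Abramov-Kac's formula}. There, generic points of finitely many ergodic measures are glued along long blocks, using the BIP/finitely primitive property of $\CMS$ with growing block lengths. Projecting by $\coding\circ\Proj$ gives a point of $B(\alpha)$. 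The main obstacle is making this gluing precise when several ergodic components must be mixed. Concretely, we must control the transition errors and guarantee convergence, not just along a subsequence; this is handled by letting the block lengths grow slowly relative to the cumulative time.
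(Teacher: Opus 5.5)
Your necessity argument via weak$^*$ limits of empirical measures is correct, and simpler than the paper's periodic-orbit construction. Your proof of the second statement is also correct. The parabolic-block construction for $\alpha\in[\pinf,\psup]$ works.

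\textbf{The gap.} The problem is the last step: nonemptiness of $B(\alpha)$ for $\alpha\in[\alpha_{\min},\alpha_{\max}]\setminus[\pinf,\psup]$. You only outline it, and the outline fails at its first move. You decompose $\mu$ ergodically and lift the components to $\CMS$ via \eqref{eq classical Abramov-Kac's formula}. That formula requires $\codmeas(\indudomain)>0$, and by Lemma \ref{lemma equivalent condition not liftable} this fails for components $\delta_\omega$. Positivity of $\lambda(\mu)$ does not exclude such components. Your own measure $\mu_s=s\nu_1+(1-s)\delta_\omega$ has one whenever $\alpha_{\min}<\alpha<\pinf$, since then $s<1$. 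You would therefore still have to do one of two things: show that liftable ergodic measures exist on both sides of $\alpha$, or splice long parabolic excursions between generic induced segments. In either case the gluing estimate itself is only asserted. That means convergence of the averages at every time $n$, including inside long excursions, in the original time scale.

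\textbf{A simpler repair.} The induced system is unnecessary here. Take ergodic components $\nu_1,\nu_2$ of $\mu$ with $\nu_1(\phi)\le\alpha\le\nu_2(\phi)$; these exist because $x\mapsto\mu_x(\phi)$ has mean $\alpha$. Choose $x_i\in J$ with $\frac1n S_n(\phi)(x_i)\to\nu_i(\phi)$ and codes $\tau^{(i)}\in\coding^{-1}(x_i)$. Concatenate initial segments of $\tau^{(1)}$ and $\tau^{(2)}$ of lengths $\lfloor pL_j\rfloor$ and $\lfloor (1-p)L_j\rfloor$, joined by the bounded-length connectors given by primitivity of $\MS$. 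A block copied from $\tau^{(i)}$ of length $L$ changes the Birkhoff sum by at most $D_{\phi^*}(L)=o(L)$, by \eqref{eq milde distorsion}. Take $L_j\to\infty$ with $L_j=o(\sum_{i<j}L_i)$. Then the errors from copying, genericity and connectors are all $o(n)$, and so is the deviation inside a partial block, so the averages converge to $\alpha$ at all times. This handles every $\alpha\in[\alpha_{\min},\alpha_{\max}]$ at once; your parabolic-block case is $x_i=\omega_i$. The paper reaches the same conclusion in one line by citing \cite[Corollary 21.15]{DenkerGrillendergerSigmund} for a point whose Birkhoff average equals $\mu(\phi)$.

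\textbf{The second statement.} It also has a one-line proof. Any $\mu$ with $\mu(\phi)=\alpha\notin[\pinf,\psup]$ lies outside $\text{Conv}(\{\delta_\omega\}_{\omega\in\Omega})$, because every measure in that hull integrates $\phi$ into $[\pinf,\psup]$. Lemma \ref{lemma positive Lyapunov} then gives $\lambda(\mu)>0$. Your detour through a minimizing measure and the segment $\mu_s$ is correct but unnecessary. Also, the range of $s\mapsto\mu_s(\phi)$ on $(0,1]$ is $[\alpha_{\min},\pinf)$, not $(\pinf,\alpha_{\min}]$.
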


\begin{proof}
    We assume that $B(\alpha)$ is not empty. Then there exists $\tau\in \MS$ such that 
    $\lim_{n\to\infty}\frac{1}{n}S_n(\phi^*)(\tau)=\alpha.$    
    For a contradiction, we assume that $\alpha\notin[\alpha_{\min},\alpha_{\max}]$. Then there exists $\epsilon>0$ such that $[\alpha-\epsilon,\alpha+\epsilon]\cap[\alpha_{\min},\alpha_{\max}]=\emptyset$. By \cite[Corollary 24.3.5]{UrbanskiRoyMunday}, $\rational:J\rightarrow J$ is topologically exact. In particular, 
    \begin{align}\label{eq primitive MS}
        \text{$\MS$ is primitive,}
    \end{align} 
    that is, there exist $N\in\mathbb{N}$ and a set $\Lambda\subset \MS^N$ such that for all $e,e'\in \edge $ there is $a=a(e,e')\in \Lambda$ for which $e ae'\in \MS^*$. 
Since $\phi^*$ is continuous on $\MS$, there exists $L\in\mathbb{N}$ such that for each $\tilde \tau\in \MS$ with $\tau_i=\tilde \tau_i$ for all $0\leq i\leq L-1$ we have $|\phi^*(\tau)-\phi^*(\tilde \tau)|<\epsilon/8$.
Since     $\lim_{n\to\infty}\frac{1}{n}S_n(\phi^*)(\tau)=\alpha$, there exists $M\in\mathbb{N}$ such that 
    \[
    \left|
    \frac{1}{M}S_M(\phi^*)(\tau)-\alpha
\right|<\frac{\epsilon}{8} \text{ and }  \frac{(L+N)\max\{\sup_{x\in J}|\phi(x)|,|\alpha|\}}{M}<\frac{\epsilon }{8}.
    \]
    Let $a:=a(\tau_{M+L-1},\tau_{0})$ and let $\mu:=\frac{1}{M+L+N}\sum_{i=0}^{M+L+N-1}\delta_{\rational^i(\coding(\hat \tau))}$, where 
    \[
    \hat\tau:=\tau_0\tau_1\cdots\tau_{M+L-1}a_0\cdots a_{N-1}\tau_0\tau_1\cdots\tau_{M+L-1}a_0\cdots a_{N-1} \cdots.
    \]
    Then $\mu\in M(\rational)$ and $|\mu(\phi)-\alpha|<\epsilon/2$. Hence, $[\alpha-\epsilon,\alpha+\epsilon]\cap[\alpha_{\min},\alpha_{\max}]\neq\emptyset$. This is a contradiction. Hence, $\alpha\in [\alpha_{\min},\alpha_{\max}]$.

    Conversely, let $\alpha\in [\alpha_{\min},\alpha_{\max}]$. Since $M(\rational)$ is a convex set, there exists $\mu\in M(\rational)$ such that $\mu(\phi)=\alpha$. By \cite[Corollary 21.15]{DenkerGrillendergerSigmund}, there exists $x\in J$ such that $\lim_{n\to\infty}\frac{1}{n}S_n(\phi)(x)=\alpha$. This implies that $B(\alpha)\neq \emptyset$.

    We will show that the last statement. Let $\alpha\in[\alpha_{\min},\alpha_{\max}]\setminus [\pinf,\psup]$. Then there exists $\mu\in M(\rational)$ such that $\mu(\phi)=\alpha$. Since $\alpha\notin [\pinf,\psup]$, we have $\mu\notin \text{Conv}(\{\delta_\omega\}_{\omega\in \Omega})$. Thus, by Lemma \ref{lemma positive Lyapunov}, we obtain $\lambda(\mu)>0$.
    \end{proof}

For $\alpha\in (\alpha_{\min},\alpha_{\max})$ we define the function $\palpha:\mathbb{R}^2\rightarrow\mathbb{R}$ by 
\[
\palpha(b,q):=p(b,q)+q\alpha=P(q(-\phi+\alpha)-b\log|\rational'|).
\]
We denote by $\minomega$ a rationally indifferent periodic point $\omega\in \Omega$ satisfying $\alpha_{\minomega}=\pinf$ and by $\maxomega$ a rationally indifferent periodic point $\omega\in \Omega$ satisfying $\alpha_{\maxomega}=\psup$.

We define the variational Birkhoff spectrum $\vspectrum:\mathbb{R}\rightarrow [0,\dimension]$ by 
\[
\vspectrum(\alpha):=\sup\left\{\frac{h(\mu)}{\lambda(\mu)}:\mu\in M(\rational),\ \lambda(\mu)>0,\ \mu(\phi)=\alpha \right\}.
\]

\begin{lemma}\label{lemma positivity p alpha}
    For all $\alpha\in (\alpha_{\text{min}},\alpha_{\text{max}})$ and $q\in \mathbb{R}$ we have $\palpha(\vspectrum(\alpha),q)\geq 0$. Moreover, for all $b\in \mathbb{R}$ we have 
    \begin{align}\label{eq asymptotic behavior of alpha pressure}
        \lim_{|q|\to\infty}\palpha(b,q)=\infty.
    \end{align}
\end{lemma}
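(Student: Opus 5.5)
For the first claim, we bound the supremum defining $\palpha(\vspectrum(\alpha),q)$ from below by test measures, then pass to a limit. Fix $\alpha\in(\alpha_{\min},\alpha_{\max})$ and $q\in\mathbb{R}$. By Proposition \ref{prop non empty B and variational B} and convexity of $M(\rational)$, we first check that the set over which $\vspectrum(\alpha)$ is a supremum is nonempty, so that $\vspectrum(\alpha)$ is a genuine supremum. Take $\mu_1,\mu_2\in M(\rational)$ with $\mu_1(\phi)<\alpha<\mu_2(\phi)$. If either one lies outside $\text{Conv}(\{\delta_\omega\}_{\omega\in\Omega})$, a suitable convex combination with $\mu(\phi)=\alpha$ is also outside that set, hence has $\lambda(\mu)>0$ by Lemma \ref{lemma positive Lyapunov}. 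If both lie inside, we combine them with any measure outside (for instance $\mu_{0,0}$ from Theorem \ref{thm equilibrium state rational}, since $(0,0)\in\mathcal N$ because $p(0,0)>0$ by Theorem \ref{thm bowen formula}).

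Next, for any $\mu\in M(\rational)$ with $\lambda(\mu)>0$ and $\mu(\phi)=\alpha$, the definition of $P$ gives
\[
\palpha(\vspectrum(\alpha),q)\ \ge\ h(\mu)+q\mu(-\phi+\alpha)-\vspectrum(\alpha)\lambda(\mu)=\lambda(\mu)\left(\frac{h(\mu)}{\lambda(\mu)}-\vspectrum(\alpha)\right).
\]
We then choose a sequence $\mu_n$ with $h(\mu_n)/\lambda(\mu_n)\to\vspectrum(\alpha)$. The term $\lambda(\mu_n)$ is bounded, since $\log|\rational'|$ is continuous on $J$ (because $\text{Crit}(\rational)\cap J=\emptyset$). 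Hence the right-hand side tends to $0$, and we get $\palpha(\vspectrum(\alpha),q)\ge 0$.

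For \eqref{eq asymptotic behavior of alpha pressure}, fix $b$ and use $\alpha_{\min}<\alpha<\alpha_{\max}$. Pick $\nu_+\in M(\rational)$ with $\nu_+(\phi)=\alpha_{\min}<\alpha$, which exists by compactness of $M(\rational)$ and continuity of $\mu\mapsto\mu(\phi)$. Since $h\ge0$,
\[
\palpha(b,q)\ge q(\alpha-\alpha_{\min})-b\lambda(\nu_+)\to\infty \quad (q\to+\infty).
\]
Symmetrically, with $\nu_-(\phi)=\alpha_{\max}>\alpha$ we get $\palpha(b,q)\ge |q|(\alpha_{\max}-\alpha)-b\lambda(\nu_-)\to\infty$ as $q\to-\infty$. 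Here $\lambda(\nu_\pm)$ is finite because $\log|\rational'|$ is bounded on $J$.

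Neither part presents a real obstacle. The only point needing care is that the supremum defining $\vspectrum(\alpha)$ is over a nonempty set, which is exactly what the convex-combination argument in the first step handles.
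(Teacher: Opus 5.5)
Your proof is correct. The second part (the limit as $|q|\to\infty$) is the paper's argument: testing the pressure against measures with $\nu(\phi)=\alpha_{\min}$ and $\nu(\phi)=\alpha_{\max}$ is just a specific choice of the paper's $\underline{\nu},\overline{\nu}$.

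For the first part your route differs slightly. The paper splits into two cases. If $\alpha\in[\pinf,\psup]$, it tests against $p\delta_{\minomega}+(1-p)\delta_{\maxomega}$, which has zero entropy, zero Lyapunov exponent and integral $\alpha$. This gives $\palpha(b,q)\geq 0$ for every $b\in\mathbb{R}$ and never uses the value of $\vspectrum(\alpha)$. Only for $\alpha\notin[\pinf,\psup]$ does it invoke Proposition \ref{prop non empty B and variational B} to obtain a sequence $\mu_n$ with $\lambda(\mu_n)>0$, $\mu_n(\phi)=\alpha$ and $h(\mu_n)/\lambda(\mu_n)\to\vspectrum(\alpha)$.

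You instead first show that the constraint set defining $\vspectrum(\alpha)$ is nonempty for every $\alpha\in(\alpha_{\min},\alpha_{\max})$, and then run the approximating-sequence argument uniformly. This needs one small fact you use without comment. A convex combination that gives positive weight to a measure outside $\text{Conv}(\{\delta_\omega\}_{\omega\in\Omega})$ stays outside that set. This holds because the set is exactly the invariant probability measures carried by the finite set $\Omega$.

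What your approach buys is a by-product: the last assertion of Proposition \ref{prop non empty B and variational B} extends to all of $(\alpha_{\min},\alpha_{\max})$. So $\vspectrum(\alpha)$ is a supremum over a nonempty set throughout, and no convention for the empty supremum is needed. The paper's case split is more economical and gives the stronger conclusion (for all $b$) on $[\pinf,\psup]$. You also make explicit the boundedness of $\lambda(\mu_n)$ needed to pass to the limit, which the paper leaves implicit.
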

\begin{proof}
Let $\alpha\in (\alpha_{\min},\alpha_{\max})$ and let $q\in \mathbb{R}$. 
    If $\alpha\in [\alpha_{\minomega},\alpha_{\maxomega}]$ then there exists $p\in[0,1]$ such that $\alpha=p\alpha_{\minomega}+(1-p)\alpha_{\maxomega}$. Then, by using the measure $p\delta_{\minomega}+(1-p)\delta_{\maxomega}$, we can verify that $\palpha(\vspectrum(\alpha),q)\geq 0$. If $\alpha\in (\alpha_{\min},\alpha_{\max})\setminus [\alpha_{\minomega}, \alpha_{\maxomega}]$ then by Proposition \ref{prop non empty B and variational B}, there exists $\{\mu_n\}_{n\in\mathbb{N}}\subset M(\rational)$ such that for all $n\in\mathbb{N}$ we have $\lambda(\mu_n)>0$, $\mu_n(\phi)=\alpha$ and $\lim_{n\to\infty}h(\mu_n)/\lambda(\mu_n)=\vspectrum(\alpha)$. Thus, 
    $
    \palpha(\vspectrum(\alpha),q)\geq h(\mu_n)-\vspectrum(\alpha)\lambda(\mu_n)$ and  letting $n\to\infty$, we obtain $\palpha(\vspectrum(\alpha),q)\ge 0$.  

    Next, we show \eqref{eq asymptotic behavior of alpha pressure}. Let $\alpha\in (\alpha_{\min},\alpha_{\max})$ and let $b\in\mathbb{R}$. Then there exist $\underline{\nu},\overline{\nu}\in M(\rational)$ such that $\underline{\nu}(\phi)<\alpha<\overline{\nu}(\phi)$. Hence, we obtain 
    $
        \lim_{q\to-\infty}\palpha(b,q)
        \geq 
         h(\overline{\nu})+\lim_{q\to-\infty}q(-\overline{\nu}(\phi)+\alpha)-b\lambda(\overline{\nu})=\infty
        \text{ and }
        \lim_{q\to\infty}\palpha(b,q)
        \geq 
         h(\underline{\nu})+\lim_{q\to\infty}q(-\underline{\nu}
         (\phi)+\alpha)-b\lambda(\underline{\nu})=\infty.
    $
\end{proof}

\begin{prop}\label{prop only frat}
     For all $\alpha\in (\alpha_{\text{min}},\alpha_{\text{max}})\setminus \prange$ we have $\vspectrum(\alpha)<\dimension$.
\end{prop}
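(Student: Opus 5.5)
We argue by contradiction. Fix $\alpha\in(\alpha_{\min},\alpha_{\max})\setminus\prange$ and suppose $\vspectrum(\alpha)=\dimension$; note $\vspectrum(\alpha)\le\dimension$ always holds by Theorem \ref{thm bowen formula}. Since $\alpha\notin\prange\supset[\pinf,\psup]$, Proposition \ref{prop non empty B and variational B} supplies measures $\mu_n\in M(\rational)$ with $\lambda(\mu_n)>0$, $\mu_n(\phi)=\alpha$ and $h(\mu_n)/\lambda(\mu_n)\to\dimension$. The plan is to pass to a weak* limit of these measures and show that the limit is an equilibrium measure for $-\dimension\log|\rational'|$ whose $\phi$-integral equals $\alpha$. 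By Proposition \ref{prop equilibrium state Lambda}, every equilibrium measure for $-\dimension\log|\rational'|$ has $\phi$-integral in $\prange$, so this is a contradiction.

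First, I will show that the $\mu_n$ almost attain the pressure. We have $h(\mu_n)-\dimension\lambda(\mu_n)=\lambda(\mu_n)\bigl(h(\mu_n)/\lambda(\mu_n)-\dimension\bigr)$, and $\lambda(\mu_n)\le\max_J\log|\rational'|$ is bounded because $\text{Crit}(\rational)\cap J=\emptyset$. Hence $h(\mu_n)-\dimension\lambda(\mu_n)\to 0=p(\dimension,0)$, using Theorem \ref{thm bowen formula}.

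Second, I will take a limit. By compactness of $M(\rational)$, some subsequence converges weak* to a measure $\mu$. Upper semicontinuity of entropy and continuity of $\log|\rational'|$ on $J$ give $h(\mu)-\dimension\lambda(\mu)\ge\limsup_n\bigl(h(\mu_n)-\dimension\lambda(\mu_n)\bigr)=0=p(\dimension,0)$, so $\mu\in M_\dimension$. Continuity of $\phi$ gives $\mu(\phi)=\alpha$. This is exactly the argument of Lemma \ref{lemma equilibrium measure near Delta}.

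Finally, I will show that $\mu(\phi)\in\prange$. By Proposition \ref{prop equilibrium state Lambda}, $\mu$ is a convex combination of the measures $\delta_\omega$ for $\omega\in\Omega$, together with $\mu_\dimension$ when $\dimension\minrate>2$. When $\dimension\minrate>2$, the condition $\indumeasure_\dimension(\return)<\infty$ makes $\mu_\dimension$ an invariant probability measure, so $\mu_\dimension$ is finite. Therefore $\mu(\phi)$ is a convex combination of the values $\alpha_\omega\in[\pinf,\psup]$ and, when $\mu_\dimension$ is finite, of $\mu_\dimension(\phi)$. In every case $\mu(\phi)$ lies in the interval $\prange$ defined in \eqref{eq def flat}. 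This contradicts $\alpha\notin\prange$.

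The main point needing care is the consistency between the dichotomy in \eqref{eq def flat}, which is stated in terms of finiteness of $\mu_\dimension$, and the dichotomy in Proposition \ref{prop equilibrium state Lambda}, which is stated in terms of $\dimension\minrate>2$. The direction used above follows from Theorem \ref{thm maximal measure and liftable problem}. In the opposite case, $\dimension\minrate\le2$, no contribution from $\mu_\dimension$ appears in $M_\dimension$, so the inclusion $\mu(\phi)\in\prange$ holds regardless of how $\prange$ is defined there. Everything else is routine.
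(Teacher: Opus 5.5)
Your proof is correct, but it takes a different route from the paper's.

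\textbf{The paper's route.} The paper works with the convex function $q\mapsto\palpha(\dimension,q)$. Assume $\vspectrum(\alpha)=\dimension$ with $\alpha<\min\prange$. Then Theorem \ref{thm bowen formula} and Lemma \ref{lemma positivity p alpha} give $\palpha(\dimension,0)=0\le\palpha(\dimension,q)$ for all $q>0$, so $(\palpha)^+_q(\dimension,0)\ge 0$. On the other hand, Proposition \ref{prop derivatibe of pressure without conditions} combined with Proposition \ref{prop equilibrium state Lambda} gives $(\palpha)^+_q(\dimension,0)=\alpha-\min\prange<0$, a contradiction. The case $\alpha>\max\prange$ is handled symmetrically with the left derivative and $q<0$.

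\textbf{Your route.} You bypass the $q$-direction of the pressure. You take a weak* limit of a maximizing sequence and obtain an element of $M_\dimension$ whose $\phi$-integral is $\alpha$. Your compactness and upper-semicontinuity step is exactly the mechanism inside the proofs of Proposition \ref{prop derivatibe of pressure without conditions} and Lemma \ref{lemma equilibrium measure near Delta}. So both proofs rest on the same ingredients, namely Bowen's formula, upper semicontinuity of entropy, and the description of $M_\dimension$.

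\textbf{Comparison.} Your version is shorter and more elementary. It needs neither one-sided derivatives nor Lemma \ref{lemma positivity p alpha}, and it treats both components of $(\alpha_{\min},\alpha_{\max})\setminus\prange$ at once. The paper's version instead phrases the obstruction as a sign condition on the $q$-derivative of $\palpha$, which matches the pressure-based analysis it carries out next in Proposition \ref{prop relationship}.

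Your treatment of the two dichotomies is also right. Only the inclusion $\{\mu(\phi):\mu\in M_\dimension\}\subset\prange$ is needed, and it follows from Theorem \ref{thm maximal measure and liftable problem} as you say. The paper's derivative computation implicitly uses the same inclusion.
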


\begin{proof}
    We first consider the case $\alpha\in (\alpha_{\text{min}},\min \prange)$. For a contradiction, we assume that there exists  $\alpha\in (\alpha_{\text{min}},\min \prange)$ such that $\vspectrum(\alpha)=\dimension$. By Theorem \ref{thm bowen formula} and Lemma \ref{lemma positivity p alpha}, we have $\palpha(\dimension,0)=0$ and $\palpha(\dimension,q)\geq 0$ for all $q\in(0,\infty)$. By the convexity of the function $q\mapsto\palpha(\dimension,q)$, we obtain $(\palpha)_q^+(\dimension,0)\geq0$. However, by Proposition \ref{prop derivatibe of pressure without conditions} and Proposition \ref{prop equilibrium state Lambda}, we have
 $
 (\palpha)_q^+(\dimension,0)=\sup_{\mu\in M_\dimension}\{-\mu(\phi)\}+\alpha= -\min \prange+\alpha<0.
 $
    This is a contradiction. Therefore, for all $\alpha\in (\alpha_{\text{min}},\min \prange)$ we have $\vspectrum(\alpha)<\dimension$. By a similar argument, one can show that  for all $\alpha\in (\max \prange,\alpha_{\text{max}})$ we have $\vspectrum(\alpha)<\dimension$. 
    \end{proof}

\begin{prop}\label{prop relationship}
    For each $\alpha\in (\alpha_{\text{min}},\min \prange)$ (resp. $\alpha\in (\max\prange,\alpha_{\text{max}})$) there exists a unique number $q(\alpha)\in (0,\infty)$ (resp. $q(\alpha)\in (-\infty,0)$) such that $(\vspectrum(\alpha),q(\alpha))\in \mathcal{N}$ and  
    \begin{align}\label{eq relation in the statement}
\palpha(\vspectrum(\alpha),q(\alpha))=0 \text{ and } \frac{\partial}{\partial q}\palpha(\vspectrum(\alpha),q(\alpha))=0.        
    \end{align}
    Moreover, $\mu_{\vspectrum(\alpha),q(\alpha)}$ is the unique measure $\nu\in M(\rational)$ satisfying $\lambda(\nu)>0$, $\nu(\phi)=\alpha$ and $\vspectrum(\alpha)=h(\nu)/\lambda(\nu)$.
\end{prop}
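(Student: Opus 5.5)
I treat the case $\alpha\in(\alpha_{\min},\min\prange)$; the other case is symmetric, with $q<0$. Set $b:=\vspectrum(\alpha)$. By Proposition \ref{prop only frat} we have $b<\dimension$. Then $p(b,0)>0=\LB|_{q=0}$, so $(b,0)\in\mathcal{N}$.

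\textbf{Step 1: the sign of $p_\alpha$ and where to look for its minimum.} By Lemma \ref{lemma positivity p alpha}, the function $q\mapsto\palpha(b,q)$ is convex, nonnegative, and tends to $\infty$ as $|q|\to\infty$, so it attains a minimum $m\ge0$ at some point $q_*$. I will show that the minimum is attained only in $(0,\infty)$ and only inside the region where $(b,q)\in\mathcal{N}$.
\begin{itemize}
\item \emph{Restricting to $\mathcal{N}$.} For $(b,q)\notin\mathcal{N}$ we have $\palpha(b,q)=\LB+q\alpha=\max_\omega q(\alpha-\alpha_\omega)$. For $q>0$ this is nonnegative and tends to $+\infty$; for $q<0$ it is positive, since $\alpha<\pinf\le\alpha_\omega$ for all $\omega$.
\item \emph{Pushing the minimizer to $q>0$.} At $q=0$ we have $(b,0)\in\mathcal{N}$, and Theorem \ref{thm regularity of non induced pressure} gives
\[
\partial_q\palpha(b,0)=\alpha-\mu_{b}(\phi).
\]
Since $b<\dimension$ and $\alpha<\min\prange$, I want to show $\mu_b(\phi)>\alpha$, so that the derivative at $0$ is negative. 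I do not expect this to hold immediately. Instead I argue via the zero of $\palpha$ below: I first show the minimizer lies in $\mathcal{N}$.
\item \emph{Using $m=0$.} I claim $m=0$. Let $\nu_n$ be near-maximizers of $h/\lambda$ with $\nu_n(\phi)=\alpha$. Then
\[
\palpha(b,q)\ge \lambda(\nu_n)\bigl(h(\nu_n)/\lambda(\nu_n)-b\bigr)\to 0
\]
only in the limit, which does not bound $m$ from above. For the upper bound $m\le0$ I intend to use the dual inequality: if $\palpha(b,q)>0$ for all $q$, then by continuity and compactness there is $b'>b$ with $\palpha(b',q)\ge0$ for all $q$. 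A minimax/Lagrange argument (via the variational principle, the affine dependence on $q$, and compactness of $\{\mu:\mu(\phi)=\alpha\}$ together with upper semicontinuity of entropy) then produces $\mu$ with $\mu(\phi)=\alpha$ and $h(\mu)-b'\lambda(\mu)\ge0$. Such a $\mu$ has $\lambda(\mu)>0$, since $\alpha\notin[\pinf,\psup]$ and Lemma \ref{lemma positive Lyapunov} applies. This contradicts the definition of $b$.
\end{itemize}
This minimax step is the main obstacle. I would carry it out by applying Sion's minimax theorem to
\[
(\mu,q)\mapsto h(\mu)-b'\lambda(\mu)+q(\alpha-\mu(\phi))
\]
on $M(\rational)\times\mathbb{R}$, which is usc-concave in $\mu$ and affine in $q$. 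Coercivity in $q$ comes from \eqref{eq asymptotic behavior of alpha pressure}.

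\textbf{Step 2: locating the minimizer.} With $m=0$, a minimizer $q_*$ satisfies $\palpha(b,q_*)=0$. If $(b,q_*)\notin\mathcal{N}$, then $0=\max_\omega q_*(\alpha-\alpha_\omega)$, which forces $q_*=0$. But $(b,0)\in\mathcal{N}$, a contradiction. Hence $(b,q_*)\in\mathcal{N}$. By Theorem \ref{thm regularity of non induced pressure} the function is differentiable there, so $\partial_q\palpha(b,q_*)=0$, which gives \eqref{eq relation in the statement}.

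\textbf{Step 3: $q_*>0$.} If $q_*=0$, then $p(b,0)=0$, which forces $b=\dimension$ by Theorem \ref{thm bowen formula}, a contradiction. Suppose $q_*<0$. Then $\mu:=\mu_{b,q_*}$ satisfies $\mu(\phi)=\alpha$ and $h(\mu)-b\lambda(\mu)=0$. I would then exploit convexity in $b$. Because $\palpha(b,\cdot)\ge0$ is minimized with value $0$, the point $(b,q_*)$ lies on the zero set. Comparing with $q\to-\infty$ behaviour should give a contradiction with $\alpha<\pinf$. Concretely, on $q<0$, for every $\omega$ we have
\[
\palpha(b,q)\ge q(\alpha-\alpha_\omega)>0,
\]
as computed in Step 1, since $p(b,q)\ge -q\alpha_\omega$. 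This excludes $q_*<0$. Hence $q(\alpha):=q_*\in(0,\infty)$.

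\textbf{Step 4: uniqueness.}
\begin{itemize}
\item \emph{Uniqueness of $q(\alpha)$.} Suppose two such values exist. By convexity, $\palpha(b,\cdot)$ vanishes on the interval between them, so $\partial_q^2p=0$ there. By Theorem \ref{thm regularity of non induced pressure} this gives $\alpha_{\min}=\alpha_{\max}$, which contradicts $\alpha\in(\alpha_{\min},\alpha_{\max})$.
\item \emph{Uniqueness of the measure.} Let $\nu$ satisfy $\lambda(\nu)>0$, $\nu(\phi)=\alpha$ and $h(\nu)/\lambda(\nu)=b$. Then
\[
h(\nu)-q(\alpha)\nu(\phi)-b\lambda(\nu)=-q(\alpha)\alpha=p(b,q(\alpha)),
\]
so $\nu$ is an equilibrium measure. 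By Theorem \ref{thm equilibrium state rational}, $\nu=\mu_{b,q(\alpha)}$.
\item \emph{Existence of the measure.} Conversely, $\mu_{b,q(\alpha)}$ has $\mu(\phi)=\alpha$ by Ruelle's formula, and $h/\lambda=b$ since $\palpha=0$.
\end{itemize}
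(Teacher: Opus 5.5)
Your proof is correct, but it finds the critical point by a genuinely different mechanism from the paper.

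\textbf{The paper's route.} It first observes $\{\vspectrum(\alpha)\}\times[0,\infty)\subset\mathcal N$. It then proves exactly the inequality you set aside, $\frac{\partial}{\partial q}\palpha(\vspectrum(\alpha),0)<0$. If this failed, Lemma \ref{lemma equilibrium measure near Delta} would give $b_0\in(\vspectrum(\alpha),\dimension)$ with $\mu_{b_0}(\phi)$ close to $\min\prange$, so $\frac{\partial}{\partial q}\palpha(b_0,0)<0$. The intermediate value theorem would then give $b'\in[\vspectrum(\alpha),b_0)$ with $\mu_{b'}(\phi)=\alpha$, and hence $0<p(b',0)=h(\mu_{b'})-b'\lambda(\mu_{b'})\le h(\mu_{b'})-\vspectrum(\alpha)\lambda(\mu_{b'})\le 0$, a contradiction. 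Strict convexity in $q$ and coercivity then give a unique critical point $q(\alpha)>0$. The identity $\palpha=0$ there follows from $0\le\palpha(\vspectrum(\alpha),q(\alpha))=h-\vspectrum(\alpha)\lambda\le 0$, evaluated at $\mu_{\vspectrum(\alpha),q(\alpha)}$.

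\textbf{Your route.} You get $\min_q\palpha(\vspectrum(\alpha),q)=0$ by Lagrange duality: Sion's theorem on $M(\rational)\times\mathbb{R}$, using compactness of $M(\rational)$ and that entropy is affine and upper semicontinuous. You then locate the minimizer by elementary facts:
\begin{itemize}
\item $p=\max_\omega(-q\alpha_\omega)$ off $\mathcal N$;
\item $p(\vspectrum(\alpha),0)>0$, because $\vspectrum(\alpha)<\dimension$;
\item $\palpha(\vspectrum(\alpha),q)\ge q(\alpha-\alpha_\omega)>0$ for $q<0$.
\end{itemize}

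\textbf{Comparison.} Your route uses $\alpha<\min\prange$ only through Proposition \ref{prop only frat}. It also turns the negativity of $\partial_q\palpha$ at $q=0$ into a consequence rather than an input. The price is an external minimax theorem. The paper's route stays within the thermodynamic tools it has already developed.

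\textbf{A simplification.} The detour through some $b'>b$ is unnecessary. Sion applied at $b=\vspectrum(\alpha)$ directly gives $\min_q\palpha(b,q)=\max\{h(\mu)-b\lambda(\mu):\mu(\phi)=\alpha\}$. Every such $\mu$ has $\lambda(\mu)>0$ and $h(\mu)\le b\lambda(\mu)$, so this maximum is at most $0$.

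\textbf{One slip to fix.} In your first bullet, take $q>0$ and $(b,q)\notin\mathcal N$. Then $\max_\omega q(\alpha-\alpha_\omega)=q(\alpha-\pinf)<0$; it is not a nonnegative quantity tending to $+\infty$. Combined with $\palpha\ge 0$, this shows at once that $\{b\}\times(0,\infty)\subset\mathcal N$, as in the paper. You never use the false claim, so nothing breaks. Likewise, you can delete the exploratory remarks in your second bullet and at the start of Step 3. The inequality $\palpha(b,q)\ge q(\alpha-\alpha_\omega)>0$ for $q<0$ is all Step 3 needs.
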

\begin{proof}
If $\alpha_{\text{min}}=\alpha_{\text{max}}$, there is nothing to prove. Thus, we assume that $\alpha_{\text{min}}<\alpha_{\text{max}}$.
Let $\alpha\in(\alpha_{\text{min}},\min \prange)$. Then, by Theorem \ref{thm bowen formula}, Lemma \ref {lemma positivity p alpha} and Proposition \ref{prop only frat}, we have $p(\vspectrum(\alpha),0)>0$ and $p(\vspectrum(\alpha),q)\geq -q\alpha>-q\min \prange\geq \LB$ for all $q\in (0,\infty)$. Hence, $\{\vspectrum(\alpha)\}\times[0,\infty)\subset \mathcal{N}$. 
On the other hand, by Theorem \ref{thm bowen formula}, for all $b\in (-\infty,\dimension)$ we have $\palpha(b,0)>0$ and thus, $(-\infty,\dimension)\times\{0\}\subset \mathcal{N}$. 
Hence, by Theorem \ref{thm regularity of non induced pressure}, the function $\palpha$ is real-analytic on an open set $\mathcal{O}$ containing $\{\vspectrum(\alpha)\}\times [0,\infty)\cup (-\infty,\dimension)\times \{0\}$. We will show that 
\begin{align}\label{eq proof of relation}
\frac{\partial}{\partial q}\palpha(\vspectrum(\alpha),0)<0.    
\end{align}
For a contradiction, we assume that 
\begin{align}\label{eq proof of relation contradiction}
\frac{\partial}{\partial q}\palpha(\vspectrum(\alpha),0)\geq 0.    
\end{align}
We take a small number $\epsilon>0$ such that  $\alpha<\min \prange-\epsilon$. By Lemma \ref{lemma equilibrium measure near Delta}, there exists $b_0\in(\vspectrum(\alpha),\dimension)$ such that  $\min\prange-\epsilon\leq \mu_{b_0}(\phi)$. Then, by Theorem \ref{thm regularity of non induced pressure}, we have 
\begin{align*}
    \frac{\partial}{\partial q}\palpha(b_0,0)=-\mu_{b_0}(\phi)+\alpha<-\min \prange+\epsilon+\alpha<0.
\end{align*}
Combining this with \eqref{eq proof of relation contradiction} and using the continuity of the function $b \in[\vspectrum(\alpha),b_0]\mapsto \frac{\partial}{\partial q}\palpha(b,0)$, we conclude that there exists $b'\in [\vspectrum(\alpha),b_0)$ such that $\frac{\partial}{\partial q}\palpha(b',0)=0$. This yields that $\mu_{b'}(\phi)=\alpha$. Thus, we obtain 
\[
0<\palpha(b',0)=h(\mu_{b'})-b'\lambda(\mu_{b'})\leq h(\mu_{b'})-\vspectrum(\alpha)\lambda(\mu_{b'})\leq h(\mu_{b'})-\frac{h(\mu_{b'})}{\lambda(\mu_{b'})}\lambda(\mu_{b'})=0.
\]
This is a contradiction and we obtain \eqref{eq proof of relation}. Since we assume that $\alpha_{\text{min}}<\alpha_{\text{max}}$, Theorem \ref{thm regularity of non induced pressure} yields that the function $q\mapsto\palpha(\vspectrum(\alpha),q)$ is strictly convex on $[0,\infty)$. Thus, by \eqref{eq proof of relation} and \eqref{eq asymptotic behavior of alpha pressure}, there exists a unique number $q(\alpha)\in (0,\infty)$ such that $\frac{\partial}{\partial q}\palpha(\vspectrum(\alpha),q(\alpha))=0$. In particular $\mu_{\vspectrum(\alpha),q(\alpha)}(\phi)=\alpha$. Moreover, by  Lemma \ref{lemma positivity p alpha}, we obtain
$
0\leq \palpha(\vspectrum(\alpha),q(\alpha))=h(\mu_{\vspectrum(\alpha),q(\alpha)}) -\vspectrum(\alpha)\lambda(\mu_{\vspectrum(\alpha),q(\alpha)})\leq 0
$
and thus, $\palpha(\vspectrum(\alpha),q(\alpha))=0$ and $\vspectrum(\alpha)=h(\mu_{\vspectrum(\alpha),q(\alpha)})/\lambda(\mu_{\vspectrum(\alpha),q(\alpha)})$. Hence, for all $\alpha\in(\alpha_{\text{min}},\min \prange)$, the proof of the first half is complete. By a similar argument, we can show that for all $\alpha\in (\max \prange, \alpha_{\text{max}})$  there exists a unique number $q(\alpha)\in (-\infty,0)$ such that $(\vspectrum(\alpha),q(\alpha))\in \mathcal{N}$ and \eqref{eq relation in the statement} holds. Moreover, we have $\vspectrum(\alpha)=h(\mu_{\vspectrum(\alpha),q(\alpha)})/\lambda(\mu_{\vspectrum(\alpha),q(\alpha)})$. 

Next, we shall show that the second half. Let $\alpha\in (\alpha_{\text{min}},\alpha_{\text{max}})\setminus \prange$ and let $\nu$ be in $M(\rational)$ such that $\lambda(\nu)>0$, $\nu(\phi)=\alpha$ and $\vspectrum(\alpha)=h(\nu)/\lambda(\nu)$. Then, we have
\[
h(\nu)+q(\alpha)(-\nu(\phi)+\alpha)-\vspectrum(\alpha)\lambda(\nu)=0=p(\vspectrum(\alpha),q(\alpha))+q(\alpha)\alpha.
\]
Thus, $\nu$ is an equilibrium measure for $-q(\alpha)\phi-\vspectrum(\alpha)\log|\rational'|$. Therefore, since $(\vspectrum(\alpha),q(\alpha))\in \mathcal{N}$, the uniqueness of an equilibrium measure for the potential $-q(\alpha)\phi-\vspectrum(\alpha)\log|\rational'|$ (Theorem \ref{thm uniquness and existence of the equilibrium state}) yields that $\nu=\mu_{\vspectrum(\alpha),q(\alpha)}$. 
\end{proof}

\begin{prop}\label{prop real analytic}
    The functions $\alpha\mapsto \vspectrum(\alpha)$ and $\alpha\mapsto q(\alpha)$ are real-analytic on $(\alpha_{\text{min}},\alpha_{\text{max}})\setminus \prange$.
\end{prop}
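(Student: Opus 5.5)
We apply the analytic implicit function theorem to the system \eqref{eq relation in the statement}. Fix $\alpha_0\in(\alpha_{\min},\min\prange)$; the case $\alpha_0\in(\max\prange,\alpha_{\max})$ is symmetric. If $\alpha_{\min}=\alpha_{\max}$ there is nothing to prove, so assume $\alpha_{\min}<\alpha_{\max}$. Define
\[
G(\alpha,b,q):=\bigl(p(b,q)+q\alpha,\ \tfrac{\partial}{\partial q}p(b,q)+\alpha\bigr).
\]
Since $\mathcal{N}$ is open (by the continuity of $p$) and $p$ is real-analytic on $\mathcal{N}$ by Theorem \ref{thm regularity of non induced pressure}, the map $G$ is real-analytic on $\mathbb{R}\times\mathcal{N}$. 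By Proposition \ref{prop relationship}, $G(\alpha,\vspectrum(\alpha),q(\alpha))=0$ with $(\vspectrum(\alpha),q(\alpha))\in\mathcal{N}$.

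The main step is to check that the Jacobian of $G$ in $(b,q)$ at $(\alpha_0,\vspectrum(\alpha_0),q(\alpha_0))$ is invertible. Using the Ruelle formula \eqref{eq ruell's formula noninduced}, this Jacobian is
\[
\begin{pmatrix} -\lambda(\mu) & -\mu(\phi)+\alpha_0\\ \partial_b\partial_q p & \partial_q^2 p\end{pmatrix}
=\begin{pmatrix} -\lambda(\mu) & 0\\ \partial_b\partial_q p & \partial_q^2 p\end{pmatrix},
\]
where $\mu=\mu_{\vspectrum(\alpha_0),q(\alpha_0)}$ and we used $\mu(\phi)=\alpha_0$ from Proposition \ref{prop relationship}. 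Its determinant is $-\lambda(\mu)\,\partial_q^2p$. Here $\lambda(\mu)>0$ by Theorem \ref{thm equilibrium state rational}, and $\partial_q^2 p\neq 0$ by the last statement of Theorem \ref{thm regularity of non induced pressure}, since $\alpha_{\min}<\alpha_{\max}$. So the Jacobian is nonsingular.

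The implicit function theorem then gives a neighbourhood $I$ of $\alpha_0$ and a real-analytic map $\alpha\mapsto(b(\alpha),\hat q(\alpha))$ solving $G=0$, with $(b(\alpha_0),\hat q(\alpha_0))=(\vspectrum(\alpha_0),q(\alpha_0))$. It remains to identify $(b(\alpha),\hat q(\alpha))$ with $(\vspectrum(\alpha),q(\alpha))$; this is the step needing care, since the implicit function theorem only gives uniqueness near $(\vspectrum(\alpha_0),q(\alpha_0))$ and we do not yet know that $\vspectrum$ is continuous. We argue directly. Shrink $I$ so that $I\subset(\alpha_{\min},\min\prange)$, $\hat q(\alpha)>0$ and $(b(\alpha),\hat q(\alpha))\in\mathcal{N}$. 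Put $\nu=\mu_{b(\alpha),\hat q(\alpha)}$. The second equation of $G=0$ gives $\nu(\phi)=\alpha$, and the first gives $h(\nu)-b(\alpha)\lambda(\nu)=0$. Hence $b(\alpha)=h(\nu)/\lambda(\nu)\le\vspectrum(\alpha)$.

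For the reverse inequality, note that $b\mapsto\palpha(b,\hat q(\alpha))$ is strictly decreasing on $\mathcal{N}$, because its derivative is $-\lambda<0$. Lemma \ref{lemma positivity p alpha} gives $\palpha(\vspectrum(\alpha),\hat q(\alpha))\ge0=\palpha(b(\alpha),\hat q(\alpha))$. The monotonicity extends beyond $\mathcal{N}$ since $p(\cdot,q)$ is nonincreasing everywhere, and the segment from $b(\alpha)$ to $\vspectrum(\alpha)$ lies in $\mathcal{N}$ because $\{\vspectrum(\alpha)\}\times[0,\infty)\subset\mathcal{N}$ by the proof of Proposition \ref{prop relationship}. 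Therefore $\vspectrum(\alpha)\le b(\alpha)$.

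So $b(\alpha)=\vspectrum(\alpha)$. Then $\hat q(\alpha)$ is a positive zero of $\partial_q\palpha(\vspectrum(\alpha),\cdot)$, and by the strict convexity and uniqueness in Proposition \ref{prop relationship} it equals $q(\alpha)$. Hence $\vspectrum$ and $q$ are real-analytic near $\alpha_0$.
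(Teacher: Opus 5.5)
Your proof is correct and follows the paper's argument. It uses the same map $G$, the same Jacobian with determinant $-\lambda(\mu_{\vspectrum(\alpha),q(\alpha)})\,\partial_q^2 p\neq 0$, and the analytic implicit function theorem. You also identify the implicit-function branch with $(\vspectrum(\alpha),q(\alpha))$: $b(\alpha)\le\vspectrum(\alpha)$ comes from the variational definition, and the reverse inequality from strict monotonicity of $\palpha(\cdot,\hat q(\alpha))$ together with Lemma \ref{lemma positivity p alpha}. The paper leaves this identification implicit, and your version of it is sound.
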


\begin{proof}
If $\alpha_{\text{min}}=\alpha_{\text{max}}$, there is nothing to prove. Thus, we assume that $\alpha_{\text{min}}<\alpha_{\text{max}}$.
We define $G:\mathbb{R}\times\mathcal{N}\rightarrow \mathbb{R}^2$ by $G(\alpha,b,q):= (\palpha(b,q), \frac{\partial}{\partial q}\palpha(b,q))=(p(b,q)+q\alpha, \frac{\partial}{\partial q}p(b,q)+\alpha)$. By Proposition \ref{prop relationship}, for all $\alpha\in (\alpha_{\text{min}},\alpha_{\text{max}})\setminus \prange$ we have $G(\alpha,\vspectrum(\alpha),q(\alpha))=0$. 
We want to apply the implicit function theorem. To do this, it is sufficient to show that 
\begin{align*}
    \text{det}\begin{pmatrix}
   \frac{\partial}{\partial b}\palpha(\vspectrum(\alpha),q(\alpha)) & \frac{\partial}{\partial q}\palpha(\vspectrum(\alpha),q(\alpha)) \\
   \frac{\partial^2}{\partial b\partial q}\palpha(\vspectrum(\alpha),q(\alpha)) & 
   \frac{\partial^2}{\partial q^2}\palpha(\vspectrum(\alpha),q(\alpha))
\end{pmatrix}
\neq 0.
\end{align*}
Note that, by Proposition \ref{prop relationship} and Theorem \ref{thm regularity of non induced pressure}, we have $\frac{\partial}{\partial q}\palpha(\vspectrum(\alpha),q(\alpha))=0$ and $\frac{\partial}{\partial b}\palpha(\vspectrum(\alpha),q(\alpha))=
-\lambda(\mu_{\vspectrum(\alpha),q(\alpha)})
<0$.
Since $\alpha_{\text{min}}<\alpha_{\text{max}}$, Theorem \ref{thm regularity of non induced pressure} implies that $\frac{\partial^2}{\partial q^2}\palpha(\vspectrum(\alpha),q(\alpha))\neq 0$. Therefore, by the implicit function theorem and Theorem \ref{thm regularity of non induced pressure}, we are done. 
\end{proof}

\begin{prop}\label{prop monotone}
    The function $\vspectrum$ is strictly increasing on $(\alpha_{\text{min}},\min \prange)$ and  strictly decreasing on $(\max \prange, \alpha_{\text{max}})$.
\end{prop}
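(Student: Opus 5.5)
The plan is to differentiate the identity $\palpha(\vspectrum(\alpha),q(\alpha))=0$ with respect to $\alpha$, using the real-analyticity from Proposition \ref{prop real analytic}. Write $b(\alpha):=\vspectrum(\alpha)$ in this argument. We have $p(b(\alpha),q(\alpha))+q(\alpha)\alpha=0$ for all $\alpha\in(\alpha_{\min},\alpha_{\max})\setminus\prange$. By Proposition \ref{prop relationship}, the point $(b(\alpha),q(\alpha))$ lies in $\mathcal{N}$, so the chain rule and Ruelle's formula \eqref{eq ruell's formula noninduced} apply there. Differentiating gives
\[
-\lambda(\mu_{b(\alpha),q(\alpha)})\,b'(\alpha)+\bigl(-\mu_{b(\alpha),q(\alpha)}(\phi)+\alpha\bigr)q'(\alpha)+q(\alpha)=0 .
\]

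The second relation in \eqref{eq relation in the statement} says $\frac{\partial}{\partial q}\palpha(b(\alpha),q(\alpha))=0$, i.e. $\mu_{b(\alpha),q(\alpha)}(\phi)=\alpha$. So the middle term vanishes, and
\[
b'(\alpha)=\frac{q(\alpha)}{\lambda(\mu_{b(\alpha),q(\alpha)})}.
\]
The denominator is strictly positive by Theorem \ref{thm equilibrium state rational}. The sign of $b'$ is therefore the sign of $q(\alpha)$.

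By Proposition \ref{prop relationship}, $q(\alpha)\in(0,\infty)$ for $\alpha\in(\alpha_{\min},\min\prange)$. Hence $b'>0$ on this whole open interval, and $b$ is strictly increasing there. Likewise $q(\alpha)\in(-\infty,0)$ for $\alpha\in(\max\prange,\alpha_{\max})$, so $b'<0$ and $b$ is strictly decreasing there.

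There are two small points to handle. First, if $\alpha_{\min}=\alpha_{\max}$ the intervals are empty and there is nothing to prove. Second, differentiability of $q(\alpha)$ is needed for the chain rule, and it is supplied by Proposition \ref{prop real analytic}. I expect no real obstacle. The only care needed is to check that the partial derivatives are evaluated at a point of $\mathcal{N}$, where Theorem \ref{thm regularity of non induced pressure} applies, and Proposition \ref{prop relationship} guarantees this.
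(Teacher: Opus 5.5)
Your argument is correct, but it takes a different route from the paper. The paper never differentiates. Given $\alpha_1<\alpha_2$ in $(\alpha_{\min},\min\prange)$, it builds a comparison measure by convex combinations:
\begin{itemize}
\item Theorem \ref{thm bowen formula} gives $\nu$ with $h(\nu)/\lambda(\nu)>\dimension-\epsilon$.
\item Mixing $\nu$ with $\delta_{\minomega}$ leaves $h/\lambda$ unchanged, since $h(\delta_{\minomega})=\lambda(\delta_{\minomega})=0$. This yields $\mu$ with $\mu(\phi)>\alpha_2$ and $h(\mu)/\lambda(\mu)>\dimension-\epsilon>\vspectrum(\alpha_1)$; this last inequality is where Proposition \ref{prop only frat} enters.
\item It then mixes $\mu$ with the maximizer $\mu_{\vspectrum(\alpha_1),q(\alpha_1)}$ to obtain $\bar\nu$ with $\bar\nu(\phi)=\alpha_2$.
\item Affinity of entropy gives $h(\bar\nu)>\vspectrum(\alpha_1)\lambda(\bar\nu)$, hence $\vspectrum(\alpha_2)>\vspectrum(\alpha_1)$.
\end{itemize}

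That proof is soft. It needs only the existence of a maximizer at $\alpha_1$ and the bound $\vspectrum(\alpha_1)<\dimension$. It does not use Proposition \ref{prop real analytic}, Ruelle's formula, or the sign of $q(\alpha)$, so it would survive where only existence of an optimal measure is known.

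Your proof instead uses the full analytic package. Real-analyticity of $\vspectrum$ and $q$ rests on the implicit function theorem, on $\partial_q^2 p\neq 0$, and on identifying the implicit branch with $(\vspectrum,q)$. In exchange your argument is shorter. It also gives the quantitative identity $\vspectrum'(\alpha)=q(\alpha)/\lambda(\mu_{\vspectrum(\alpha),q(\alpha)})$, which explains the monotonicity through the sign of the Lagrange multiplier $q(\alpha)$ and carries more information than monotonicity alone.
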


\begin{proof}
    Let $\alpha_1,\alpha_2\in (\alpha_{\text{min}},\min \prange)$.
    We assume that $\alpha_1<\alpha_2$. By Proposition \ref{prop only frat}, we have $b(\alpha_1)<\dimension$. 
    We take a small $\epsilon>0$ with $\alpha_2<\min\prange-\epsilon$ and $b(\alpha_1)<\dimension-\epsilon$. We first show that there exists $\mu\in M(\rational)$ such that  
    \begin{align}\label{eq proof of monotonicity saturated}
        \lambda(\mu)>0,\ \min\prange-\epsilon<\mu(\phi) \text{ and } \dimension-\epsilon<\frac{h(\mu)}{\lambda(\mu)}.
    \end{align}
    By Theorem \ref{thm bowen formula}, there exists $\nu\in M(\rational)$ such that $\lambda(\nu)>0$ and $\dimension-\epsilon<h(\nu)/\lambda(\nu)$. Moreover, there exists $p\in [0,1)$ such that $\min \prange-\epsilon<p\alpha_{\minomega}+(1-p)\nu(\phi)$. We set $\mu:=p\delta_{\minomega}+(1-p)\nu$. Then, by the affinity of the entropy map (\cite[Theorem 8.1]{walters2000introduction}), we obtain $h(\mu)/\lambda(\mu)=h(\nu)/\lambda(\nu)$ and thus, $\mu$ satisfies \eqref{eq proof of monotonicity saturated}.
    
    Since $\alpha_1<\alpha_2<\mu(\phi)$, there exists $\bar p\in (0,1)$ such that $\alpha_2=\bar p\alpha_1+(1-\bar p)\mu(\phi)$.  
We set $\bar\nu=\bar p\mu_{b(\alpha_1),q(\alpha_1)}+(1-\bar p)\mu$. By Proposition \ref{prop relationship}, \eqref{eq proof of monotonicity saturated} and the affinity of the entropy map, we obtain $\bar\nu(\phi)=\alpha_2$ and 
\begin{align*}
&h(\bar \nu)=\bar ph(\mu_{b(\alpha_1),q(\alpha_1)})+(1-\bar p)h(\mu)
\\&>b(\alpha_1)(\bar p\lambda(\mu_{b(\alpha_1),q(\alpha_1)})+(1-\bar p)\lambda(\mu))    
=b(\alpha_1)\lambda(\bar \nu).
\end{align*}
Hence,  we obtain $b(\alpha_1)<b(\alpha_2)$. This implies that $\vspectrum$ is strictly increasing on $(\alpha_{\text{min}},\min \prange)$. By the similar argument, we can show that $b$ is strictly decreasing on $(\max \prange, \alpha_{\text{max}})$.
\end{proof}

\section{Conditional variational principle}\label{sec Conditional variational principle}
In this section, we establish a conditional variational principle and complete the proof of Theorem \ref{thm main}.
As in the previous section, we fix a parabolic rational map $\rational$ with $\text{deg}(\rational)\geq 2$ such that $\infty\notin J$ and \eqref{eq assumption simple} holds and a potential $\phi\in \mathcal{H}$.

Recall that by \eqref{eq primitive MS}, $\MS$ is primitive. 
Thus, there exist $\ell\in\mathbb{N}$ and a set $\Lambda\subset\MS^\ell$ such that, for every $e,e'\in\edge$, there exists $a_{e,e'}\in\Lambda$ for which $ea_{e,e'}e'\in\MS^*$. We fix such an $\ell$ and, for each $e,e'\in\edge$, fix an element $a_{e,e'}\in\Lambda$ with $ea_{e,e'}e'\in\MS^*$.
\begin{lemma}\label{lemma long prabolic}
    There exists a constant $L>0$ such that for all $n\in\mathbb{N}$, $\tau\in \inddiferent^n\cap\MS^n$, $x\in \coding([\tau])$ and $y\in\coding([\tau])$ we have 
    \[
    |x-y|\leq Ln \inf\{|(\rational^{n})'(z)|^{-1}: z\in \comp(\tau a_{\tau_{n-1},h}h), h\in H\}.
    \]
\end{lemma}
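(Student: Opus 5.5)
The plan is to compare the size of the parabolic cylinder $\coding([\tau])$ with the derivative of $\rational^n$ along an extended hyperbolic word. Fix $n$, $\tau\in\inddiferent^n\cap\MS^n$ and $h\in H$, and write $w=\tau a_{\tau_{n-1},h}h$. Since all letters of $\tau$ lie in $\inddiferent$, \eqref{eq separation} gives a single $\omega\in\Omega$ with $\tau_i\in\inddiferent_\omega$ for all $i$. Moreover $\coding([\tau])=A_\tau\subset B(\omega,2\expansive)$, and $\rational^{i}$ maps $A_\tau$ into $A_{\tau_i}\subset B(\omega,2\expansive)$ for $0\le i\le n-1$. Inside $B(\omega,\beta(\rational))$ the relevant inverse branch is $\rational_\omega^{-1}$. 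Hence $A_\tau=\rational_\omega^{-(n-1)}(A_{\tau_{n-1}})$, up to the cylinder structure, and Proposition \ref{prop local property near inddiferent} controls it.

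\textbf{Step 1 (diameter of $A_\tau$).} Points of $J\cap B(\omega,\beta(\rational))\setminus\{\omega\}$ lie in the petals $S^k_\omega(x(\theta),\theta)$ by Theorem \ref{thm fatou}. The set $A_\tau$ consists of points whose first $n$ iterates stay near $\omega$, so it is contained in $\rational_\omega^{-(n-1)}$ of a bounded set. Therefore $|x-\omega|\ll (m)^{-1/\rate_\omega}$ for suitable $m$. I would estimate the diameter crudely by $\mathrm{diam}(A_\tau)\le 2\sup_{x\in A_\tau}|x-\omega|$. A sharper route is to integrate $|(\rational_\omega^{-(n-1)})'|$ along a path in $A_{\tau_{n-1}}$, away from $\omega$ and in a petal. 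When $A_{\tau_{n-1}}$ meets $\omega$ (the fixed cylinder), the decay must come from the layer structure. Points of $A_\tau$ lie in some $\rational_\omega^{-j}$-preimage of a fundamental annulus $\{r\le |z-\omega|\}$ with $j\ge$ something. Here I expect the factor $n$ to absorb the sum over the $n$ possible layers: each layer $j\le n$ contributes a piece of size $\asymp j^{-(\rate_\omega+1)/\rate_\omega}$. Summing these, together with the distance to $\omega$, gives at most $\ll n\cdot n^{-(\rate_\omega+1)/\rate_\omega}$ provided that every point of $A_\tau$ lies at distance $\ll n^{-1/\rate_\omega}\le n\cdot n^{-(\rate_\omega+1)/\rate_\omega}$ from $\omega$. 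This bound is exactly what Proposition \ref{prop local property near inddiferent} gives when $x\in A_\tau\setminus\{\omega\}$ has $\rational^{n-1}x$ outside a fixed small ball. Points with $\rational^{n-1}x$ very close to $\omega$ are even closer to $\omega$, so the bound $|x-\omega|\ll n^{-1/\rate_\omega}$ holds uniformly by monotonicity of the petal dynamics \eqref{eq action rational inverse branch}.

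\textbf{Step 2 (derivative along $w$).} For $z\in\comp(w)$, the chain rule gives $|(\rational^n)'(z)|=|(\rational^{|w|-1})'(z)|/|(\rational^{\ell+1})'(\rational^n z)|$. The denominator is bounded above by $\max_J|\rational'|^{\ell+1}$, since $\ell$ is fixed. Next, $\rational^{|w|-1}$ maps $\comp(w)$ conformally onto $B(z_h,2\kappa)$ with bounded distortion by Lemma \ref{lemma distortion}. The points $\rational^{i}z$ for $i\le n-1$ stay in a slightly enlarged petal region near $\omega$, because $\comp(w)$ is small by \eqref{eq def eta}. Proposition \ref{prop local property near inddiferent} therefore gives $|(\rational^{n})'(z)|\ll n^{(\rate_\omega+1)/\rate_\omega}$, using that $\rational^{n}z$ lies outside a fixed ball around $\omega$. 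Indeed, $\rational^n z$ is within $\ell+1$ steps of $B(z_h,2\kappa)\subset J\setminus B(\Omega,2\expansive)$ neighborhood, so it is uniformly away from $\omega$. Hence $\inf_z|(\rational^n)'(z)|^{-1}\gg n^{-(\rate_\omega+1)/\rate_\omega}$, uniformly in $h$.

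\textbf{Combining.} Steps 1 and 2 give $|x-y|\ll n^{-1/\rate_\omega}=n\cdot n^{-(\rate_\omega+1)/\rate_\omega}\ll n\inf\{|(\rational^n)'(z)|^{-1}\}$, with constants independent of $\tau$, $h$ and $\omega$, since $\Omega$ is finite. The main obstacle is the uniform upper bound on the derivative in Step 2. Proposition \ref{prop local property near inddiferent} is stated for $R_\omega^{-n}$ on the petal minus a ball. I need to check that $\rational^n z$ lies in the domain of that proposition for $z\in\comp(w)$, which may stick out of $J$ into the Fatou set. I would handle this by enlarging $\theta$-sectors slightly and using Koebe distortion (Lemma \ref{lemma distortion}) to transfer the estimate from a point of $J\cap\comp(w)$ to all of $\comp(w)$.
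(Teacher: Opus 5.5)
Your proposal is correct and is essentially the paper's proof. Both arguments bound $|x-\omega|\ll n^{-1/\rate_\omega}$ uniformly on the parabolic cylinder, and both show $|(\rational^n)'|\ll n^{(\rate_\omega+1)/\rate_\omega}$ on $\comp(\tau a_{\tau_{n-1},h}h)$ via Proposition \ref{prop local property near inddiferent}, Lemma \ref{lemma distortion} and the chain rule, so that $n^{-1/\rate_\omega}=n\cdot n^{-(\rate_\omega+1)/\rate_\omega}$ closes the argument; the paper gets the first bound by applying Proposition \ref{prop local property near inddiferent} at the first entry time into $H$, where you use sector monotonicity via \eqref{eq action rational inverse branch}, and both work.

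There is one slip in Step 2. The chain-rule factor is $(\rational^{\ell})'(\rational^n z)$, not $(\rational^{\ell+1})'$. Transferring the estimate across $\comp(w)$ also needs a lower bound on this factor, not only the upper bound you state; the lower bound does hold, because only finitely many conformal branches of fixed length $\ell$ occur.
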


\begin{proof}
    We will show that there exists a constant $D>0$ such that for all $n\in\mathbb{N}$, $\omega\in \Omega$, $\tau\in \inddiferent^n_\omega\cap\MS^n$ and $x\in \coding([\tau])$ we have 
\begin{align}\label{eq long parabolic}
    |x-\omega|\leq Dn^{-1/p_\omega}.
\end{align}
By Proposition \ref{prop local property near inddiferent}, there exists a constant $ D\geq 1$ such that for all $n\in\mathbb{N}$, $\omega\in \Omega$ and $z\in \bigcup_{\parabolic h\in \inddiferent_\omega H \cap\MS^2}\coding([\parabolic h])$ we have
\begin{align}\label{eq long parabolic comp inverse}
    \frac{1}{D} \leq \frac{|\rational^{-n}_\omega(z)-\omega|}{n^{-1/p_\omega}}\leq  D  \text{ and }     \frac{1}{D} \leq \frac{|(\rational^{-n}_\omega)'(z)|}{n^{-(p_\omega+1)/p_\omega}}\leq  D .
\end{align}
 Let $n\in\mathbb{N}$ and let $\omega\in\Omega$. 
We fix $\tau\in \inddiferent^n_\omega\cap\MS^n$ and $x\in \coding([\tau])\setminus\{\omega\}$. Then there exists $\tau'\in [\tau]$ such that $x=\coding(\tau')$.  
Since $x\neq \omega$, \eqref{eq n big near inddiferent} implies that $\{m\in\mathbb{N}:\tau_{m}'\in H\}\neq \emptyset$. Define $k:=\min\{m\in\mathbb{N}:\tau_{m}'\in H\}\geq n+1$. Since $2\kappa<\smallnumber$ (see \eqref{eq def kappa} for the definition of $\kappa$ and (I1) and (I2) in Section \ref{sec Dynamics of parabolic rational maps near rationally indifferent fixed points}  for the definition of $\smallnumber$), we have 
\[
x= \rational^{-k}_{\tau_0\cdots\tau_{k}}(\coding(\tau_k\tau_{k-1}\cdots))=\rational_\omega^{-(k-1)}\circ R^{-1}_{\tau_{k-1}\tau_k}(\coding(\tau_k\tau_{k-1}\cdots)).
\]
Therefore, by \eqref{eq long parabolic comp inverse}, we obtain \eqref{eq long parabolic}.  

By \eqref{eq long parabolic}, for all $n\in\mathbb{N}$, $\omega\in\Omega$, $\tau\in \inddiferent_\omega^n\cap \MS^n$, $x\in \coding([\tau])$ and $y\in\coding([\tau])$ we obtain 
\begin{align}\label{eq long parabolic 2}
    |x-y|\leq |x-\omega|+|y-\omega|\leq 2Dn n^{-\frac{p_\omega+1}{p_\omega}}.
\end{align}
Note that since $\text{Crit}(\rational)\cap J=\emptyset$ and $J$ is compact, we have $\sup_{z\in J}|\rational'(z)|^{-1}<\infty$. Thus, by Lemma \ref{lemma distortion}, \eqref{eq long parabolic comp inverse} and the chain rule, there exists a constant $D_1\geq 1$ such that for all $n\in\mathbb{N}$, $\omega\in\Omega$ and $\tau\in \inddiferent_\omega^n\cap \MS^n$ 
\[
D_1^{-1}n^{-\frac{p_\omega+1}{p_\omega}}\leq
\inf\{|(\rational^{n})'(z)|^{-1}: z\in \comp(\tau a_{\tau_{n-1},h}h), h\in H\} \leq D_1n^{-\frac{p_\omega+1}{p_\omega}}.
\]
Combining this with \eqref{eq long parabolic 2} we obtain the desired result.
\end{proof}

\begin{lemma}\label{lemma diam and derivative}
    There exists a constant $C>0$ such that for every $n\in\mathbb{N}$ and $\tau\in \MS^n$ we have 
    \[
    \text{diam}(\coding([\tau]))\le Cn\inf\{|(\rational^{n})'(z)|^{-1}: z\in \coding([\tau a_{\tau_{n-1},h}h]),h\in H\}.
    \]
\end{lemma}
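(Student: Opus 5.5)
We split an arbitrary word $\tau\in\MS^n$ according to its last visit to $H$. Let $k:=\max\{0\le m\le n-1:\tau_m\in H\}$ if this set is nonempty, so that $\tau=\tau'\gamma$ with $\tau'=\tau_0\cdots\tau_k$ ending in $h_0:=\tau_k\in H$ and $\gamma=\tau_{k+1}\cdots\tau_{n-1}\in\inddiferent^{n-k-1}\cap\MS^{n-k-1}$ (possibly empty). If no letter of $\tau$ lies in $H$, then $\tau\in\inddiferent^n\cap\MS^n$. In that case $\coding([\tau a_{\tau_{n-1},h}h])\subset \coding([\tau])\cap \comp(\tau a_{\tau_{n-1},h}h)$ up to the identification of the point where the infimum is taken. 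Since $|(\rational^n)'|$ is comparable on $\comp(\tau a h)$ by Lemma \ref{lemma distortion} (after accounting for the bounded extra iterates from $a$), Lemma \ref{lemma long prabolic} gives the claim directly.

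In the general case, first treat $\gamma$ empty, i.e.\ $\tau_{n-1}=h_0\in H$. Then $\coding([\tau])\subset \rational^{-(n-1)}_{\tau}(B(z_{h_0},2\kappa))=\comp(\tau)$, using Lemma \ref{lemma component} with word $\tau_0\cdots\tau_{n-2}$ and final letter $h_0$. By Koebe (Lemma \ref{lemma distortion}), $\text{diam}(\comp(\tau))\ll |(\rational^{n-1})'(z)|^{-1}$ for any $z$ in it. Then $|(\rational^n)'(z)|^{-1}\gg |(\rational^{n-1})'(z)|^{-1}$ because $\sup_J|\rational'|<\infty$, which gives the bound even without the factor $n$. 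The points $z\in\coding([\tau a h])$ lie in $\coding([\tau])$, hence in $\comp(\tau)$, so the infimum is comparable to $\text{diam}(\comp(\tau))$.

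For $\gamma$ nonempty of length $m=n-k-1$, write $\coding([\tau])=\rational^{-k}_{\tau'}(\coding([h_0\gamma]))$, where the inverse branch is defined on $B(z_{h_0},2\kappa)\supset A_{h_0}\supset \coding([h_0\gamma])$. Koebe distortion on this branch gives
\[
\text{diam}(\coding([\tau]))\asymp |(\rational^{k})'(w)|^{-1}\,\text{diam}(\coding([h_0\gamma]))
\]
uniformly in $w\in\coding([\tau])$. Next, $\coding([h_0\gamma])=\rational^{-1}_{h_0\gamma_0}(\coding([\gamma]))$ on a region avoiding critical points, so $\text{diam}(\coding([h_0\gamma]))\ll\text{diam}(\coding([\gamma]))$. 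Lemma \ref{lemma long prabolic} then bounds this by $Lm\,|(\rational^m)'(z')|^{-1}$ for $z'\in\comp(\gamma a h)$. Taking $z\in\coding([\tau a h])$, the chain rule gives $(\rational^n)'(z)=(\rational^k)'(z)\,\rational'(\rational^k z)\,(\rational^m)'(\rational^{k+1}z)$ with $\rational^{k+1}z\in\coding([\gamma a h])\subset\comp(\gamma a h)$. Using boundedness of $|\rational'|$ on $J$ and $m\le n$, these combine to $\text{diam}(\coding([\tau]))\le Cn|(\rational^n)'(z)|^{-1}$ for every such $z$, hence for the infimum.

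The main obstacle is the distortion bookkeeping. First, $\rational^{k}(z)$ for $z\in\coding([\tau a h])$ must lie in the Koebe domain of $\rational_{\tau'}^{-k}$, which holds since it lies in $A_{h_0}$ and $\text{diam}(A_{h_0})\le\Koebe r$. Second, the infimum over $z$ and $h\in H$ in Lemma \ref{lemma long prabolic} has to be compared with the derivative at our specific point. This is handled by the two-sided estimate $D_1^{-1}m^{-\growth}\le\cdot\le D_1m^{-\growth}$ proved inside Lemma \ref{lemma long prabolic}, which makes all such derivatives comparable uniformly in $h$.
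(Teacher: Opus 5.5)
Your proposal is correct and matches the paper's argument. You split $\tau$ at its last letter $\tau_k\in H$, apply bounded distortion to the branch $\rational^{-k}_{\tau_0\cdots\tau_k}$ together with a one-step univalent inverse of $\rational$, then apply Lemma~\ref{lemma long prabolic} to the parabolic tail $\tau_{k+1}\cdots\tau_{n-1}\in\inddiferent^{n-k-1}$ and recombine using the chain rule and $\sup_J|\rational'|<\infty$. The comparability step you flag at the end is not needed: the bound in Lemma~\ref{lemma long prabolic} is an infimum over $\comp(\gamma a h)$, so it is automatically at most $|(\rational^m)'(\rational^{k+1}z)|^{-1}$ at your point $\rational^{k+1}z\in\comp(\gamma a h)$.
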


\begin{proof}
We denote by $C$ the constant in Lemma \ref{lemma distortion}. We fix $n\in\mathbb{N}$ and $\tau\in \MS^n$. If $\{\ell\in\{0,\ldots,n-1\}:\tau_\ell\in H\}=\emptyset$, then, by Lemma \ref{lemma long prabolic}, there is nothing to prove. Thus, we assume that $\{\ell\in\{0,\ldots,n-1\}:\tau_\ell\in H\}\neq\emptyset$ and define
\[
k:=\max\{\ell\in \{0,\cdots,n-1\}:\tau_\ell\in H\}.
\]
By Lemma \ref{lemma distortion}, the mean value theorem and the chain rule, for all $x:=\coding(b)\in \coding([\tau])$ and $y:=\coding(\tilde b)\in \coding([\tau])$ we have 
\begin{align}\label{eq diam 1}
    &|x-y|=
    |\rational^{-k}_{\tau_0\cdots \tau_k}(\coding(\tau_k\cdots\tau_{n-1}b_n\cdots))
    -
    \rational^{-k}_{\tau_0\cdots\tau_k}(\coding(\tau_k\cdots\tau_{n-1}\tilde b_n\cdots))|
    \\&\leq C^2\inf_{z\in \comp(\tau_0\cdots\tau_k)}|(\rational^k)'(z)|^{-1}
    |\coding(\tau_k\cdots\tau_{n-1}b_n\cdots)-\coding(\tau_k\cdots\tau_{n-1}\tilde b_n\cdots)|.\nonumber
\end{align}
Since $\kappa<\Koebe \hat q$ (see \eqref{eq def kappa} for the definition of $\kappa$ and \eqref{eq biholomorphic one iteration} for the definition of $\hat q$), \eqref{eq Koebe} implies that for all $x:=\coding(b)\in \coding([\tau])$ and $y:=\coding(\tilde b)\in \coding([\tau])$ we have 
\begin{align}\label{eq diam 2}
 &|\coding(\tau_k\cdots\tau_{n-1}b_n\cdots)-\coding(\tau_k\cdots\tau_{n-1}\tilde b_n\cdots)|
 \\&\leq C\inf_{z\in \pi([\tau_k])}|R'(z)|^{-1} |\coding(\tau_{k+1}\cdots\tau_{n-1}b_n\cdots)-\coding(\tau_{k+1}\cdots\tau_{n-1}\tilde b_n\cdots)|.\nonumber
\end{align}
By the definition of $k$, we have $\tau_{k+1}\cdots\tau_{n-1}\in \inddiferent^{n-(k+1)}\cap\MS^{n-(k+1)}$. Thus, combining Lemma \ref{lemma long prabolic} with \eqref{eq diam 1} and \eqref{eq diam 2}, we obtain the desired result.
\end{proof}

For a continuous function $\psi^*$ on $\MS$ and $n\in \mathbb{N}$ we define 
\[
D_{\psi^*}(n):=\sup_{\tau\in \MS^n}\sup_{\tau_1,\tau_2\in [\tau]}
\left|
S_n(\psi^*)(\tau_1)
-
S_n(\psi^*)(\tau_2)
\right|.
\]
Since $\MS$ is compact, we have 
\begin{align}\label{eq milde distorsion}
   \lim_{n\to\infty} D_{\psi^*}(n)n^{-1}=0.
\end{align}
For $(b,q)\in \mathbb{R}^2$ we set $g^*_{b,q}:=-q\phi^*-b\log |(\rational^*)'|$ and $D_{b,q}(n):=D_{g^*_{b,q}}(n)$.
Since $\MS$ is primitive by \eqref{eq primitive MS}, it is known that (see \cite[Section 2]{Kessebohmerweakgibbs} and \cite[Remark on P.154]{kessebohmer2004multifractaltobeappdated}) for each $(b,q)\in \mathbb{R}^2$ there exists a weak Gibbs measure $m^*_{b,q}$, that is, there exists a constant $C\geq 1$ such that for all $n\in\mathbb{N}$, $\tau\in \MS^n$ and $\tilde \tau\in [\tau]$ we have 
\begin{align}\label{eq weak gibbs}
    (CD_{b,q}(n))^{-1}m^*_{b,q}([\tau])\leq{\exp(S_n(g_{b,q}^*)(\tilde \tau)-n\codingpressure(b,q))}\leq CD_{b,q}(n)m^*_{b,q}([\tau]).
\end{align}
\begin{prop}\label{prop conditional variational principle}
    For all $\alpha\in (\alpha_{\min},\alpha_{\max})\setminus \prange$ we have $b(\alpha)=\dim_H(\mu_{\vspectrum(\alpha),q(\alpha)})=\vspectrum(\alpha)$.
\end{prop}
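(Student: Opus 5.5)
We prove the two inequalities $b(\alpha)\ge \dim_H(\mu_{\vspectrum(\alpha),q(\alpha)})$ and $b(\alpha)\le \vspectrum(\alpha)$. The statement then follows, because by Proposition \ref{prop relationship} and \eqref{eq volume lemma} we have $\dim_H(\mu_{\vspectrum(\alpha),q(\alpha)})=h(\mu_{\vspectrum(\alpha),q(\alpha)})/\lambda(\mu_{\vspectrum(\alpha),q(\alpha)})=\vspectrum(\alpha)$. Fix $\alpha\in(\alpha_{\min},\alpha_{\max})\setminus\prange$ and write $\mu_\alpha:=\mu_{\vspectrum(\alpha),q(\alpha)}$.

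\textbf{Lower bound.} The measure $\mu_\alpha$ is ergodic: it is the projection of $\codmeas_{\vspectrum(\alpha),q(\alpha)}$, which is the lift of the ergodic Gibbs measure $\indumeasure_{\vspectrum(\alpha),q(\alpha)}$. It also satisfies $\mu_\alpha(\phi)=\alpha$. By Birkhoff's ergodic theorem, $\mu_\alpha(B(\alpha))=1$. By the definition of $\dim_H(\mu_\alpha)$ this gives $b(\alpha)=\dim_H(B(\alpha))\ge\dim_H(\mu_\alpha)=\vspectrum(\alpha)$, using \eqref{eq volume lemma} and $\lambda(\mu_\alpha)>0$.

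\textbf{Upper bound.} Set $b_0:=\vspectrum(\alpha)$ and $q_0:=q(\alpha)$, so that $\palpha(b_0,q_0)=0$ by Proposition \ref{prop relationship}. Fix $t>b_0$ and $\epsilon>0$. By continuity and strict monotonicity in $b$ (since $\partial_b\palpha=-\lambda<0$ on $\mathcal{N}$), we have $\palpha(t,q_0)<0$; put $\gamma:=-\palpha(t,q_0)>0$. Take $x\in B(\alpha)$ and a coding $\tau\in\coding^{-1}(x)$. For all large $n$ we have $|S_n\phi^*(\tau)-n\alpha|<\epsilon n$, so $B(\alpha)\subset\bigcup_N\bigcap_{n\ge N}\bigcup\{\coding([\tau|_n]):\tau|_n\in\mathcal{C}_n\}$, where $\mathcal{C}_n$ is the set of words of length $n$ on which $|S_n\phi^*-n\alpha|\le\epsilon n+D_{\phi^*}(n)$. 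For each $N$, cover the $N$-th set at level $n\ge N$ by the cylinders in $\mathcal{C}_n$. By Lemma \ref{lemma diam and derivative},
\[
\text{diam}(\coding([\tau]))^t\le C^tn^t\exp\bigl(-tS_n\log|(\rational^*)'|(\tilde\tau)+tD_{0,1}'(n)\bigr)
\]
for $\tilde\tau\in[\tau a h]$. Here the extension by the connecting word $a_{\tau_{n-1},h}h$ of bounded length costs only a bounded factor, and the derivative variation over the cylinder is the subexponential term $D_{\log|(\rational^*)'|}(n)$. We multiply by $\exp(q_0(-S_n\phi^*+n\alpha))\cdot\exp(|q_0|(\epsilon n+D_{\phi^*}(n)))\ge1$ on $\mathcal{C}_n$. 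Then the weak Gibbs property \eqref{eq weak gibbs} for $m^*_{t,q_0}$, together with $\codingpressure(t,q_0)+q_0\alpha=\palpha(t,q_0)=-\gamma$ (by \eqref{eq code and noncode pressure}), gives
\[
\sum_{\tau\in\mathcal{C}_n}\text{diam}(\coding([\tau]))^t\ll n^tD_{t,q_0}(n)e^{o(n)+|q_0|\epsilon n-\gamma n}\sum_{\tau}m^*_{t,q_0}([\tau]).
\]
The last sum is at most $1$, so the right-hand side is bounded by $e^{-(\gamma/2) n}$ once $\epsilon<\gamma/(2|q_0|+1)$ and $n$ is large, using \eqref{eq milde distorsion}. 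Since cylinder diameters tend to zero uniformly by \eqref{eq uniform decay of cylinders}, $\mathcal{H}^t$ of each $N$-th set is $0$. Countable stability then gives $\dim_H B(\alpha)\le t$, and letting $t\downarrow b_0$ yields $b(\alpha)\le\vspectrum(\alpha)$.

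\textbf{Main obstacle.} The hard step is controlling the diameter of cylinders that end inside the parabolic region, where bounded distortion fails. Lemma \ref{lemma diam and derivative} is designed exactly for this: it loses only a linear factor $n$ and relates the diameter to the derivative along an extension ending in $H$. The remaining work is to check that switching from $\tau$ to the extended word $\tau a_{\tau_{n-1},h}h$ only changes $S_n$ by $D(n)=o(n)$ and bounded terms. The weak Gibbs measure is used on the full compact shift $\MS$ precisely to avoid inducing here.
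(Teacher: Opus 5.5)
Your proposal is correct and follows essentially the same route as the paper. The lower bound comes from ergodicity of $\mu_{\vspectrum(\alpha),q(\alpha)}$, Birkhoff's theorem and the volume lemma. The upper bound covers by level-$n$ cylinders, applies Lemma \ref{lemma diam and derivative}, uses the weak Gibbs measure $m^*_{t,q(\alpha)}$ on $\MS$, and exploits $\palpha(t,q(\alpha))<0$ for $t>\vspectrum(\alpha)$. The only cosmetic differences are that you treat both signs of $q(\alpha)$ at once via $|q(\alpha)|$, and that you index the cover by words with controlled Birkhoff sums rather than by cylinders meeting $B_n$.
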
 
\begin{proof}
    By \eqref{eq volume lemma} and Proposition \ref{prop relationship}, for all $\alpha\in (\alpha_{\min},\alpha_{\max})\setminus \prange$ we have $b(\alpha)\geq\dim_H(\mu_{\vspectrum(\alpha),q(\alpha)})=\vspectrum(\alpha)$. We will show that for all $\alpha\in (\alpha_{\min},\alpha_{\max})\setminus \prange$ we have $b(\alpha)\leq\dim_H(\mu_{\vspectrum(\alpha),q(\alpha)})$. 
    
    Let $\alpha\in (\alpha_{\min},\min \prange)$. By Proposition \ref{prop relationship}, we have $(\vspectrum(\alpha),q(\alpha))\in \mathcal{N}$ and $q(\alpha)>0$. We take $b_0>\dim_H(\mu_{\vspectrum(\alpha),q(\alpha)})=\vspectrum(\alpha)$ satisfying $[\vspectrum(\alpha),b_0]\times\{ q(\alpha)\}\subset \mathcal{N}$. We fix $b\in (\vspectrum(\alpha),b_0]$.
    By Theorem \ref{thm regularity of non induced pressure}, the function $t\mapsto \pressure(t,q(\alpha))$ is strictly decreasing on $[\vspectrum(\alpha),b_0]$. 
    Choose $\epsilon>0$ sufficiently small so that
    \begin{align}\label{eq conditional small epsilon}
    (q(\alpha)+2)\epsilon<\pressure(\vspectrum(\alpha),q(\alpha))-\pressure(b,q(\alpha))=-q(\alpha)\alpha-\pressure(b,q(\alpha))    
    \end{align}
    (see Proposition \ref{prop relationship}).
    For $n\in\mathbb{N}$ we define 
    \[
    B_{n}:=\left\{x\in J: \left|\ell^{-1}S_\ell(\phi)(x)-\alpha\right|<\epsilon \text{ for all }\ell\geq n\right\}.
    \]
     Then we have $B(\alpha)\subset \bigcup_{n\in\mathbb{N}} B_n$. Therefore, by \cite[Theorem 15.3.4 and Theorem 15.3.5]{Urbanskinoninvertible}, we obtain 
     $
     b(\alpha)\leq \sup_{n\in\mathbb{N}} \dim_H(B_n).
     $
     Since $b$ is an arbitrary number in $(\dim_H(\mu_{\vspectrum(\alpha),q(\alpha)}),b_0]$, if we show that $\dim_H(B_n)\leq b$ for all $n\in\mathbb{N}$ then we obtain $b(\alpha)\leq \dim_H(\mu_{\vspectrum(\alpha),q(\alpha)})$. 
     
     We fix $n\in\mathbb{N}$.
     By \eqref{eq milde distorsion}, there exists $N\in\mathbb{N}$ such that for all $\ell\geq N$ we have 
    \begin{align}\label{eq mild distorsion b,q and b,0}
    D_{b,q(\alpha)}(\ell)\leq \epsilon \ell \text{ and } D_{b,0}(\ell)\leq \epsilon \ell.
    \end{align}
     For $\ell\geq N$ we define
     $
     E_\ell:=\{\tau\in \MS^\ell: \coding([\tau])\cap B_n\neq \emptyset\}.
     $
     Then for all $\ell\geq N$ we have  
     \begin{align}\label{eq conditional cover}
         B_n\subset \bigcup_{\tau\in E_\ell} \coding([\tau]) \text{ and }\lim_{\ell\to\infty}\max_{\tau\in E_\ell}\text{diam}(\coding([\tau]))=0
     \end{align}
     by \eqref{eq uniform decay of cylinders}. For $\ell \geq N$ and $\tau\in E_\ell $ we fix $h\in H$, $\coding(\tau_1)\in \coding([\tau a_{\tau_{\ell-1},h}h])$ and $\coding( \tau_2)\in \coding([\tau])\cap B_n$. By Lemma \ref{lemma diam and derivative}, \eqref{eq weak gibbs}, \eqref{eq code and noncode pressure} and  \eqref{eq mild distorsion b,q and b,0}, for all $\ell \geq N$ we obtain
     \begin{align*}
         &\sum_{\tau\in E_\ell} (\text{diam}(\coding([\tau])))^b 
         \leq  C_1\ell\sum_{\tau\in E_\ell}
         e^{
         S_\ell(-b\log |(R^*)'|)(\tau_1)
         }
         \\&\leq C_1\ell e^{D_{b,0}(\ell)}
         \sum_{\tau\in E_\ell}
         {e^{\ell \pressure(b,q(\alpha))+S_\ell(q(\alpha)\phi^*)(\tau_2)}}
         \exp(S_\ell(g_{b,q(\alpha)}^*)(\tau_2)-\ell\pressure(b,q(\alpha)))
         \\&
         \leq C_1C_2\ell e^{D_{b,0}(\ell)}e^{D_{b,q(\alpha)}(\ell)}e^{
         \ell(
         \pressure(b,q(\alpha))+q(\alpha)(\epsilon+\alpha)
         )
         }
         \sum_{\tau\in E_\ell}m_{b,q(\alpha)}^*([\tau])
         \\&\leq C_1C_2\ell\exp(\ell (\pressure(b,q(\alpha))+q(\alpha)\alpha+( q(\alpha)+2)\epsilon) ),
     \end{align*}
     where $C_1$ denotes the constant in Lemma \ref{lemma diam and derivative} and $C_2$ denotes the constant in \eqref{eq weak gibbs}. Note that $C_1$ and $C_2$ are independent of $\ell\in\mathbb{N}$. Thus, by \eqref{eq conditional small epsilon}, we obtain $\lim_{\ell\to\infty}\sum_{\tau\in E_\ell} (\text{diam}(\coding([\tau])))^b=0$. Hence, by \eqref{eq conditional cover}, we obtain $\dim_H(B_n)\leq b$ for all $n\in\mathbb{N}$. This implies that for all $\alpha\in (\alpha_{\min},\min \prange)$ we have $b(\alpha)\leq\dim_H(\mu_{\vspectrum(\alpha),q(\alpha)})$. By a similar argument, we can show that for all $\alpha\in (\max \prange, \alpha_{\max})$ we have $b(\alpha)\leq\dim_H(\mu_{\vspectrum(\alpha),q(\alpha)})$. This completes the proof.    
\end{proof}

\emph{Proof of Theorem \ref{thm main}}
(M1) follows from Propositions \ref{prop relationship} and \ref{prop conditional variational principle}. (M2) follows from Propositions \ref{prop real analytic} and  \ref{prop conditional variational principle}. (M3) follows from Propositions \ref{prop only frat}, \ref{prop monotone} and \ref{prop conditional variational principle}.
\qed

\section{Proof of Theorem \ref{thm parabolic ragion}}\label{sec proof Theorem parabolic region}
In this section, we show Theorem \ref{thm parabolic ragion}.
As in the previous section, we fix a parabolic rational map $\rational$ with $\text{deg}(\rational)\geq 2$ such that $\infty\notin J$ and \eqref{eq assumption simple} holds.

We begin with the following version of \cite[Lemma 2.3]{takahasi}. For $F\subset \MS^*$ we define $[F]:=\bigcup_{\tau\in F}[\tau]$. 
For $\ell\in\mathbb{N}$ and $a\in \edge$ we denote by $\MS^\ell(a)$ the set of $\tau\in \MS^\ell$ for which $\tau_0=\tau_{\ell-1}=a$.
\begin{lemma}\label{lemma cylinder approximation}
    Let $m\in \mathbb{N}$ and let $a\in \edge$. Let $\psi^*_1,\cdots, \psi_m^*:\MS\rightarrow \mathbb{R}$ is continuous and let $\codmeas\in M(\shift)$ be ergodic with $\codmeas([a])>0$. Then for any $\epsilon>0$ there exists $N\in\mathbb{N}$ such that for each $n\geq N$ there exist $\ell\geq n$ and $F^\ell\subset \MS^{\ell}(a)$ such that for all $1\leq j\leq m$ we have 
    \[
    \left|\frac{1}{\ell}\log \# F^\ell-h(\codmeas)\right|<\epsilon \text{ and }\sup_{\tau\in [F^\ell]}\left|\frac{1}{\ell}S_\ell(\psi_j^*)-\codmeas(\psi_j^*)\right|<\epsilon.
    \]
\end{lemma}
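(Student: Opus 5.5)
We combine the Birkhoff ergodic theorem, the Shannon--McMillan--Breiman theorem and the Poincaré recurrence structure of $[a]$, as in the proof of \cite[Lemma 2.3]{takahasi}. Fix $\epsilon>0$. Since each $\psi_j^*$ is continuous on the compact space $\MS$, we first fix $k\in\mathbb{N}$ such that $|\psi_j^*(\tau)-\psi_j^*(\tau')|<\epsilon/4$ whenever $\tau,\tau'$ agree on the first $k$ symbols, for all $1\le j\le m$.

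For $\ell\ge n$ we define $G_\ell$ as the set of $\tau\in[a]$ satisfying three conditions:
\begin{itemize}
\item[(i)] $\shift^{\ell-1}(\tau)\in[a]$;
\item[(ii)] $|\frac{1}{\ell}S_\ell(\psi_j^*)(\tau)-\codmeas(\psi_j^*)|<\epsilon/4$ for all $j$;
\item[(iii)] $|-\frac{1}{\ell}\log\codmeas([\tau_0\cdots\tau_{\ell-1}])-h(\codmeas)|<\epsilon/2$.
\end{itemize}
Birkhoff's ergodic theorem applied to the $\psi_j^*$ and to $\mathbf{1}_{[a]}$, together with Shannon--McMillan--Breiman for the generating partition $\{[e]\}_{e\in\edge}$, shows that conditions (ii) and (iii) hold for all large $\ell$ at $\codmeas$-a.e.\ point. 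Condition (i) needs a separate argument.

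The main obstacle is guaranteeing (i) for a $\codmeas$-large set simultaneously with (ii) and (iii), for some $\ell\ge n$ rather than for every $\ell$. To handle this we argue as follows. By Egorov's theorem, there are $N$ and a set $A\subset[a]$ with $\codmeas(A)>\codmeas([a])/2$ on which (ii) and (iii) hold for all $\ell\ge N$. By ergodicity, $\frac1L\sum_{\ell=1}^{L}\codmeas(A\cap\shift^{-(\ell-1)}[a])\to\codmeas(A)\codmeas([a])>0$. Hence for each $n\ge N$ there is $\ell\in[n,2n]$ (after enlarging $N$) with $\codmeas(A\cap\shift^{-(\ell-1)}[a])\ge c:=\codmeas([a])^2/4$. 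We then set $G_\ell:=A\cap\shift^{-(\ell-1)}[a]$.

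We take $F^\ell$ to be the set of words $\tau_0\cdots\tau_{\ell-1}$ with $\tau\in G_\ell$. Then $F^\ell\subset\MS^\ell(a)$. For the upper bound on the count, each cylinder of $F^\ell$ has measure at most $e^{-\ell(h-\epsilon/2)}$ and these cylinders cover $G_\ell$, so $\#F^\ell\ge c\,e^{\ell(h-\epsilon/2)}$. For the lower bound, each cylinder has measure at least $e^{-\ell(h+\epsilon/2)}$, so $\#F^\ell\le e^{\ell(h+\epsilon/2)}$. Increasing $N$ so that $\frac{1}{\ell}|\log c|<\epsilon/2$ gives the entropy estimate.

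For the Birkhoff estimate, take any $\tau'\in[F^\ell]$. It shares its first $\ell$ symbols with some $\tau\in G_\ell$, so for $0\le i\le\ell-k$ the points $\shift^i\tau'$ and $\shift^i\tau$ agree on $k$ symbols, which gives $|\psi_j^*(\shift^i\tau')-\psi_j^*(\shift^i\tau)|<\epsilon/4$. The remaining $k$ terms contribute at most $2k\max_j\|\psi_j^*\|_\infty/\ell<\epsilon/4$ for $\ell$ large. Combining these with (ii) yields $\sup_{[F^\ell]}|\frac1\ell S_\ell(\psi_j^*)-\codmeas(\psi_j^*)|<\epsilon$, completing the plan.
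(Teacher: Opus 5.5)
Your proof is correct. It handles the one real difficulty, making every word end in $a$ without losing exponential mass, by a different mechanism from the paper.

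\textbf{The paper's route.} Besides the SMB and Birkhoff conditions at scale $q$, it imposes two frequency conditions on $S_q\mathbf{1}_{[a]}$ and $S_{\lfloor(1+\epsilon)q\rfloor}\mathbf{1}_{[a]}$. These force a visit to $[a]$ somewhere in the window $[q,(1+\epsilon)q]$. It then splits the good words by the first such visit time and pigeonholes to find one $\ell$ carrying mass $\gtrsim(\epsilon q)^{-1}$. Finally it extends each length-$q$ word to a word of length $\ell+1$ ending in $a$. This costs an $O(\epsilon)$ error from the mismatch between the SMB scale $q$ and the final word length: the term $(1-q/\ell)h(\mu^*)$, and at most $\epsilon q$ extra Birkhoff terms.

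\textbf{Your route.} You fix the good set $A\subset[a]$ once for all large scales via Egorov. You then use the Ces\`aro characterization of ergodicity,
\[
\frac1L\sum_{\ell\le L}\mu^*\bigl(A\cap\sigma^{-(\ell-1)}[a]\bigr)\to\mu^*(A)\mu^*([a]),
\]
to find a single $\ell\in[n,2n]$ at which a constant proportion $c$ of $A$ lies in $[a]$ at time $\ell-1$. Because SMB and Birkhoff are applied at the same length $\ell$, the words $\tau_0\cdots\tau_{\ell-1}$ already lie in $\Sigma_A^\ell(a)$. So there is no extension step, the measure loss is a constant rather than of order $1/q$, and there is no scale mismatch.

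\textbf{What each buys.} The paper uses only pointwise ergodic theorems and automatically gives $\ell$ within a factor $1+\epsilon$ of $q$. Your averaging argument gives the same if you average over $[n,(1+\epsilon)n]$, though the lemma only needs $\ell\ge n$.

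\textbf{Cosmetic points.} The bound $\#F^\ell\ge c\,e^{\ell(h-\epsilon/2)}$ is the lower bound on the count and the other is the upper bound; your labels are swapped. Also, the appeal to Birkhoff for $\mathbf{1}_{[a]}$ in your second paragraph is never used.
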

\begin{proof}
    Let $0<1/s<\codmeas([a])$ and let $0<\epsilon<1$. By \eqref{eq milde distorsion}, there exists $N\in\mathbb{N}$ such that 
    \begin{align}\label{eq cylinder small error}
       \max_{1\leq j\leq m} \sup_{n \geq N} \frac{1}{n} D_{\psi_j^*}(n)<\frac{\epsilon}{2} \text{ and } \sup_{n\geq N}\frac{|\log 2s \epsilon n|+1}{n}<\frac{\epsilon}{3}.
    \end{align}
    For $q\in \mathbb{N}$
    we denote by $B^q$ the set of $\tau\in \MS^q \cap [a]$ for which
    \begin{align}\label{eq cylinder entropy and mu cylinder}
        \exp(-(h(\mu)+\epsilon/3)n)<\mu([\omega])<\exp(-(h(\mu)-\epsilon/3)n)
    \end{align}
    and there exists $\tilde \tau\in [\tau]$ such that 
    \begin{align}\label{eq cylinder average}
        \max_{1\leq j\leq m} \sup_{l \geq q} \left|\frac{1}{l}S_l(\psi_j^*)(\tilde \tau)-\codmeas(\psi_j^*)\right|<\frac{\epsilon}{2}, 
    \end{align}
    \begin{align}\label{eq cylinder how many a}
        \frac{1}{q}S_q 1_{[a]}(\tilde \tau)<\left(1+\frac{\epsilon}{3}\right)\codmeas([a]) \text{ and }\frac{1}{q}S_{\lfloor(1+\epsilon)q\rfloor} 1_{[a]}(\tilde \tau)>\left(1+\frac{\epsilon}{2}\right)\codmeas([a]). 
    \end{align}
    Here, $\lfloor\cdot\rfloor$ denotes the floor function and $1_{A}$ denotes the indicator function of $A\subset \MS$. For each $q\in \mathbb{N}$ and $l\geq q$ we define $B^{q,l}:=\{\tau\in B^q:\inf \{j\geq q:[a]\cap \sigma^j([\tau])\neq \emptyset\}=l\}$. Note that by \eqref{eq cylinder how many a}, for each $q\in\mathbb{N}$ we have $B^q=\bigcup_{l=q}^{\lfloor(1+\epsilon)q\rfloor}B^{q,l}$. 
    Since the partition $\{[e]:e\in \edge\}$ is a generator and $\#\edge<\infty$, Kolmogorov-Sinai theorem (see, for example, \cite[Theorem 4.18]{walters2000introduction}) implies that $h(\mu)$ is equal to the entropy of $\shift$ with respect to the partition $\{[e]:e\in \edge\}$. 
    Thus, by Shannon-McMillan-Breiman's theorem (see, for example, \cite[Theorem 2.5.5]{PrzytyckiUrbanski}) and Birkhoff's ergodic theorem, we obtain 
    $
    \lim_{q\to\infty} \codmeas([B^q])=\codmeas([a]).
$
    Let $n\geq N$ and let $q\geq n$ be such that  $\codmeas([B^q])>\codmeas([a])/2$. Then, since $B^q=\bigcup_{l=q}^{\lfloor(1+\epsilon)q\rfloor}B^{q,l}$ and $1/s<\codmeas([a])$, there exists $\ell \in \{q,\cdots, \lfloor(1+\epsilon)q\rfloor\}$ such that $\codmeas([B^{q, \ell}])\geq (2s\epsilon q)^{-1}$. By \eqref{eq cylinder entropy and mu cylinder}, we obtain 
    \begin{align}\label{eq cylinder count}
        ({2s\epsilon q})^{-1} \exp((h(\mu)-\epsilon/3)n)\leq \# B^{q,\ell} \leq \exp((h(\mu)+\epsilon/3)n).
    \end{align}
    For each $\tau \in B^{q,\ell}$ we fix $\tau'=\tau_0'\tau_1'\cdots\in [a]$ such that $\tau'\in [\tau]$ and $\tau'_\ell=a$. We set $F^{\ell+1}:=\{[\tau'_0\cdots\tau_\ell']:\tau\in B^{q,\ell}\}$. Then we have $F^{\ell+1}\subset \MS^{\ell+1}(a)$ and $\# F^{\ell+1}=\#B^{q,\ell}$. Moreover, by \eqref{eq cylinder small error} and \eqref{eq cylinder count}, we obtain 
    \[
    \left|\frac{1}{\ell+1}\log \#F^{\ell+1}-h(\codmeas)\right|\leq\left(1-\frac{q}{\ell}\right)h(\mu)+{\epsilon}<2(h_{\text{top}}(\shift)+1)\epsilon,
    \]
    where $h_{\text{top}}(\shift)$ denotes the topological entropy of the dynamical system $(\MS,\shift)$ (see \cite[Definition 7.6 and Theorem 8.6]{walters2000introduction}). Combining this with \eqref{eq cylinder average} and \eqref{eq cylinder small error}, we obtain the desired result. 
\end{proof}

\begin{lemma}\label{lemma hyperbolic ergodic approximation}
    For a continuous function $\phi$ on $J$ and $\alpha\in \prange$ there exists a sequence $\{\eta_{i_n}^*\}_{n\in\mathbb{N}} \subset 
    M(\shift)$ of ergodic measures such that  
    \begin{align}\label{eq hyperbolic ergodic appro large dimension}
    \lim_{n\to\infty}i_n=\infty,\ 
     \lim_{n\to\infty}\frac{h(\eta_{i_n}^*)}{\lambda(\eta_{i_n}^*)}=\dimension 
    \end{align}
     and for each $n\in\mathbb{N}$ and $h\in H$ we have 
    \begin{align}\label{eq hyperbolic ergodic appro nice appro}
        \eta_{i_n}^*([h])>0,\ h(\eta_{i_n}^*)>2i_n^{-1} \text{ and }|\eta_{i_n}^*(\phi)-\alpha|<i_n^{-1/2}. 
    \end{align}
\end{lemma}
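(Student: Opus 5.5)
The construction mixes a measure that realises $\alpha$ with a measure of near-maximal dimension, and then perturbs the mixture to make it ergodic and fully supported on $H$. For $\alpha\in\prange$, write $\alpha=\sum_{\omega}p_\omega\alpha_\omega+p_0\mu_\dimension(\phi)$ as a convex combination of the values at Dirac masses $\delta_\omega$ ($\omega\in\Omega$), together with $\mu_\dimension$ when $\mu_\dimension$ is finite. This is possible by the definition \eqref{eq def flat} of $\prange$. Call the resulting measure $\nu_\alpha\in M(\rational)$, so that $\nu_\alpha(\phi)=\alpha$.

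By Theorem \ref{thm bowen formula}, for each $i$ there is an ergodic $\nu_i$ with $\lambda(\nu_i)>0$ and $h(\nu_i)/\lambda(\nu_i)>\dimension-1/i$. We may take $\nu_i=\mu_{b_i}$ with $b_i\uparrow\dimension$. Then $\nu_i$ is positive on open sets, so $\nu_i([h])>0$ for every $h\in H$, and $h(\nu_i)>0$. Form the convex combination $\mu_i:=(1-t_i)\nu_\alpha+t_i\nu_i$ with $t_i\to0$ slowly. The point is that
\[
\frac{h(\mu_i)}{\lambda(\mu_i)}=\frac{t_ih(\nu_i)+(1-t_i)p_0h(\mu_\dimension)}{t_i\lambda(\nu_i)+(1-t_i)p_0\lambda(\mu_\dimension)}
\]
is a ratio of weighted sums of pairs each of ratio close to $\dimension$. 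The Dirac masses contribute $0$ to both entropy and Lyapunov exponent, since $\lambda(\delta_\omega)=0$. Hence $h(\mu_i)/\lambda(\mu_i)\to\dimension$, while $|\mu_i(\phi)-\alpha|\le 2t_i\|\phi\|_\infty$.

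Next lift $\mu_i$ to $\codmeas_i\in M(\shift)$ via Remark \ref{rem coding pressure}. For the nonatomic components, use the liftings of the form \eqref{eq def lift}, so that entropy is preserved by \eqref{eq entropy presurving}. Then replace the non-ergodic $\codmeas_i$ by an ergodic measure. Apply Lemma \ref{lemma cylinder approximation}, or more precisely a version of it for a finite convex combination of ergodic components, concatenating their good cylinder families at a common symbol $a\in H$ using the primitivity \eqref{eq primitive MS}. This produces a finite family $F^\ell\subset\MS^\ell(a)$ of words with controlled Birkhoff sums of $\phi^*$ and $\log|(\rational^*)'|$, and with cardinality about $e^{\ell h(\codmeas_i)}$. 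Concatenating freely the words of $F^\ell$, each padded with a fixed connector passing through all symbols of $H$, gives a subshift of finite type. Its measure of maximal entropy (or a Bernoulli-type measure on these blocks) is ergodic and charges every $[h]$, $h\in H$. Its entropy, $\phi$-average and Lyapunov exponent are within $\epsilon$ of those of $\codmeas_i$, up to an error $O(\ell^{-1})$ coming from the connectors. Choosing $\epsilon=\epsilon_i$ small relative to $t_i\lambda(\nu_i)$ and $t_ih(\nu_i)$, and then taking a subsequence $i_n$ with $t_{i_n}\|\phi\|_\infty\ll i_n^{-1/2}$ and $h>2i_n^{-1}$, gives \eqref{eq hyperbolic ergodic appro large dimension} and \eqref{eq hyperbolic ergodic appro nice appro}. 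The bound $h>2i_n^{-1}$ holds because the entropy is at least about $t_ih(\nu_i)$, so $t_i$ should decay slower than $i^{-1}$, for instance $t_i=i^{-3/4}$. This choice forces $t_i\|\phi\|_\infty$ above $i^{-1/2}$. Reindexing resolves this: pick $i_n$ larger than the index used to build the $n$-th measure, so that every requirement becomes "small enough compared to $i_n$", which is achievable since $|\eta(\phi)-\alpha|$ and $1-h/(\dimension\lambda)$ tend to $0$ while $h$ stays positive at scale $t_i$.

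\textbf{Main obstacle.} The delicate step is the ergodic approximation of a non-ergodic mixture involving Dirac masses at parabolic points. The dimension ratio must be preserved even though $\lambda(\mu_i)$ may itself be tiny, of order $t_i$. The errors in the Lyapunov sum must therefore be $o(t_i)$ rather than merely $o(1)$, which forces $\ell$ to be taken large depending on $t_i$. I would handle this by applying the estimate of Lemma \ref{lemma cylinder approximation} to $\log|(\rational^*)'|$ with precision $\epsilon_i\ll t_i^2$, so that $(h\pm\epsilon_i)/(\lambda\pm\epsilon_i)$ is still within $1/i$ of $\dimension$.
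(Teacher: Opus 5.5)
Your architecture is the paper's: mix a measure realising $\alpha$ (parabolic Dirac masses, plus $\mu_\dimension$ when it is finite) with a hyperbolic measure of dimension close to $\dimension$, then replace the mixture by an ergodic measure charging every $[h]$. Using $\mu_{b_i}$ instead of the paper's finite-alphabet induced equilibrium states is fine, since $p(b_i,0)>0$ and Theorem \ref{thm bowen formula} give $b_i<h(\mu_{b_i})/\lambda(\mu_{b_i})\le\dimension$. Your concatenation argument is a legitimate hand-made substitute for the paper's appeal to Takahasi's Main Theorem.

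\textbf{The gap is in the rate bookkeeping}, which is the actual content of \eqref{eq hyperbolic ergodic appro nice appro}. Whenever $p_0=0$ (forced if $\mu_\dimension$ is infinite) you have $h(\mu_i)=t_ih(\nu_i)$ and $\mu_i(\phi)-\alpha=t_i(\nu_i(\phi)-\alpha)$. An admissible index must then satisfy $2/h(\eta)<i_n<|\eta(\phi)-\alpha|^{-2}$, which needs roughly $t_i(|\alpha|+\|\phi\|_\infty)^2\lesssim h(\nu_i)$. You implicitly treat $h(\nu_i)$ as bounded below. But if $\dimension\minrate\le 2$, every weak$^*$ limit of $\mu_{b_i}$ lies in $M_\dimension=\text{Conv}(\{\delta_\omega\}_{\omega\in\Omega})$ (Proposition \ref{prop equilibrium state Lambda} and the argument of Lemma \ref{lemma equilibrium measure near Delta}). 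By upper semicontinuity $h(\mu_{b_i})\to 0$, at a rate you do not control, so with $t_i=i^{-3/4}$ the window can be empty.

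Your diagnosis is also inverted. The choice $t_i=i^{-3/4}$ already gives $t_i\|\phi\|_\infty<i^{-1/2}$ for large $i$. Taking $i_n$ larger only tightens $|\eta(\phi)-\alpha|<i_n^{-1/2}$, so reindexing cannot rescue a weight fixed in advance.

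The missing idea is to choose the weight \emph{after} $\nu_i$, proportional to its entropy. For example, take $t_i=\min\{h(\nu_i)/(16(|\alpha|+\|\phi\|_\infty)^2),1/i\}$ and then $i_n:=\lfloor 4/h(\mu_i)\rfloor+1$. This is the paper's choice in its first case, $p_n=16^{-1}(|\alpha|+\|\phi\|_\infty)^{-2}h(\codmeas_n)$ and $i_n=\lfloor 4/h(\nu_n^*)\rfloor+1$. It does not disturb the dimension, because the Dirac masses contribute nothing to $h$ or $\lambda$, so $h(\mu_i)/\lambda(\mu_i)=h(\nu_i)/\lambda(\nu_i)$. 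Alternatively, when $\alpha$ is not an extreme value, choose the weights so that the mixture hits $\alpha$ exactly (the paper's third case). Then only the ergodic-approximation error remains in $\eta(\phi)-\alpha$, and it can be made as small as you like relative to $h$.

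Two smaller points:
\begin{itemize}
\item The atomic components $\delta_{\tau_\omega}$ give zero mass to $[a]$ for $a\in H$. They therefore cannot be fed into Lemma \ref{lemma cylinder approximation} at a symbol of $H$, and must be encoded by long periodic blocks plus connectors.
\item The Lyapunov precision is not a real obstacle, since it is chosen after the mixture is fixed. It must, however, be small relative to $t_i\lambda(\nu_i)$ rather than $t_i^2$, because $\lambda(\nu_i)$ may itself tend to $0$.
\end{itemize}
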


\begin{proof}
Let $\Lambda\subset \CMS^M$ ($M\in\mathbb{N}$) 
be a finite set witnessing the finite primitivity of $\CMS$. 
Let $N:=\max\{\max_{[e]}\return:\text{there exist $1\leq i\leq M$ and $\tau\in \Lambda$ such that }\tau_i=e\}$. Then for each $n\geq N$, $\CMSn:=\CMS\cap (\bigcup_{k=1}^{n}\induedge_{k})^{\mathbb{N}\cup\{0\}}$ is a primitive Markov shift over a finite alphabet. As in \eqref{eq def topological induced pressure}, for each $n\geq N$ we define the topological pressure $\tilde P_n(-\dimension \log |\indumap'|_{\CMSn}|)$ of $-\dimension \log |\indumap'|_{\CMSn}|$ with respect to the Markov shift $(\CMSn, \indushift|_{\CMSn})$. The compact approximation property of the topological pressure (see \cite[Theorem 2.1.5]{mauldin2003graph}) and Theorem \ref{thm maximal measure and liftable problem} imply that 
\[
\lim_{n\to\infty}\tilde P_n(-\dimension \log |\indumap'|_{\CMSn}|)= \tilde P(-\dimension\log |\indumap'|)=0.
\]
For $n\geq N$ we denote by the unique ergodic equilibrium measure $\indumeasure_n\in M(\indushift)$ for $-\dimension \log |\indumap'|_{\CMSn}|$. Note that by Lemma \ref{lemma uniformly expanding}, we have $\inf \{\lambda(\indumeasure):\indumeasure\in M(\indushift)\}>0$. Thus, by the variational principle (Theorem \ref{thm variational principle induce}) and the Gibbs property \eqref{eq gibbs}, for each $n\geq N$ and $h\in H$ we have 
$\indumeasure_n\circ \Proj^{-1}([h])>0\text{ and }
    \lim_{n\to\infty} {h(\indumeasure_n)}/{\lambda(\indumeasure_n)}=\dimension+{P_n(-\dimension \log |\indumap'|_{\CMSn}|)}/{\lambda(\indumeasure_n)}=\dimension. 
$
For each $n\geq N$ we define $\codmeas_{n}$ by \eqref{eq def lift} using $\indumeasure_{n}$. Then, by \eqref{eq Abramov-Kac's formula}, for each $n\geq N$ and $h\in H$ we have 
\begin{align}\label{eq ergodic appro 1}
\codmeas_n([h])>0
\text{ and }
    \lim_{n\to\infty}\frac{h(\codmeas_n)}{\lambda(\codmeas_n)}=    \lim_{n\to\infty}\frac{h(\indumeasure_n)}{\lambda(\indumeasure_n)}=\dimension 
\end{align}
Recall that $\minomega$ is a rationally indifferent periodic point $\omega\in \Omega$ satisfying $\alpha_{\minomega}=\pinf$ and $\maxomega$ is a rationally indifferent periodic point $\omega\in \Omega$ satisfying $\alpha_{\maxomega}=\psup$. 

We first consider the case where $\liminf_{n\to\infty}h(\codmeas_n)=0$ and $\alpha=\alpha_{\omega'}\in \{\alpha_{\minomega},\alpha_{\maxomega}\}$ for some $\omega'\in \{{\minomega},{\maxomega}\}$.
By taking a subsequence if necessary, we may assume that $\lim_{n\to\infty}h(\codmeas_n)=0$ and $(|\alpha|+\|\phi\|_\infty)^{-2}h(\codmeas_n)<1$ for all $n\geq N$, where $\|\phi\|_\infty:=\sup\{|\phi(x)|:x\in J\}$.
For each $n\geq N$ we set $p_n:=16^{-1}(|\alpha|+\|\phi\|_\infty)^{-2}h(\codmeas_n)$
 and  $\nu_n^*:=p_n\codmeas_n+(1-p_n)\delta_{\omega'}$.
Then for all $n\geq N$ we have 
\begin{align}\label{eq hyperbolic appro clear}
    |\nu_n^*(\phi^*)-\alpha|\leq (|\alpha|+\|\phi\|_\infty)p_n
\end{align}
Note that  by the affinity of the entropy map (\cite[Theorem 8.1]{walters2000introduction}), we have 
\begin{align}\label{eq ergodic appro 3}
    h(\nu_n^*)=p_n h(\codmeas_n) \text{ and }\lambda(\nu_n^*)=p_n\lambda(\codmeas_n)
\end{align}
By \cite[Main Theorem]{takahasi}, for each $n\geq N$ there exists an ergodic measure $m_n^*\in M(\shift)$ such that 
\begin{align}\label{eq ergodic appro 4}
   &|h(m_n^*)-h(\nu_n^*)|<\min\left\{\frac{h(\nu_n^*)}{2},\frac{\lambda(\nu_n^*)}{n}\right\}, |m_n^*(\phi^*)-\nu_n^*(\phi^*)|<\frac{h(\nu_n^*)}{16},
   \\& |\lambda(m_n^*)-\lambda(\nu_n^*)|<\frac{\lambda(\nu_n^*)}{n}
   \text{ and } \min_{h\in H} m_n^*([h])>0
   . \nonumber
\end{align}
Then since $\sup_{n\geq N}|h(m_n^*)/\lambda(m_n^*)|\leq \delta$ by Theorem \ref{thm bowen formula}, we have, as $n\to 0$, 
\begin{align*}
    \left|\frac{h(m_n^*)}{\lambda(m_n^*)}-\frac{h(\nu_n^*)}{\lambda(\nu_n^*)}\right|=
    \left|
    \frac{(\lambda(\nu_n^*)-\lambda(m_n^*))h(m_n^*)}{\lambda(m_n^*)\lambda(\nu_n^*)}
    +\frac{h(m_n^*)-h(\nu_n^*)}{\lambda(\nu_n^*)}
    \right|\leq\frac{\dimension+1}{n}\to0.
\end{align*}
Combining this with \eqref{eq ergodic appro 1} and \eqref{eq ergodic appro 3} and noting that $p_n>0$ for all $n\geq N$,
\begin{align}\label{eq hyperbolic appro case 1}
   \left| \frac{h(m_n^*)}{\lambda(m_n^*)}-\dimension\right|\leq\left|\frac{h(m_n^*)}{\lambda(m_n^*)}-\frac{h(\nu_n^*)}{\lambda(\nu_n^*)}\right|+\left|\frac{h(\mu_n^*)}{\lambda(\mu_n^*)}-\dimension\right| \to 0 \text{ as }n\to \infty. 
\end{align}
Since $\lim_{n\to\infty} h(\nu_n^*)=\lim_{n\to\infty}p_nh(\codmeas_n)=0$, by enlarging $N$ if necessary, we may assume that $h(\nu_n^*)<1$. 
For each $n\geq N$ we set $i_n:=\lfloor4/h(\nu_n^*)\rfloor+1$ and $\eta_{i_n}^*:=m_{n}^*$.
By \eqref{eq hyperbolic appro clear} and \eqref{eq ergodic appro 4}, for each $n\geq N$ we have 
\begin{align*}
&|\eta_{i_n}^*(\phi^*)-\alpha|\leq|m_n^*(\phi^*)-\nu_n^*(\phi^*)|+|\nu_n^*(\phi^*)-\alpha|<\frac{1}{i_n^{1/2}}
\text{, }
    h(\eta_{i_n}^*)>\frac{h(\nu_n^*)}{2}>\frac{2}{i_n} .
\end{align*}
Combining this with \eqref{eq hyperbolic appro case 1} and \eqref{eq ergodic appro 4}, the proof is complete in this case.

Next we consider the case where $\liminf_{n\to\infty}h(\codmeas_n)>0$ and $\alpha=\alpha_{\omega'}\in \{\alpha_{\minomega},\alpha_{\maxomega}\}$ for some $\omega'\in \{{\minomega},{\maxomega}\}$. 
By enlarging $N$ if necessary, we may assume that $c:=\inf\{h(\codmeas_n):n\geq N\}>0$.
For each $n\geq N$ we set $p_n:=3/(nc)$ and $\nu_n^*:=p_n\codmeas_n+(1-p_n)\delta_{\omega'}$.
By \cite[Main Theorem]{takahasi}, for each $n\geq N$ there exists an ergodic measure $\eta_n^*\in M(\shift)$ such that 
\begin{align}\label{eq ergodic appro case 2}
   &|h(\eta_n^*)-h(\nu_n^*)|<n^{-1},\  |\eta_n^*(\phi^*)-\nu_n^*(\phi^*)|<n^{-1},
   \\& |\lambda(\eta_n^*)-\lambda(\nu_n^*)|<{n}^{-1}
   \text{ and } \min_{h\in H} \eta_n^*([h])>0
   . \nonumber
\end{align}
Then we have $\lim_{n\to\infty}h(\eta^*_n)/\lambda(\eta_n^*)=\dimension$ by \eqref{eq ergodic appro 1}. Moreover, for all $n\geq N$ we have $h(\eta_n^*)>p_nh(\codmeas_n)-1/n\geq2/n$ and $|\eta_n^*(\phi^*)-\alpha|\ll 1/n $.

We then consider the case where $\alpha\in (\alpha_{\minomega},\alpha_{\maxomega})$. In this case, for each $n\geq N$ there exist $p_n\in (0,1]$ and $\omega_n\in \{{\minomega}, {\maxomega}\}$ such that the measure $\nu_n^*:=p_n \codmeas_n+(1-p_n)\delta_{\omega_n}$ satisfies $\nu_n^*(\phi)=\alpha$.
By \cite[Main Theorem]{takahasi}, for each $n\geq N$ there exists an ergodic measure $m_n^*\in M(\shift)$ such that 
\begin{align}\label{eq ergodic appro 5}
   &|h(m_n^*)-h(\nu_n^*)|<\min\left\{\frac{1}{4n(h(\nu_n^*))^{-1}+1},\frac{\lambda(\nu_n^*)}{n}\right\},
   \\&|m_n^*(\phi^*)-\nu_n^*(\phi^*)|<\frac{h(\nu_n^*)}{4n},
    |\lambda(m_n^*)-\lambda(\nu_n^*)|<\frac{\lambda(\nu_n^*)}{n}
   \text{ and } \min_{h\in H} m_n^*([h])>0
   . \nonumber
\end{align}
Then as in the case where $\liminf_{n\to\infty} h(\codmeas_n)=0$ and $\alpha\in \{\alpha_{\minomega},\alpha_{\maxomega}\}$, we can show $\lim_{n\to\infty}h(m_n^*)/\lambda(m_n^*)=\dimension$. For each $n\geq N$ we set $i_n:=\lfloor4n/h(\nu_n^*)\rfloor+1$ and $\eta_{i_n}^*:=m_{n}^*$.
Then for each $n\geq N$ we have 
\begin{align*}
|\eta_{i_n}^*(\phi^*)-\alpha|<i_n^{-1}\text{ and }
    h(\eta_{i_n}^*)=h(m_{n}^*)>{(1-(4n)^{-1})h(\nu_n^*)}>{2}{i_n}^{-1}.
\end{align*}

The remaining case is when $\mu_\dimension$ is finite and $\alpha\in F\setminus[\alpha_{\minomega},\alpha_{\maxomega}]$.  We assume that $\mu_\dimension$ is finite, or equivalently, $\dimension\minrate>2$ (see Theorem \ref{thm maximal measure and liftable problem}). Let $\alpha\in F\setminus[\alpha_{\minomega},\alpha_{\maxomega}]$. By \eqref{eq Abramov-Kac's formula} and \eqref{eq gibbs}, $h(\codmeas_{\dimension})/\lambda(\codmeas_{\dimension})=\dimension$ and $\codmeas_{\dimension}([h])>0$ for all $h\in H$. 
Since $\alpha\in F\setminus[\alpha_{\minomega},\alpha_{\maxomega}]$ there exist $p_n\in (0,1]$ and $\omega\in \{{\minomega}, {\maxomega}\}$ such that the measure $\nu^*:=p \codmeas_\dimension+(1-p)\delta_{\omega}$ satisfies $\nu^*(\phi)=\alpha$. By a similar argument, in the case where $\liminf_{n\to\infty}h(\codmeas_n)>0$ and $\alpha\in \{\alpha_{\minomega},\alpha_{\maxomega}\}$, we can construct a sequence $\{\eta_{i_n}^*\}_{n\in\mathbb{N}} \subset 
    M(\shift)$ of ergodic measures satisfying \eqref{eq hyperbolic ergodic appro large dimension} and \eqref{eq hyperbolic ergodic appro nice appro}.
\end{proof}

\begin{lemma}\label{lemma boundary}
    For  $k\in\mathbb{N}\cup\{0\}$ and $\tau,\tau'\in \MS$ such that $\coding(\tau)=\coding(\tau')$ and $\tau_k\neq \tau'_k$ we have $\tau_\ell\neq \tau_\ell'$ for all $\ell\geq k$. 
\end{lemma}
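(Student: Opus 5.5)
The plan is to argue by induction on $\ell\geq k$, so it suffices to show the following one-step claim: if $\coding(\tau)=\coding(\tau')$ and $\tau_k\neq\tau'_k$, then $\tau_{k+1}\neq\tau'_{k+1}$. Using $\rational\circ\coding=\coding\circ\shift$, we have $\coding(\shift^k\tau)=\rational^k(\coding(\tau))=\rational^k(\coding(\tau'))=\coding(\shift^k\tau')$. Hence, after replacing $\tau,\tau'$ by $\shift^k\tau,\shift^k\tau'$, we may assume $k=0$. So we must show that $\coding(\tau)=\coding(\tau')=:x$ together with $\tau_0\neq\tau'_0$ forces $\tau_1\neq\tau'_1$. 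Iterating this claim along $\shift^\ell\tau,\shift^\ell\tau'$ then gives $\tau_\ell\neq\tau'_\ell$ for all $\ell\geq k$.

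We prove the one-step claim by contradiction: suppose $\tau_1=\tau'_1=:e$. Since $x\in A_{\tau_0}\cap A_{\tau'_0}$, both $A_{\tau_0}$ and $A_{\tau'_0}$ contain $x$, and $\rational(x)=\coding(\shift\tau)\in A_e$. Admissibility gives $A_{\tau_0,e}=A_{\tau'_0,e}=1$, i.e.\ $\rational(\text{Int}(A_{\tau_0}))\cap\text{Int}(A_e)\neq\emptyset$. By the Markov property, $A_e\subset\rational(A_{\tau_0})$ and $A_e\subset \rational(A_{\tau'_0})$. Since $\rational|_{A_{\tau_0}}$ and $\rational|_{A_{\tau'_0}}$ are injective, we obtain inverse branches $g=(\rational|_{A_{\tau_0}})^{-1}$ and $g'=(\rational|_{A_{\tau'_0}})^{-1}$ defined on $A_e$. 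Both satisfy $g(\rational x)=x=g'(\rational x)$.

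The key step is to show that $g$ and $g'$ agree near $\rational(x)$, so that $\text{Int}(A_{\tau_0})$ and $\text{Int}(A_{\tau'_0})$ intersect, contradicting disjointness of interiors. We have $x\in J$ and $\text{Crit}(\rational)\cap J=\emptyset$, so $\rational$ is a local biholomorphism on a neighborhood $V$ of $x$. The sets $A_{\tau_0}$ and $A_{\tau'_0}$ have diameter $\leq\kappa$, which is small compared with $\hat q$ and the injectivity radius. Shrinking $\kappa$ if needed, both sets lie in a ball around $x$ on which $\rational$ is injective, and both $g(A_e)$ and $g'(A_e)$ are contained there. Since $A_e=\overline{\text{Int}(A_e)}$ and $\rational(x)\in A_e$, pick $y\in\text{Int}(A_e)$ close to $\rational(x)$. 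Then $g(y),g'(y)$ lie in this injectivity ball and $\rational(g(y))=\rational(g'(y))=y$, so $g(y)=g'(y)$. Moreover, by openness of the local biholomorphism and the Markov property, points of $\text{Int}(A_e)$ pulled back land in the interior of $A_{\tau_0}\cap \rational^{-1}(A_e)$ for a nonempty open set of $y$. Thus $\text{Int}(A_{\tau_0})\cap\text{Int}(A_{\tau'_0})\neq\emptyset$, contradicting $\tau_0\neq\tau'_0$.

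The main obstacle is this last point: justifying that the common preimage of interior points of $A_e$ lies in the interiors of both $A_{\tau_0}$ and $A_{\tau'_0}$, rather than on their boundaries. The interiors here are taken relative to $J$, consistent with the cylinder construction of \cite{DenkerUrbanskiforummath}. I would handle it via $A_{\tau_0 e}=g(A_e)$: if $g(A_e)=g'(A_e)$ as sets, then $A_{\tau_0}\supset g(A_e)$ has nonempty interior. This follows because $g$ is a homeomorphism onto its image and maps $\text{Int}(A_e)$ onto an open subset of $J$ contained in both $A_{\tau_0}$ and $A_{\tau'_0}$. That open set lies in the interiors of both, and since their interiors are pairwise disjoint, this is a contradiction.
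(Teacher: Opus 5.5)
Your proof is correct in substance and rests on the same mechanism as the paper. Local injectivity of $R$ forces the two inverse branches through $x$ to coincide near $R(x)$. Then, since $A_e=\overline{\text{Int}(A_e)}$, you pass to interior points and obtain a common point of $\text{Int}(A_{\tau_k})$ and $\text{Int}(A_{\tau'_k})$. The organization differs in two ways:
\begin{itemize}
\item \emph{Reduction.} The paper does not induct. It takes $\ell>k$ with $\tau_\ell=\tau'_\ell$ and pulls back $m=\ell-k$ steps at once, using the inverses of $R^m\colon A_{\tau_k\cdots\tau_\ell}\to A_{\tau_\ell}$ and $R^m\colon A_{\tau'_k\cdots\tau'_\ell}\to A_{\tau_\ell}$. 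You use $\pi\circ\sigma=R\circ\pi$ to reduce to $m=1$; the two are equivalent.
\item \emph{Interiority.} To place the common preimage of $y\in\text{Int}(A_{\tau_\ell})$ in both interiors, the paper invokes the forward invariance $R(\bigcup_e\partial A_e)\subset\bigcup_e\partial A_e$, which it asserts without proof. You instead show that the pulled-back set is itself relatively open in $J$. This needs only the Markov axioms, complete invariance of $J$ and local injectivity, so your route is more self-contained. You are also right that $\text{Int}$ must be read relative to $J$.
\end{itemize}

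Two justifications need repair.

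First, you cannot ``shrink $\kappa$'': the partition is fixed, and $\hat q$ only controls balls centred at points of $R^{-1}(\Omega)$. The injectivity you need comes from expansiveness instead. If $a\neq b$ in $J$ and $R(a)=R(b)$, then $\sup_{n\geq0}|R^n(a)-R^n(b)|=|a-b|$, so $|a-b|>\expansive$. Since $\kappa\leq\Koebe r<r/2<\expansive/8$, any $a\in A_{\tau_0}$ and $b\in A_{\tau'_0}$ satisfy $|a-b|\leq2\kappa<\expansive$. Hence $g=g'$ on all of $A_e$. Alternatively, argue locally as the paper does: take a neighbourhood $U$ of $x$ mapped biholomorphically onto $B(R(x),\epsilon)$ and use continuity of $g,g'$ at $R(x)$.

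Second, ``$g$ is a homeomorphism onto its image'' does not by itself make $g(\text{Int}(A_e))$ open in $J$. The correct reason is again injectivity at scale $\expansive$. Let $z=g(y)$ with $y\in\text{Int}(A_e)$, and let $z'\in J$ be close to $z$. Then $R(z')\in\text{Int}(A_e)$ and $|g(R(z'))-z'|\leq\kappa+|z-z'|<\expansive$, so $g(R(z'))=z'$. Equivalently, $g$ agrees near $y$ with a holomorphic local inverse of $R$, which maps relatively open subsets of $J$ to relatively open subsets of $J$ because $R^{-1}(J)=J$. With these two points filled in, $g(\text{Int}(A_e))$ is a nonempty relatively open subset of $J$ inside $A_{\tau_0}\cap A_{\tau'_0}$, and your contradiction follows.
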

\begin{proof}
    For a contradiction we assume that $\{\ell>k:\tau_\ell= \tau_\ell'\}\neq \emptyset$. we set $\ell:=\min\{\ell>k:\tau_\ell= \tau_\ell'\}$, $x:=R^k(\coding(\tau))=R^k(\coding(\tau'))$ and $m:=\ell-k$. 
     Recall that $\rational^m:A_{\tau_k\cdots\tau_\ell}\rightarrow A_{\tau_\ell}$ and $\rational^m:A_{\tau_k'\cdots\tau_\ell'}\rightarrow A_{\tau_\ell}$ are bijection. We denote by $R^{-m}_{\tau_k}$ (resp. $R^{-m}_{\tau_k'}$) the inverse of $\rational^m:A_{\tau_k\cdots\tau_\ell}\rightarrow A_{\tau_\ell}$ (resp. $\rational^m:A_{\tau_k'\cdots\tau_\ell'}\rightarrow A_{\tau_\ell}$).
     Since $J\cap \text{Crit}(\rational^m)=\emptyset$, there exists $\epsilon>0$ such that $\rational^m:U\rightarrow B(\rational^m(x),\epsilon)$ is biholomorphic, where $U$ denotes the connected component of $\rational^{-m}(B(\rational^m(x),\epsilon))$ containing $x$. We denote by $R^{-m}_{U}$ its inverse. 
    Since $\rational^m(x)\in A_{\tau_\ell}=\overline{\text{Int}(A_{\tau_\ell})}$ and $x\in U$, there exists $y\in \text{Int}(A_{\tau_\ell})\cap B(\rational^m(x),\epsilon)$ such that $\rational^{-m}_{\tau_k}(y),\rational^{-m}_{\tau_k'}(y)\in U$.
    Then $\rational^{-m}_{\tau_k}(y)=\rational^{-m}_U(y)=\rational^{-m}_{\tau_k'}(y)$.
    Since $\rational(\bigcup_{e\in \edge}\partial A_e)\subset \bigcup_{e\in \edge}\partial A_e$ and $y\in \text{Int}(A_{\tau_\ell})$, we obtain
     $
         \rational^{-m}_{\tau_k}(y)\in \text{Int}(A_{\tau_k}) \text{ and }\rational^{-m}_{\tau_k'}(y)\in \text{Int}(A_{\tau_k'})
     $. Hence, $\rational^{-m}_{\tau_k}(y)=\rational^{-m}_{\tau_k'}(y)\in\text{Int}(A_{\tau_k})\cap \text{Int}(A_{\tau_k'})$. Since $\tau_k\neq \tau_k'$, this is a contradiction.
\end{proof}

\emph{Proof of Theorem \ref{thm parabolic ragion}.}
    Let $\alpha\in \prange$. We take a sequence $\{\eta_{i_n}^*\}_{n\in\mathbb{N}}\subset M(\shift)$ of ergodic measures satisfying \eqref{eq hyperbolic ergodic appro large dimension} and \eqref{eq hyperbolic ergodic appro nice appro}. In addition, we may assume that $\{i_n\}_{n\in\mathbb{N}}$ is strictly increasing. 
    For simplicity, we write $\{\eta_{i_n}^*\}_{n\in\mathbb{N}}$ as $\{\eta_{i}^*\}_{i\in\mathbb{N}}$. 
    We fix $h\in H$ and $a\in \MS^*$ such that $hah\in \MS^*$. We set $c:=\sup_{\tau\in [hah]}S_{|hah|}(|\phi^*|)(\tau)$.
    By Lemma \ref{lemma cylinder approximation}, for any $i\in\mathbb{N}$ there exist $\ell_i\in \mathbb{N}$ and $F^{\ell_i}\subset \MS^{\ell_i}(h)$ such that 
    \begin{align}\label{eq non intresting ine}
     \max\left\{\sup_{[hah]}\log|(\rational^{|hah|})'\circ\coding|,\ ic+i|hah||\alpha|, \ \frac{1}{\lambda(\eta^*_{i})+2i^{-1}} \right\}\leq \frac{\ell_i}{i^2},
    \end{align}
    \begin{align}\label{eq nice cylinder appro}
        &\left|\frac{1}{\ell_i}\log \#F^{\ell_i}-h(\eta_i^*)\right|\leq \frac{1}{i^2},\ \sup_{\tau\in [F^{\ell_i}]}\left|\frac{1}{\ell_i}S_{\ell_i}(\phi^*)-\eta^*_i(\phi^*)\right|\leq \frac{1}{i^{1/2}}
        \text{ and }
        \\&\sup_{\tau\in [F^{\ell_i}]}\left|\frac{1}{\ell_i}\log |(\rational^{\ell_i})'\circ \coding|-\lambda(\eta_i^*)\right|\leq \frac{1}{i^2}.\nonumber
    \end{align}
    We inductively construct a strictly increasing sequence $\{t_k\}_{k\in\mathbb{N}}\subset \mathbb{N}$ satisfying the following conditions: for all $i\geq 2$ we have
    \begin{align}\label{eq flat to define r_t}
        t_{i-1}\ell_{i-1}\left(\lambda(\eta_{i-1}^*)+\frac{3}{(i-1)^2}\right)<\ell_{i}\left(\lambda(\eta_{i}^*)+\frac{t_i+2}{i^2}\right),
    \end{align}
\begin{align}\label{eq to bound derivative}
    \sum_{j=1}^{i-1}t_j\ell_j\left(\lambda(\eta_j^*)+\frac{2}{j^2}\right)<\frac{t_i\ell_i}{i^2},
\end{align}
    \begin{align}\label{eq flat detail}
        t_{i-1}\ell_{i-1}\left(\lambda(\eta_{i-1}^*)+\frac{3}{i-1}\right)<\frac{t_i\ell_i}{i},\ 
        \frac{({i-1})^2i\ell_i}{\ell_{i-1}}\left(\lambda(\eta_i^*)+\frac{2}{i^2}\right)<t_{i-1},
    \end{align}
    \begin{align}\label{eq flat subset}
        \sum_{j=1}^{i-1}\frac{3 t_j\ell_j}{j^{1/2}}\leq \frac{t_i\ell_i}{i^{1/2}},\ 
    (|hah|+\ell_i)|\alpha|+\sup_{0\leq p\leq|hah|+\ell_{i}}\sup_{[F^{\ell_i}]}|S_p\phi^*|\leq t_{i-1}.
    \end{align}
For $i\in \mathbb{N}$ we set 
$
\hat{F}^{\ell_i}:=F^{\ell_i}hah:=\{\tau hah: \tau\in F^{\ell_i}\}.
$
For $i\in \mathbb{N}$ and $0\leq s\leq t_{i}$ we define 
\[
F^{\ell_i,0}=\{\emptyset\} \text{ and }F^{\ell_i,s}:=\prod_{k=1}^{s}\hat{F}^{\ell_i}=\{\tau_1\tau_2\cdots\tau_s:\tau_k\in \hat{F}^{\ell_i}, 1\leq k\leq s\}
\]
where $\emptyset$ denotes the empty word. For $i\in\mathbb{N}$ and $0\leq s\leq t_{i+1}$ we set
\begin{align*}
&E^{t_1,\cdots, t_i,s}:=\{\tau_1\cdots\tau_{i+1}:\tau_k\in F^{\ell_k,t_k}\text{ for all } 1\leq k\leq i\text{ and } \tau_{i+1}\in F^{\ell_{i+1},s}\}
\\&
\subset \MS^{\sum_{k=1}^it_k(\ell_k+|hah|)+s(\ell_{i+1}+|hah|)}.
\end{align*}
For $i\in \mathbb{N}$ we define the probability measure $\hat{\mu}_i^*$ on $\hat{F}^{\ell_i}$ by setting 
\begin{align}\label{eq def hat mu i}
\hat{\mu}_i^*(\{\tau\}):=(\#\hat{F}^{\ell_i})^{-1}=(\#F^{\ell_i})^{-1} \text{ for all }\tau\in \hat{F}^{\ell_i}.
\end{align}
Using Kolmogorov extension theorem, we define the product measure 
\[
\hat{\mu}^*:=\bigotimes_{k = 1}^{t_1}\hat{\mu}_1^*\times \bigotimes_{k = 1}^{t_2}\hat{\mu}_2^*\times \cdots \text{ on } V:=\prod_{k=1}^{t_1} \hat{F}^{\ell_1}\times\prod_{k=1}^{t_2} \hat{F}^{\ell_2}\cdots.
\]
We define the bijection map $\iota: V\rightarrow  \MS$ by 
\[
\iota(\tau_1,\tau_2, \cdots):=\tau_1^1hah\tau_2^1hah\cdots\tau_{t_1}^1hah
\tau_2^1hah\tau_2^2hah\cdots\tau_{t_2}^2hah \cdots,
\]
where $\tau_k=\tau_1^khah\tau_2^khah\cdots\tau_{t_k}^khah$, $\tau_{j}^{k}\in F^{\ell_k}$, $1\leq j\leq t_k$ and $k\in \mathbb{N}$. We set 
\[
\mu:=\hat\mu^*\circ \iota^{-1}\circ\coding^{-1}.
\]
Note that for all $i\in\mathbb{N}$, $0\leq s\leq t_{i+1}$ and $\tau\in E^{t_1,\cdots,t_i,s}$ we have $\coding^{-1}(\coding([\tau]))\cap\iota(V)=[\tau]\cap\iota(V)$. 
The proof of this claim proceeds as follows. Let $\tau'\in \coding^{-1}(\coding([\tau]))\cap\iota(V)$. If $\tau'\notin[\tau]$ then there exists $1\leq k\leq |\tau|-1$ such that $\tau_k'\neq \tau_k$. Then by Lemma \ref{lemma boundary}, we have $\tau_{|\tau|-1}'\neq \tau_{|\tau|-1}=h$. On the other hand, since $\tau'\in \iota(V)$, we have $\tau'=h$. This is a contradiction. Thus $\tau'\in [\tau]\cap\iota(V)$. Hence, by \eqref{eq def hat mu i}, for all $i\in\mathbb{N}$, $0\leq s\leq t_{i+1}$ and $\tau\in E^{t_1,\cdots,t_i,s}$ we obtain
\begin{align}\label{eq basic measure mu}
    \mu(\coding([\tau]))=(\#F^{\ell_1})^{-t_1}(\#F^{\ell_2})^{-t_2}\cdots(\#F^{\ell_i})^{-t_i}(\#F^{\ell_{i+1}})^{-s}.
\end{align}
Since $\{t_k\}_{k\in\mathbb{N}}$ is strictly increasing, 
for each integer $T\geq t_1$ there exists a unique pair of integers $N:=N_T$, $s:=s_T$ such that $T=\sum_{k=1}^{N}t_k+s$. We define 
\[
\Gamma:=\bigcap_{T=t_1}^\infty \bigcup_{\tau\in E^{t_1,\cdots,t_N,s}} A_{\tau}.
\]
We will show that 
\begin{align}\label{eq dim of Gamma}
    \dim_H(\Gamma)=\dimension.
\end{align}
Note that by \eqref{eq basic measure mu}, we have $\mu(\Gamma)>0$. Thus, by Frostman lemma (see, for example, \cite[Theorem 15.6.3]{Urbanskinoninvertible}), it is enough to show that for all $x\in \Gamma$ we have 
\begin{align}\label{eq Frostman}
\liminf_{r\to 0}\frac{\log \mu(B(x,r)\cap \Gamma)}{\log r}\geq \dimension.
\end{align}
For each integer $T\geq t_1$ we define 
\[
r_T:=\exp\left(-t_{N}\ell_{N}\left(\lambda(\eta_{N}^*)+3 N^{-2}\right)-s\ell_{N+1}\left(\lambda(\eta_{N+1}^*)+2(N+1)^{-2}\right)\right).
\]
If $s<t_{N+1}-1$ then $N_{T+1}=N_T$ and $s_{T+1}=s_T+1$, and so
\[
r_{T+1}=\exp(-\ell_{N+1}\left(\lambda(\eta_{N+1}^*)+2(N+1)^{-2}\right))r_T.
\]
If $s=t_{N+1}-1$ then $N_{T+1}=N_T+1$ and $s_{T+1}=0$, and so
\begin{align}\label{eq r_T exceptional}
r_{T+1}=\exp\left(-t_{N+1}\ell_{N+1}\left(\lambda(\eta_{N+1}^*)+3 (N+1)^{-2}\right)\right).
\end{align}
Note that for all $i\geq 2$ we have 
    \begin{align*}
        &t_{i}\ell_{i}\left(\lambda(\eta_{i}^*)+\frac{3}{i^2}\right)>t_{i-1}\ell_{i-1}\left(\lambda(\eta_{i-1}^*)+\frac{3}{(i-1)^2}\right)+(t_{i}-1)\ell_{i}\left(\lambda(\eta_{i}^*)+\frac{2}{i^2}\right)
    \\&\Longleftrightarrow \ell_{i}\left(\lambda(\eta_{i}^*)+\frac{t_i+2}{i^2}\right)>t_{i-1}\ell_{i-1}\left(\lambda(\eta_{i-1}^*)+\frac{3}{(i-1)^2}\right).
    \end{align*}
Hence, by \eqref{eq flat to define r_t}, the sequence $\{r_T\}_{T\geq t_1}$ is strictly decreasing to $0$ as $T\to \infty$. 
For $r\in (0,r_{t_1}]$ we take a unique integer $T:=T_r\geq t_1$ such that
\begin{align}\label{eq r_T}
    r_{T+1}<r\leq r_T.
\end{align}
We first show that there exists a constant $C>0$ such that for all $r\in (0,r_{t_1}]$ and $x\in \Gamma$ we have 
$
    \#\{\tau\in E^{t_1,\cdots,t_N,s}:B(x,r)\cap \coding([\tau])\neq \emptyset\}\leq C.
$
 By \eqref{eq non intresting ine} and \eqref{eq nice cylinder appro}, for all $r\in (0,r_{t_1}]$ and $\tau\in E^{t_1,\cdots, t_N,s}$ we have 
\begin{align*}
    \sup_{\coding[\tau]}\log |(\rational^{|\tau|})'|\leq \sum_{k=1}^{N}t_k\ell_k( \lambda(\eta_k^*)+2k^{-2})+s\ell_{N+1}( \lambda(\eta_{N+1}^*)+2(N+1)^{-2}).
\end{align*}
Hence, by \eqref{eq to bound derivative} and the definition of $r_T$,
 for all $r\in (0,r_{t_1}]$ and $\tau\in E^{t_1,\cdots, t_N,s}$ we have 
\begin{align}\label{eq count ball 1}
    \sup_{\coding([\tau])}\log |(\rational^{|\tau|})'|\leq \log r_T^{-1}
    \Longrightarrow r\leq r_T\leq \inf _{\coding([\tau])}|(\rational^{|\tau|})'|^{-1}
    .
\end{align}
By the bounded distortion property (Lemma \ref{lemma distortion}), there exists $C_1>0$ such that for all $r\in (0,r_{t_1}]$, $\tau\in E^{t_1,\cdots, t_N,s}$ and $x\in \coding([\tau h])$ we have 
\begin{align}\label{eq count ball 2}
    B(x,C_1|(\rational^{|\tau|})'(x)|^{-1})\subset \comp(\tau h)=\rational^{-|\tau|}_{\tau h}(B(z_h, 2\kappa))
\end{align}
(see, for example, the proof of \cite[Lemma 19.3.10]{Urbanskinoninvertible}). Note that if $\tau'h,\tau h\in\MS^*$ and $\tau\neq \tau'$ then $\comp(\tau h)\cap \comp(\tau'h)=\emptyset$. Hence, by \eqref{eq count ball 1} and \eqref{eq count ball 2},  there exists a constant $C>0$ such that for all $r\in (0,r_{t_1}]$ and $x\in \Gamma$ we have 
\[
    \#\{\tau\in E^{t_1,\cdots,t_N,s}:B(x,r)\cap \coding([\tau])\neq \emptyset\}\leq C.
\]
Combining this with \eqref{eq basic measure mu} and \eqref{eq nice cylinder appro}, for all $r\in (0,r_{t_1}]$ and $x\in \Gamma$ we obtain
\begin{align}\label{eq prob ball}
    &\mu(B(x,r)\cap\Gamma)\leq C(\#F^{\ell_N})^{-t_N}(\#F^{\ell_{N+1}})^{-s}
    \\&\leq C\exp\left(-t_N\ell_N(h(\eta_N^{*})+N^{-2})-s\ell_{N+1}(h(\eta_{N+1}^{*})+(N+1)^{-2})\right) \nonumber.
\end{align}
Thus, for all $r\in (0,r_{t_1}]$ with $s_{T_r}<t_{N_{T_r}}-1$ and $x\in \Gamma$, as $r\to 0$, 
\begin{align*}
    &\frac{\log\mu(B(x,r)\cap\Gamma)}{\log r}\geq \frac{\log (\#F^{\ell_N})^{-t_N}(\#F^{\ell_{N+1}})^{-s}}{\log r_{T+1}} +O\left((-\log r)^{-1}\right) 
    \\&\geq \frac{t_N\ell_N(h(\eta_N^*)-N^{-2})+s\ell_{N+1}(h(\eta_{N+1}^*)-(N+1)^{-2})}{t_{N}\ell_{N}(\lambda(\eta_{N}^*)+3 N^{-2})+(s+1)\ell_{N+1}(\lambda(\eta_{N+1}^*)+2(N+1)^{-2})}+O\left(({-\log r})^{-1}\right) \nonumber.
\end{align*}
Note that by \eqref{eq hyperbolic ergodic appro nice appro}, we have $h(\eta_{k}^*)-k^{-1}>0$ for all $k\in\mathbb{N}$. Hence, by \eqref{eq flat detail}, for all $r\in (0,r_{t_1}]$ with $s_{T_r}<N$ and $x\in \Gamma$, as $r\to 0$, 
\begin{align}\label{eq less than N}
    &\frac{\log\mu(B(x,r)\cap\Gamma)}{\log r}\geq \frac{h(\eta_N^*)-N^{-2}}{\lambda(\eta_{N}^*)+4 N^{-2}}+O\left((-\log r)^{-1}\right).
\end{align}
Moreover, for all $r\in (0,r_{t_1}]$ with $s_{T_r}\geq N$ and $x\in \Gamma$, as $r\to 0$,
\begin{align}\label{eq lager than N}
    \frac{\log\mu(B(x,r)\cap\Gamma)}{\log r}\geq&\min\left\{\frac{h(\eta_N^*)-N^{-2}}{\lambda(\eta_N^*)+3N^{-2}},\frac{N}{N+1}\frac{h(\eta_{N+1}^*)-(N+1)^{-2}}{\lambda(\eta_{N+1}^*)+2(N+1)^{-2}}\right\}\nonumber
    \\&+O\left((-\log r)^{-1}\right)
\end{align}
By \eqref{eq r_T exceptional} and \eqref{eq prob ball}, for all $r\in (0,r_{t_1}]$ with $s_{T_r}=t_{N_{T_r}}-1$ and $x\in \Gamma$, as $r\to 0$,
\[
\frac{\log\mu(B(x,r)\cap\Gamma)}{\log r}\geq\left(1-\frac{1}{t_{N+1}}\right)\frac{h(\eta_{N+1}^*)+(N+1)^{-2}}{\lambda(\eta_{N+1}^*)+3(N+1)^{-2}}+O((-\log r)^{-1}).
\]
Thus, by \eqref{eq less than N}, \eqref{eq lager than N}, \eqref{eq hyperbolic ergodic appro large dimension} and \eqref{eq hyperbolic ergodic appro nice appro}, for all $x\in \Gamma$ we obtain \eqref{eq Frostman}.
This completes the proof of \eqref{eq dim of Gamma}.

Next, we will show that 
\begin{align}\label{eq Gamma subset}
    \Gamma\subset B(\alpha).
\end{align}
We set $t_0:=0$ and $\ell_0:=0$. For $i\in\mathbb{N}$ and $0\leq m\leq t_i-1$ we define $q_{i,m}:=\sum_{k=1}^{i}t_{k-1}(\ell_{k-1}+|hah|)+m(\ell_i+|hah|)$.
By \eqref{eq hyperbolic ergodic appro nice appro}, \eqref{eq non intresting ine} and \eqref{eq nice cylinder appro}, for all $N\in\mathbb{N}$, $0\leq s\leq t_{N+1}-1$, $\tau\in E^{t_1,\cdots, t_N,s}$ and $x\in \coding([\tau])$ we have 
\begin{align}\label{eq essential Birkhoff}
   &| S_{|\tau|}(\phi)(x)-|\tau|\alpha|
   \\&
   \leq
   \sum_{i=1}^{N}\sum_{m=0}^{t_{i}-1}
   \left(
   \left|
   S_{\ell_i}(\phi)(\rational^{q_{i,m}}(x))
   -
   \ell_i\alpha\right|
   +
   \left|
   S_{|hah|}(\phi)(\rational^{q_{i,m}+\ell_i}(x))
   -|hah|\alpha\right|
   \right)\nonumber
   +\\&
   \sum_{m=0}^{s-1}\left(
   \left|
   S_{\ell_{N+1}}(\phi)(\rational^{q_{N+1,m}}(x))
   -
   \ell_{N+1}\alpha\right|
   +
   \left|
   S_{|hah|}(\phi)(\rational^{q_{N+1,m}+\ell_{N+1}}(x))
   -|hah|\alpha\right|
   \right)\nonumber
   \\&
   \leq \sum_{i=1}^N\frac{2t_i\ell_i}{i^{1/2}}
   +\left(\sum_{i=1}^{N}t_i+s\right)(c+|hah||\alpha|)+
   \frac{2s\ell_{N+1}}{(N+1)^{1/2}}\leq
   \sum_{i=1}^N\frac{3t_i\ell_i}{i^{1/2}}+\frac{3s\ell_{N+1}}{(N+1)^{1/2}}.\nonumber
\end{align}
Let $x\in \Gamma$ and let $n\geq (\ell_1+|hah|)t_1$. Then there exist $N:=N_n\in\mathbb{N}$, $0\leq s\leq t_{N+1}-1$ and $\tau\in E^{t_1,\cdots,t_N,s}$ such that $|\tau|\leq n<|\tau|+\ell_{N+1}+|hah|$ and $x\in \coding([\tau])$. Then by \eqref{eq essential Birkhoff} and \eqref{eq flat subset}, we obtain
\begin{align*}
    &|S_n(\phi)(x)-n\alpha|\leq |S_n(\phi)(x)-S_{|\tau|}(\phi)(x)|+|S_{|\tau|}(\phi)(x)-|\tau|\alpha|+\alpha||\tau|-n|
    \\&\leq|S_{n-|\tau|}(\phi)(\rational^{|\tau|}(x))|+ \sum_{i=1}^N\frac{3t_i\ell_i}{i^{1/2}}+\frac{3s\ell_{N+1}}{(N+1)^{1/2}}+|\alpha|(\ell_{N+1}+|hah|)
\\&\leq\frac{4 t_N\ell_N}{N^{1/2}}+ \frac{3s\ell_{N+1}}{(N+1)^{1/2}}+t_{N}\leq \frac{4(t_N\ell_N+s\ell_{N+1})}{N^{1/2}}+\frac{t_N\ell_N}{\ell_N}\leq \frac{5n}{N^{1/2}}.
\end{align*}
Since $\lim_{n\to\infty} N_n=\infty$, we obtain $x\in B(\alpha)$. This completes the proof of \eqref{eq Gamma subset}. Combining \eqref{eq dim of Gamma} and \eqref{eq Gamma subset}, we obtain desired result.
\qed

\section{Verification of the Defining Condition of $\mathcal H$}\label{sec Verification of the Defining Condition}
As in the previous section, we fix  a parabolic rational map $\rational$ with $\text{deg}(\rational)\geq 2$ such that $\infty\notin J$ and \eqref{eq assumption simple} holds.
In this section, we give a sufficient condition for a continuous potential $\phi:J\to\mathbb{R}$ to belong to $\mathcal{H}$.

We say that for $\exponent>0$ and $A\subset J$ a potential $\phi:J\rightarrow \mathbb{R}$ is H\"older continuous on $A$ with exponent $\exponent$ if there exists a constant $C>0$ such that for all $x,y\in A$ we have 
$
|\phi(x)-\phi(y)|\leq C|x-y|^\exponent.    
$
A potential $\phi:J\to\mathbb{R}$ is said to be H\"older continuous if there exists $\exponent>0$ such that $\phi$ is H\"older continuous on $J$ with exponent $\exponent$. The number $\exponent$ is called a H\"older exponent of $\phi$. Recall that for $\omega\in \Omega$ we define
$
\growth=(\rate_\omega+1)\rate_\omega^{-1} \text{ and }
\minrate=\min_{\omega\in \Omega}\growth
$ (see \eqref{eq def minrate}).
\begin{lemma}\label{lemma sufficient condition Holder}
We assume that a potential $\phi:J\rightarrow \mathbb{R}$ is H\"older continuous and there exist $\epsilon>0$ and $\exponent>\minrate^{-1}$ such that $\phi$ is H\"older continuous on $B(\Omega,\epsilon)\cap J$ with exponent $\exponent$. Then $\phi\in \mathcal{H}$. 
\end{lemma}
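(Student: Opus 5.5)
The plan is to check directly that the induced potential $\indupote$ is locally H\"older. Fix $n\in\mathbb{N}$, $a\in\CMS^n$ and $\tau,\tau'\in[a]$ with $\tau\neq\tau'$. Write $k:=\inf\{j:\tau_j\neq\tau'_j\}\geq n$, so that $\tilde d(\tau,\tau')=e^{-k}$. Since $\tau_0=\tau'_0=e\in\induedge_m$ for some $m$, both points have the same return time $\return=m$. Hence
\[
|\indupote(\tau)-\indupote(\tau')|\le\sum_{j=0}^{m-1}|\phi(x_j)-\phi(y_j)|,
\]
where $x_j=\rational^j(\coding\circ\Proj(\tau))$ and $y_j=\rational^j(\coding\circ\Proj(\tau'))$. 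The key geometric input is that $x_j,y_j$ lie in a common set $\coding([w])$. Here $w$ is the tail starting at position $j$ of the word $\Proj^*(\tau_0\cdots\tau_{k-1})$. Its length is at least $m-j$ plus the lengths of the $k-1$ further induced letters. By Lemma \ref{lemma exponential decay} (applied to the shifted induced word) and bounded distortion (Lemma \ref{lemma distortion}), $|x_j-y_j|$ is controlled by $|(\rational^{m-j})'|^{-1}$ along the parabolic segment times $Ce^{-\beta(k-1)}$.

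The bound splits into two ranges. For $j$ with $x_j,y_j\notin B(\Omega,\epsilon)$, which happens only for a uniformly bounded number of indices $j$ (the initial $h$ and the last boundedly many steps entering and leaving the petal, using Proposition \ref{prop local property near inddiferent}), we use global H\"older continuity with exponent $\beta_0$:
\[
|\phi(x_j)-\phi(y_j)|\ll |x_j-y_j|^{\beta_0}\ll e^{-\beta\beta_0 k}.
\]
For indices with $x_j,y_j\in B(\omega,\epsilon)$, $1\le j\le m-1$, the point $x_j$ is $\rational_\omega^{-(m-1-j)}$ applied to a point near the exit, and likewise for $y_j$. Proposition \ref{prop local property near inddiferent} gives $|(\rational_\omega^{-i})'|\asymp i^{-\growth}$ on the relevant sector, and Koebe distortion \eqref{eq Koebe}, applied along the inverse branch (legitimate since the $y_j$ lie in the same petal sector), gives
\[
|x_j-y_j|\ll (m-j)^{-\growth}\,|x_{m-1}-y_{m-1}|\ll (m-j)^{-\growth}e^{-\beta k}.
\]
With the local exponent $\exponent$ we then get
\[
\sum_j|\phi(x_j)-\phi(y_j)|\ll e^{-\beta\exponent k}\sum_{i\ge1}i^{-\exponent\growth}.
\]
This series converges exactly because $\exponent\growth\ge\exponent\minrate>1$, giving a bound uniform in $m$. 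Combining the two ranges yields $|\indupote(\tau)-\indupote(\tau')|\ll \tilde d(\tau,\tau')^{\beta\min\{\beta_0,\exponent\}}$, uniformly in $n$ and $a$, so $\phi\in\mathcal{H}$.

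The main obstacle is the parabolic-segment estimate. We must justify that points of $\coding([e])$ for $e\in\induedge_{m,\omega}$ lie in the domain where the uniform derivative asymptotics of Proposition \ref{prop local property near inddiferent} and Koebe distortion apply to the whole backward orbit simultaneously. This requires handling the entry and exit steps through the compositions $\rational_\omega^{-(m-1)}\circ\rational^{-1}_{\tau_{m-1}h}$, as in the proof of Lemma \ref{lemma long prabolic}, and then using \eqref{eq separation} and Theorem \ref{thm fatou} to keep everything in one sector. A first attempt would mimic the proof of Lemma \ref{lemma long prabolic}: transfer distances at the exit point $x_{m-1}$, where hyperbolic distortion holds, back through the parabolic inverse branch. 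Only the derivative bound in the Koebe distortion is needed there, not the position estimate.
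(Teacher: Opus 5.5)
Your proposal is correct and follows essentially the paper's argument. You reduce, via Lemma \ref{lemma exponential decay}, to bounding $|\indupote(\tau)-\indupote(\tau')|$ by a power of the distance between the exit points, and you treat the boundedly many indices outside $B(\Omega,\epsilon)$ (guaranteed by \eqref{eq n big near inddiferent}) with the global H\"older exponent. On the parabolic stretch you combine the mean value theorem, Lemma \ref{lemma distortion} and the decay $|(\rational^{-k})'|\ll k^{-\growth}$ from Proposition \ref{prop local property near inddiferent}, so that $\sum_k k^{-\minrate\exponent}<\infty$ gives a bound uniform in the return time. The paper avoids your ``main obstacle'' by pulling back directly from $x_m\in A_{h'}$ through the branches $\rational^{-k}_{\shift^{n-k}(e^*)}$. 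These are univalent on the convex ball $B(z_{h'},2\kappa)$ by Lemma \ref{lemma component}, so no separate Koebe argument along $\rational_\omega^{-1}$ starting from $x_{m-1}$ is needed.
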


\begin{proof}
Let $\exponent'$ be a H\"older exponent of $\phi$. Then there exists a constant $C_1>0$ such that for all $x,y\in J$ we have $|\phi(x)-\phi(y)|\leq C_1 |x-y|^{\exponent'}$ and for all $x,y\in B(\Omega,\epsilon)\cap J$ we have $|\phi(x)-\phi(y)|\leq C_1 |x-y|^{\exponent}$.
We denote by $C_2\geq 1$ the constant in Lemma \ref{lemma distortion}. 
By \eqref{eq n big near inddiferent}, there exists $N\in\mathbb{N}$ such that for all $n\geq N$ and $\parabolic\in \inddiferent^n\cap \MS^n$ we have $\coding([\parabolic])\subset B(\Omega,\epsilon)$. 
    By Lemma \ref{lemma exponential decay}, it is enough to show that there exist a constant $C>0$ and a exponent $\vartheta>0$ such that for all $e\in \induedge$ and $\tau,\tau'\in[e]$ we have 
    \begin{align}\label{eq varification}
        |\indupote(\tau)-\indupote(\tau')|\leq C|\coding\circ\Proj(\indushift(\tau))-\coding\circ\Proj(\indushift(\tau'))|^{\vartheta}.
    \end{align}

Let $n\in\mathbb{N}$ and let $e\in \induedge_n$. 
We fix $\tau,\tau'\in [e]$ and put $x:=\coding\circ\Proj(\indushift(\tau))$ and $y:=\coding\circ\Proj(\indushift(\tau'))$. 
We set $e^*:=\Proj^*(e)$.
Then we have 
$
\coding(\Proj(\tau))=\rational^{-n}_{e^*}(x)
\text{ and }
\coding(\Proj(\tau'))=\rational^{-n}_{e^*}(y).
$

If $n<N+1$ then by the mean value theorem and Lemma \ref{lemma distortion}, we have 
\begin{align}\label{eq varification 1}
    &\left|\indupote(\tau)-\indupote(\tau')\right|
    \leq C_1 \sum_{k=0}^{n-1} 
    |\rational^{-(n-k)}_{\shift^k(e^*)}(x)
    -
    \rational^{-(n-k)}_{\shift^k(e^*)}(y)
    |^{\exponent'}
    \leq C_1C_2^NDN|x-y|^{\exponent'},
\end{align}
where $D:=\sup_{1\leq n\leq N}\sup_{a\in \edge^n\times H\cap \MS^{n+1}}\sup_{z\in A_{a_{n}}}|(\rational^{-n}_{a})'(z)|^{\beta'}$.

We assume that $n\geq N+1$. By Proposition \ref{prop local property near inddiferent}, there exists a constant $C_3>0$ such that for all $\omega\in \Omega$ and $n\in\mathbb{N}$ we have  
\begin{align}\label{eq varification growth}
    D_{n,\omega}:=\sup_{a\in\edge\times \inddiferent^n_\omega\times H\cap \MS^{n+2}}\sup_{z\in A_{a_{n+1}}}|(\rational^{-n}_a)'(z)|\leq C_3
n^{-\growth}.
\end{align}
In particular, we have $D_{\sup}:=\max_{\omega\in \Omega}\sup_{n\in\mathbb{N}} D_{n,\omega}<\infty$.
By the mean value theorem, \eqref{eq varification growth}
and Lemma \ref{lemma distortion},
we have
\begin{align}\label{eq varification 2}
    &\left|\indupote(\tau)-\indupote(\tau')\right|
    \leq
    |\phi(\rational^{-n}_{e^*}(x))-\phi(\rational^{-n}_{e^*}(y))|
    \\&+
    \sum_{k=N}^{n-1}|\phi(\rational^{-k}_{\shift^{n-k}(e^*)}(x))-\phi(\rational^{-k}_{\shift^{n-k}(e^*)}(y))|\nonumber
    \\&+\sum_{k=1}^{N-1}|\phi(\rational^{-k}_{\shift^{n-k}(e^*)}(x))-\phi(\rational^{-k}_{\shift^{n-k}(e^*)}(y))|\nonumber
    \\&\leq
    C_1C_2^N\bigg(D_{\sup}|x-y|^{\exponent'}+C_3\sum_{k=1}^{\infty}k^{-\minrate\exponent}|x-y|^\exponent+DN|x-y|^{\exponent'}\bigg).\nonumber
\end{align}
Since $\minrate \exponent>1$, we have $\sum_{k=1}^{\infty}k^{-\minrate\exponent}<\infty$. Hence, by \eqref{eq varification 1} and \eqref{eq varification 2}, we obtain \eqref{eq varification}. Therefore, we are done. 
\end{proof}

\subsection*{Acknowledgments}
The author was supported by the JSPS KAKENHI 25KJ1382. 

\bibliographystyle{abbrv}
\bibliography{reference}

\end{document}